	
\documentclass[10pt]{article}
\usepackage[
    colorlinks,%
    linkcolor=blue,citecolor=red,urlcolor=blue,
]{hyperref}
\usepackage{cite,amsfonts,amssymb,amsmath,exscale,bbm}
\usepackage{graphicx}

\usepackage{footnpag} 
\usepackage[all,poly]{xy}  

\setlength{\topmargin}{0pt}
\setlength{\textheight}{8in}
\setlength{\textwidth}{6in}
\setlength{\oddsidemargin}{.25in}
\setlength{\evensidemargin}{0pt}
\setlength{\baselineskip}{1ex}


\newcommand\beq{\begin{equation}}
\newcommand\eeq{\end{equation}}
\newcommand\be{\begin{equation}}
\newcommand\ee{\end{equation}}
\newcommand\beqa{\begin{eqnarray}}
\newcommand\eeqa{\end{eqnarray}}
\newcommand\bean{\begin{eqnarray*}}
\newcommand\eean{\end{eqnarray*}}


\newcommand\U{{\mathrm U}}
\newcommand\C{{\mathbb C}}
\newcommand\N{{\mathbb N}}
\newcommand\R{{\mathbb R}}
\newcommand\Z{{\mathbb Z}}
\newcommand\Q{{\mathbb Q}}
\renewcommand{\H}{{\mathcal{H}}}
\newcommand\J{{\mathcal{J}}}
\newcommand\K{{\mathcal{K}}}
\newcommand\G{{\mathcal{G}}} 
\newcommand\RR{{\mathcal{R}}}
\newcommand\M{{\mathcal{M}}} 
\newcommand{\HH}{{\sf H}} 
\newcommand{\VV}{{\sf V}} 

\newcommand{\x}{\textit{\textbf{x}}}
\newcommand{\p}{\textit{\textbf{p}}}

\newcommand\unit{\mathbbm{1}}



\newcommand{\SO}{{\rm SO}}

\newcommand{\Hhat}{\widehat H}


\newcommand\Vect{{\rm{Vect}}}
\newcommand\Hilb{{\rm Hilb}}
\newcommand\tc{{\mathcal{C}}}  
\newcommand\Cat{{\bf{Cat}}} 
\newcommand\twoVe{{\bf{2Vect}}}
\newcommand\twoHi{{\bf{2Hilb}}}

\newcommand\Rep{{\bf{2Rep}}}
\newcommand\me{{\bf{Meas}}}
\newcommand\Mat{{\rm Mat}}
\newcommand\Ab{{\rm Ab}}


\newcommand\tensor{\otimes}
\newcommand\To{\Rightarrow}

\newcommand{\im}{\mathrm{im}}


\newcommand\direct{\int^{\oplus}}
\newcommand\extd{\mathrm {d}}


\newcommand{\gmean}[2]{\sqrt{{\extd #1}{\extd #2}}} 
\newcommand{\rnd}[2]{{\frac{\extd #1}{\extd #2}}}
\newcommand{\sqrnd}[2]{\sqrt{\frac{\extd #1}{\extd #2}}}
\newcommand{\alme}{\textit{a.e.}}

\newcounter{letter} \newcounter{numeral} \newcounter{Numeral}

\newenvironment{romanlist}{
\begin{list}{(\roman{numeral})}{\usecounter{numeral}}
}{\end{list}}

\newcommand\arr{\longrightarrow}
\renewcommand\d{\partial}

\newcommand\maps{\colon}


\newtheorem{theo}{Theorem}
\newtheorem{lemma}[theo]{Lemma}
\newtheorem{prop}[theo]{Proposition}
\newtheorem{defn}[theo]{Definition}
\newtheorem{cor}[theo]{Corollary}
\newenvironment{proof}{\noindent
\textbf{Proof: }}{\hfill\rule{.6em}{.8em} \medskip}
\newenvironment{proof.within.proof}
{\noindent{\it Proof:}}{
\hfill $\Box$ \medskip}

%
\begin{document}
%

\centerline{\Large \bf Infinite-Dimensional Representations of 2-Groups}

\bigskip

\centerline{\large John C.\ Baez$^1$, Aristide Baratin$^{2}$,
Laurent Freidel$^{3,4}$, Derek K.\ Wise$^{5}$ }

\bigskip \bigskip

{\small
\centerline{${}^1$ Department of Mathematics, University of California}
\centerline{Riverside, CA 92521, USA}
\vskip .3em
\centerline{${}^2$ Max Planck Institute for Gravitational
Physics, Albert Einstein Institute,} \centerline{Am M\"uhlenberg 1,
14467 Golm, Germany}
\vskip .3em
\centerline{${}^3$ Laboratoire de Physique, \'Ecole
Normale Sup{\'e}rieure de Lyon} \centerline{46 All\'ee d'Italie, 69364
Lyon Cedex 07, France}
\vskip .3em
\centerline{${}^4$ Perimeter Institute for Theoretical Physics}
\centerline{Waterloo ON, N2L 2Y5, Canada}
\vskip .3em
\centerline{${}^5$ Institute for Theoretical Physics III, University of Erlangen--N\"urnberg}
\centerline{Staudtstra{\ss}e 7 / B2, 91058 Erlangen, Germany}
}

\bigskip \bigskip


\begin{abstract}
A `2-group' is a category equipped with a multiplication satisfying
laws like those of a group.  Just as groups have representations on
vector spaces, 2-groups have representations on `2-vector spaces',
which are categories analogous to vector spaces.  Unfortunately, Lie
2-groups typically have few representations on the finite-dimensional
2-vector spaces introduced by Kapranov and Voevodsky.  For this
reason, Crane, Sheppeard and Yetter introduced certain
infinite-dimensional 2-vector spaces called `measurable categories'
(since they are closely related to measurable fields of Hilbert
spaces), and used these to study infinite-dimensional representations
of certain Lie 2-groups.  Here we continue this work.  We begin with a
detailed study of measurable categories.  Then we give a geometrical
description of the measurable representations, intertwiners and
2-intertwiners for any skeletal measurable 2-group.  We study tensor
products and direct sums for representations, and various concepts of
subrepresentation.  We describe direct sums of intertwiners, and
sub-intertwiners---features not seen in ordinary group representation
theory.  We study irreducible and indecomposable representations
and intertwiners.  We also study `irretractable'
representations---another feature not seen in ordinary group
representation theory.  Finally, we argue that measurable categories
equipped with some extra structure deserve to be considered `separable
2-Hilbert spaces', and compare this idea to a tentative definition of
2-Hilbert spaces as representation categories of commutative von
Neumann algebras.
\end{abstract}


\newpage

\tableofcontents

\newpage

%
\section{Introduction}
%

The goal of `categorification' is to develop a richer version of
existing mathematics by replacing sets with categories.  This lets
us exploit the following analogy:

\vskip 2em
\begin{center}
{\small
\begin{tabular}{c|c}                    \hline
set theory             &   category theory     
  \rule[1.4em]{0em}{0em} \rule[-.8em]{0em}{0em}   \\      \hline
elements    &  objects                    
\rule[1.4em]{0em}{0em} \rule[-.8em]{0em}{0em}   \\     
equations   &  isomorphisms                \\     
between elements   &  between objects   \rule[-.8em]{0em}{0em}     \\    
sets        &  categories                    \rule[-.8em]{0em}{0em} \\     
functions   &  functors                     \rule[-.8em]{0em}{0em} \\     
equations   &  natural isomorphisms    \\     
between functions   & between functors \rule[-.8em]{0em}{0em}      \\     \hline
\end{tabular}} \vskip 1em
\end{center}
\vskip 0.5em

\noindent Just as sets have elements, categories have objects.  Just
as there are functions between sets, there are functors between
categories.  The correct analogue of an equation between elements is not
an equation between objects, but an isomorphism.  More generally, the
analog of an equation between functions is a natural isomorphism
between functors.

The word `categorification' was first coined by Louis Crane
\cite{Crane} in the context of mathematical physics.  Applications to
this subject have always been among the most exciting
\cite{BaezLauda2}, since categorification holds the promise of
generalizing some of the special features of low-dimensional physics
to higher dimensions.  The reason is that categorification
\textit{boosts the dimension by one}.

To see this in the simplest possible way, note that we can draw
sets as 0-dimensional dots and functions between sets as 1-dimensional
arrows:
$$
  \xymatrix{
  S \bullet \ar@/^2ex/[rr]^{f}="g1" &&\bullet S'
}
$$
If we could draw all the sets in the world this way, and all the functions
between them, we would have a picture of the category of all sets.

But there are many categories beside the category of sets, and when we
study categories {\it en masse} we see an additional layer of structure.  We
can draw categories as dots, and functors between categories as
arrows.  But what about natural isomorphisms between functors, or more
general natural transformations between functors?  We can draw these
as 2-dimensional surfaces:
$$
  \xymatrix{
  C \bullet \ar@/^2ex/[rr]^{f}="g1"\ar@/_2ex/[rr]_{f'}="g2"&&\bullet C'
  \ar@{=>}^{h} "g1"+<0ex,-2.5ex>;"g2"+<0ex,2.5ex>
}
$$
So, the dimension of our picture has been boosted by one!  Instead of
merely a category of all categories, we say we have a `2-category'.  
If we could draw all the categories in the world this way, and all functors
between them, and all natural transformations between those, we
would have a picture of the 2-category of all categories.  

This story continues indefinitely to higher and higher dimensions:
categorification is a process than can be iterated.  But our goal here
lies in a different direction: we wish to take a specific branch of
mathematics, the theory of infinite-dimensional group representations, 
and categorify that just once.  This might seem like a
purely formal exercise, but we shall see otherwise.  In fact, the
resulting theory has fascinating relations both to well-known topics
within mathematics (fields of Hilbert spaces and Mackey's theory of
induced group representations) and to interesting ideas in physics (spin
foam models of quantum gravity, most notably the Crane--Sheppeard model).

\subsection{2-Groups}

To categorify group represenation theory, we must first
choose a way to categorify the basic notions
involved: the notions of `group' and `vector space'.  At present,
categorifying mathematical definitions is not a completely straightforward
exercise: it requires a bit of creativity and good taste.  
So, there is work to be done here. 

By now, however, there is a fairly uncontroversial way to categorify
the concept of `group'.  The resulting notion of `2-group' can be
defined in various equivalent ways \cite{BaezLauda}.  For example, we
can think of a 2-group as a category equipped with a multiplication
satisfying the usual axioms for a group.  Since categorification
involves replacing equations by natural isomorphisms, we should demand
that the group axioms hold {\it up to natural isomorphism}.  Then we
should demand that these isomorphisms obey some laws of their own,
called `coherence laws'.  This is where the creativity comes into
play.  Luckily, everyone agrees on the correct coherence laws for
2-groups.

However, to simplify our task in this paper, we shall only consider 
`strict' 2-groups, where the axioms for a group hold as {\it equations}---not
just up to natural isomorphisms.  This lets us ignore the issue of 
coherence laws.  Another advantage of strict 2-groups is that they are 
essentially the same as `crossed modules' \cite{ForresterBarker}, which
are structures already familiar in algebra.  So, henceforth we 
shall always use the term `2-group' to mean a 2-group of this kind.  

Suppose $\G$ is a 2-group of this kind.  Since $\G$ is a category, it
has objects and morphisms.  The objects form a group under
multiplication, so we can use them to describe symmetries.  The new
feature, where we go beyond traditional group theory, is the
morphisms.  For most of our more substantial results, we shall make a
drastic simplifying assumption: we shall assume $\G$ is not only
strict but also `skeletal'.  This means that there only exists a
morphism from one object of $\G$ to another if these objects are
actually equal.  In other words, all the morphisms between object of
$\G$ are actually automorphisms.  Since the objects of $\G$ describe
symmetries, their automorphisms describe {\em symmetries of
symmetries}.

The reader should not be fooled by the somewhat intimidating
language.  A skeletal 2-group is really a very simple thing.  Using
the theory of crossed modules, explained in Section \ref{crossmod},
we shall see that a skeletal 2-group $\G$ consists of:
\begin{itemize}
\item
a group $G$ (the group of objects of $\G$),
\item an abelian group $H$ (the group of automorphisms of any object),
\item a left action $\rhd$ of $G$ as automorphisms of $H$.
\end{itemize}

A nice example is the `Poincar\'e 2-group', first discovered by one of
the authors \cite{Baez1}.  But to understand this, and to prepare
ourselves for the discussion of physics applications later in this
introduction, let us first recall the ordinary Poincar\'e group.

In special relativity, we think of a point $\x = (t,x,y,z)$ in
$\R^4$ as describing the time and location of an event.  We equip
$\R^4$ with a bilinear form, the so-called `Minkowski metric':
\[         \x \cdot \x' = tt' - xx' - yy' - zz' \]
which serves as substitute for the usual dot product on $\R^3$.  
With this extra structure, $\R^4$ is called `Minkowski
spacetime'.  The group of all linear transformations
\[           T \maps \R^4 \to \R^4 \]
preserving the Minkowski metric is called $\mathrm{O}(3,1)$. The
connected component of the identity in this group is called
$\SO_0(3,1)$.  This smaller group is generated by rotations in space
together with transformations that mix time and space coordinates.
Elements of $\SO_0(3,1)$ are called `Lorentz transformations'.
In special relativity, we think of Lorentz transformations as
symmetries of spacetime.  However, we also want to count translations
of $\R^4$ as symmetries.  To include these, we need to take the
semidirect product
\[          \SO_0(3,1) \ltimes \R^4   ,\]
and this is called the {\bf Poincar\'e group}.  

The Poincar\'e 2-group is built from the same ingredients, Lorentz
transformation and translations but in a different way.  Now Lorentz
transformations are treated as symmetries---that is, objects---while
the translations are treated as symmetries of symmetries---that
is, morphisms.  More precisely, the {\bf Poincar\'e 2-group} is defined
to be the skeletal 2-group with:
\begin{itemize}
\item
$G =\SO_0(3,1)$: the group of Lorentz transformations,
\item $H = \R^4$: the group of translations of Minkowski space,
\item the obvious action of $\SO_0(3,1)$ on $\R^4$.
\end{itemize}
As we shall see, the representations of this particular 2-group may
have interesting applications to physics.   For other examples of
2-groups, see our invitation to `higher gauge theory' \cite{BaezHuerta}.
This is a generalization of gauge theory where 2-groups replace groups.

\subsection{2-Vector spaces}

Just as groups act on sets, 2-groups can act on categories.  If a
category is equipped with structure analogous to that of a vector
space, we may call it a `2-vector space', and call a 2-group action
preserving this structure a `representation'.  There is, however,
quite a bit of experimentation underway when it comes to axiomatizing
the notion of `2-vector space'.   In this paper we investigate
representations of 2-groups on infinite-dimensional 2-vector spaces,
following a line of work initiated by Crane, Sheppeard and Yetter
\cite{CraneSheppeard,CraneYetter,Yetter2}.  A quick review of the
history will explain why this is a good idea.

To begin with, finite-dimensional 2-vector spaces were introduced by
Kapranov and Voevodsky \cite{KV}.  Their idea was to replace the
`ground field' $\C$ by the category $\Vect$ of finite-dimensional
complex vector spaces, and exploit this analogy:

\vskip 1em
\begin{center}
{\small
\begin{tabular}{c|c}                               \hline
ordinary             &   higher   
\rule[1.4em]{0em}{0em} \\
linear algebra       &  linear algebra 
\rule[-.8em]{0em}{0em} \\     \hline
$\C$        &  $\Vect$          \rule[1.4em]{0em}{0em}             \\
$+$         &  $\oplus$         \rule[1.2em]{0em}{0em}             \\
$\times$    &  $\otimes$        \rule[1.2em]{0em}{0em}             \\
$0$         &  $\{0\}$          \rule[1.2em]{0em}{0em}             \\
$1$         &  $\C$             \rule[1.2em]{0em}{0em}             \\
\end{tabular}} \vskip 1em
\end{center}

\noindent
Just as every finite-dimensional vector space is isomorphic to $\C^N$
for some $N$, every finite-dimensional Kapranov--Voevodsky 2-vector
space is equivalent to $\Vect^N$ for some $N$.  We can take this as a
{\em definition} of these 2-vector spaces --- but just as with
ordinary vector spaces, there are also intrinsic characterizations
which make this result into a theorem \cite{Neuchl,Yetter1}.

Similarly, just as every linear map $T \maps \C^M \to \C^N$ is equal
to one given by a $N \times M$ matrix of complex numbers, every linear
map $T \maps \Vect^M \to \Vect^N$ is isomorphic to one given by an $N
\times M$ matrix of vector spaces.  Matrix addition and multiplication
work as usual, but with $\oplus$ and $\otimes$ replacing the usual
addition and multiplication of complex numbers.

The really new feature of higher linear algebra is that we also
have `2-maps' between linear maps.  If we have linear maps
$T, T' \maps \Vect^M \to \Vect^N$ given by
$N \times M$ matrices of vector spaces
$T_{n,m}$ and $T'_{n,m}$, then a 2-map $\alpha \maps T \To T'$
is a matrix of linear operators
$\alpha_{n,m} \maps T_{n,m} \to T'_{n,m}$.
If we draw linear maps as arrows:
$$
\xymatrix{\Vect^M \ar[r]^{T}& \Vect^N}
$$
then we should draw 2-maps as 2-dimensional surfaces, like this:
$$
  \xymatrix{
  \Vect^M \ar@/^2ex/[rr]^{T}="g1"\ar@/_2ex/[rr]_{T'}="g2"&&\Vect^N
  \ar@{=>}^{\alpha} "g1"+<0ex,-2.5ex>;"g2"+<0ex,2.5ex>
}
$$
So, compared to ordinary group representation theory, the key novelty
of 2-group representation theory is that besides intertwining operators
between representations, we also have `2-intertwiners', drawn as surfaces.
This boosts the dimension of our diagrams by one, giving 2-group
representation theory an intrinsically 2-dimensional character.

The study of representations of 2-groups on Kapranov--Voevodsky
2-vector spaces was initiated by Barrett and Mackaay
\cite{BarrettMackaay}, and continued by Elgueta \cite{Elgueta2}.  They
came to some upsetting conclusions.  To understand these, we need to
know a bit more about 2-vector spaces.

An object of $\Vect^N$ is an $N$-tuple of finite-dimensional
vector spaces $(V_1, \dots,
V_N)$, so every object is a direct sum of certain special objects
\[   e_i = (0, \dots,\underbrace{\C}_{\mbox{$i$th place}}, \dots, 0 ) . \]
These objects $e_i$ are analogous to the `standard basis' of $\C^N$.
However, unlike the case of $\C^N$, these objects $e_i$ are
essentially the {\it only} basis of $\Vect^N$.  More precisely, given
any other basis $e'_i$, we have $e'_i \cong e_{\sigma(i)}$ for some
permutation $\sigma$.

This fact has serious consequences for representation theory.  A
2-group $\G$ has a group $G$ of objects.  Given a representation of
$\G$ on $\Vect^N$, each $g \in G$ maps the standard basis $e_i$ to
some new basis $e'_i$, and thus determines a permutation $\sigma$.
So, we automatically get an action of $G$ on the finite set $\{1,
\dots, N\}$.

If $G$ is finite, it will typically have many actions on finite sets.
So, we can expect that finite 2-groups have enough interesting
representations on Kapranov--Voevodsky 2-vector spaces to yield
an interesting theory.  But there are many `Lie 2-groups', such 
as the Poincar\'e 2-group, where the group of objects is a Lie group with
few nontrivial actions on finite sets.  Such 2-groups have few
representations on Kapranov--Voevodsky 2-vector spaces.

This prompted the search for a `less discrete' version of
Kapranov--Voevodsky 2-vector spaces, where the finite index set $\{1,
\dots, N \}$ is replaced by something on which a Lie group can act in
an interesting way.  Crane, Sheppeard and Yetter \cite{CraneSheppeard,
CraneYetter, Yetter2} suggested replacing the index set by a
measurable space $X$ and replacing $N$-tuples of finite-dimensional
vector spaces by `measurable fields of Hilbert spaces' on $X$.

Measurable fields of Hilbert spaces have long been important for
studying group representations \cite{Mackey1968}, von Neumann algebras
\cite{Dixmier}, and their applications to quantum physics
\cite{Mackey1978,Varadarajan}.  Roughly, a measurable field of Hilbert
spaces on a measurable space $X$ can be thought of as assigning a
Hilbert space to each $x\in X$, in a way that varies measurably with
$x$.  There is also a well-known concept of `measurable field of
bounded operators' between measurable fields of Hilbert spaces over a
fixed space $X$.  These make measurable fields of Hilbert spaces over
$X$ into the objects of a category $H^X$.  This is the prototypical
example of what Crane, Sheppeard and Yetter call a `measurable
category'.

When $X$ is finite, $H^X$ is essentially just a Kapranov--Voevodsky
2-vector space.  If $X$ is finite and equipped with a measure, $H^X$
acquires a kind of inner product, so it becomes a finite-dimensional
`2-Hilbert space' \cite{Baez2}.  When $X$ is infinite, we should think
of the measurable category $H^X$ as some sort of {\it
infinite-dimensional} 2-vector space.  However, it lacks some features
we expect from an infinite-dimensional 2-Hilbert space: in particular,
there is no inner product of objects.  We discuss this issue further
in Section \ref{conclusion}.

Most importantly, since Lie groups have many actions on measurable
spaces, there is a rich supply of representations of Lie 2-groups on
measurable categories.  As we shall see, a representation of a 
2-group $\G$ on the category $H^X$ gives, in particular, an action of
the group $G$ of objects on the space $X$, just as representations on
$\Vect^N$ gave group actions on $N$-element sets. These actions lead
naturally to a geometric picture of the representation theory.

In fact, a measurable category $H^X$ already has a considerable
geometric flavor.  To appreciate this, it helps to follow Mackey
\cite{Mackey1978} and call a measurable field of Hilbert spaces on the
measurable space $X$ a `measurable Hilbert space bundle' over $X$.
Indeed, such a field $\H$ resembles a vector bundle in that it assigns
a Hilbert space $\H_x$ to each point $x \in X$.  The difference is
that, since $\H$ lives in the world of measure theory rather than
topology, we only require that each point $x$ lie in a {\it
measurable} subset of $X$ over which $\H$ can be trivialized, and we
only require the existence of {\it measurable} transition functions.
As a result, we can always write $X$ as a disjoint union of countably
many measurable subsets on which $\H_x$ has constant dimension.  In
practice, we demand that this dimension be finite or countably
infinite.  Similarly, measurable fields of bounded operators may be
viewed as measurable bundle maps.  So, the measurable category $H^X$
may be viewed as a measurable version of the category of Hilbert space
bundles over $X$.  In concrete examples, $X$ is often a manifold or
smooth algebraic variety, and measurable fields of Hilbert spaces
often arise from bundles or coherent sheaves of Hilbert spaces over
$X$.

\subsection{Representations}

The study of representations of skeletal 2-groups on measurable
categories was begun by Crane and Yetter \cite{CraneYetter}.  The
special case of the Poincar\'e 2-group was studied in detail by Crane
and Sheppeard \cite{CraneSheppeard}.  They noticed interesting
connections to the orbit method in geometric quantization, and also to
the theory of discrete subgroups of $\SO(3,1)$, known as
`Kleinian groups'.  These observations suggest that Lie 2-group
representations on measurable categories deserve a thorough and 
careful treatment.

This, then, is the goal of the present text.  We give {\em geometric} 
descriptions of:
\begin{itemize}
\item a representation $\rho$ of a skeletal 2-group $\G$ 
on a measurable category $H^X$, 
\item an intertwiner between such representations:
$
\xymatrix{\rho \ar[r]^{\phi}& \rho'}\,
$
\item a 2-intertwiner between such intertwiners:  \,
$
  \xymatrix{
  \rho\ar@/^2.5ex/[rr]^{\phi}="g1"\ar@/_2.5ex/[rr]_{\phi'}="g2"&&\rho'
  \ar@{=>}^{\alpha} "g1"+<0ex,-2.5ex>;"g2"+<0ex,2.5ex>
}.$
\end{itemize}
We use the term `intertwiner' as short for `intertwining operator'.
This is a commonly used term for a morphism between group representations;
here we use it to mean a morphism between 2-group representations.  
But in addition to intertwiners, we have something really new:
2-intertwiners between interwiners!  This extra layer of structure
arises from categorification.

We define all these concepts in Sections \ref{reps} and \ref{2vs}.  
Instead of previewing the definitions here, we prefer to sketch 
the geometric picture that emerges in Section \ref{MeasRep}.
So, we now assume $\G$ is a skeletal 2-group described by the data
$(G,H,\rhd)$, as above.  We also assume in what follows that all the
spaces and maps involved are measurable.  Under these assumptions we
can describe representations of $\G$, as well as intertwiners and
2-intertwiners, in terms of familiar geometric constructions---but
living in the category of measurable spaces, rather than smooth
manifolds.  Essentially---ignoring various technical issues which we
discuss later---we obtain the following dictionary relating
representation theory to geometry.

\vskip 1em
\begin{center}
{
\begin{tabular}{c|c}
\hline
\\
representation theory   &   geometry
            \\
    &        \\     \hline \\
a representation of $\G$ on $H^X$  &  
a right action of $G$ on $X$, and a map $X \to H^\ast$  
\\ & making  $X$ a `measurable $G$-equivariant bundle' over $H^\ast$ \\
\\
an intertwiner between   &   
a `Hilbert $G$-bundle' over the pullback of $G$-equivariant bundles
\\
representations on $H^X$ and $H^Y$   & 
and a `$G$-equivariant measurable family of measures' $\mu_y$ on $X$        
\\
\\
a 2-intertwiner    &  a map of Hilbert $G$-bundles \\ \\
\hline
\end{tabular}} \vskip 2em
\end{center}

This dictionary requires some explanation!  First, $H^\ast$ here is
not quite the Pontrjagin dual of $H$, but rather the group, under
pointwise multiplication, of measurable homomorphisms
\[  \chi \maps H \to \C^\times \]
where $\C^\times$ is the multiplicative group of nonzero complex
numbers.  However, this group $H^\ast$ contains the Pontrjagin dual of
$H$.  It turns out that a measurable homomorphism like $\chi$ above,
with our definition of measurable group, is automatically also
continuous.  Since $\C^\times \cong \U(1)\times \R$, we have
\[
      H^\ast = \Hhat \times \hom(H,\R)
\]
where $\Hhat$ is the Pontrjagin dual of $H$.  One can consistently
restrict to `unitary' representations of $\G$, where we replace
$H^\ast$ by $\Hhat$ in the above table.  In most of the paper, we
shall have no reason to make this restriction, but it is often useful
in examples, as we shall see below.

In any case, under some mild conditions on $H$, $H^*$ is again a
measurable space, and its group operations are measurable.
The left action $\rhd$ of $G$ on $H$ naturally
induces a right action of $G$ on $H^*$, say $(\chi,g) \mapsto \chi_g$,
given by
\[      \chi_g(h) = \chi(g\rhd h).\]
This promotes $H^*$ to a right $G$-space.

As indicated in the chart, a representation of $\G$ is simply a
$G$-equivariant map $X \to H^\ast$, where $X$ is a measurable
$G$-space.  Because of the measure-theoretic context, we are happy to
call this a `bundle' even with no implied local triviality in the
topological sense.  Indeed, most of the fibers may even be empty.
Because of the $G$-equivariance, however, fibers are isomorphic along
any given $G$-orbit in $H^\ast$.

This geometric pictures helps us understand irreducibility and related
notions for 2-group representations.  Recall that for ordinary groups,
a representation is `irreducible' if it has no subrepresentations
other than the 0-dimensional representation and itself.  It is
`indecomposable' if it has no direct summands other than the
0-dimensional representation and itself.  Since every direct summand
is a subrepresentation, every indecomposable representation is
irreducible.  The converse is generally false.  However, it is
true in some cases: for example, every {\it unitary} irreducible
representation is indecomposable.

The situation with 2-groups is more subtle.  The notions of
subrepresentation and direct summand generalize to 2-group
representations, but there is also an intermediate notion: a
`retract'.  In fact this notion already exists for group
representations.  A group representation $\rho'$ is a `retract' of
$\rho$ if $\rho'$ is a subrepresentation and there is also an
intertwiner projecting down from $\rho$ to this subrepresentation.
So, we may say a representation is `irretractable' if it has
no retracts other than the 0-dimensional representation and itself.
But for group representations, a retract turns out to be exactly the same
thing as a direct summand, so there is no need for these additional 
notions.

However, we can generalize the concept of `retract' to 2-group
representations---and now things become more interesting!  Now
we have:
\[   \textrm{direct summand} \Longrightarrow \textrm{retract} 
\Longrightarrow \textrm{subrepresentation} \]
and thus:
\[   \textrm{irreducible} \Longrightarrow \textrm{irretractable}
\Longrightarrow \textrm{indecomposable}  
\]
None of these implications are reversible, except {\it
perhaps} every irretractable representation is
irreducible.  At present this question is unsettled.

Indecomposable and irretractable representations play 
important roles in our work.  Each has a nice geometric picture.
Suppose we have a representation of our skeletal 2-group $\G$
corresponding to a $G$-equivariant map $X \to H^*$.  If the $G$-space
$X$ has more than a single orbit, then we can write it as a disjoint
union of $G$-spaces $X=X'\cup X''$ and split the map $X\to H^\ast$
into a pair of maps.  This amounts to writing our 2-group
representation as a direct sum of representations.  So, a
representation on $H^X$ is indecomposable if the $G$-action on
$X$ is transitive.  

By equivariance, this implies that the image of the corresponding map
$X\to H^\ast$ is a single orbit of $H^\ast$, and that the stabilizer
of a point in $X$ is a subgroup of the stabilizer of its image in
$H^\ast$.  In other words, the orbit in $H^\ast$ is a quotient of $X$.
It follows that indecomposable representations of $\G$ are classified
up to equivalence by pairs consisting of:
\begin{itemize}
\item an orbit in $H^\ast$, and
\item a subgroup of the stabilizer of a point in that orbit.
\end{itemize}

It turns out that a representation is irretractable if and only if it
is indecomposable and the map $X \to H^\ast$ is injective.  This of
course means that $X$ is isomorphic as a $G$-space to one of the
orbits of $H^\ast$.  Thus, irretractable representations are classified up
to equivalence by $G$-orbits in $H^\ast$.

In the case of the Poincar\'e 2-group, this has an interesting
interpretation.  The group $H=\R^4$ has $H^\ast\cong\C^4$.
So, a representation in general is given by a
$\SO_0(3,1)$-equivariant map $p\maps X \to \C^4$, where
$\SO_0(3,1)$ acts independently on the real and imaginary parts of a
vector in $\C^4$.  The representation is irretractable if the image of
$p$ is a single orbit.  Restricting to the Pontrjagin dual $\Hhat$
amounts to choosing the orbit of some {\em real} vector, an element of
$\R^4$.  Thus `unitary' irretractable representations are classified by
the $\SO_0(3,1)$ orbits in $\R^4$, which are familiar objects from
special relativity.

If we use $\p = (E,p_x,p_y,p_z)$ as our name for a point of
$\R^4$, then any orbit is a connected component of the solution set of
an equation of the form
\[
    \p \cdot \p = m^2
\]
where the dot denotes the Minkowski metric.  In other words:
\[        E^2 - p_x^2 - p_y^2 - p_y^2 = m^2  . \]
The variable names are the traditional ones in relativity: $E$ stands
for the energy of a particle, while $p_x,p_y,p_z$ are the three
components of its momentum, and the constant $m$ is its mass.  An
orbit corresponding to a particular mass $m$ describes the allowed
values of energy and momentum for a particle of this mass.  These
orbits can be drawn explicitly if we suppress one dimension:
\[
\xy
(0,0)*{\includegraphics{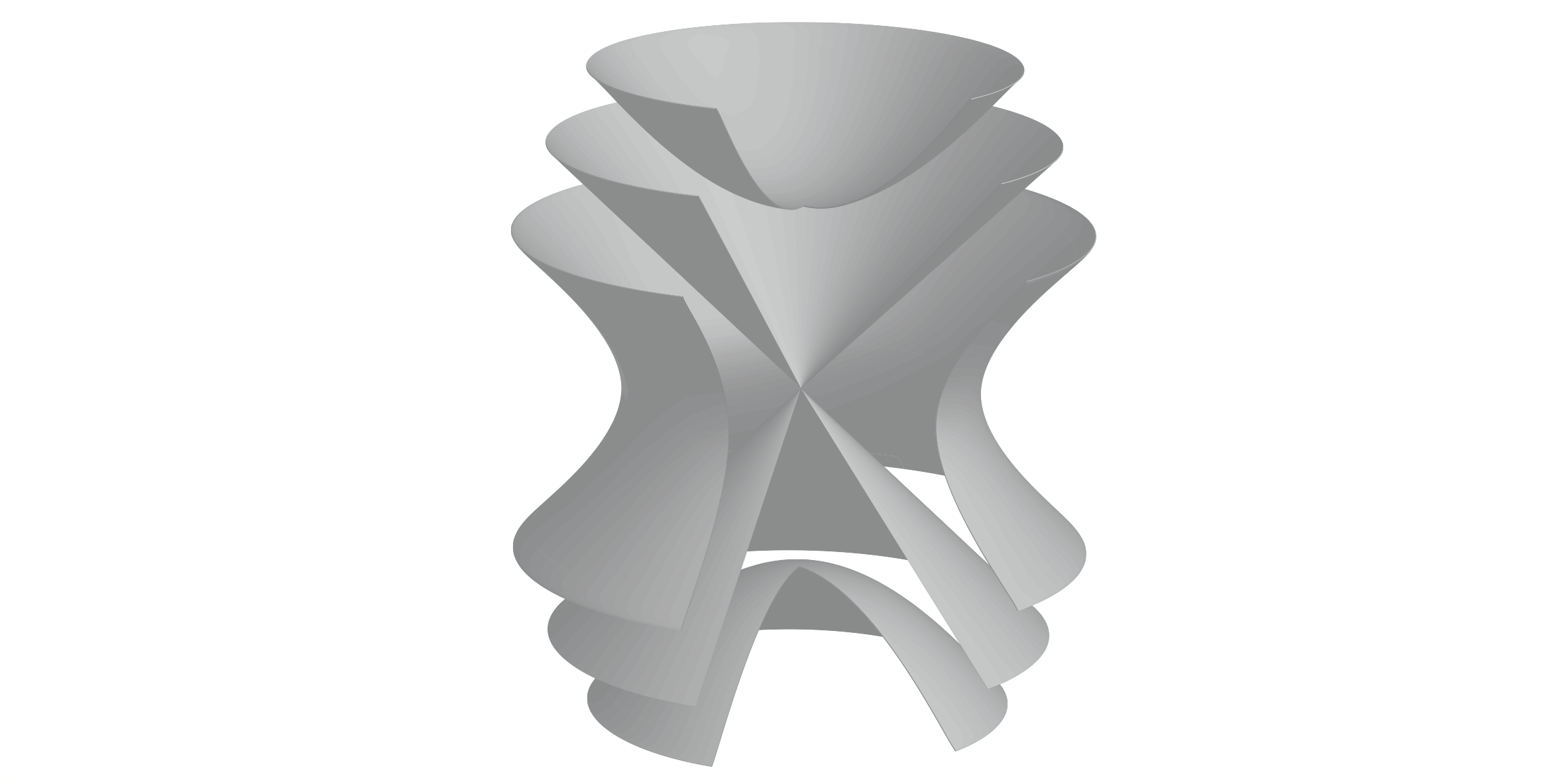}};
(-18,23)*{\scriptstyle m^2>0};
(-22,18)*{\scriptstyle m^2=0};
(-24,12)*{\scriptstyle m^2<0};
(30,3);(30,15)**\dir{-}?(1)*\dir{>};
(35,8)*{\scriptstyle E>0};
(30,-3);(30,-15)**\dir{-} ?(1)*\dir{>};
(35,-8)*{\scriptstyle E<0};
\endxy
\]

Though this picture is dimensionally reduced, it faithfully depicts
all of the orbits in the 4-dimensional case.  There are six types of
orbits, thus giving us six types of irretractable representations of the
Poincar\'e 2-group:
\begin{enumerate}
\item $E = 0$, $m = 0$: the trivial representation (orbit is a single point)
\item $E > 0$, $m = 0$: the `positive energy massless' representation 
\item $E < 0$, $m = 0$: the `negative energy massless' representation
\item $E > 0$, $m > 0$: `positive energy real mass' representations
(one for each $m>0$)
\item $E < 0$, $m > 0$: `negative energy real mass' representations
(one for each $m>0$)
\item $m^2< 0$: `imaginary mass' or `tachyon' representations (one for
each $-im>0$)
\end{enumerate}
On the other hand, there are many more {\em indecomposable}
representations, since these are classified by a choice of one of the
above orbits together with a subgroup of the corresponding point
stabilizer---$\SO(2)$, $\SO(3)$ or $\SO_0(2,1)$ depending on whether
$m^2=0$, $m^2>0$, or $m^2<0$. These indecomposable representations
were studied by Crane and Sheppeard \cite{CraneSheppeard}, though they
called them `irreducible'.

To any reader familiar with the classification of irreducible unitary
representations of the ordinary Poincar\'e {\em group}, the above
story should seem familiar, but also a bit strange.  It should seem
familiar because these group representations are {\em partially}
classified by $\SO(3,1)$ orbits in Minkowski spacetime.  The strange
part is that for these group representations, some extra data is also
needed.  For example, a particle with positive mass and energy is
characterized by both a mass $m > 0$ and a {\em spin}---an irreducible
representation of $\SO(3)$ (or in a more detailed treatment, the
double cover of this group).  By switching to the Poincar\'e 2-group,
we seem to have somehow lost the spin information.

This is not the case.  In fact, as we now explain, the `spin'
information from the ordinary Poincar\'e group representation theory
has simply been pushed up one categorical notch---we will find it in
the intertwiners!  In other words, the concept of spin shows up not in
the classification of representations of the Poincar\'e 2-group, but
in the classification of {\it morphisms} between representations.  The
reason, ultimately, is that Lorentz transformations and translations
of $\R^4$ show up at different levels in the Poincar\'e 2-group: the
Lorentz transformations as objects, and the translations as morphisms.

To see this in more detail, we need to understand the geometry of
intertwiners.  Suppose we have two representations, one on $H^X$ and
one on $H^Y$, given by equivariant bundles $\chi^{\phantom{I}}_1\maps
X\to H^\ast$ and $\chi^{\phantom{I}}_2\maps Y \to H^\ast$.  Looking
again at the chart, the key geometric object is a Hilbert bundle over
the pullback of $\chi_1$ and $\chi_2$.  This pullback may be seen as a
subspace $Z$ of $Y\times X$:
\[
{\xygraph{
  []!{0;<1.5cm,0cm>:<.75cm, 1.3cm>::}  
  []{Z}="Pullback" :@{->} [d]
  {X}="X"  [r] {Y}="Y"
      :@{->}^{\chi^{\phantom{I}}_2} [d] {H^\ast}="M"
        "X"        :@{->}_{\chi^{\phantom{I}}_1} "M"
        "Pullback" :@{->} "Y"
  }}
\qquad \qquad
Z=\{(y,x) \in Y\times X : \chi^{\phantom{I}}_2(y) = \chi^{\phantom{I}}_1(x)\}
\]
It is easy to see that $Z$ is a $G$-space under the diagonal action of
$G$ on $X\times Y$, and that the projections into $X$ and $Y$ are
$G$-equivariant.

If $H^X$ and $H^Y$ are both indecomposable representations, then $X$
and $Y$ each lie over a single orbit of $H^*$.  These orbits must be
the same in order for the pullback $Z$, and hence the space of
intertwiners, to be nontrivial.  If $H^X$ and $H^Y$ are both
irretractable, this implies that they are equivalent.  Thus, given an
irretractable representation represented by an orbit $X$ in $H^\ast$,
the self-intertwiners of this representation are classified by
equivariant Hilbert space bundles over $X$.

Equivariant Hilbert bundles are the subject of Mackey's induced
representation theory \cite{Mackey1952,Mackey1968,Mackey1978}.  In
general, a way to construct an equivariant bundle is to pick a point
in the base space $X$ and a Hilbert space that is a representation of
the stabilizer of that point, and then use the action of $G$ to
`translate' the Hilbert space along a $G$-orbit.  Conversely, given an
equivariant bundle, the fiber over a given point is a representation
of the stabilizer of that point.  Indeed, there is an equivalence of
categories:

\[
\left(\txt{$G$-equivariant vector bundles 
\\ over a homogeneous space $X$}\right)
    \simeq 
\left(\txt{representations of the
 \\ stabilizer of a point in $X$}\right)
\]

\noindent
Proving this is straightforward when we mean `vector bundles' in the
in the ordinary topological sense.  But in Mackey's work, he
generalized this correspondence to a measure-theoretic
context---precisely the context that arises in the theory of 2-group
representations we are considering here!  The upshot for us is that
self-intertwiners of an irretractable representation amount to
representations of the stabilizer subgroup.

To illustrate this idea, let us return to the example of the
Poincar\'e 2-group.  Suppose we have a unitary irretractable
representation of this 2-group.  As we have seen, this is given by one
of the orbits $X\subset \R^4$ of $\SO_0(3,1)$.  Now, consider any
self-intertwiner of this representation.  This is given by a
$\SO_0(3,1)$-invariant Hilbert space bundle over $X$.  By induced
representation theory, this amounts to the same thing as a
representation of the stabilizer of any point $x\in X$.  For a
`positive energy real mass' representation, for example, corresponding
to an ordinary massive particle in special relativity, this stabilizer
is $\SO(3)$, so self-intertwiners are essentially representations of
$\SO(3)$.

In ordinary group representation theory, there is no notion of
`reducibility' for intertwiners.  But here, because of the additional
level of categorical structure, 2-group intertwiners in many ways more
closely resemble group representations than group intertwiners.  There
is a natural concept of `direct sum' of intertwiners, and this gives a
notion of `indecomposable' intertwiner.  Similarly, the concept of
`sub-intertwiner' gives a notion of `irreducible' intertwiner.

Returning yet again to the Poincar\'e 2-group example, consider the
self-intertwiners of a positive energy real mass representation.
We have just seen that these correspond to representations of $\SO(3)$.  
When is such a self-intertwiner irreducible?  Unsurprisingly, the
answer is: precisely when the corresponding representation of $\SO(3)$
is irreducible.

Because of the added layer of structure, we can also ask how a pair of
intertwiners with the same source and target representations might be
related by 2-intertwiner.  As we shall see, intertwiners satisfy an
analogue of Schur's lemma: a 2-intertwiner between {\em irreducible}
intertwiners is either null or an isomorphism, and in the latter case
is essentially unique.  So, there is no interesting information in the
self-2-intertwiners of an irreducible intertwiner.

We conclude with a small warning: in the foregoing description of the
representation theory, we have for simplicity's sake glossed over
certain subtle measure theoretic issues.  Most of these issues make
little difference in the case of the Poincar\'e 2-group, but may be
important for general representations of an arbitrary measurable
2-group.  For details, read the rest of the book!

\subsection{Applications}

Next we describe some potential applications to physics.  Crane and
Sheppeard \cite{CraneSheppeard} originally examined representations of
the Poincar\'e 2-group as part of a plan to construct a physical
theory of a specific sort.   We believe a very similar model 
is implicit in the work of two of the current authors on Feynman
diagrams in quantum gravity \cite{BaratinFreidel1}.  Since proving
this was one of our main motivations for studying the representations
of Lie 2-groups, we would like to recall the ideas here.

A major problem in physics today is trying to extend quantum field
theory, originally formulated for theories that neglect gravity, to
theories that include gravity.  Quantum field theories that neglect
gravity, such as the Standard Model of particle physics, treat
spacetime as flat.  More precisely, they treat it as $\R^4$ with its
Minkowski metric.  The ordinary Poincar\'e group acts as symmetries
here.  

In quantum field theories, physical quantities are often computed with
the help of `Feynman diagrams'.  The details can be found in any good
book on quantum field theory---or, for that matter, Borcherds' review
article for mathematicians \cite{Borcherds}.  However, from a very
abstract perspective, a Feynman diagram can be seen as a graph with:
\begin{itemize}
\item
edges labelled by irreducible representations of some group $G$, and
\item
vertices labelled by intertwiners,
\end{itemize}
where the intertwiner at any vertex goes from the trivial
representation to the tensor product of all the representations labelling
edges incident to that vertex.  In the simplest theories, the group
$G$ is just the Poincar\'e group.  In more complicated theories,
such as the Standard Model, we use a larger group.

There is a way to evaluate Feynman diagrams and get complex numbers,
called `Feynman amplitudes'.  Physically, we think of the group
representations labelling Feynman diagram edges as {\it particles}.
Indeed, we have already said a bit about how an irreducible representation of
the Poincar\'e group can describe a particle with a given mass and
spin.  We think of the intertwiners as {\it interactions}: ways for
the particles to collide and turn into other particles.  So, a
Feynman diagram describes a process involving particles.  When we take
the absolute value of its amplitude and square it, we obtain the
probability for this process to occur.

Feynman diagrams are essentially one-dimensional structures, since
they have vertices and edges.  On the other hand, there is an approach
to quantum gravity that uses closely analogous {\em two-dimensional}
structures called `spin foams' \cite{BaezSF, BarrettCrane,
FreidelKrasnov, Rovelli}.  The 2-dimensional analogue of a graph is
called an `2-complex': it is a structure with vertices, edges {\it and
faces}.  In a spin foam, we label the vertices, edges and faces of a
2-complex with data of some sort.  Like Feynman diagrams, spin foams
should be thought of as describing physical processes---but now of a
higher-dimensional sort.  A spin foam model is a recipe for computing
complex numbers from spin foams: their `amplitudes'.  As before, when
we take the absolute value of these amplitude and square them, we
obtain probabilities.

The first spin foam model, only later recognized as such, goes back to
a famous 1968 paper by Ponzano and Regge \cite{PonzanoRegge}.  This
described {\em Riemannian} quantum gravity in {\em 3-dimensional}
spacetime---two drastic simplifications that are worth explaining.

First of all, gravity is much easier to deal with in 3d spacetime,
since in this case, in the absence of matter, all solutions of
Einstein's equations for general relativity look alike locally.
More precisely, any spacetime obeying these equations can be locally
identified, after a suitable coordinate transformation, with $\R^3$
equipped with its Minkowski metric
\[         \x \cdot \x' = tt' - xx' - yy' . \]
This is very different from the physically realistic 4d case, where
gravitational waves can propagate through the vacuum, giving a
plethora of locally distinct solutions.  Physicists say that 3d
gravity lacks `local degrees of freedom'.  This makes it much easier
to study---but it retains some of the conceptual and technical
challenges of the 4d problem.

Second of all, in `Riemannian quantum gravity', we investigate a
simplified world where time is just the same as space.  In 4d
spacetime, this involves replacing Minkowski spacetime with 4d
Euclidean space---that is, $\R^4$ with the inner product
\[         \x \cdot \x' = tt' + xx' + yy' + zz' . \]
While physically quite unrealistic, this switch simplifies some of the
math.  The reason, ultimately, is that the group of Lorentz
transformations, $\SO_0(3,1)$, is noncompact, while the rotation group
$\SO(4)$ is compact.  A compact Lie group has a countable set of
irreducible unitary representations instead of a continuum, and this
makes some calculations easier.  For example, certain integrals become
sums.  

Ponzano and Regge found that after making both these simplifications,
they could write down an elegant theory of quantum gravity, now called
the Ponzano--Regge model.  Their theory is deeply related to
representations of the 3-dimensional rotation group, $\SO(3)$.  In
modern terms, the idea is to start with a 3-manifold equipped with a
triangulation $\Delta$.  Then we form the Poincar\'e dual of $\Delta$
and look at its 2-skeleton $K$.  In simple terms, $K$ is the 2-complex
with:
\begin{itemize}
\item one vertex for each tetrahedron in $\Delta$,
\item one edge for each triangle in $\Delta$, 
\item one face for each edge of $\Delta$.
\end{itemize}
We call such a thing a `2-complex'.  Note that a 2-complex is
precisely the sort of structure that, when suitably labelled, gives a
spin foam!  To obtain a spin foam, we:
\begin{itemize}
\item label each face of $K$ with an irreducible representation of
$\SO(3)$, and
\item label each edge of $K$ with an intertwiner.
\end{itemize}
There is a way to compute an amplitude for such a spin foam,
and we can use these amplitudes to answer physically interesting 
questions about 3d Riemannian quantum gravity.  

The Ponzano--Regge model served as an inpiration for many further
developments.  In 1997, Barrett and Crane proposed a similar model for
{\it 4-dimensional} Riemannian quantum gravity \cite{BarrettCrane}.
More or less simultaneously, the general concept of `spin foam model'
was formulated \cite{BaezSF}.  Shortly thereafter, spin foam models of
4d Lorentzian quantum gravity were proposed, closely modelled after
the Barrett-Crane model \cite{DFKR,PR}.  Later, `improved' models were
developed by Freidel and Krasnov \cite{FreidelKrasnov} and Engle,
Pereira, Rovelli and Livine \cite{EPRL}.  These newer models are
beginning to show signs of correctly predicting some phenomena we
expect from a realistic theory of quantum gravity.  However, this is
work in progress, whose ultimate success is far from certain.

One fundamental challenge is to incorporate {\em matter} in a spin
foam model of quantum gravity.  Indeed, any theory that fails to do
this is at best a warmup for a truly realistic theory.  Recently, a
lot of progress has been made on incorporating matter in the
Ponzano--Regge model.  Here is where spin foams meet Feynman diagrams!

The idea is to compute Feynman amplitudes using a slight
generalization of the Ponzano--Regge model which lets us include
matter \cite{Barrett}.  This model takes the gravitational
interactions of particles into account.  As a consistency check, we
want the `no-gravity limit' of this model to reduce to the standard
recipe for computing Feynman amplitudes in quantum field theory---or
more precisely its analogue with Euclidean $\R^3$ replacing 4d
Minkowski spacetime.  And indeed, this was shown to be true \cite{PR1,
PRIIIa, PRIIIb}.

This raised the hope that the same sort of strategy can work in
4-dimensional quantum gravity.  It was natural to start with the
`no-gravity limit', and ask if the usual Feynman amplitudes for
quantum field theory in flat 4d spacetime can be computed using a spin
foam model.  If we could do this, the result would not be a theory of
quantum gravity, but it would provide a radical new formulation of
quantum field theory, in which Minkowski spacetime is replaced by an
inherently quantum-mechanical spacetime built from spin foams.  If a
formulation exists, it may help us develop models describing quantum
gravity and matter in 4 dimensions.

Recent work by \cite{BaratinFreidel1} gives precisely such a
formulation, at least in the 4-dimensional {\it Riemannian} case.  In
other words, this work gives a spin foam model for computing Feynman
amplitudes for quantum field theories, not on Minkowski spacetime, but
rather on 4-dimensional Euclidean space.  Feynman diagrams for such
theories are built using representations, not of the Poincar\'e group,
but of the {\bf Euclidean group}:
\[          \SO(4) \ltimes \R^4 .\]

More recently still, it was seen that this new model is a close
relative of the Crane--Sheppeard model
\cite{BaratinFreidel2,BaratinWise}!  The only difference is that where
the Crane--Sheppeard model uses the Poincar\'e 2-group, the new model
uses the {\bf Euclidean 2-group}, a skeletal 2-group for which:
\begin{itemize}
\item
$G =\SO(4)$: the group of rotations of 4d Euclidean space,
\item $H = \R^4$: the group of translations 4d Euclidean space,
\item the obvious action of $\SO(4)$ on $\R^4$.
\end{itemize}
The representation theory of the Euclidean 2-group is very much like
that of the Poincar\'e 2-group, but with concentric spheres replacing
the hyperboloids
\[        E^2 - p_x^2 - p_y^2 - p_y^2 = m^2  . \]

So, we can now guess the meaning of the Crane--Sheppeard model: it
should give a new way to compute Feynman integrals for ordinary
quantum field theories on 4d Minkowski spacetime.  To conclude, 
let us just say a word about how this model actually works.

It helps to go back to the Ponzano--Regge model.  We can describe this
directly in terms of a 3-manifold with triangulation $\Delta$, instead of the
Poincar\'e dual picture.  In these terms, each spin foam corresponds
to a way to:
\begin{itemize}
\item label each edge of $\Delta$ with an irreducible representation of
$\SO(3)$, and
\item label each triangle of $\Delta$ with an intertwiner.
\end{itemize}
The Ponzano--Regge model gives a way to compute an amplitude for any
such labelling.  

The Crane--Sheppeard model does a similar thing one dimension up.  
Suppose we take a 4-manifold with a triangulation $\Delta$.
Then we may:
\begin{itemize}
\item label each edge of $\Delta$ with an irretractable representation of
the Poincar\'e 2-group,
\item label each triangle of $\Delta$ with an irreducible intertwiner, and
\item label each tetrahedron of $\Delta$ with a 2-intertwiner.
\end{itemize}
The Crane--Sheppeard model gives a way to compute an amplitude for any
such labelling.

\subsection{Plan of the paper}

Above we describe a 2-group as a category equipped with
a multiplication and inverses.  While this is correct, another
equivalent approach turns out to be more useful for our purposes here.
Just as a group can be thought of as a category that has one object
and for which all morphisms are invertible, a 2-group can be thought
of as a 2-category that has one object and for which all morphisms and
2-morphisms are invertible.  In Section \ref{reps} we recall the
definition of a 2-category and explain how to think of a 2-group as a
2-category of this sort.  We also describe how to construct 2-groups
from crossed modules, and vice versa.  We conclude by defining the
2-category $\Rep(\G)$ of representations of a fixed 2-group $\G$ in a
fixed 2-category $\tc$.

In Section \ref{2vs} we explain measurable categories.  We first
recall Kapranov and Voevodsky's 2-vector spaces, and then introduce
the necessary analysis to present Yetter's results on measurable
categories.  To do this, we need to construct the 2-category $\me$ of
measurable categories.  The problem is that we do not yet know an
intrinsic characterization of measurable categories.  At present, a
measurable category is simply defined as one that is
`$C^*$\!-equivalent' to a category of measurable fields of Hilbert
spaces.  So, it is a substantial task to construct the 2-category
$\me$.  As a warmup, we carry out a similar construction of the
2-category of Kapranov--Voevodsky 2-vector spaces (for which an
intrinsic characterization is known, making a simpler approach
possible).

Working in this picture, we study the representations of 2-groups on
measurable categories in Section \ref{MeasRep}.  We present a detailed
study of equivalence, direct sums, tensor products, reducibility,
decomposability, and retractability for representations and
1-intertwiners.  While our work is hugely indebted to that of Crane,
Sheppeard, and Yetter, we confront many issues they did not discuss.
Some of these arise from the fact that they implicitly consider
representations of discrete 2-groups, while we treat {\it measurable}
representations of {\it measurable} 2-groups---for example, Lie
2-groups.  The representations of a Lie group viewed as a discrete
group are vastly more pathological than its measurable
representations.  Indeed, this is already true for $\mathbb{R}$, which
has enormous numbers of nonmeasurable 1-dimensional representations if
we assume the axiom of choice, but none if we assume the axiom of
determinacy.  The same phenomenon occurs for Lie 2-groups.  So, it is
important to treat them as measurable 2-groups, and focus on their
measurable representations.

In Section \ref{conclusion}, we conclude by sketching some directions
for future research.  We argue that a measurable category $H^X$ becomes
a `separable 2-Hilbert space' when the measurable space $X$ is equipped
with a $\sigma$-finite measure.  We also sketch how this approach to
separable 2-Hilbert spaces should fit into a more general approach
to 2-Hilbert spaces based on von Neumann algebras.

Finally, Appendix \ref{tools} contains some results from analysis that
we need.  {\bf Nota Bene:} in this paper, we always use `measurable
space' to mean `standard Borel space': that is, a set $X$ with a
$\sigma$-algebra of subsets generated by the open subsets for some
complete separable metric on $X$.  Similarly, we use `measurable
group' to mean `lcsc group': that is, a topological group for which
the topology is locally compact Hausdorff and second countable.  We
also assume all our measures are $\sigma$-finite and positive.  These
background assumptions give a fairly convenient framework for the
analysis in this paper.

%
\section{Representations of 2-groups}
\label{reps}
%

%
\subsection{From groups to 2-groups}
%

\subsubsection{2-groups as 2-categories}
\label{2group2cat}

We have said that a 2-group is a category equipped with product
and inverse operations satisfying the usual group axioms.  However,
a more powerful approach is to think of a 2-group as a special
sort of 2-category.

To understand this, first note that
a group $G$ can be thought of as a category with a single object
$\star$, morphisms labeled by elements of $G$, and composition defined
by multiplication in $G$:
\[
\xymatrix{\star \ar[r]^{g_1} & \star \ar[r]^{g_2} &\star }  \quad =
\quad \xymatrix{\star \ar[r]^{g_2g_1} & \star }
\]
In fact, one can define a group to be a category with a single object
and all morphisms invertible. The object $\star$ can be thought of as
an object whose symmetry group is $G$.

In a 2-group, we add an additional layer of structure to this picture,
to capture the idea of {\em symmetries between symmetries}.  So, in
addition to having a single object $\star$ and its automorphisms, we
have isomorphisms {\em between} automorphisms of $\star$:
$$
  \xymatrix{
  \star\ar@/^2ex/[rr]^{g}="g1"\ar@/_2ex/[rr]_{g'}="g2"&&\star
  \ar@{=>}^{h} "g1"+<0ex,-2.5ex>;"g2"+<0ex,2.5ex>
}
$$
These `morphisms between morphisms' are called {\it 2-morphisms}.

To make this precise, we should recall that a 2-category consists of:

\begin{itemize}
\item objects: \,$X,Y,Z, \ldots$
\item morphisms:
$
\xymatrix{X \ar[r]^{f}&Y}
$
\item  2-morphisms:  \,
$
  \xymatrix{
  X\ar@/^2.5ex/[rr]^{f}="g1"\ar@/_2.5ex/[rr]_{f'}="g2"&&Y
  \ar@{=>}^{\alpha} "g1"+<0ex,-2.5ex>;"g2"+<0ex,2.5ex>
}
$
\end{itemize}
Morphisms can be composed as in a category, and 2-morphisms can be
composed in two distinct ways: vertically:
$$
\xymatrix{
   X\ar@/^4ex/[rr]^{f}="g1"\ar[rr]^(0.35){f'}\ar@{}[rr]|{}="g2"
  \ar@/_4ex/[rr]_{f''}="g3"&&Y
  \ar@{=>}^{\alpha} "g1"+<0ex,-2ex>;"g2"+<0ex,1ex>
  \ar@{=>}^{\alpha'} "g2"+<0ex,-1ex>;"g3"+<0ex,2ex>
}
\quad =\quad
\xymatrix{
 X\ar@/^3ex/[rr]^{f}="g1"
  \ar@/_3ex/[rr]_{f''}="g3"&&Y
  \ar@{=>}^{\alpha'\cdot\alpha} "g1"+<0ex,-2ex>;"g3"+<0ex,2ex>
}
$$
and horizontally:
$$
  \xymatrix{
  X\ar@/^2.5ex/[rr]^{f_1}="g1"\ar@/_2.5ex/[rr]_{f'_1}="g2"&&Y
  \ar@{=>}^{\alpha_1} "g1"+<0ex,-2.5ex>;"g2"+<0ex,2.5ex>
  \ar@/^2.5ex/[rr]^{f_2}="g1"\ar@/_2.5ex/[rr]_{f'_2}="g2"&&Z
  \ar@{=>}^{\alpha_2} "g1"+<0ex,-2.5ex>;"g2"+<0ex,2.5ex>
}
\quad =\quad
\xymatrix{
 X\ar@/^3ex/[rr]^{f_2 f_1}="g1"
  \ar@/_3ex/[rr]_{f'_2 f_1'}="g3"&&Y
  \ar@{=>}^{\alpha_2\circ\alpha_1} "g1"+<-2ex,-2ex>;"g3"+<-2ex,2ex>
}
$$
A few simple axioms must hold for this to be a 2-category:
\begin{itemize}
\item Composition of morphisms must be associative, and every
object $X$ must have a morphism
$$
\xymatrix{X \ar[r]^{1_x}&X}
$$
serving as an identity for composition, just as in an ordinary
category.
\item Vertical composition must be associative,
and every morphism $\xymatrix{X \ar[r]^{f}&Y}$ must have a 2-morphism
$$
  \xymatrix{
  X\ar@/^2.5ex/[rr]^{f}="g1"\ar@/_2.5ex/[rr]_{f}="g2"&&Y
  \ar@{=>}^{1_f} "g1"+<0ex,-2.5ex>;"g2"+<0ex,2.5ex>
}
$$
serving as an identity for vertical composition.
\item
Horizontal composition must be associative, and the
2-morphism
$$
  \xymatrix{
  X\ar@/^2.5ex/[rr]^{1_X}="g1"\ar@/_2.5ex/[rr]_{1_X}="g2"&&X
  \ar@{=>}^{1_{1_X}} "g1"+<0ex,-2.5ex>;"g2"+<0ex,2.5ex>
}
$$
must serve as an identity for horizontal composition.
\item
Vertical composition and
horizontal composition of 2-morphisms
must satisfy the following \textbf{exchange law}:
\beq
(\alpha'_2 \cdot \alpha_2) \circ (\alpha'_1 \cdot \alpha_1) = (\alpha'_2 \circ \alpha'_1) \cdot (\alpha_2 \circ
\alpha_1)
\eeq
so that diagrams of the form
$$
\xymatrix{
  X\ar@/^4ex/[rr]^{f_1}="g1"\ar[rr]^(0.35){f'_1}\ar@{}[rr]|{}="g2"
  \ar@/_4ex/[rr]_{f''_1}="g3"&&Y
  \ar@{=>}^{\alpha_1} "g1"+<0ex,-2ex>;"g2"+<0ex,1ex>
  \ar@{=>}^{\alpha'_1} "g2"+<0ex,-1ex>;"g3"+<0ex,2ex>
  \ar@/^4ex/[rr]^{f_2}="g1"\ar[rr]^(0.35){f'_2}\ar@{}[rr]|{}="g2"
  \ar@/_4ex/[rr]_{f''_2}="g3"&& Z
  \ar@{=>}^{\alpha_2} "g1"+<0ex,-2ex>;"g2"+<0ex,1ex>
  \ar@{=>}^{\alpha'_2} "g2"+<0ex,-1ex>;"g3"+<0ex,2ex>
}
$$
define unambiguous 2-morphisms.
\end{itemize}
For more details, see the references \cite{KS,MacLane}.

We can now define a 2-group:

\begin{defn}
A {\bf 2-group} is a 2-category with a unique object
such that all morphisms and 2-morphisms are invertible.
\end{defn}
In fact it is enough for all 2-morphisms to have `vertical' inverses;
given that morphisms are invertible it then follows that 2-morphisms
have horizontal inverses.  Experts will realize that we are defining a
`strict' 2-group \cite{BaezLauda}; we will never use any other sort.

The 2-categorical approach to 2-groups is a powerful conceptual tool.
However, for explicit calculations it is often useful to treat
2-groups as `crossed modules'.

\subsubsection{Crossed modules} \label{crossedmod}

\label{crossmod}

Given a 2-group $\G$, we can extract from it four pieces of
information which form something called a `crossed module'.
Conversely, any crossed module gives a 2-group.  In fact,
2-groups and crossed modules are just different ways of
describing the same concept.  While less elegant than 2-groups,
crossed modules are good for computation, and also good for
constructing examples.

Let $\G$ be a 2-group.  From this we can extract:
\begin{itemize}
\item the group $G$ consisting of all morphisms of $\G$: \,
$
\xymatrix{\star \ar[r]^{g} & \star }
$
\item the group $H$ consisting of all 2-morphisms whose source
is the identity morphism:
$$
  \xymatrix{
  \star\ar@/^2ex/[rr]^{1}="g1"\ar@/_2ex/[rr]_{g}="g2"&&\star
  \ar@{=>}^{h} "g1"+<0ex,-2.5ex>;"g2"+<0ex,2.5ex>
}
$$
\item the homomorphism $\d \maps H \to G$ assigning to
each 2-morphism $h \in H$ its target:
$$
  \xymatrix{
  \star\ar@/^2ex/[rr]^{1}="g1"\ar@/_2ex/[rr]_{\d(h) := g}="g2"&&\star
  \ar@{=>}^{h} "g1"+<0ex,-2.5ex>;"g2"+<0ex,2.5ex>
}
$$
\item the action $\rhd$ of $G$ as automorphisms of $H$ given by
`horizontal conjugation':
$$
  \xymatrix{
  \star\ar@/^2ex/[rr]^{1}="g1"\ar@/_2ex/[rr]_{g\d(h)g^{-1}}="g2"&&\star
  \ar@{=>}^{g \rhd h} "g1"+<0ex,-2.5ex>;"g2"+<0ex,2.5ex>
}
:=
  \xymatrix{
  \star \ar@/^2.5ex/[rr]^{g^{-1}}="g1"\ar@/_2.5ex/[rr]_{g^{-1}}="g2"&& \star
  \ar@{=>}^{1_{g^{-1}}} "g1"+<0ex,-2.5ex>;"g2"+<0ex,2.5ex>
  \ar@/^2.5ex/[rr]^{1}="g1"\ar@/_2.5ex/[rr]_{\d h}="g2"&& \star
  \ar@{=>}^{h} "g1"+<0ex,-2.5ex>;"g2"+<0ex,2.5ex>
  \ar@/^2.5ex/[rr]^{g}="g1"\ar@/_2.5ex/[rr]_{g}="g2"&& \star
  \ar@{=>}^{1_g} "g1"+<0ex,-2.5ex>;"g2"+<0ex,2.5ex>
}
$$

\end{itemize}
It is easy to check that
the homomorphism $\d\maps H \to G$ is compatible with $\rhd$ in
the following two ways:
\beqa \label{comp1}
\d(g \rhd h) &=& g \d(h) g^{-1}\\ \label{comp2}
\d(h) \rhd h'&=& h  h'  h^{-1}.
\eeqa
Such a system $(G,H,\rhd,\d)$ satisfying equations (\ref{comp1}) and
(\ref{comp2}) is called a \textbf{crossed module}.

We can recover the 2-group $\G$ from its crossed module
$(G,H,\rhd,\d)$, using a process we now describe.  In fact, every
crossed module gives a 2-group via this process \cite{ForresterBarker}.

Given a crossed module $(G,H,\rhd,\d)$, we construct a 2-group $\G$
with:
\begin{itemize}
\item one object: \,$\star$
\item elements of $G$ as morphisms: \,
$\xymatrix{\star \ar[r]^{g}&\star}$
\item pairs $u = (g, h) \in G\times H$ as 2-morphisms, where $(g,h)$
is a 2-morphism from $g$ to $\d(h)g$.  We draw such a pair as:
$$ u=
  \xymatrix{
  \star\ar@/^2.5ex/[rr]^{g}="g1"\ar@/_2.5ex/[rr]_{g'}="g2"&&\star
  \ar@{=>}^{h} "g1"+<0ex,-2ex>;"g2"+<0ex,2ex>
}
$$
where $g'=\d(h)g$.

\end{itemize}
Composition of morphisms and vertical composition of 2-morphisms
are defined using multiplication in $G$ and $H$, respectively:
\[
\begin{aligned}
\xymatrix{\star \ar[r]^{g_1} & \star \ar[r]^{g_2} & \star}  \quad =
\quad \xymatrix{\star \ar[r]^{g_2 g_1} & \star}
\end{aligned}
\]
and
$$
\begin{aligned}
\xymatrix{
\star\ar@/^4ex/[rr]^{g}="g1"\ar[rr]^(0.35){g'}\ar@{}[rr]|{}="g2"
  \ar@/_4ex/[rr]_{g''}="g3"&&\star
  \ar@{=>}^{h} "g1"+<0ex,-2ex>;"g2"+<0ex,1ex>
  \ar@{=>}^{h'} "g2"+<0ex,-1ex>;"g3"+<0ex,2ex>
}
\quad =\quad
\xymatrix{
 \star\ar@/^3ex/[rr]^{g}="g1"
  \ar@/_3ex/[rr]_{g''}="g3"&&\star
  \ar@{=>}^{h' h} "g1"+<0ex,-2ex>;"g3"+<0ex,2ex>
}
\end{aligned}
$$
with $g' = \d(h)g$ and $g'' = \d(h') \d(h) g = \d(h'h) g$.  In other words,
suppose we have 2-morphisms $u=(g,h)$ and $u'=(g',h')$.  If $g'=\d(h)g$,
they are vertically composable, and their vertical composite is given by:
\beq
            u'\cdot u = (g',h')\cdot(g,h) = (g,h'h)
\eeq
They are always horizontally composable, and we define their horizontal
composite by:
$$
\begin{aligned}
  \xymatrix{
  \star\ar@/^2ex/[rr]^{g_1}="g1"\ar@/_2ex/[rr]_{g'_1}="g2"&&\star
  \ar@{=>}^{h_1} "g1"+<0ex,-2.5ex>;"g2"+<0ex,2.5ex>
  \ar@/^2ex/[rr]^{g_2}="g1"\ar@/_2ex/[rr]_{g'_2}="g2"&&\star
  \ar@{=>}^{h_2} "g1"+<0ex,-2.5ex>;"g2"+<0ex,2.5ex>
}
\quad =\quad
\xymatrix@C=1.3cm{
 \star\ar@/^3ex/[rr]^{g_2 g_1}="g1"
  \ar@/_3ex/[rr]_{g'_2 g_1'}="g3"&&\star
  \ar@{=>}^{h_2 (g_2 \rhd h_1)} "g1"+<-4ex,-3ex>;"g3"+<-4ex,3ex>
}
\end{aligned}
$$
So, horizontal composition makes the set of 2-morphisms into a
group, namely the semidirect product $G \ltimes H$ with multiplication:
\beq
(g_2,h_2)\circ (g_1, h_1)  \equiv (g_2g_1, h_2 (g_2 \rhd h_1))
\eeq
One can check that the exchange law
\beq
(u'_2 \cdot u_2) \circ (u'_1 \cdot u_1) =
(u'_2 \circ u'_1) \cdot (u_2 \circ u_1)
\eeq
holds for 2-morphisms $u_i=(g_i,h_i)$ and
$u'_i=(g'_i,h'_i)$, so that the diagram
$$
\begin{aligned}
\xymatrix{
  \star\ar@/^4ex/[rr]^{g_1}="g1"\ar[rr]^(0.35){g'_1}\ar@{}[rr]|{}="g2"
  \ar@/_4ex/[rr]_{g''_1}="g3"&&\star
  \ar@{=>}^{h_1} "g1"+<0ex,-2ex>;"g2"+<0ex,1ex>
  \ar@{=>}^{h'_1} "g2"+<0ex,-1ex>;"g3"+<0ex,2ex>
  \ar@/^4ex/[rr]^{g_2}="g1"\ar[rr]^(0.35){g'_2}\ar@{}[rr]|{}="g2"
  \ar@/_4ex/[rr]_{g''_2}="g3"&&\star
  \ar@{=>}^{h_2} "g1"+<0ex,-2ex>;"g2"+<0ex,1ex>
  \ar@{=>}^{h'_2} "g2"+<0ex,-1ex>;"g3"+<0ex,2ex>
}
\end{aligned}
$$
gives a well-defined 2-morphism.

\medskip
To see an easy example of a 2-group, start with a group $G$ acting as
automorphisms of a group $H$.  If we take $\rhd$ to be this action and
let $\d\colon H \to G$ be the trivial homomorphism, we can easily
check that the crossed module axioms (\ref{comp1}) and (\ref{comp2})
hold {\it if $H$ is abelian}.  So, if $H$ is abelian, we obtain a
2-group with $G$ as its group of objects and $G \ltimes H$ as its
group of morphisms, where the semidirect product is defined using the
action $\rhd$.

Since $\d$ is trivial in this example, any 2-morphism $u = (g,h)$
goes from $g$ to itself:
$$
  \xymatrix{
  \star\ar@/^2.5ex/[rr]^{g}="g1"\ar@/_2.5ex/[rr]_{g}="g2"&&\star
  \ar@{=>}^{h} "g1"+<0ex,-2ex>;"g2"+<0ex,2ex>
}
$$
So, this type of 2-group has only 2-{\em auto}morphisms,
and each morphism has precisely one 2-automorphism for
each element of $H$.

A 2-group with trivial $\d$ is called \textbf{skeletal}, and one can
easily see that every skeletal 2-group is of the form just described.
An important point is that for a skeletal 2-group, the group $H$ is
necessarily abelian.  While we derived this using (\ref{comp2}) above,
the real reason is the Eckmann--Hilton argument \cite{EH}.

An important example of a skeletal 2-group is the `Poincar\'e 2-group'
coming from the semidirect product $SO(3,1)\ltimes \R^4$ in precisely
the way just described \cite{Baez1}.  

%
\subsection{From group representations to 2-group representations}
%

\subsubsection{Representing groups}

In the ordinary theory of groups, a group $G$
may be represented on a vector space.  In the language of categories,
such a representation is nothing but a {\it functor} $\rho \maps G \to \Vect$,
where $G$ is seen as category with one object $\ast$, and $\Vect$ is
the category of vector spaces and linear operators.  To see this, note
that such a functor must send the object $\ast \in G$ to some vector space
$\rho(\ast) = V \in \Vect$.  It must also send each morphism $\star \stackrel{g}{\to}
\star$ in $G$---or in other words, each \textit{element} of our group---to a linear map
$$
\xymatrix{V \ar[r]^{\rho(g)} & V}
$$
Saying that $\rho$ is a functor then means that it preserves identities and composition:
$$\rho(1) = \unit_V $$
$$\rho(gh) = \rho(g)\rho(h)$$
for all group elements $g,h$.

In this language, an intertwining operator between group
representations---or `intertwiner', for short---is nothing
but a  {\it natural transformation}.  To see this,
suppose that $\rho_1,\rho_2 \maps G \to \Vect$ are functors and
$ \phi \maps \rho_1 \To \rho_2 $
is a natural transformation.   Such a transformation must
give for each object $\star \in G$ a linear operator from $\rho_1(\ast) = V_1$
to $\rho_2(\ast) = V_2$.  But $G$ is a category with one object, so
we have a single operator $\phi \maps V_1 \to V_2$.
Saying that the transformation is `natural' then means that this square
commutes:
\beq \label{intertwine}
\xymatrix{
  V_1\ar[rr]^{\rho_1(g)}\ar[dd]_{\phi}&& V_1\ar[dd]^{\phi}\\
  \\
  V_2\ar[rr]_{\rho_2(g)}&&V_2
}
\eeq
for each group element $g$.  This says simply that
\beq
\label{commute}
\rho_2(g)\phi = \phi\rho_1(g)
\eeq
for all $g \in G$.  So, $\phi$ is an intertwiner in the usual sense.

Why bother with the categorical viewpoint on
on representation theory?  One reason is that it lets us
generalize the concepts of group representation and intertwiner:

\begin{defn}
If $G$ is a group and $C$ is any category, a {\bf representation} of $G$ in $C$ is a
functor $\rho$ from $G$ to $C$, where $G$ is seen as a category with one object.
Given representations $\rho_1$ and $\rho_2$ of $G$ in $C$, an {\bf intertwiner}
$\phi \maps \rho \to \rho'$ is a natural transformation from $\rho$ to $\rho'$.
\end{defn}
In ordinary representation theory we take $C = \Vect$; but we
can also, for example, work with the category of sets $C= \mathrm{Set}$,
so that a representation of $G$ in $C$ picks out a set together with
an action of $G$ on this set.

Quite generally, there is a category ${\bf Rep}(G)$ whose objects are
representations of $G$ in $C$, and whose morphisms are the
intertwiners.  Composition of intertwiners is defined by composing
natural transformations.  We define two representations $\rho_1,
\rho_2 \maps G \to C$ to be {\bf equivalent} if there exists an
intertwiner between them which has an inverse.  In other words,
$\rho_1$ and $\rho_2$ are equivalent if there is a natural {\it
isomorphism} between them.

In the next section we shall see that the representation theory of 2-groups
amounts to taking all these ideas and `boosting the dimension by one', using
2-categories everywhere instead of categories.

\subsubsection{Representing 2-groups} \label{2rep}

Just as groups are typically represented in the category of vector
spaces, 2-groups may be represented in some 2-category of
`2-vector spaces'. However, just as for group
representations, the definition of a 2-group representation does not
depend on the particular target 2-category we wish to represent our
2-groups in. We therefore present the definition in its abstract form
here, before describing precisely what sort of 2-vector spaces we will
use, in Section \ref{2vs}.

We have seen that a representation of a group $G$ in a category $C$ is
a functor $\rho \maps G \to C$ between categories.  Similarly, a
representation of a 2-group will be a `2-functor' between
2-categories.  As with group representations, we have intertwiners
between 2-group representations, which in the language of 2-categories are
`pseudonatural transformations'.  But the extra layer of categorical
structure implies that in 2-group representation theory we also have
`2-intertwiners' going between intertwiners.  These are
defined to be `modifications' between pseudonatural transformations.

The reader can learn the general notions of `2-functor', `pseudonatural
transformation' and `modification' from the review article by Kelly and
Street \cite{KS}.  However, to make this paper self-contained, we
describe these concepts below in the special cases that we
actually need.

%
%
\begin{defn} If $\G$ is a 2-group and $\tc$ is any 2-category, then a
{\bf representation} of $\G$ in $\tc$ is a 2-functor $\rho$ from $\G$ to $\tc$.
\end{defn}
%
%

Let us describe what such a 2-functor amounts to.  Suppose a 2-group
$\G$ is given by the crossed module $(G,H,\d,\rhd)$, so that $G$ is
the group of morphisms of $\G$, and $G\ltimes H$ is the group of
2-morphisms, as described in section \ref{crossedmod}.  Then a
representation $\rho\maps \G \to \tc$ is specified by:

\begin{itemize}
\item an object $V$ of $\tc$, associated to the single object of the
      2-group: $\rho(\star) =V$
\item for each morphism $g\in G$, a morphism in $\tc$ from $V$ to itself:
$$
\xymatrix{V \ar[r]^{\rho(g)} & V}
$$
\item for each 2-morphism $u=(g,h)$, a 2-morphism in $\tc$
$$
  \xymatrix{
  V \ar@/^2ex/[rr]^{\rho(g)}="g1"\ar@/_2ex/[rr]_{\rho(\d hg)}="g2"&& V
  \ar@{=>}^{\rho(u)} "g1"+<0ex,-2.5ex>;"g2"+<0ex,2.5ex>
}
$$
\end{itemize}
That $\rho$ is a 2-functor means these correspondences preserve
identities and all three composition operations: composition of
morphisms, and horizontal and vertical composition of 2-morphisms.  In
the case of a 2-group, preserving identities follows from preserving
composition.  So, we only need require:
\begin{itemize}
\item
for all morphisms $g,g'$:
\beq \label{1comp}
\rho(g'g) = \rho(g')\, \rho(g)
\eeq
\item
for all vertically composable 2-morphisms $u$ and $u'$:
\beq
\label{2comp_vert}
\rho(u'\cdot u) = \rho(u') \cdot \rho(u)
\eeq
\item
for all 2-morphisms $u,u'$:
\be
\label{2comp_hor}
\rho(u'\circ u) = \rho(u') \circ \rho(u)
\eeq
\end{itemize}
Here the compositions laws in $\mathcal{G}$ and $\tc$ have been
denoted the same way, to avoid an overabundance of notations.

%
%
\begin{defn} Given a 2-group $\G$, any 2-category $\tc$, and
representations $\rho_1,\rho_2$ of $\G$ in $\tc$, an {\bf intertwiner}
$\phi \maps \rho_1 \to \rho_2$ is a pseudonatural transformation from
$\rho_1$ to $\rho_2$.
\end{defn}
%
%
This is analogous to the usual representation theory of groups, where
an intertwiner is a natural transformation between functors.  As
before, an intertwiner involves a morphism $\phi \maps V_1 \to V_2$ in
$\tc$.  However, as usual when passing from categories to
2-categories, this morphism is only required to satisfy the
commutation relations (\ref{commute}) {\it up to 2-isomorphism}.
In other words, whereas before the diagram
(\ref{intertwine}) commuted, so that the morphisms $\rho_2(g) \phi$
and $\phi \rho_1(g)$ were {\em equal}, here we only require that
there is a specified invertible 2-morphism $\phi(g)$ from one to the other.
(An invertible 2-morphism is called a `2-isomorphism'.)
The commutative square (\ref{intertwine}) for intertwiners
is thus generalized to:
\beq \label{1int}
\xymatrix{
  V_1\ar[rr]^{\rho_1(g)}\ar[dd]_{\phi}&&V_1 \ar[dd]^{\phi}\\
  \\
  V_2 \ar[rr]_{\rho_2(g)}&&V_2
  \ar@{<=}_{\phi(g)} "1,3"+<-3ex,-3ex>; "3,1"+<6ex,6ex>
}
\eeq
We say the commutativity of the diagram (\ref{intertwine}) has
been `weakened'.

In short, a intertwiner from $\rho_1$ to $\rho_2$
is really a pair consisting of a morphism $\phi \maps V_1 \to V_2$
together with a family of 2-isomorphisms
\beq
\phi(g) \maps \rho_2(g)\,\phi \stackrel\sim\longrightarrow
\phi\,\rho_1(g)
\eeq
one for each $g \in G$.
These data must satisfy some additional conditions in order
to be `pseudonatural':
\begin{itemize}
\item
$\phi$ should be compatible with the identity $1 \in G$:
\beq \label{norm1int}
\phi(1) = \unit_\phi
\eeq
where $\unit_\phi \maps \phi \to \phi$ is the identity 2-morphism.  Diagrammatically:
\[
\xymatrix{
  V_1\ar[rr]^{\unit_{V_1}}\ar[dd]_{\phi}&&V_1 \ar[dd]^{\phi}\\
  \\
  V_2 \ar[rr]_{\unit_{V_2}}&&V_2
  \ar@{<=}_{\phi(1)} "1,3"+<-4ex,-4ex>; "3,1"+<4ex,4ex>
}
\quad
\xymatrix{\\=\\}
\quad
\xymatrix{
  V_1\ar@/^7ex/[rrdd]^{\phi}\ar@/_7ex/[ddrr]_{\phi}&& \\
  \\
  & &V_2
  \ar@{<=}_{\unit_\phi} "1,3"+<-4ex,-4ex>; "3,1"+<4ex,4ex>
}
\]
\item $\phi$ should be compatible with composition of morphisms in $G$.
Intuitively, this means we should be able to glue $\phi(g)$ and
$\phi(g')$ together in the most obvious way, and obtain $\phi(g'g)$:
\beq
\label{1interdiag}
\xymatrix{
  V_1\ar[rr]^{\rho_1(g)}\ar[dd]_{\phi}&&V_1 \ar[rr]^{\rho_1(g')} \ar[dd]^{\phi}&&V_1 \ar[dd]^{\phi}\\
  \\
  V_2 \ar[rr]_{\rho_2(g)}&&V_2 \ar[rr]_{\rho_2(g')} && V_2
  \ar@{<=}_{\phi(g)} "1,3"+<-4ex,-4ex>; "3,1"+<4ex,4ex>
  \ar@{<=}_{\phi(g')} "1,5"+<-4ex,-4ex>; "3,3"+<4ex,4ex>
}
\quad
\xymatrix{\\=\\}
\quad
\xymatrix{
  V_1\ar[rr]^{\rho_1(g'g)}\ar[dd]_{\phi}&&V_1 \ar[dd]^{\phi}\\
  \\
  V_2 \ar[rr]_{\rho_2(g'g)}&&V_2
  \ar@{<=}_{\phi(g'g)} "1,3"+<-4ex,-4ex>; "3,1"+<4ex,4ex>
}
\eeq
To make sense of this equation we need the concept of `whiskering',
which we now explain. Suppose in any 2-category we have morphisms
$f_1,f_2 \maps x \to y$, a 2-morphism $\phi\maps f_1 \To f_2$, and a
morphism $g\maps y \to z$.  Then we can {\bf whisker} $\phi$ by $g$ by
taking the horizontal composite $\unit_{g} \circ \phi$, defining:
\[
\xymatrix{x \ar@/^3ex/[r]^{f_1}="f1" \ar@/_3ex/[r]_{f_2}="f2" & y \ar[r]^{g} & z
 \ar@{=>}^{\phi} "f1"+<0ex,-2.5ex>;"f2"+<0ex,2.5ex>
}
\qquad := \qquad
\xymatrix{x \ar@/^3ex/[r]^{f_1}="f1" \ar@/_3ex/[r]_{f_2}="f2" & y \ar@/^3ex/[r]^{g}="g1"
\ar@/_3ex/[r]_{g}="g2" & z
 \ar@{=>}^{\phi} "f1"+<0ex,-2.5ex>;"f2"+<0ex,2.5ex>
 \ar@{=>}^{\unit_{g}} "g1"+<0ex,-2.5ex>;"g2"+<0ex,2.5ex> }
\]
We can also whisker on the other side:
\[
\xymatrix{
x \ar[r]^{f} & y
\ar@/^3ex/[r]^{g_1}="f1" \ar@/_3ex/[r]_{g_2}="f2"
& z
 \ar@{=>}^{\phi} "f1"+<0ex,-2.5ex>;"f2"+<0ex,2.5ex>
}
\qquad := \qquad
\xymatrix{
x \ar@/^3ex/[r]^{f}="f1" \ar@/_3ex/[r]_{f}="f2" & y \ar@/^3ex/[r]^{g_1}="g1"
\ar@/_3ex/[r]_{g_2}="g2" & z
 \ar@{=>}^{\unit_{f}} "f1"+<0ex,-2.5ex>;"f2"+<0ex,2.5ex>
 \ar@{=>}^{\phi} "g1"+<0ex,-2.5ex>;"g2"+<0ex,2.5ex> }
\]

To define the 2-morphism given by the diagram on the left-hand side of
(\ref{1interdiag}), we whisker $\phi(g)$ on one side by $\rho_2(g')$,
whisker $\phi(g')$ on the other side by $\rho_1(g)$, and then vertically
compose the resulting 2-morphisms.
So, the equation in (\ref{1interdiag}) is a diagrammatic way of writing:
\beq \label{compatib}
\left[\phi(g') \circ \unit_{\rho_1(g)} \right]
\cdot
\left[\unit_{\rho_2(g')} \circ \phi(g) \right]
=
\phi(g'g)
\eeq
\item
Finally, the intertwiner $\phi$ should satisfy a higher-dimensional analogue of
diagram (\ref{intertwine}), so that it `intertwines' the 2-morphisms $\rho_1(u)$
and $\rho_2(u)$ where $u=(g,h)$ is a 2-morphism in the 2-group.  So, we demand
that the following ``pillow'' diagram commute for all $g \in G$ and $h\in H$:
\beq \label{pillowdiagram}
\xymatrix{
  V_1\ar[ddd]_{\phi}
    \ar@{}[ddd]^(0.85){}="fx"\ar@/^3ex/[rrr]^{\rho_1(g')}="a"\ar@/_3ex/[rrr]_{\rho_1(g)}="b"
  &&&V_1\ar[ddd]^{\phi}\ar@{}[ddd]_(0.15){}="fy"\\
  \\
  \\
  V_2\ar@{-->}@/^3ex/[rrr]^{\rho_2(g')}="c"\ar@/_3ex/[rrr]_{\rho_2(g)}="d"&&&V_2\\
  \ar@{<=}^{\rho_1(u)} "a"+<0pt,-2.5ex>;"b"+<0pt,2.5ex>
  \ar@{<:}^{\rho_2(u)} "c"+<0pt,-2.5ex>;"d"+<0pt,2.5ex>
  \ar@{} "fy";"c"|(0.3){}="f1"
  \ar@{} "fy";"c"|(0.7){}="f2"
  \ar@{} "b";"fx"|(0.3){}="b1"
  \ar@{} "b";"fx"|(0.7){}="b2"
  \ar@{<:} "f1";"f2"^{\phi(g')}
  \ar@{<=} "b1";"b2"_{\phi(g)}
}
\eeq
where we have introduced $g' = \d(h) g$. In other words:
\beq \label{pillow}
\left[ \unit_\phi \circ \rho_1(u) \right] \cdot \phi(g) =
\phi(g') \cdot \left[\rho_2(u) \circ \unit_\phi \right]
\eeq
where we have again used whiskering to glue together the 2-morphisms on
the front and top, and similarly the bottom and back.
\end{itemize}

Now a word about notation is required.  While an intertwiner from
$\rho_1$ to $\rho_2$ is really a pair consisting of a morphism $\phi
\maps V_1 \to V_2$ and a family of 2-morphisms $\phi(g)$, for
efficiency we refer to an intertwiner simply as $\phi$, and denote it by
$\phi \maps \rho_1 \to \rho_2$.  This should not cause any confusion.

So far, we have described representation of 2-groups as {\em
2-functors} and intertwiners as {\em pseudonatural transformations}.
As mentioned earlier, there are also things going between
pseudonatural transformations, called {\em modifications}.  The
following definition should thus come as no surprise:
%
%
\begin{defn}
Given a 2-group $\G$, a 2-category $\tc$, representations $\rho_1$ and
$\rho_2$ of $G$ in $\tc$,
and intertwiners $\phi,\psi \maps \rho \to \rho'$,
a {\bf 2-intertwiner} $m \maps \phi \To \psi$ is a modification from
$\phi$ to $\psi$.
\end{defn}
%
%
Let us say what modifications amount to in this case.
A modification $m \maps \phi \To \psi$ is a 2-morphism
\beq \label{2int}
  \xymatrix{
  V_1 \ar@/^2ex/[rr]^{\phi}="g1"\ar@/_2ex/[rr]_{\psi}="g2"&& V_2
  \ar@{=>}^{m} "g1"+<0ex,-2.5ex>;"g2"+<0ex,2.5ex>
}
\eeq
in $\tc$ such that the following pillow diagram:
\beq \label{2pillow}
\xymatrix{
  V_1\ar[rrr]^{\rho_1(g)}
    \ar@{}[rrr]^(0.85){}="fx"\ar@/_3ex/[ddd]_{\phi}="a"\ar@/^3ex/[ddd]^{\psi}="b"
  &&&V_1\ar@{-->}@/_3ex/[ddd]_{\phi}="c"\ar@/^3ex/[ddd]^{\psi}="d"\\
  \\
  \\
  V_2\ar[rrr]_{\rho_2(g)}\ar@{}[rrr]_(0.15){}="fy"&&&V_2\\
  \ar@{=>}^{m} "a"+<2.5ex,0pt>;"b"+<-2.5ex,0pt>
  \ar@{:>}^{m} "c"+<2.5ex,0pt>;"d"+<-2.5ex,0pt>
  \ar@{} "fy";"c"|(0.3){}="f1"
  \ar@{} "fy";"c"|(0.7){}="f2"
  \ar@{} "b";"fx"|(0.3){}="b1"
  \ar@{} "b";"fx"|(0.7){}="b2"
  \ar@{=>} "f1";"f2"^{\psi(g)}
  \ar@{:>} "b1";"b2"_{\phi(g)}
}
\eeq
commutes.  Equating the front and left with the back and right,
this means precisely that:
\beq \label{eq_pillow}
\psi(g) \cdot \left[\unit_{\rho_2(g)} \circ m \right] =
\left[m \circ \unit_{\rho_1(g)}\right] \cdot \phi(g)
\eeq
where we have again used whiskering to attach the morphisms
$\rho_i(g)$ to the 2-morphism $m$.

It is helpful to compare this diagram with the condition shown in
(\ref{pillowdiagram}).  One important difference is that in that case,
we had a ``pillow'' for each element $g \in G$ and $h \in H$, whereas
here we have one only for each $g \in G$.  For a intertwiner, the
pillow involves 2-morphisms between the maps given by representations.
Here the condition states that we have a fixed 2-morphism $m$ between
morphisms $I$ and $J$ between representation spaces, making the
given diagram commute for each $g$.  This is what representation
theory of ordinary groups would lead us to expect from an intertwiner.

\subsubsection{The 2-category of representations}
\label{sec:2Rep}

Just as any group $G$ gives a category ${\bf Rep}(G)$ with
representations as objects and intertwiners as morphisms, any 2-group
$\G$ gives a 2-category $\Rep(\G)$ with representations as objects,
intertwiners as morphisms, 2-intertwiners as 2-morphisms. It is worth
describing the structure of this 2-category explicitly.  In particular,
let us describe the rules for composing intertwiners and for
vertically and horizontally composing 2-intertwiners:

\begin{itemize}
\item
First, given a composable pair of intertwiners:
\[
\xymatrix{\rho_1 \ar[r]^{\phi} & \rho_2 \ar[r]^{\psi} & \rho_3}
\]
we wish to define their composite, which will be an intertwiner
from $\rho_1$ to $\rho_3$.  Recall that this intertwiner is a pair
consisting of a morphism $\xi \maps V_1 \to V_3$ in $\tc$ together
with a family of 2-morphisms $\xi(g)$.  We define $\xi$ to
be the composite $\psi \phi$,
and for any $g \in G$ we
define $\xi(g)$ by gluing together the diagrams (\ref{1int}) for
$\phi(g)$ and $\psi(g)$ in the obvious way:

\beq
\label{compose1int}
\newsavebox{\BoxA}
\savebox{\BoxA}{
\xymatrix{
  V_1\ar[rr]^{\rho_1(g)}\ar[dd]_{\phi}&&V_1\ar[dd]^{\phi} \\
  \\
  V_2 \ar[rr]_{\rho_2(g)}\ar[dd]_{\psi}&&V_2\ar[dd]^{\psi}
  \ar@{<=}_{\phi(g)} "1,3"+<-4ex,-4ex>; "3,1"+<4ex,4ex> \\
  \\
  V_3 \ar[rr]_{\rho_3(g)}&&V_2
  \ar@{<=}_{\psi(g)} "3,3"+<-4ex,-4ex>; "5,1"+<4ex,4ex>
}}
\newsavebox{\BoxB}
\savebox{\BoxB}{
\xymatrix{
  V_1\ar[rr]^{\rho_1(g)}\ar[dd]_{\xi}&&V_1 \ar[dd]^{\xi}\\
  \\
  V_3 \ar[rr]_{\rho_3(g)}&&V_3
  \ar@{<=}_{\xi(g)} "1,3"+<-4ex,-4ex>; "3,1"+<4ex,4ex>
}}
\xy
(-22,0)*{\usebox{\BoxB}};
(0,0)*{:=};
(22,0)*{\usebox{\BoxA}};
\endxy
\eeq
The diagram on the left hand side is once again evaluated with
the help of whiskering: we whisker $\phi(g)$ on one side by $\psi$
and $\psi(g)$ on the other side by $\phi$, then vertically compose the
resulting 2-morphisms.  In summary:
\beq
\label{compo_1int} \xi = \psi \phi,
\qquad
\xi(g) =
\left[\unit_\psi \circ \phi(g)\right]
\cdot \left[\psi(g) \circ \unit_\phi \right]
\eeq
By some calculations best done using diagrams, one
can check that these formulas define an intertwiner: relations
(\ref{1int}), (\ref{norm1int}), (\ref{1interdiag}) and
(\ref{pillowdiagram}) follow from the corresponding relations
for $\psi$ and $\phi$.

\item
Next, suppose we have a vertically composable pair of
2-intertwiners:
$$
\xymatrix{
   \rho_1\ar@/^4ex/[rr]^{\phi}="g1"\ar[rr]^(0.35){\psi}\ar@{}[rr]|{}="g2"
  \ar@/_4ex/[rr]_{\xi}="g3"&&\rho_2
  \ar@{=>}^{m} "g1"+<0ex,-2ex>;"g2"+<0ex,1ex>
  \ar@{=>}^{n} "g2"+<0ex,-1ex>;"g3"+<0ex,2ex>
}
$$
Then the 2-intertwiners $m$ and $n$ can be vertically composed using
vertical composition in $\tc$.  With some further calculations one
one check that the relation (\ref{eq_pillow}) for $n \cdot m \maps \phi
\To \xi$ follows from the corresponding relations for $m$ and $n$.

\item
Finally, consider a horizontally composable pair of 2-intertwiners:
$$
 \xymatrix{
  \rho_1 \ar@/^2ex/[rr]^{\phi}="g1"\ar@/_2ex/[rr]_{\phi'}="g2"&&\rho_2
  \ar@{=>}^{m} "g1"+<0ex,-2.5ex>;"g2"+<0ex,2.5ex>
  \ar@/^2ex/[rr]^{\psi}="g1"\ar@/_2ex/[rr]_{\psi'}="g2"&&\rho_3
  \ar@{=>}^{n} "g1"+<0ex,-2.5ex>;"g2"+<0ex,2.5ex>
}
$$
Then $m$ and $n$ can be composed using horizontal composition in
$\tc$.  With more calculations, one can check that the result
$n \circ m$ defines a 2-intertwiner: it satisfies
relation (\ref{eq_pillow}) because $n$ and $m$ satisfy the
corresponding relations.

\end{itemize}
All the calculations required
above are well-known in 2-category theory \cite{KS}.  Quite
generally, these calculations show that for {\it any}
2-categories ${\mathcal X}$ and ${\mathcal Y}$, there is
a 2-category with:
\begin{itemize}
\item 2-functors $\rho \maps {\mathcal X} \to {\mathcal Y}$
as objects,
\item pseudonatural transformations between these as morphisms,
\item modifications between these as 2-morphisms.
\end{itemize}
We are just considering the case ${\mathcal X} = \G$,
${\mathcal Y} = \tc$.

We conclude our description of $\Rep(\G)$ by discussing invertibility
for intertwiners and 2-intertwiners; this will allow us to introduce
natural equivalence relations for representations and intertwiners.

We first need to fill a small gap in our description of the 2-category
$\Rep(\G)$: we need to describe the identity morphisms and
2-morphisms.   Every representation $\rho$, with representation
space $V$, has its \textbf{identity intertwiner} given by the identity
morphism $\unit_V \maps V \to V$ in $\tc$, together with for each
$g$ the identity 2-morphism
$$\unit_{\rho(g)} \maps \rho(g) \unit_V \stackrel{\sim}{\longrightarrow}
\unit_V \rho(g)$$
Also, every intertwiner $\phi$ has its \textbf{identity 2-intertwiner},
given by the identity 2-morphism $\unit_{\phi}$ in $\tc$.

We define a 2-intertwiner $m \maps \phi \To \psi$
to be {\bf invertible} (for vertical composition) if there exists
$n \maps \psi \To \phi$ such that
\[
n \cdot m = \unit_{\phi} \quad \mbox{and} \quad m \cdot n = \unit_{\psi}
\]
Similarly, we define a intertwiner $\phi \maps \rho_1 \to \rho_2$
to be {\bf strictly invertible} if there exists an
intertwiner $\psi \maps \rho_2 \to \rho_1$ with
\beq \label{invert1mor}
\psi \phi = \unit_{\rho_1} \quad \mbox{and} \quad
\phi \psi = \unit_{\rho_2}
\eeq
However, it is better to relax the notion of invertibility for
intertwiners by requiring that the equalities (\ref{invert1mor})
hold only {\it up to invertible 2-intertwiners}.  In this case we say
that $\phi$ is {\bf weakly invertible}, or simply {\bf invertible}.

As for ordinary groups, we often consider equivalence classes of representations,
rather than representations themselves:

%
%
\begin{defn} \label{equivalrep}
We say that two representations $\rho_1$ and $\rho_2$ of a 2-group
are {\bf equivalent}, and write $\rho_1 \simeq \rho_2$, when there
exists a weakly invertible intertwiner between them.
\end{defn}
%
%
In the representation theory of 2-groups, however, where an extra layer
of categorical structure is added, it is also natural to consider
equivalence classes of intertwiners:
%
%
\begin{defn} \label{equival1int}
We say two intertwiners $\psi, \phi \maps \rho_1 \to \rho_2$
are {\bf equivalent}, and write $\phi \simeq \psi$, when there
exists an invertible 2-intertwiner between them.
\end{defn}
%
%

Sometimes it is useful to relax this notion of equivalence to 
include pairs of intertwiners that are not strictly parallel.  Namely, 
we call intertwiners $\phi \maps \rho_1 \to \rho_2$ and 
$\psi \maps \rho'_1 \to \rho'_2$ `equivalent' if there are invertible
intertwiners $\rho_i \to \rho'_i$ such that
\[
 \rho_1 \stackrel{\phi}{\to} \rho_2 \stackrel{\sim}{\to} \rho'_2
 \qquad \text{and} \qquad
  \rho_1 \stackrel{\sim}{\to}  \rho'_1 \stackrel{\psi}{\to} \rho'_2
\]
are equivalent,  in the sense of the previous definition.

A major task of 2-group representation theory is to
classify the representations and intertwiners up to equivalence.
Of course, one can only do this concretely after choosing a 2-category
in which to represent a given 2-group.  We turn to this task next.

%
\section{Measurable categories}
\label{2vs}
%

We have described the passage from groups to 2-groups, and from
representations to 2-{rep\-re\-sen\-ta\-tions}. Having presented these
definitions in a fairly abstract form, our next objective is to
describe a suitable target 2-category for representations of 2-groups.
Just as ordinary groups are typically represented on vector spaces,
2-groups can be represented on higher analogues called `2-vector
spaces'.  The idea of a 2-vector space can be formalized in several
ways.  In this section we describe the general idea of 2-vector
spaces, then focus on a particular formalism: the 2-category $\me$
defined by Yetter \cite{Yetter2}.

\subsection{From vector spaces to 2-vector spaces}
\label{sec:2vect}

To understand 2-vector spaces, it is helpful first to remember the
naive point of view on linear algebra that vectors are lists of
numbers, operators are matrices.  Namely, any finite dimensional
complex vector space is isomorphic to $\C^N$ for some natural number
$N$, and a linear map
\[
      T\maps \C^M \to \C^N
\]
is an $N\times M$ matrix of complex numbers $T_{n,m}$, where
$n\in\{1,\ldots,N\}$, $m\in\{1,\ldots,M\}$.  Composition of operators
is accomplished by matrix multiplication:
\[
       (UT)_{k,m} = \sum_{n=1}^N U_{k,n} T_{n,m}
\]
for $T\maps \C^M \to \C^N$ and $U\maps \C^N \to \C^K$.

As a setting for doing linear algebra, we can form a category whose objects
are just the sets $\C^N$ and whose morphisms are $N\times M$ matrices.  This
category is smaller than the category $\Vect$ of {\em all} finite dimensional
vector spaces, but it is {\em equivalent} to $\Vect$.   This is why
one can accomplish the same things with matrices as with abstract linear
maps---an oft used fact in practical computations.

Kapranov and Voevodsky \cite{KV} observed that we can `categorify'
this naive version of the category of vector spaces and define a
2-category of `2-vector spaces'.  When we categorify a concept, we
replace sets with categories.  In this case, we replace the set $\C$
of complex numbers, along with its usual product and sum operations,
by the category $\Vect$ of complex vector spaces, with its tensor
product and direct sum.  Thus a `2-vector' is a list, not of numbers,
but of vector spaces.  Since we can define maps between such lists
they form, not just a set, but a category: a `2-vector space'.  A
morphism between 2-vector spaces is a matrix, not of numbers, but of
vector spaces.  We also get another layer of structure: {\it
2-morphisms}.  These are matrices of linear maps.

More precisely, there is a 2-category denoted $\twoVe$ defined as follows:

\subsubsection*{Objects}

The objects of $\twoVe$ are the categories
\[
       \Vect^0, \Vect^1, \Vect^2, \Vect^3, \ldots
\]
where $\Vect^N$ denotes the $N$-fold cartesian product.  Note in
particular that the zero-dimensional 2-vector space $\Vect^0$ has just
one object and one morphism.

\subsubsection*{Morphisms}

Given 2-vector spaces $\Vect^M$ and $\Vect^N$, a morphism
\[
     T\maps \Vect^M \to \Vect^N
\]
is given by an $N\times M$ matrix of complex vector spaces $T_{n,m}$,
where $n\in\{1,\ldots,N\}$, $m\in\{1,\ldots,M\}$.  Composition is
accomplished by matrix multiplication, as in ordinary linear algebra,
but using tensor product and direct sum:
\beq
\label{matmult}
       (UT)_{k,m} = \bigoplus_{n=1}^N U_{k,n}\otimes T_{n,m}
\eeq
for $T\maps \Vect^M \to \Vect^N$ and $U\maps \Vect^N \to \Vect^K$.

\subsubsection*{2-Morphisms}

Given morphisms $T,T'\maps \Vect^M \to \Vect^N$, a 2-morphism $\alpha$
between these:
\[
  \xymatrix{
  \Vect^M\ar@/^2ex/[rr]^{T}="g1"\ar@/_2ex/[rr]_{T'}="g2"&& \Vect^N
  \ar@{=>}^{\alpha} "g1"+<0ex,-2.5ex>;"g2"+<0ex,2.5ex>
}
\]
is an $N\times M$ matrix of linear maps of vector spaces, with components
\[
       \alpha_{n,m}\maps T_{n,m} \to T'_{n,m}.
\]
Such 2-morphisms can be composed \textit{vertically}:
\[
\xymatrix{
 \Vect^M\ar@/^5ex/[rr]^{T}="g1"\ar[rr]^(0.35){T'}\ar@{}[rr]|{}="g2"
  \ar@/_5ex/[rr]_{T''}="g3"&&\Vect^N
  \ar@{=>}^{\alpha} "g1"+<0ex,-2ex>;"g2"+<0ex,1ex>
  \ar@{=>}^{\alpha'} "g2"+<0ex,-1ex>;"g3"+<0ex,2ex>
}
\]
simply by composing componentwise the linear maps:
\beq\label{vertcomp}
     (\alpha' \cdot \alpha)_{n,m} = {\alpha'}_{n,m}\alpha_{n,m}.
\eeq
They can also be composed \textit{horizontally}:
\[
  \xymatrix{
  \Vect^N\ar@/^2ex/[rr]^{T}="g1"\ar@/_2ex/[rr]_{T'}="g2"&& \Vect^M
  \ar@/^2ex/[rr]^{U}="g3"\ar@/_2ex/[rr]_{U'}="g4"&& \Vect^K
  \ar@{=>}^{\alpha} "g1"+<0ex,-2.5ex>;"g2"+<0ex,2.5ex>
  \ar@{=>}^{\beta} "g3"+<0ex,-2.5ex>;"g4"+<0ex,2.5ex>
}
\]
analogously with (\ref{matmult}), by using `matrix multiplication'
with respect to tensor product and direct sum of maps:
\beq\label{horcomp} (\beta\circ \alpha)_{k,m} = \bigoplus_{n=1}^N
\beta_{k,n}\otimes \alpha_{n,m}.  \eeq

While simple in spirit, this definition of $\twoVe$ is problematic for
a couple of reasons.  First, composition of morphisms is not strictly
associative, since the direct sum and tensor product of vector spaces
satisfy the associative and distributive laws only up to isomorphism,
and these laws are used in proving the associativity of matrix
multiplication.  So, $\twoVe$ as just defined is not a 2-category, but
only a `weak' 2-category, or `bicategory'.  These are a bit more
complicated, but luckily any bicategory is equivalent, in a precise
sense, to some 2-category.  The next section gives a concrete
description of a such a 2-category.  (See also the work of Elgueta
\cite{Elgueta1}.)

The above definition of $\twoVe$ is also somewhat naive, since it
categorifies a naive version of $\Vect$ where the only vector spaces
are those of the form $\C^N$.  A more sophisticated approach involves
`abstract' 2-vector spaces.  One can define these axiomatically by
listing properties of a category that guarantee that it is equivalent
to $\Vect^N$ (see Def.\ 2.12 in \cite{Neuchl}, and also \cite{Yetter1}).
A cruder way to accomplish the same effect is to {\it define} an
abstract 2-vector space to be a category equivalent to $\Vect^N$.  We
take this approach in the next section, because we do not yet know an
axiomatic approach to measurable categories, and we wish to prepare
the reader for our discussion of those.

\subsection{Categorical perspective on 2-vector spaces}
\label{2vectcat}

In this section we give a definition of $\twoVe$ which involves
treating it as a sub-2-category of the 2-category $\Cat$, in which
objects, morphisms, and 2-morphisms are categories, functors, and
natural transformations, respectively.  This approach addresses both
problems mentioned at the end of the last subsection.  Similar
ideas will be very useful in our study of measurable categories
in the sections to come.

In this approach the objects of $\twoVe$ are `linear categories'
that are `linearly equivalent' to $\Vect^N$ for some $N$.  The
morphisms are `linear functors' between such categories, and the
2-morphisms are natural transformations.

Let us define the three quoted terms.  First, a {\bf linear category}
is a category where for each pair of objects $x$ and $y$, the set of
morphisms from $x$ to $y$ is equipped with the structure of a
finite-dimensional complex vector space, and composition of morphisms
is a bilinear operation.  For example, $\Vect^N$ is a linear category.

Second, a functor $F \maps \VV \to \VV'$ between linear categories is a
{\bf linear equivalence} if it is an equivalence that maps morphisms
to morphisms in a linear way.  We define a {\bf 2-vector space} to be
a linear category that is linearly equivalent to $\Vect^N$ for some
$N$.  For example, given a category $V$ and an equivalence $F \maps \VV
\to \Vect^N$, we can use this equivalence to equip $\VV$ with the
structure of a linear category; then $F$ becomes a linear equivalence
and $V$ becomes a 2-vector space.

Third, note that any $N \times M$ matrix of
vector spaces $T_{n,m}$ gives a functor $T \maps \Vect^M \to \Vect^N$
as follows.  For an object $V\in \Vect^M$, we define $TV \in \Vect^N$
by
\[
       (TV)_{n} = \bigoplus_{m=1}^M T_{n,m} \otimes V_m.
\]
For a morphism $\phi$ in $\Vect^M$, we define $T\phi$ by:
\[
       (T\phi)_{n} = \bigoplus_{m=1}^M \unit_{T_{n,m}} \otimes \phi_m
\]
where $ \unit_{T_{n,m}} $ denotes the identity map on the vector space
$T_{n,m}$.  It is straightforward to check that these operations
define a functor.  We call such a functor from $\Vect^N$ to $\Vect^M$
a {\bf matrix functor}.  More generally, given 2-vector spaces $\VV$
and $\VV'$, we define a \textbf{linear functor} from $\VV$ to $\VV'$
to be any functor naturally isomorphic to a composite
\[
\xymatrix{
   \VV \ar[r]^(.4){F}&
   \Vect^M \ar[r]^{T}&
   \Vect^N \ar[r]^(.6){G}&
   \VV'
}
\]
where $T$ is a matrix functor and $F,G$ are linear equivalences.

These definitions may seem complicated, but unlike the naive
definitions in the previous section, they give a 2-category:

\begin{theo} \label{twoVe} There is a sub-2-category $\twoVe$
of $\Cat$ where the objects are 2-vector spaces, the morphisms
are linear functors, and the 2-morphisms are natural transformations.
\end{theo}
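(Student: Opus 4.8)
The plan is to exploit the fact that $\Cat$ is already a 2-category, so that exhibiting a sub-2-category requires only checking that the chosen objects, morphisms, and 2-morphisms are closed under the composition operations of $\Cat$ and contain the relevant identities. The objects we select (2-vector spaces) are by definition linear categories, hence genuine objects of $\Cat$; and the 2-morphisms are taken to be \emph{all} natural transformations between linear functors, so their vertical and horizontal composites, together with identity natural transformations, are inherited from $\Cat$ with nothing to verify. Thus the entire content of the theorem reduces to two claims about the morphisms: that the identity functor $\unit_\VV$ on any 2-vector space $\VV$ is a linear functor, and that the composite of two linear functors is again a linear functor.

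The technical heart of the argument is the following lemma, which I would isolate first: every functor $E \maps \Vect^M \to \Vect^N$ that is additive (preserves finite direct sums) and $\C$-linear on hom-spaces is naturally isomorphic to a matrix functor. To prove it, set $E_{n,m} = E(e_m)_n$, the $n$th component of $E$ applied to the basis object $e_m$, and let $T$ be the matrix functor built from the matrix $(E_{n,m})$. Since every object $V \in \Vect^M$ decomposes canonically as $\bigoplus_m e_m \otimes V_m$, additivity and $\C$-linearity force $E(V) \cong \bigoplus_m E(e_m) \otimes V_m = TV$, naturally in $V$; running the same decomposition through morphisms yields a natural isomorphism $E \cong T$. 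This is the one place where the rigidity of the standard basis $e_1, \dots, e_M$ is genuinely used.

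Granting the lemma, both remaining claims follow quickly. For identities, a 2-vector space $\VV$ comes with a linear equivalence $F \maps \VV \to \Vect^N$; choosing a linear quasi-inverse $G \maps \Vect^N \to \VV$, we have $\unit_\VV \cong G\, \unit_{\Vect^N} F$, and $\unit_{\Vect^N}$ is the matrix functor of the identity matrix, so $\unit_\VV$ is linear. For composites, write linear functors $L_1 \maps \VV \to \VV'$ and $L_2 \maps \VV' \to \VV''$ as $L_1 \cong G_1 T_1 F_1$ and $L_2 \cong G_2 T_2 F_2$, with the $F_i, G_i$ linear equivalences and the $T_i$ matrix functors. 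Then $L_2 L_1 \cong G_2\, T_2 (F_2 G_1) T_1\, F_1$, and the middle functor $T_2 (F_2 G_1) T_1 \maps \Vect^{M_1} \to \Vect^{N_2}$ is a composite of additive, $\C$-linear functors, hence itself additive and $\C$-linear; by the lemma it is naturally isomorphic to a matrix functor $T$, whence $L_2 L_1 \cong G_2 T F_1$ is linear.

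The main obstacle is the lemma: making precise that additivity together with $\Vect$-enriched linearity pins down a functor between finite powers of $\Vect$, up to natural isomorphism, by its values on the basis objects, and verifying naturality on morphisms as well as objects. Everything after that is bookkeeping already guaranteed by the ambient 2-category, and I would remark in closing that associativity of composition and the interchange law need not be reproved, since they hold automatically in $\Cat$.
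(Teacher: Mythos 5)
Your proof is correct, but it takes a genuinely different route from the paper's. The paper proceeds through Lemmas \ref{composition.0}--\ref{composition.3}: it first shows by an explicit distributivity computation that a composite of matrix functors is naturally isomorphic to the matrix functor given by the product formula (\ref{matmult}); it then classifies linear equivalences $\Vect^N \to \Vect^M$ (Lemma \ref{composition.1}), showing $N=M$ and that any such equivalence is a permutation matrix functor, since equivalences preserve indecomposables and the $e_i$ are essentially the only basis; finally it inserts weak inverses $\bar{F}, \bar{G}$ (Lemmas \ref{composition.2} and \ref{composition.3}) to reduce a composite of linear functors to a composite of matrix functors. You instead isolate a single stronger recognition lemma---every additive, hom-linear functor $\Vect^M \to \Vect^N$ is naturally isomorphic to a matrix functor, determined by its values on the $e_m$---which subsumes Lemmas \ref{composition.0} and \ref{composition.1} (minus the permutation and dimension information, which you never need), and your regrouping $L_2 L_1 \cong G_2\,[T_2 (F_2 G_1) T_1]\,F_1$ eliminates Lemma \ref{composition.2} and the weak-inverse bookkeeping entirely. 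Your route buys brevity and an intrinsic characterization of linear functors between standard 2-vector spaces; note that your additivity hypothesis is actually redundant, since direct sums in $\Vect^M$ are biproducts, preserved by any hom-linear functor, and it is precisely hom-linearity that excludes the conjugation equivalence $K \maps \Vect \to \Vect$ flagged in the paper. The paper's route buys the explicit composition formula (\ref{matmult}), used later to justify the naive matrix calculus, and---more importantly---a proof template that transfers verbatim to the measurable setting (Lemmas \ref{me.composition.0}--\ref{me.composition.3}), where no analogue of your recognition lemma is available because the paper has no intrinsic characterization of measurable categories. Two glosses in your argument deserve a sentence each: a linear equivalence admits a \emph{linear} quasi-inverse (immediate from full faithfulness with linear bijections on hom-spaces), and the isomorphism $E(e_m \tensor V_m) \cong E(e_m) \tensor V_m$ should be exhibited naturally, e.g.\ by $\xi \tensor w \mapsto E(1_{e_m} \tensor w)(\xi)$, with hom-linearity giving linearity in $w$ and additivity giving invertibility on a basis; both are routine, and comparable details are left to the reader in the paper's own Lemma \ref{composition.1}.
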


The proof of this result will serve as the pattern for a similar
argument for measurable categories.  We break it into a series of
lemmas.  It is easy to see that identity functors and identity natural
transformations are linear.  It is obvious that natural
transformations are closed under vertical and horizontal composition.
So, we only need to check that linear functors are closed under
composition.  This is Lemma \ref{composition.3}.

\begin{lemma}
\label{composition.0}
A composite of matrix functors is naturally isomorphic to a matrix
functor.
\end{lemma}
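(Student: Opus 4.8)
The plan is to show that the composite of two matrix functors $T \maps \Vect^M \to \Vect^N$ and $U \maps \Vect^N \to \Vect^K$, with component vector spaces $T_{n,m}$ and $U_{k,n}$, is naturally isomorphic to the matrix functor $W \maps \Vect^M \to \Vect^K$ whose components are given by the matrix-multiplication formula (\ref{matmult}), namely
\[
W_{k,m} = \bigoplus_{n=1}^N U_{k,n} \otimes T_{n,m}.
\]
In other words, the formula (\ref{matmult}), originally introduced as a \emph{definition} of matrix composition, is justified by exhibiting a natural isomorphism $U \circ T \cong W$ between honest functors.

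First I would compute both sides on an object $V \in \Vect^M$. Applying $T$ and then $U$ gives
\[
(U(TV))_k = \bigoplus_{n=1}^N U_{k,n} \otimes \Bigl( \bigoplus_{m=1}^M T_{n,m} \otimes V_m \Bigr),
\]
whereas the candidate matrix functor gives
\[
(WV)_k = \bigoplus_{m=1}^M \Bigl( \bigoplus_{n=1}^N U_{k,n} \otimes T_{n,m} \Bigr) \otimes V_m.
\]
These two vector spaces are canonically isomorphic by the standard coherence constraints of $\Vect$: distributivity of $\otimes$ over $\oplus$ turns each $U_{k,n} \otimes (\bigoplus_m \cdots)$ into $\bigoplus_m (U_{k,n} \otimes \cdots)$, associativity of $\otimes$ rebrackets $U_{k,n} \otimes (T_{n,m} \otimes V_m)$ as $(U_{k,n} \otimes T_{n,m}) \otimes V_m$, and the symmetry of $\oplus$ lets me interchange the order of the two direct sums before distributing the $\otimes V_m$ back out. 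Call the resulting isomorphism $\eta_V \maps U(TV) \to WV$.

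The key point is that each building block of $\eta_V$ --- the distributor, the associator, and the commutativity constraint for $\oplus$ --- is natural in all of its arguments. Hence I would argue that $\eta_V$ is natural in $V$: for a morphism $\phi$ in $\Vect^M$, the functor $U \circ T$ sends $\phi$ to the map built from $\unit_{U_{k,n}} \otimes (\unit_{T_{n,m}} \otimes \phi_m)$, while $W$ sends it to the map built from $\unit_{W_{k,m}} \otimes \phi_m = (\unit_{U_{k,n}} \otimes \unit_{T_{n,m}}) \otimes \phi_m$, and the naturality squares for the distributor and associator say precisely that $\eta$ intertwines these two. So the family $\{\eta_V\}$ assembles into a natural isomorphism $U \circ T \To W$.

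The only real work is this last verification of naturality, and I expect it to be routine rather than conceptually hard: it reduces to the coherence of the tensor product and direct sum on $\Vect$, which guarantees that any diagram built from associators, distributors and symmetries commutes. The main bookkeeping obstacle will simply be keeping the two summation indices $n$ and $m$ straight through the reordering of the nested direct sums. Once $\eta$ is shown natural, $U \circ T$ is naturally isomorphic to the matrix functor $W$, which is exactly the claim.
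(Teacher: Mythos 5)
Your proposal is correct and matches the paper's proof: both compute $(UTV)_k$ and rewrite it, via the canonical distributivity, associativity, and symmetry isomorphisms of $\Vect$, as $\bigoplus_{m}\bigl(\bigoplus_{n} U_{k,n}\otimes T_{n,m}\bigr)\otimes V_m$, concluding that $UT$ is naturally isomorphic to the matrix functor given by formula (\ref{matmult}). Your added verification that naturality in $V$ follows from the naturality of these coherence constraints is exactly the routine checking the paper leaves implicit.
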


\begin{proof} Suppose $T\maps
\Vect^M \to \Vect^N$ and $U\maps \Vect^N \to \Vect^K$ are matrix
functors.  Their composite $UT$ applied to an object $V \in \Vect^M$ gives
an object $UTV$ with components
\[
      (UTV)_k  = \bigoplus_{n=1}^N U_{k,n} \otimes
                  \left( \bigoplus_{m=1}^M T_{n,m} \otimes V_m\right)
\]
but this is naturally isomorphic to
\[
     \bigoplus_{m=1}^M
     \left( \bigoplus_{n=1}^N U_{k,n}\otimes T_{n,m} \right) \otimes V_{m}
\]
so $UT$ is naturally isomorphic to the matrix functor defined by
formula (\ref{matmult}).
\end{proof}

\begin{lemma}
\label{composition.1}
If $F \maps \Vect^N \to \Vect^M$ is a linear equivalence,
then $N = M$ and $F$ is a linear functor.
\end{lemma}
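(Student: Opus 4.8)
The statement has two parts: the numerical equality $N=M$, and the claim that $F$ qualifies as a linear functor in the sense defined above. Since $\Vect^N$ and $\Vect^M$ are already in the standard form, the second part reduces to showing that $F$ is naturally isomorphic to a matrix functor (one takes the flanking linear equivalences in the definition of `linear functor' to be identities). So the plan is to extract a matrix from $F$ and exhibit a natural isomorphism to the corresponding matrix functor, obtaining $N=M$ along the way.

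First I would record that a linear functor is additive, hence preserves finite direct sums. A biproduct $x\oplus y$ is pinned down by its injections and projections together with the equations $p_x i_x=\unit$, $p_y i_y=\unit$, $p_x i_y=0$, $p_y i_x=0$, $i_x p_x+i_y p_y=\unit$; because $F$ respects composition, identities, addition and zero morphisms, it carries this data to biproduct data for $Fx\oplus Fy$, so $F(x\oplus y)\cong Fx\oplus Fy$ canonically. Choosing a basis then yields, for any finite-dimensional $W$, an isomorphism $F(W\otimes x)\cong W\otimes Fx$; I would verify that this is natural in $x$ and independent of the chosen basis (natural in $W$), since this `copower-preservation' is precisely what lets me reconstruct $F$ from its values on the basis objects $e_m$.

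Next I would analyze the action on the standard basis objects, whose isomorphism classes are exactly the indecomposable objects of $\Vect^N$. Because $F$ is fully faithful and linear, it induces vector-space isomorphisms $\mathrm{Hom}(e_i,e_j)\xrightarrow{\sim}\mathrm{Hom}(Fe_i,Fe_j)$; as $\dim\mathrm{Hom}(e_i,e_j)=\delta_{ij}$, each $Fe_i$ satisfies $\mathrm{End}(Fe_i)\cong\C$ (so is indecomposable) and the $Fe_i$ are pairwise non-isomorphic. Thus $m\mapsto[Fe_m]$ injects $\{1,\dots,N\}$ into the $M$ isomorphism classes of indecomposables of $\Vect^M$. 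Essential surjectivity, combined with additivity (every $e_k\in\Vect^M$ is $\cong FV\cong\bigoplus_m V_m\otimes Fe_m$, and indecomposability of $e_k$ forces a single one-dimensional summand), makes this assignment surjective; a bijection between an $N$-element and an $M$-element set forces $N=M$, and gives $Fe_m\cong e_{\sigma(m)}$ for a permutation $\sigma$.

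Finally I would assemble the natural isomorphism. Let $T$ be the matrix functor whose matrix is the permutation matrix $T_{n,m}=\C$ when $n=\sigma(m)$ and $0$ otherwise, so that $T$ simply permutes components by $\sigma$. Writing an arbitrary object as $V=\bigoplus_m V_m\otimes e_m$ and applying $F$, the biproduct- and copower-preservation isomorphisms give $FV\cong\bigoplus_m V_m\otimes Fe_m\cong\bigoplus_m V_m\otimes e_{\sigma(m)}=TV$, once the isomorphisms $Fe_m\cong e_{\sigma(m)}$ are fixed once and for all. The hard part will be the naturality check: I must confirm that these componentwise isomorphisms commute with the image under $F$ of an \emph{arbitrary} morphism of $\Vect^N$, not merely with identities. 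This follows from the naturality of the copower-preservation isomorphism in $x$ together with its basis-independence, but it is the step demanding genuine care rather than mere bookkeeping. Once naturality is established, $F\cong T$ is a matrix functor, hence a linear functor, completing the proof.
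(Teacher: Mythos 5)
Your proof is correct and follows essentially the same route as the paper's: show $F(e_m)\cong e_{\sigma(m)}$ for a permutation $\sigma$ (forcing $N=M$), then exhibit a natural isomorphism from $F$ to the matrix functor of the permutation matrix. The paper explicitly leaves the verification details to the reader, while you supply them, and you correctly locate the crucial use of $\C$-linearity on hom-spaces (basis-independence and naturality of the copower comparison) exactly where the paper flags it via the complex-conjugation counterexample.
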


\begin{proof}
Let $e_i$ be the standard basis for $\Vect^N$:
\[       e_i = (0, \dots,\underbrace{\C}_{\mbox{$i$th place}}, \dots, 0 ) . \]
Since an equivalence maps indecomposable objects to indecomposable
objects, we have $F(e_i) \cong e_{\sigma(i)}$ for some function
$\sigma$.  This function must be a permutation, since $F$ has a weak
inverse.  Let $\tilde{F}$ be the matrix functor corresponding to the
permutation matrix associated to $\sigma$.  One can check that $F$ is
naturally isomorphic to $\tilde{F}$, hence a linear functor.  Checking
this makes crucial use of the fact that $F$ be a {\it linear}
equivalence: for example, taking the complex conjugate of a vector
space defines an equivalence $K \maps \Vect \to \Vect$ that is not
a matrix functor.  We leave the details to the reader.
\end{proof}

\begin{lemma}
\label{composition.2}
If $T\maps \VV \to \VV'$ is a linear functor and $F\maps \VV \to \Vect^M$,
$G\maps \Vect^N \to \VV'$ are {\rm arbitrary} linear equivalences, then
$T$ is naturally isomorphic to the composite
\[
\xymatrix{
   \VV \ar[r]^(.4){F}&
   \Vect^M \ar[r]^{\tilde{T}}&
   \Vect^N \ar[r]^(.6){G}&
   \VV'
}
\]
for some matrix functor $\tilde{T}$.
\end{lemma}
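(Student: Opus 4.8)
The plan is to reduce the statement to the two preceding lemmas by using the given equivalences $F$ and $G$ to transport the matrix functor that is already hidden inside $T$. First I would unpack the definition of `linear functor': since $T \maps \VV \to \VV'$ is linear, there are linear equivalences $F_0 \maps \VV \to \Vect^{M_0}$ and $G_0 \maps \Vect^{N_0} \to \VV'$ together with a matrix functor $T_0 \maps \Vect^{M_0} \to \Vect^{N_0}$ such that $T \cong G_0\, T_0\, F_0$. I then choose weak inverses $\bar F \maps \Vect^M \to \VV$ and $\bar G \maps \VV' \to \Vect^N$ of the given equivalences, and I define $\tilde T$ to be (naturally isomorphic to) the composite $\bar G\, T\, \bar F \maps \Vect^M \to \Vect^N$. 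The two things to check are that this $\tilde T$ may be taken to be an honest matrix functor, and that $G\, \tilde T\, F \cong T$.

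The second point is immediate from the fact that $F$ and $G$ are equivalences: we have natural isomorphisms $G\,\bar G \cong \unit_{\VV'}$ and $\bar F\, F \cong \unit_{\VV}$, so $G\,\tilde T\, F \cong G\,\bar G\, T\, \bar F\, F \cong T$. For the first point I substitute $T \cong G_0\, T_0\, F_0$ to obtain $\tilde T \cong (\bar G\, G_0)\, T_0\, (F_0\, \bar F)$. Here $\bar G\, G_0 \maps \Vect^{N_0} \to \Vect^N$ and $F_0\, \bar F \maps \Vect^M \to \Vect^{M_0}$ are composites of linear equivalences, hence are themselves linear equivalences between standard 2-vector spaces. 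By Lemma \ref{composition.1} this forces $N_0 = N$ and $M_0 = M$ and shows that each of these `change of basis' functors is naturally isomorphic to a (permutation) matrix functor. Thus $\tilde T$ is naturally isomorphic to a composite of three matrix functors, and by Lemma \ref{composition.0} (applied twice) this composite is naturally isomorphic to a single matrix functor; replacing $\tilde T$ by that matrix functor establishes the first point and completes the argument.

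The only real obstacle is the bookkeeping with weak inverses and the matching of the dimensions $M_0, N_0$ with the given $M, N$; the substantive content is carried entirely by Lemma \ref{composition.1}. What makes the proof work is precisely its guarantee that the change-of-basis equivalences $\bar G\, G_0$ and $F_0\, \bar F$ are matrix functors rather than arbitrary functors---recall from the proof of that lemma that a general equivalence of $\Vect^N$ with itself, such as the complex-conjugation functor, need not be a matrix functor, so linearity of $F$, $G$, $F_0$, $G_0$ is essential. Once Lemma \ref{composition.1} is invoked, Lemma \ref{composition.0} collapses the triple composite, and the two triangle isomorphisms for $F$ and $G$ close the argument.
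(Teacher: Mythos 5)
Your proof is correct and follows essentially the same route as the paper's: both unpack the linearity of $T$ to get $T \cong G_0\, T_0\, F_0$, form $\tilde T \cong \bar G\, G_0\, T_0\, F_0\, \bar F$ using weak inverses of the given equivalences, and invoke Lemma \ref{composition.1} to identify the dimensions and turn the change-of-basis equivalences into matrix functors, then Lemma \ref{composition.0} to collapse the composite. Your closing remark correctly identifies that the linearity hypothesis (ruling out examples like complex conjugation) is exactly what makes Lemma \ref{composition.1} applicable, which is also the crux of the paper's argument.
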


\begin{proof}
Since $T$ is linear we know there exist linear equivalences $F'\maps \VV
\to \Vect^{M'}$ and $G'\maps \Vect^{N'} \to \VV'$ such that $T$ is
naturally isomorphic to the composite
\[
\xymatrix{
   \VV \ar[r]^(.4){F'}&
   \Vect^{M'} \ar[r]^{\tilde{T}'}&
   \Vect^{N'} \ar[r]^(.6){G'}&
   \VV'
}
\]
for some matrix functor $\tilde{T}'$.  We have $M' = M$ and
$N' = N$ by Lemma \ref{composition.1}.  So, let $\tilde{T}$ be
the composite
\[
\xymatrix{
   \Vect^M \ar[r]^(.6){\bar{F}}&
   \VV \ar[r]^(.4){F'}&
   \Vect^M \ar[r]^{\tilde{T}'}&
   \Vect^N \ar[r]^(.6){G'}&
   \VV' \ar[r]^(.4){\bar{G}}&
  \Vect^N
}
\]
where $\bar{F}$ and $\bar{G}$ are weak inverses for $F$ and $G$.
Since $F' \bar{F} \maps \Vect^M \to \Vect^M$ and $\bar{G} G' \maps
\Vect^N \to \Vect^N$ are linear equivalences, they are naturally
isomorphic to matrix functors by Lemma \ref{composition.1}.  Since
$\tilde{T}$ is a composite of functors that are naturally isomorphic
to matrix functors, $\tilde{T}$ itself is naturally isomorphic to a
matrix functor by Lemma \ref{composition.0}.  Note that the composite
\[
\xymatrix{
   \VV \ar[r]^(.4){F}&
   \Vect^M \ar[r]^{\tilde T}&
   \Vect^N \ar[r]^(.6){G}&
   \VV'
}
\]
is naturally isomorphic to $T$.  Since $F$ and $G$ are linear equivalences
and $\tilde{T}$ is naturally isomorphic to a matrix functor,
it follows that $T$ is a linear functor.
\end{proof}

\begin{lemma}
\label{composition.3}
A composite of linear functors is linear.
\end{lemma}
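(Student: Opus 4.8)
The plan is to reduce the composite of two linear functors to the canonical form appearing in the definition, using Lemma \ref{composition.2} to force the two matrix functors to meet along a common power of $\Vect$, and then using Lemmas \ref{composition.1} and \ref{composition.0} to collapse the result into a single matrix functor. Let $S \maps \VV \to \VV'$ and $T \maps \VV' \to \VV''$ be linear functors between 2-vector spaces; the goal is to show that $TS \maps \VV \to \VV''$ is again linear.

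First I would fix the equivalences I want to see flanking $TS$. Since $\VV$, $\VV'$ and $\VV''$ are 2-vector spaces, I choose linear equivalences $F \maps \VV \to \Vect^M$, $E \maps \VV' \to \Vect^P$ and $G \maps \Vect^N \to \VV''$, and let $\bar E \maps \Vect^P \to \VV'$ be a weak inverse of $E$ (itself a linear equivalence). The whole point of Lemma \ref{composition.2} is that the equivalences on either side of a linear functor may be chosen \emph{arbitrarily}, so I apply it twice. Applied to $S$ with $F$ on the domain side and $\bar E$ on the codomain side, it yields a matrix functor $\tilde S \maps \Vect^M \to \Vect^P$ with $S \cong \bar E\,\tilde S\,F$. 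Applied to $T$ with $E$ on the domain side and $G$ on the codomain side, it yields a matrix functor $\tilde T \maps \Vect^P \to \Vect^N$ with $T \cong G\,\tilde T\,E$.

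Composing and using that natural isomorphism is a congruence for composition, I get
\[
TS \;\cong\; (G\,\tilde T\,E)(\bar E\,\tilde S\,F) \;=\; G\,\tilde T\,(E\bar E)\,\tilde S\,F .
\]
Here $E\bar E \maps \Vect^P \to \Vect^P$ is a linear equivalence, so by Lemma \ref{composition.1} it is naturally isomorphic to a matrix functor; hence $\tilde T\,(E\bar E)\,\tilde S$ is, up to natural isomorphism, a composite of three matrix functors. By Lemma \ref{composition.0} this composite is naturally isomorphic to a single matrix functor $\tilde U \maps \Vect^M \to \Vect^N$. Therefore $TS \cong G\,\tilde U\,F$ with $F$ and $G$ linear equivalences and $\tilde U$ a matrix functor, which is exactly the form required for $TS$ to be a linear functor.

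The one genuinely substantive step — and the point I expect to be the crux — is arranging for the two matrix functors $\tilde S$ and $\tilde T$ to meet along the common space $\Vect^P$, so that the inner equivalences $\bar E$ and $E$ become mutually inverse and cancel into a matrix functor. This is precisely what Lemma \ref{composition.2} provides: without the freedom to prescribe the flanking equivalences, the equivalence into $\VV'$ coming from the definition of $S$ and the equivalence out of $\VV'$ coming from the definition of $T$ need bear no relation to one another, and there would be no way to collapse the middle. Everything else is routine bookkeeping with natural isomorphisms, together with the standard facts that weak inverses of linear equivalences, and composites of linear equivalences, are again linear equivalences.
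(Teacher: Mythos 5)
Your proof is correct and takes essentially the same approach as the paper: both hinge on Lemma \ref{composition.2}'s freedom to prescribe the flanking equivalences so the middle cancels, followed by Lemma \ref{composition.0} to collapse the matrix functors. The only (inessential) difference is bookkeeping---the paper applies Lemma \ref{composition.2} just once, to the second functor, taking the weak inverse $\bar{G}$ of the equivalence from the first functor's definitional decomposition so that $\bar{G}G$ cancels outright, whereas you apply it symmetrically to both functors relative to a fixed $E$ and absorb the resulting $E\bar{E}$ via Lemma \ref{composition.1}.
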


\begin{proof}
Suppose we have a composable pair of linear functors
$T\maps \VV \to \VV'$ and $U\maps \VV' \to \VV''$.  By definition,
$T$ is naturally isomorphic to a composite
\[
\xymatrix{
   \VV \ar[r]^(.4){F}&
   \Vect^L \ar[r]^{\tilde{T}}&
   \Vect^M \ar[r]^(.6){G}&
   \VV'
}
\]
where $\tilde{T}$ is a matrix functor, and $F$ and $G$ are linear
equivalences.  By Lemma \ref{composition.2}, $U$ is naturally isomorphic
to a composite
\[
\xymatrix{
   \VV' \ar[r]^(.4){\bar{G}}&
   \Vect^M \ar[r]^{\tilde{U}}&
   \Vect^N \ar[r]^(.55){H}&
   \VV''
}
\]
where $\tilde{U}$ is a matrix functor, $\bar{G}$ is a weak inverse for
$G$, and $H$ is a linear equivalence.  The composite $UT$ is thus
naturally isomorphic to
\[
\xymatrix{
   \VV \ar[r]^(.4){F}&
   \Vect^L \ar[r]^{\tilde{U}\tilde{T}}&
   \Vect^N \ar[r]^(.55){H}&
   \VV''
}
\]
Since $\tilde{U}\tilde{T}$ is naturally isomorphic to a matrix functor by
Lemma \ref{composition.0}, it follows that $UT$ is a linear functor.
\end{proof}

These results justify the naive recipe for composing 1-morphisms using
matrix multiplication, namely equation (\ref{matmult}).  First,
Lemma \ref{composition.0} shows that the composite of matrix functors
is naturally isomorphic to their matrix product as given by equation
(\ref{matmult}).  More generally, given any linear functors
$T \maps \Vect^L \to \Vect^M$ and $U \maps
\Vect^M \to \Vect^N$, we can choose matrix functors naturally isomorphic
to these, and the composite $UT$ will be naturally isomorphic to the
matrix product of these matrix functors.  Finally, we can reduce the
job of composing linear functors between {\it arbitrary} 2-vector spaces
to matrix multiplication by choosing linear equivalences between
these 2-vector spaces and some of the form $\Vect^N$.

Similar results hold for natural transformations.  Any $N \times M$
matrix of linear operators $\alpha_{n,m} \maps T_{n,m} \to T'_{n,m}$
determines a natural transformation between the matrix functors
$T,T'\maps \Vect^M \to \Vect^N$.  This natural transformation gives,
for each object $V\in \Vect^M$, a morphism $\alpha_{{}_V}\maps TV \to
T'V$ with components
\[
     (\alpha_{{}_V})_n\maps
     \bigoplus_{m=1}^M T_{n,m} \otimes V_m \to
     \bigoplus_{m=1}^M  T'_{n,m} \otimes V_m
\]
given by
\[
    (\alpha_{{}_V})_n = \bigoplus_{m=1}^M \alpha_{n,m} \otimes \unit_{V_m}.
\]
We call a natural transformation of this sort a {\bf matrix natural
transformation}.  However:

\begin{theo}
\label{composition.4}
Any natural transformation between matrix functors is a matrix natural
transformation.
\end{theo}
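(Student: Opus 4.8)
The claim is that any natural transformation $\beta$ between two matrix functors $T,T' \maps \Vect^M \to \Vect^N$ must in fact arise from a matrix of linear maps $\beta_{n,m} \maps T_{n,m} \to T'_{n,m}$ in the canonical way. The plan is to exploit the standard basis objects $e_m \in \Vect^M$ and reconstruct $\beta$ from its action on these objects, then check that the reconstructed matrix natural transformation agrees with $\beta$ everywhere by naturality.

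**The main steps.** First I would evaluate $T$ and $T'$ on the standard basis object $e_m = (0,\dots,\C,\dots,0)$. By the defining formula for a matrix functor, $(T e_m)_n = \bigoplus_{m'} T_{n,m'} \otimes (e_m)_{m'} \cong T_{n,m}$, since $(e_m)_{m'}$ is $\C$ when $m'=m$ and $0$ otherwise. So $T e_m$ is the object of $\Vect^N$ whose $n$th component is (naturally isomorphic to) $T_{n,m}$, and likewise for $T'$. The component of the natural transformation at $e_m$ is therefore a morphism $\beta_{e_m} \maps T e_m \to T' e_m$, i.e.\ a tuple of linear maps whose $n$th entry is a map $T_{n,m} \to T'_{n,m}$. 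I would \emph{define} $\beta_{n,m}$ to be exactly this $n$th component, and let $\tilde\beta$ be the matrix natural transformation built from the matrix $(\beta_{n,m})$. The goal is then to show $\tilde\beta = \beta$ as natural transformations.

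**Finishing by naturality.** To prove $\tilde\beta_V = \beta_V$ for an arbitrary object $V = (V_1,\dots,V_M) \in \Vect^M$, I would use two facts. The two natural transformations agree on each $e_m$ by construction. Then I would extend to general $V$ in two stages: first to objects of the form $W \otimes e_m$ where $W$ is a single vector space placed in the $m$th slot (using naturality with respect to the morphisms $\C \to W$, or more cleanly using that both sides are additive/linear in the relevant sense so they commute with the $\otimes$ action), and second to a general $V$, which is a direct sum $\bigoplus_m V_m \otimes e_m$ of such objects. Since an object $V\in \Vect^M$ decomposes as a biproduct of the $V_m$-weighted basis objects, and since naturality forces any natural transformation to respect the inclusion and projection morphisms of a biproduct, both $\beta_V$ and $\tilde\beta_V$ are determined by their restrictions to the summands. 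Hence they coincide.

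**The main obstacle.** The delicate step is the passage from agreement on the $e_m$ to agreement on $V_m \otimes e_m$ for a general vector space $V_m$: a natural transformation is a priori only required to be compatible with \emph{morphisms}, not to be linear over $\Vect$ in any built-in way, so I cannot simply assert it scales correctly. The resolution is that $T$ and $T'$ are genuine functors and $\beta$ is natural with respect to \emph{all} morphisms $e_m \to V_m\otimes e_m$ (equivalently all linear maps $\C^{\dim V_m}$-worth of them), and the biproduct structure of $\Vect^N$ pins down $\beta_{V_m\otimes e_m}$ uniquely in terms of $\beta_{e_m}$. Making this rigorous amounts to chasing the naturality square for the canonical morphisms exhibiting $V_m\otimes e_m$ as a biproduct of copies of $e_m$ and of its basis inclusions; this is the one place where one must argue carefully rather than invoke a one-line functoriality statement.
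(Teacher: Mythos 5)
Your proposal is correct and follows essentially the same route as the paper's proof: both extract the operators $\beta_{n,m}$ from the components $(\beta_{e_m})_n$ of the natural transformation at the standard basis objects (using the canonical isomorphism $T_{n,m}\otimes \C \cong T_{n,m}$), and then verify that the resulting matrix natural transformation equals the original one by naturality. The paper compresses this last verification into the phrase ``one can check using naturality that this equals $\alpha$,'' while you spell out that check explicitly---decomposing $V \cong \bigoplus_m V_m \otimes e_m$ as a biproduct and chasing the naturality squares for its inclusion and projection morphisms---which is precisely the detail the paper leaves to the reader.
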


\begin{proof}
Given matrix functors $T,T'\maps \Vect^M \to
\Vect^N$, a natural transformation $\alpha \maps T \To T'$ gives
for each basis object $e_m \in \Vect^M$ a morphism in $\Vect^N$
with components
\[    (\alpha_{e_m})_n \maps T_{n,m} \otimes \C \to T'_{n,m} \otimes \C. \]
Using the natural isomorphism between a vector space and that vector
space tensored with $\C$, these can be reinterpreted as operators
\[     \alpha_{n,m} \maps T_{n,m} \to T'_{n,m}  .\]
These operators define a matrix natural transformation from $T$ to $T'$,
and one can check using naturality that this equals $\alpha$.
\end{proof}

One can check that vertical composition of matrix natural
transformations is given by the matrix formula of the previous
section, namely formula (\ref{vertcomp}).  Similarly, the horizontal
composite of matrix natural transformations is `essentially' given by
formula (\ref{horcomp}).  So, while these matrix formulas are a bit
naive, they are useful tools when properly interpreted.

\subsection{From 2-vector spaces to measurable categories}
\label{measurable_categories}

In the previous sections, we saw the 2-category $\twoVe$ of
Kapranov--Voevodsky 2-vector spaces as a categorification of $\Vect$,
the category of finite-dimensional vector spaces.  While one can
certainly study representations of 2-groups in $\twoVe$
\cite{BarrettMackaay,Elgueta2}, our goal is to describe
representations of 2-groups in something more akin to
infinite-dimensional 2-{\em Hilbert} spaces.  Such objects should be
roughly like `$\Hilb^X$', where $\Hilb$ is the category of Hilbert
spaces and $X$ may now be an infinite index set.  In fact, for our
purposes, $X$ should have at least the structure of a measurable
space.  This allows one to categorify Hilbert spaces $L^2(X,\mu)$ in
such a way that measurable functions are replaced by `measurable
fields of Hilbert spaces', and integrals of functions are replaced by
`direct integrals' of such fields.

We can construct a chart like the one in the introduction, outlining the
basic strategy for categorification:

\vskip 1em
\begin{center}
{\small
\begin{tabular}{c|c}                              \hline
ordinary             &   higher              \\
$L^2$ spaces        &  $L^2$ spaces       \\     \hline
                    &                        \\
$\C$        &  $\Hilb$                       \\
$+$         &  $\oplus$                      \\
$\times$    &  $\otimes$                     \\
$0$         &  $\{0\}$                       \\
$1$         &  $\C$                          \\
measurable functions       &  measurable fields of Hilbert spaces                     \\
$\int$ \quad (integral)    &  $\direct$ \quad (direct integral)    \\
                    &                        \\
\hline
\end{tabular}} \vskip 1em
\end{center}

Various alternatives spring from this basic idea. In this section and
the following one, we provide a concrete description of one possible
categorification of $L^2$ spaces: `measurable categories' as defined
by Yetter \cite{Yetter2}, which provide a foundation for earlier work
by Crane, Sheppeard, and Yetter \cite{CraneSheppeard, CraneYetter}.

Measurable categories do not provide a full-fledged categorification
of the concept of Hilbert space, so they do not deserve to be called
`2-Hilbert spaces'.  Indeed, {\it finite-dimensional} 2-Hilbert spaces
are well understood \cite{Baez2,Bartlett}, and they have a bit more
structure than measurable categories with a finite basis of objects.
Namely, we can take the `inner product' of two objects in such a
2-Hilbert space and get a Hilbert space.  We expect something similar
in an infinite-dimensional 2-Hilbert space, and it happens in many
interesting examples, but the definition of measurable category lacks
this feature.  So, our work here can be seen as a stepping-stone
towards a theory of unitary representations of 2-groups on
infinite-dimensional 2-Hilbert spaces.  See Section \ref{conclusion}
for a bit more on this issue.

The goal of this section is to construct a 2-category of measurable
categories, denoted $\me$.  This requires some work, in part because
we do not have an intrinsic characterization of measurable categories.
We also give concrete practical formulas for composing
morphisms and 2-morphisms in $\me$.  This will equip the reader with the
tools necessary for calculations in the representation theory
developed in Section \ref{MeasRep}.  But first we need some
preliminaries in analysis.  For basic results and standing
assumptions the reader may also turn to Appendix \ref{tools}.

%
\subsubsection{Measurable fields and direct integrals}
%

We present here some essential analytic tools: measurable fields of
Hilbert spaces and operators, their measure-classes and direct
integrals, and measurable families of measures.

We have explained the categorical motivation for generalizing {\em
functions} on a measurable space to `{\em fields of Hilbert spaces}'
on a measurable space.  But one cannot simply assign an arbitrary
Hilbert space to each point in a measurable space $X$ and expect to
perform operations that make good analytic sense.  Fortunately,
`measurable fields' of Hilbert spaces have been studied in detail---see
especially the book by Dixmier \cite{Dixmier}.  Algebraists may
view these as representations of abelian von Neumann algebras on
Hilbert spaces, as explained by Dixmier and also Arveson \cite[Chap.\
2.2]{Arveson}.  Geometers may instead prefer to view them as
`measurable bundles of Hilbert spaces', following the treatment of
Mackey \cite{Mackey1978}.  Measurable fields of Hilbert spaces
have also been studied from a category-theoretic perspective by
Yetter \cite{Yetter2}.

It will be convenient to impose some simplifying assumptions.  Our
measurable spaces will all be `standard Borel spaces' and our measures
will always be $\sigma$-finite and positive.  Standard Borel spaces
can be characterized in several ways:

\begin{lemma}
\label{lem:standard_Borel} Let $(X,\mathcal{B})$ be a measurable space,
i.e.\ a set $X$ equipped with a $\sigma$-algebra of subsets $\mathcal{B}$.
Then the following are equivalent:
\begin{enumerate}
\item $X$ can be given the structure of a separable complete metric
space in such a way that $\mathcal{B}$ is the $\sigma$-algebra of Borel
subsets of $X$.
\item $X$ can be given the structure of a second-countable, locally
compact Hausdorff space in such a way that $\mathcal{B}$ is the
$\sigma$-algebra of Borel subsets of $X$.
\item $(X,\mathcal{B})$ is isomorphic to one of the following:
\begin{itemize}
\item a finite set with its $\sigma$-algebra of all subsets;
\item a countably infinite set with its $\sigma$-algebra of all subsets;
\item $[0,1]$ with its $\sigma$-algebra of Borel subsets.
\end{itemize}
A measurable space satisfying any of these
equivalent conditions is called a {\bf standard Borel space}.
\end{enumerate}
\end{lemma}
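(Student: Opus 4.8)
The plan is to prove the equivalence by running through the cycle $(1)\Rightarrow(3)\Rightarrow(2)\Rightarrow(1)$, so that all three conditions become mutually equivalent. Two of these three steps are essentially bookkeeping, and the content of the lemma is concentrated in a single classical result.

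For the passage $(3)\Rightarrow(2)$ I would exploit the fact that a bijection transports topology. If $f\maps X \to Y$ is a measurable isomorphism onto one of the three model spaces $Y$, declare a subset of $X$ to be open precisely when its image under $f$ is open in $Y$. By construction $f$ becomes a homeomorphism, so $X$ inherits every topological property of $Y$, and the Borel $\sigma$-algebra generated by this topology equals $f^{-1}(\mathcal{B}_Y)=\mathcal{B}$ because $f$ is a Borel isomorphism. It then suffices to check that each model space is second countable, locally compact, and Hausdorff, with the stated $\sigma$-algebra: a finite or countably infinite set carries the discrete topology (singletons form a countable base, each singleton is compact, and every subset is Borel), while $[0,1]$ is compact metrizable. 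The same transport argument, with `completely metrizable and separable' in place of `lcsc', would also yield $(3)\Rightarrow(1)$ directly if one wanted it, since the discrete metric on a countable set is complete and separable and $[0,1]$ is a compact metric space.

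For $(2)\Rightarrow(1)$ I would pass to the one-point compactification $X^+$. A second countable, locally compact, Hausdorff space is regular (indeed Tychonoff), hence metrizable by Urysohn's theorem, so $X^+$ is a compact, second countable, Hausdorff space and is therefore compact metrizable and thus Polish. Since $X$ is open, hence $G_\delta$, in $X^+$, Alexandrov's theorem that a $G_\delta$ subset of a Polish space is Polish shows that $X$ is completely metrizable and separable with unchanged Borel structure, which is exactly $(1)$.

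The hard step is $(1)\Rightarrow(3)$, which is precisely the Borel isomorphism theorem of Kuratowski, and this is where I expect the real difficulty to lie. I would argue in two stages. First, a cardinality dichotomy: by the perfect set property, a Polish space is either countable or contains a homeomorphic copy of the Cantor space $2^{\N}$, so its cardinality is finite, countably infinite, or that of the continuum. Second, a Borel Schr\"oder--Bernstein argument: one produces a Borel injection with Borel image from $X$ into $[0,1]$, and in the uncountable case a Borel injection of the Cantor set back into $X$, and patches these into a genuine Borel isomorphism. In the countable cases points are closed, hence Borel, and generate $\mathcal{B}$, so every subset is Borel and $(X,\mathcal{B})$ is isomorphic to a finite or countably infinite set with all its subsets. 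This classical machinery is the substance of the lemma; for it I would cite a standard descriptive set theory reference rather than reproduce the Schr\"oder--Bernstein construction in full, since the remaining implications above reduce to transporting topology and taking a one-point compactification.
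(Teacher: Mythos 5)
Your proposal is correct and takes essentially the same route as the paper's own proof: the same cycle of implications, with $(3)\Rightarrow(2)$ treated as immediate, $(2)\Rightarrow(1)$ via the one-point compactification and Urysohn's metrization theorem (the paper invokes Bourbaki's remetrization of open subsets of a Polish space exactly where you invoke Alexandrov's $G_\delta$ theorem, which is the same fact), and $(1)\Rightarrow(3)$ by citing Kuratowski's classical Borel isomorphism results, which the paper quotes from Parthasarathy. The additional detail you give---the explicit transport of topology in $(3)\Rightarrow(2)$ and the perfect-set/Borel Schr\"oder--Bernstein sketch behind Kuratowski's theorem---goes slightly beyond what the paper writes out, but does not change the argument.
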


\begin{proof} It is clear that 3) implies 2).  To see that 2)
implies 1), we need to check that every second-countable locally
compact Hausdorff space $X$ can be made into a separable complete
metric space.  For this, note that the one-point compactification of
$X$, say $X^+$, is a second-countable compact Hausdorff space, which
admits a metric by Urysohn's metrization theorem.  Since $X^+$ is
compact this metric is complete.  Finally, any open subset of
separable complete metric space can be given a new metric giving it
the same topology, where the new metric is separable and complete
\cite[Chap.\ IX, \S 6.1, Prop.\ 2]{Bourbaki}.  Finally, that 1)
implies 3) follows from two classic results of Kuratowski.
Namely: two standard Borel spaces (defined using condition 1)
are isomorphic if and only if they have the same cardinality,
and any uncountable standard Borel space has the cardinality
of the continuum \cite[Chap.\ I, Thms.\ 2.8 and 2.13]{Parthasarathy}.
\end{proof}

The following definitions will be handy:

\begin{defn}
\label{defn:measurable_space}
By a {\bf measurable space} we mean a standard Borel
space $(X,\mathcal{B})$.  We call sets in $\mathcal{B}$ {\bf measurable}.
Given spaces $X$ and $Y$, a map $f \maps X \to Y$ is {\bf measurable}
if $f^{-1}(S)$ is measurable whenever $S \subseteq Y$ is measurable.
\end{defn}

\begin{defn}
\label{defn:measure}
By a {\bf measure} on a measurable space $(X, \mathcal{B})$ we mean a
$\sigma$-finite measure, i.e.\ a countably additive map $\mu \maps
\mathcal{B} \to [0,+\infty]$ for which $X$ is a countable union of
$S_i \in \mathcal{B}$ with $\mu(S_i) < \infty$.
\end{defn}

A key idea is that a measurable field of Hilbert spaces should
know what its `measurable sections' are.  That is, there should be
preferred ways of selecting one vector from the Hilbert space at each
point; these preferred sections should satisfy some properties, given
below, to guarantee reasonable measure-theoretic behavior:

\begin{defn} Let $X$ be a measurable space.  A  {\bf measurable field
of Hilbert spaces} $\H$ on $X$ is an assignment of a Hilbert space $\H_x$
to each $x\in X$, together with a subspace $\M_\H \subseteq \prod_x \H_x$
called the {\bf measurable sections} of $\H$, satisfying the properties:
\begin{itemize}
\item $\forall \xi \in \M_\H$, the function $x\mapsto \| \xi_x \|_{\H_x}$ is
measurable.
\item For any $\eta \in \prod_x \H_x$ such that $x\mapsto \langle
\eta_x,\xi_x\rangle_{\H_x}$ is measurable for all $\xi \in \M_\H$, we
have $\eta \in \M_\H$.
\item There is a sequence $\xi_i\in \M_\H$ such that
$\{(\xi_i)_x\}_{i=1}^\infty$ is dense in $\H_x$ for all $x\in X$.
\end{itemize}
\end{defn}

\begin{defn}
Let $\H$ and $\H'$ be measurable fields of Hilbert spaces on $X$.
A {\bf measurable field of bounded linear operators} $\phi \maps \H
\to \K$ on $X$ is an $X$-indexed family of bounded operators
$\phi_x\maps \H_x \to \H_x'$ such that $\xi\in \M_\H$ implies
$\phi(\xi)\in \M_{\H'}$, where $\phi(\xi)_x := \phi_x(\xi_x)$.
\end{defn}

Given a positive measure $\mu$ on $X$, measurable fields can be
integrated. The integral of a function gives an element of $\C$; the
integral of a field of Hilbert spaces gives an object of $\Hilb$.
Formally, we have the following definition:
\begin{defn} \label{MeasHilb}
Let $\H$ be a measurable field of Hilbert spaces on a measurable space
$X$; let $\langle \cdot,\cdot \rangle_x$ denote the inner product in
$\H_x$, and $\|\cdot\|_x$ the induced norm.  The {\bf direct integral}
\[
     \direct_X \extd\mu(x)\, \H_x
\]
of $\H$ with respect to the measure $\mu$ is the Hilbert space of all
$\mu$-\alme\ equivalence classes of measurable $L^2$ sections of $\H$,
that is, sections $\psi\in \M_{\H}$ such that
     \[
       \int_X \extd\mu(x)\, \| \psi_x \|^2_{x} < \infty,
     \]
with inner product given by
  \[
       \langle\psi,\psi'\rangle =
  \int_X \extd\mu(x)\, \langle \psi_x,\psi'_x \rangle_x.
  \]
for $\psi,\psi' \in \direct_X \! \extd\mu\, \H$.
\end{defn}
That the inner product is well defined for $L^2$ sections follows by
polarization.  Of course, for $\direct_X \! \extd\mu\, \H$ to be a
Hilbert space as claimed in the definition, one must also check that
it is Cauchy-complete with respect to the induced norm.  This is
indeed the case \cite[Part II Ch.~1 Prop.~5]{Dixmier}.  We often
denote an element of the direct integral of $\H$ by
\[
       \direct_X \extd\mu(x) \, \psi_x
\]
where $\psi_x\in \H_x$ is defined up to $\mu$-\alme\ equality.

We also have a corresponding notion of direct integral for fields of
linear operators:
\begin{defn}
Suppose $\phi\maps \H \to \H'$ is a $\mu$-essentially bounded
measurable field of linear operators on $X$.  The {\bf direct
integral} of $\phi$ is the linear operator acting pointwise on
sections:
\[
\begin{array}{cccc}
\displaystyle \direct_X \extd\mu(x)\, \phi_x \maps&
 \displaystyle \direct_X \extd\mu(x)\, \H_x &\to&
 \displaystyle \direct_X \extd\mu(x)\, \H'_x \\ \\
 &  \direct_X \extd\mu(x)\, \psi_x &\mapsto&
      \direct_X \extd\mu(x)\, \phi_x(\psi_x)
\end{array}
\]
\end{defn}
Note requiring that the field be $\mu$-essentially bounded---{\it
i.e.}\ that the operator norms $\|\phi_x\|$ have a common bound for
$\mu$-almost every $x$---guarantees that the image lies in the direct
integral of $\H'$, since
\[
   \int_X \extd\mu(x)\, \|\phi_x(\psi_x)\|^2_{\H'_x} \;\leq\;
   {\rm ess} \, \sup_{x'}
\|\phi_{x'}\|^2 \int \extd\mu(x)\, \|\psi_x\|^2_{\H_x} \;<\; \infty.
\]
Notice that direct integrals indeed generalize direct sums: in the
case where $X$ is a finite set and $\mu$ is counting measure, direct
integrals of Hilbert spaces and operators simply reduce to direct
sums.

In ordinary integration theory, one typically identifies functions
that coincide almost everywhere with respect to the relevant measure.
This is also useful for the measurable fields defined above, for the
same reasons.  To make `\alme-equivalence of measurable fields'
precise, we first need a notion of `restriction'.

If $A\subseteq X$ is a measurable set, any measurable field $\H$ of
Hilbert spaces on $X$ induces a field $\H|_{A}$ on $A$, called the
{\bf restriction} of $\H$ to $A$.  The restricted field is constructed
in the obvious way: we let $(\H|_A)_x =\H_x$ for each $x\in A$, and
define the measurable sections to be the restrictions of measurable
sections on $X$: $\M_{\H|_A} = \{\psi|_A : \psi \in \M_\H\}$.  It is
straightforward to check that $(\H|_A, \M_{\H|_A})$ indeed defines a
measurable field.\footnote{The first and third axioms in the
definition are obvious.  To check the second, pick $\eta \in
\prod_{x\in A} \H_x$ such that $x \mapsto \langle \eta_x,
\xi_x\rangle$ is a measurable function on $A$ for every $\xi \in
\M|_{\H|_A}$.  Extend $\eta$ to $\tilde \eta \in \prod_{x\in X} \H_x$
by setting
\[
   \tilde \eta_x =
\left\{ \begin{array}{cc} \eta_x & x\in A \\
0 & x\not\in A. \end{array}\right.
\]
}

Similarly, if $\phi\maps \H \to \K$ is a field of linear operators,
its {\bf restriction} to a measurable subset $A\subseteq X$ is the
obvious $A$-indexed family of operators \( \phi|_A \maps \H|_A \to
\K|_A \) given by $(\phi|_A)_x = \phi_x$ for each $x$ in $A$.  It is
easy to check that $\xi\in \M_{\H|_A}$ implies $\phi(\xi)\in
\M_{\K|_A}$, so $\phi|_A$ defines a measurable field on $A$.

We say two measurable fields of Hilbert spaces on $X$ are {\bf \boldmath
$\mu$-almost everwhere equivalent} if they have equal restrictions to
some measurable $A\subseteq X$ with $\mu(X-A) = 0$.  This is obviously
an equivalence relation, and an equivalence class is called a {\bf \boldmath
$\mu$-class of measurable fields}.  Two fields in the same $\mu$-class
have canonically isomorphic direct integrals, so the direct integral
of a $\mu$-class makes sense.

Equivalence classes of measurable fields of linear operators work
similarly, but with one subtlety.  First suppose we have two
measurable fields of Hilbert spaces $T_x$ and $U_x$ on $X$, and a
measurable field of operators $\alpha_x\maps T_x \to U_x$.  Given a
measure $\mu$, one can clearly identify two such $\alpha$ if they
coincide outside a set of $\mu$-measure 0, thus defining a notion of
{\bf \boldmath $\mu$-class of fields of operators} from $T$ to $U$.
So far $T$ and $U$ are fixed, but now we wish to take equivalence
classes of them as well.  In fact, it is often useful to pass to
$t$-classes of $T$ and $u$-classes of $U$, where $t$ and $u$ are in
general {\em different} measures on $X$.  We then ask what sort of
measure $\mu$ must be for the $\mu$-class of $\alpha$ to pass to a
well defined map
\[
   [\alpha_x]_\mu \maps [T_x]_t \to [U_x]_u,
\]
where brackets denote the relevant classes.  This works if and only if
each $t$-null set and each $u$-null set is also $\mu$-null.  Thus we
require \beq
\label{abso-cts}
    {\mu} \ll t \quad \text{ and }\quad \mu \ll u, \eeq where `$\ll$'
denotes absolute continuity of measures.  Given a measure $\mu$
satisfying these properties, it makes sense to speak of the {\bf
$\mu$-class of fields of operators} from a $t$-class of fields of
Hilbert spaces to a $u$-class of fields of Hilbert spaces.  In
practice, one would like to pick $\mu$ to be maximal with respect to
the required properties (\ref{abso-cts}), so that $\mu$-\alme\
equivalence is the transitive closure of $u$-\alme\ and $t$-\alme\
equivalences.

In fact, if $t$ and $u$ are both $\sigma$-finite measures, there is a
natural choice for which measure $\mu$ to take in the above
construction: the `geometric mean measure' $\sqrt{tu}$ of the measures
$t$ and $u$.  The notion of geometric mean measure is discussed in
Appendix \ref{apx:gmean}, but the basic idea is as follows.  If $t$ is
absolutely continuous with respect to $u$, denoted $t\ll u$, then we
have the Radon--Nikodym derivative $\rnd{t}{u}$.  More generally, even
when $t$ is not absolutely continuous with respect to $u$, we will use
the notation
\[
      \rnd{t}{u} := \rnd{t^{u}}{u}
\]
where $t^{u}$ is the absolutely continuous part of the Lebesgue
decomposition of $t$ with respect to $u$. An important fact, proved in
Appendix \ref{apx:gmean}, is that $$\sqrnd{t}{u} \extd u =
\sqrnd{u}{t} \extd t,$$ so we can define the {\bf geometric mean
measure}, denoted $\gmean{t}{u}$ or simply $\sqrt{tu}$, using either
of these expressions.

Every set of $t$-measure or $u$-measure zero also has
$\sqrt{tu}$-measure zero. That is,
\[
    \sqrt{tu} \ll t \quad \text{ and }\quad  \sqrt{tu} \ll u.
\]
In fact, every $\sqrt{tu}$-null set is the union of a $t$-null set and
a $u$-null set, as we show in Appendix \ref{apx:gmean}.  This means
$\sqrt{tu}$ is a measure that is maximal with respect to
(\ref{abso-cts}).

Recall that we are assuming our measures are $\sigma$-finite.
Using this, one can show that
\beq \label{chainrule}
\rnd{t}{u}\rnd{u}{t} = 1 \qquad \text{$\sqrt{tu}$-\alme}
\eeq
This rule, obvious when the two measures are equivalent, is proved in
Appendix \ref{apx:gmean}.

We shall need one more type of `field', which may be thought of as
`measurable fields of {\em measures}'.  In general, these involve two
measure spaces: they are certain families $\mu_y$ of measures on a
measurable space $X$, indexed by elements of a measurable space $Y$.
We first introduce the notion of fibered measure distribution
\cite{Yetter2}:
\begin{defn}
Suppose $X$ and $Y$ are measurable spaces and every one-point set
of $Y$ is measurable.  Then a {\boldmath {\bf $Y$-fibered measure
distribution on $Y\times X$}} is a $Y$\!-indexed family of measures
$\bar{\mu}_y$ on $Y\times X$ satisfying the properties:
\begin{itemize}
\item $\bar{\mu}_y$ is supported on $\{y\}\times X$: that is,
$\bar{\mu}_y((Y-\{y\})\times X ) = 0$
\item For every measurable $A\subseteq Y\times X$, the function
$y\mapsto \bar{\mu}_y(A)$ is measurable
\item The family is uniformly finite: that is, there exists
a constant $M$ such that for all $y \in Y$, $\bar{\mu}_y(X) < M$.
\end{itemize}
\end{defn}

Any fibered measure distributions gives rise to a $Y$\!-indexed family of
measures on $X$:
\begin{defn}
Given measurable spaces $X$ and $Y$, $\mu_y$ is a {\boldmath
{\bf $Y$\!-indexed measurable family of measures on $X$}}
if it is induced by a $Y$-fibered measure
distribution $\bar{\mu}_y$ on $Y\times X$; that is, if
\[
\mu_y(A) = \bar{\mu}_y(Y \times A)
\]
for every measurable $A\subseteq X$.
\end{defn}
Notice that, if $\bar\mu_y$ is the fibered measure distribution
associated to the measurable family $\mu_y$, we have \beq \bar\mu_y =
\delta_y \tensor \mu_y \eeq as measures on $Y\times X$, where for each
$y\in Y$, $\delta_y$ is the Dirac measure concentrated at $y$.

By itself, a fibered measure distribution $\bar\mu_y$ on $Y\times X$
is {\em not} a measure on $Y\times X$.  However, taken together with a
suitable measure $\nu$ on $Y$, it may yield a measure
$\lambda$ on $Y\times X$: \beq
\label{disint}
      \lambda = \int_Y \extd\nu \, (\delta_y\tensor \mu_y) \eeq
Because this measure $\lambda$ is obtained from $\mu_y$ by {\em
integration} with respect to $\nu$, the measurable family $\mu_y$ is
also called the {\bf disintegration} of $\lambda$ with respect to
$\nu$.  It is often the {\em dis}integration problem one is interested
in: given a measure $\lambda$ on a product space and a measure $\nu$
on one of the factors, can $\lambda$ be written as an integral of some
measurable family of measures on the other factor, as in
(\ref{disint}).  Conditions for the disintegration problem to have a
solution are given by the `disintegration theorem':

\begin{theo} [Disintegration Theorem] \label{disintegration.theo}
Suppose $X$ and $Y$ are measurable spaces.  Then a
measure $\lambda$ on $Y \times X$ has a disintegration $\mu_y$ with
respect to the measure $\nu$ on $Y$ if and only if $\nu(U) = 0$
implies $\lambda(U\times X)= 0$ for every measurable $U\subseteq Y$.
When this is the case, the measures $\mu_y$ are determined uniquely
for $\nu$-almost every $y$.
\end{theo}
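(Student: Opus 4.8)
The plan is to prove the two implications together with the uniqueness clause, treating the forward (``only if'') direction and uniqueness as essentially bookkeeping and concentrating the real work on the existence construction in the reverse direction.

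\emph{Necessity.} First I would dispose of the ``only if'' direction. If a disintegration $\mu_y$ exists, then the fiber of $U \times X$ over $y$ is $X$ when $y \in U$ and $\emptyset$ otherwise, so the defining identity $\lambda(A) = \int_Y \mu_y(A_y)\,d\nu(y)$ gives
\[
\lambda(U\times X) = \int_U \mu_y(X)\, d\nu(y),
\]
which vanishes whenever $\nu(U)=0$, since the integrand is then integrated over a $\nu$-null set. \emph{Uniqueness.} Because $X$ is a standard Borel space its $\sigma$-algebra is countably generated; I fix a countable Boolean algebra $\mathcal{A}_0$ generating it. Applying the disintegration identity to rectangles $U\times B$ shows that for each $B \in \mathcal{A}_0$ the function $y \mapsto \mu_y(B)$ is a Radon--Nikodym derivative of $U \mapsto \lambda(U\times B)$ with respect to $\nu$, hence determined $\nu$-a.e. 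Intersecting the countably many conull sets on which two candidate disintegrations agree on $\mathcal{A}_0$ leaves a conull set off which they agree on a generating algebra, hence everywhere by uniqueness of measure extension.

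\emph{Existence.} The substance is the converse. Assuming $\nu(U)=0 \Rightarrow \lambda(U\times X)=0$, the marginal $\lambda_Y(U) := \lambda(U\times X)$ satisfies $\lambda_Y \ll \nu$, and since each $U \mapsto \lambda(U\times B)$ is dominated by $\lambda_Y$, it too is absolutely continuous with respect to $\nu$. After a standard reduction to finite measures using the $\sigma$-finiteness of $\lambda$ and $\nu$, Radon--Nikodym supplies for each $B \in \mathcal{A}_0$ a density $g_B = d\lambda(\cdot \times B)/d\nu$. Fixing versions, the countably many algebra relations among the $B \in \mathcal{A}_0$—nonnegativity, $g_\emptyset = 0$, finite additivity on disjoint pairs, monotonicity—each hold $\nu$-a.e., so discarding the union of the corresponding null sets leaves a conull $Y_0$ on which $B \mapsto g_B(y)$ is a finitely additive nonnegative premeasure on $\mathcal{A}_0$.

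The hard part is upgrading this, for each $y \in Y_0$, from a finitely additive premeasure on $\mathcal{A}_0$ to a genuine countably additive Borel measure $\mu_y$ on $X$, and doing so simultaneously for all such $y$. This is exactly where the standard Borel hypothesis is indispensable: realizing $X$ as a Borel subset of a compact metric space and choosing $\mathcal{A}_0$ to admit inner approximation by compact sets, I would show that finite additivity together with this tightness forces continuity at $\emptyset$, so each premeasure extends by Carath\'eodory to a unique measure $\mu_y$. Measurability of $y \mapsto \mu_y(B)$ then propagates from $\mathcal{A}_0$ to all Borel $B$ by a monotone-class argument, and the identity $\lambda(U\times B) = \int_U \mu_y(B)\,d\nu$, true on $\mathcal{A}_0$ by construction, extends likewise to all measurable $A \subseteq Y\times X$. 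A final bookkeeping step repackages the resulting fiber measures into the fibered-distribution language of the earlier definitions; here the $\sigma$-finiteness assumptions must be spent with care, since the strict uniform-finiteness condition holds only after a suitable measurable decomposition of $Y\times X$ and the $\mu_y$ are in general only $\sigma$-finite. An alternative I would keep in reserve is to invoke the existence of regular conditional probabilities on standard Borel spaces and rescale by $d\lambda_Y/d\nu$, but that theorem is itself proved by precisely the density-patching argument above, so little would be saved.
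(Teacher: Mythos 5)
Your proposal is correct in outline, but it takes a genuinely different route from the paper: the paper's entire proof is a citation. It invokes a theorem of Maharam, as stated by Graf and Mauldin, which yields the conclusion for all Lusin spaces, and merely checks via the characterization of standard Borel spaces that the spaces considered here qualify. You instead reprove the result directly by the classical regular-conditional-measure construction: Radon--Nikodym densities $g_B = d\lambda(\cdot\times B)/d\nu$ for $B$ in a countable generating algebra, discarding countably many $\nu$-null sets so that $B \mapsto g_B(y)$ is a finitely additive premeasure in the fiber variable, then upgrading to countable additivity by a compact-class (tightness) argument --- which is exactly where the standard Borel hypothesis is spent --- and finally propagating the disintegration identity from rectangles to all measurable sets by a monotone-class argument. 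Your necessity and uniqueness arguments are the standard ones and are sound; note that the uniqueness step via $\pi$-$\lambda$/extension needs the fiber measures to be finite, which the paper's uniform-finiteness condition on fibered measure distributions supplies. What each approach buys: the paper's citation gets brevity and slightly greater generality (Lusin spaces) at the price of opacity, while yours is self-contained, and you rightly observe that appealing instead to the existence of regular conditional probabilities would be nearly circular, since that theorem is itself proved by the same density-patching argument. Your closing caveat is also well taken and in fact sharper than the paper itself: with the paper's definition of disintegration through \emph{uniformly finite} fibered measure distributions, the theorem as literally stated is delicate for genuinely $\sigma$-finite $\lambda$ --- e.g.\ $\lambda = \nu \otimes \rho$ with $\rho$ an infinite $\sigma$-finite measure forces $\mu_y = \rho$, which is not uniformly finite --- so the need for a measurable decomposition and for allowing merely $\sigma$-finite $\mu_y$ is a gap in the statement's bookkeeping, not in your argument; the paper's one-line proof silently elides the same issue.
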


\begin{proof}
Graf and Mauldin \cite{GrafMauldin} state a theorem due to Maharam
\cite{Maharam} that easily implies a stronger version of this result:
namely, that the conclusions hold whenever $X$ and $Y$ are Lusin
spaces.  Recall that a topological space space homeomorphic to separable
complete metric space is called a {\bf Polish space}, while
more generally a \textbf{Lusin space} is a topological space that is
the image of a Polish space under a continuous bijection.  By
Lemma \ref{lem:standard_Borel}, every measurable space we consider ---
i.e., every standard Borel space---is isomorphic to
some Polish space equipped with its $\sigma$-algebra of Borel sets.
\end{proof}

%
\subsubsection{The 2-category of measurable categories: $\me$}
%

We are now in a position to give a definition of the 2-category $\me$
introduced in the work of Crane and Yetter \cite{CraneYetter,
Yetter2}.  The aim of this section is essentially practical: we give
concrete descriptions of the objects, morphisms, and 2-morphisms of
$\me$, and formulae for the composition laws.  These formulae will be
analogous to those presented in the finite-dimensional case in Section
\ref{sec:2vect}, which the current section parallels.

Before diving into the technical details, let us sketch the basic idea
behind the 2-category $\me$:

\begin{itemize}
\item
The objects of $\me$ are `measurable categories', which are categories
somewhat analogous to Hilbert spaces.  The most important sort of
example is the category $H^X$ whose objects are measurable fields of
Hilbert spaces on the measurable space $X$, and whose morphisms are
measurable fields of bounded operators.  If $X$ is a finite set with
$n$ elements, then $H^X \cong \Hilb^n$.  So, $H^X$ generalizes
$\Hilb^n$ to situations where $X$ is a measurable space instead of a
finite set.
\item
The morphisms of $\me$ are `measurable functors'.  The most important
examples are `matrix functors' $T \maps H^X \to H^Y$.  Such a functor
is constructed using a field of Hilbert spaces on $X \times Y$, which
we also denote by $T$.  When $X$ and $Y$ are finite sets, such field
is simply a matrix of Hilbert spaces.  But in general, to construct a
matrix functor $T \maps H^X \to H^Y$ we also need a $Y$\!-indexed
measure on $X$.
\item
The 2-morphisms of $\me$ are `measurable natural transformations'.
The most important examples are `matrix natural transformations'
$\alpha \maps T \to T'$ between matrix functors $T, T' \maps H^X \to
H^Y$.  Such a natural transformation is constructed using a
uniformly bounded field of linear operators
$\alpha_{y,x} \maps T_{y,x} \to T'_{y,x}$.
\end{itemize}

Here we have sketchily described the most important objects, morphisms
and 2-morphisms in $\me$.  However, following our treatment of
$\twoVe$ in Section \ref{2vectcat}, we need to make $\me$ bigger to
obtain a 2-category instead of a bicategory.  To do this, we include
as objects of $\me$ certain categories that are {\it equivalent} to
categories of the form $H^X$, and include as morphisms certain
functors that are {\it naturally isomorphic} to matrix functors.

\subsubsection*{Objects}

Given a measurable space $X$, there is a category $H^X$ with:
\begin{itemize}
\item  measurable fields of Hilbert spaces on $X$ as objects;
\item bounded measurable fields of linear operators on $X$ as
morphisms.
\end{itemize}
Objects of the 2-category $\me$ are `measurable categories'---that
is, `$C^*$\!-categories' that are `$C^*$\!-equivalent' to $H^X$ for
some $X$.  Let us make this precise:

\begin{defn}
A {\bf Banach category} is a category $C$ enriched over
Banach spaces, meaning that for any pair of objects $x,y \in C$,
the set of morphisms from $x$ to $y$ is equipped with the structure
of a Banach space, composition is bilinear, and
\[
     \| fg \| \leq \| f \| \| g \|
\]
for every pair of composable morphisms $f,g$ in $C$.
\end{defn}

\begin{defn}
A {\bf\boldmath Banach $\ast$-category} is a Banach category in which each
morphism $f \maps x \to y$ has an associated morphism $f^* \maps y \to x$,
such that:
\begin{itemize}
\item each map $\hom(x,y)\to\hom(y,x)$ given by $f\mapsto f^*$ is conjugate linear;
\item $(gf)^* = f^* g^*$,  $1_x^* = 1_x$, and $f^{**} = f$, 
for every object $x$ and pair of composable morphisms $f,g$; 
\item for any morphism $f\maps x \to y$, there exists a morphism
$g\maps x \to x$ such that $f^*f = g^*g$;
\item $f^*f = 0$ if and only if $f=0$.    
\end{itemize}
\end{defn}

\begin{defn}
A {\bf\boldmath $C^*$\!-category} is a Banach $\ast$-category such that
for each morphism $f\maps x \to y$, 
\[
     \| f^* f \| = \| f \|^2 .
\]
\end{defn}

Note that for each object $x$ in a $C^*$\!-category, its endomorphisms
form a $C^*$\!-algebra.  Note also that for any measurable
space $X$, $H^X$ is a $C^*$\!-category, where the norm of any
bounded measurable field of operators $\phi \maps \H \to \K$ is
\[          \|\phi\| = \sup_{x \in X} \|\phi_x \|  \]
and we define the $\ast$ operation pointwise:
\[          (\phi^*)_x = (\phi_x)^*  \]
where the right-hand side is the Hilbert space adjoint of the operator
$\phi_x$.

\begin{defn}
A functor $F \maps C \to C'$ between $C^*$\!-categories
is a {\bf\boldmath $C^*$\!-functor} if it maps morphisms to morphisms
in a linear way, and satisfies
\[
        F(f^*) = F(f)^*
\]
for every morphism $f$ in $C$.
\end{defn}
Using the fact that a $*$-homomorphism between unital $C^*$\!-algebras is
automatically norm-decreasing, we can show that any $C^*$\!-functor
satisfies
\[
     \|F(f) \| \leq \|f\| .
\]

\begin{defn}
Given $C^*$\!-categories $C$ and $C'$,
a natural transformation $\alpha \maps F \To F'$ between functors
$F,F' \maps C \to C'$ is {\bf bounded} if for some constant $K$ we have
\[   \|  \alpha_x \| \le K  \]
for all $x \in C$.  If there is a bounded natural isomorphism between
functors between $C^*$\!-categories, we say they are {\bf boundedly
naturally isomorphic}.
\end{defn}

\begin{defn}
A $C^*$\!-functor $F \maps C \to C'$ is a {\bf \boldmath $C^*$\!-equivalence}
if there is a $C^*$\!-functor $\bar{F} \maps C' \to C$ such that
$\bar{F} F$ and $F \bar{F}$ are boundedly naturally isomorphic to
identity functors.
\end{defn}

\begin{defn}
A {\bf measurable category} is a $C^*$\!-category that is $C^*$\!-equivalent
to $H^X$ for some measurable space $X$.
\end{defn}

\subsubsection*{Morphisms}
\label{morphisms}

The morphisms of $\me$ are `measurable functors'.  The most important
measurable functors are the `matrix functors', so we begin with these.
Given two objects $H^X$ and $H^Y$ in $\me$, we can construct a functor
\[
\xymatrix{
   H^X \ar[r]^{T,t}&  H^Y
}
\]
from the following data:
\begin{itemize}
\item
a uniformly finite $Y$\!-indexed measurable family $t_y$ of measures on $X$,
\item
a $t$-class of measurable fields of Hilbert spaces $T$ on $Y\times X$,
such that $t$ is concentrated on the support of $T$; that is, for each
$y\in Y$, $t_y(\{x\in X : T_{y,x} = 0\}) = 0. $
\end{itemize}
Here by {\boldmath \bf $t$-class} we mean a $t_y$-class for each
$y$, as defined in the previous section.

For brevity, we will sometimes denote the functor constructed from
these data simply by $T$.  This functor maps any object
$\H\in H^X$---a measurable field of Hilbert spaces on $X$---to the
object $T\H\in H^Y$ given by
\[
     (T\H)_y = \direct_X \extd t_y\, T_{y,x}\tensor \H_x.
\]
Similarly, it maps any morphism $\phi\maps\H\to \H'$ to the morphism
$T\phi\maps T\H\to T\H'$ given by the direct integral of operators
\[
    (T\phi)_y = \direct_X \extd t_y\, \unit_{T_{y,x}}\tensor \phi_x
\]
where $\unit_{T_{y,x}}$ denotes the identity operator on $T_{y,x}$.
Note that $T$ is a $C^*$\!-functor.  

\begin{defn} Given measurable spaces $X$ and $Y$, a functor
$T \maps H^X \to H^Y$ of the above sort is called a {\bf matrix
functor}.
\end{defn}

Starting from matrix functors, we can define measurable functors
in general:
\begin{defn} Given objects $\HH, \HH' \in \me$, a {\bf measurable
functor} from $\HH$ to $\HH'$ is a $C^*$\!-functor that is boundedly
naturally isomorphic to a composite
\[
\xymatrix{
   \HH \ar[r]^{F}&
   H^X \ar[r]^{T}&
   H^Y \ar[r]^{G}&
   \HH'
}
\]
where $T$ is a matrix functor and the first and last functors are
$C^*$\!-equivalences.
\end{defn}

In Section \ref{2-category} we use results of Yetter to show
that the composite of measurable functors is measurable.  A key
step is showing that the composite of two matrix functors:
\[
\xymatrix{
   H^X \ar[r]^{T,t}&  H^Y \ar[r]^{U,u} & H^Z
}
\]
is boundedly naturally isomorphic to a matrix functor
\[
\xymatrix{
   H^X \ar[r]^{UT,ut}& H^Z.
}
\]
Let us sketch how this step goes, since we will need explicit formulas
for $UT$ and $ut$.  Picking any object $\H \in H^X$, we have
\begin{align*}
  (UT\H)_z&=\direct_Y \extd u_z \, U_{z,y}\tensor (T\H)_y \\
      &=\direct_Y \extd u_z \, U_{z,y}\tensor
        \left(\direct_X \extd t_y T_{y,x}\tensor \H_x \right)
\end{align*}
To express this in terms of a matrix functor, we will write it as
direct integral over $X$ with respect to a $Z$-indexed family
of measures on $X$ denoted $ut$, defined by:
\beq \label{meas1compo}
(ut)_z = \int_Y \extd u_z(y)\, t_y.
\eeq
To do this we use the disintegration theorem, Thm.\
\ref{disintegration.theo}, to obtain a field of measures
$k_{z,x}$ such that
\beq
\label{swapping_disintegrations}
\int_X \extd (ut)_z(x)\, (k_{z,x}\tensor \delta_x) =
\int_Y \extd u_z(y)\, (\delta_y\tensor t_y).
\eeq
as measures on $Y\times X$.  That is, $k_{z,x}$ and $t_y$ are,
respectively, the $X$- and $Y$-disintegrations of the same
measure on $X\times Y$, with respect to the measures $(ut)_z$ on $X$
and $u_z$ on $Y$.  The measures $k_{y,x}$ are determined uniquely for
all $z$ and $(ut)_z$-almost every $x$.  With these definitions, it
follows that there is a bounded natural isomorphism
\begin{align}
\label{bounded.natural.iso}
  (UT\H)_z &\cong
\direct_X \extd (ut)_z
\left(\direct_Y \extd k_{z,x} U_{z,y} \tensor T_{y,x}\right) \tensor \H_x \\
&=
\direct_X \extd (ut)_z (U T)_{z,x} \tensor \H_x
\end{align}
where
\beq \label{convol_1maps}
(U T)_{z,x} = \direct _Y \extd k_{z,x}(y) \,U_{z,y}\otimes T_{y,z},
\eeq
This formula for $U T$ is analogous to (\ref{matmult}).  We refer to
Yetter \cite{Yetter2} for proofs that the family of measures $ut$ and
the field of Hilbert spaces $U T$ are measurable, and hence define a
matrix functor.

It is often convenient to use an alternative form of
(\ref{swapping_disintegrations}) in terms of integrals of functions:
for every measurable function $F$ on $Y\times X$ and for all $z \in
Z$,
\beq
\label{integration_meas}
\int_X \extd (ut)_z(x) \int_Y \extd k_{z,x}(y) \,F(y,x)
= \int_Y \extd u_z(y) \int_X \extd t_y(x)
\,F(y,x).
\eeq
This can be thought of as a sort of `Fubini theorem', since it lets us
change the order of integration, but here the measure on one factor in
the product is parameterized by the other factor.

Besides composition of morphisms in $\me$, we also need identity
morphisms.  Given an object $H^X$, to show its identity functor
$\unit_X \maps H^X \to H^X$ is a matrix functor we need an $X$-indexed family
of measures on $X$, and a field of Hilbert spaces on $X\times X$.
Denote the coordinates of $X\times X$ by $(x',x)$.  The family of
measures assigns to each $x'\in X$ the unit Dirac measure concentrated
at the point $x'$:
\[
\delta_{x'}(A) =
  \begin{cases}
      1 \quad  \mbox{if} \quad x' \in A \\
      0 \quad  \mbox{otherwise}
  \end{cases}
  \qquad \mbox{for every measurable set} \, A \subseteq X
\]
The field of Hilbert spaces on $X\times X$ is the constant field
$(\unit_X)_{x',x}=\C$.  It is simple to check that this acts as both
left and right identity for composition.  Let us check that it is a
right identity by forming this composite:
\[
  \xymatrix{H^X \ar[r]^{\unit_X,\delta} & H^X \ar[r]^{T,t} &H^Y }
\]
One can check that the composite measure is:
\[
    (t\delta)_y = \direct_X \extd t_y(x') \delta_x' = t_y,
\]
and hence, using (\ref{integration_meas}),
\[
    k_{y,x} = \delta_x.
\]
We can then calculate the field of operators:
\begin{align*}
        (T\unit_X)_{y,x} &\cong
\direct_X \extd\delta_x(x')\; T_{y,x'} \tensor \C = T_{y,x}.
\end{align*}

\subsubsection*{2-Morphisms}
\label{2-morphisms}

The 2-morphisms in $\me$ are `measurable natural transformations'.
The most important of these are the `matrix natural transformations'.
Given two matrix functors $(T, t)$ and $(T', t')$, we can construct a
natural transformation between them from a $\sqrt{t t'}$-class of
bounded measurable fields of linear operators $$\alpha_{y,x}\maps
T_{y,x} \arr T'_{y,x}$$ on $Y\times X$.  Here by a {\bf \boldmath
$\sqrt{t t'}$-class}, we mean a $\sqrt{t_y t'_y}$-class for each $y$,
where the $\sqrt{t_y t'_y}$ is the geometric mean of the measures
$t_y$ and $t'_y$.  By {\bf bounded}, we mean $\alpha_{y,x}$ have a
common bound for all $y$ and $\sqrt{t_y t'_y}$-almost every $x$.

We denote the natural transformation constructed from
these data simply by $\alpha$.  This natural transformation assigns to
each object $\H\in H^X$ the morphism $\alpha_\H\maps T\H \to T'\H$ in $H^Y$
with components:
\[
\begin{array}{cccc}
   (\alpha_\H)_y \maps &  \displaystyle
     \direct_X \extd t_y  \; T_{y,x} \tensor {\H_x} & \to &
                                         \displaystyle
    \direct_X \extd t'_y  \; T'_{y,x} \tensor {\H_x} \\ \\
      &   \direct_X \extd t_y  \; \psi_{y,x} & \mapsto &
  \direct_X \extd t'_y  \; [\tilde\alpha_{y,x}\tensor \unit_{\H_x}](\psi_{y,x})
\end{array}
\]
where $\tilde\alpha$ is the rescaled field
\beq
\label{rescaling}
\tilde\alpha  = \sqrnd{t_y}{t'_y}\,\alpha .
\eeq

To check that $\alpha_\H$ is well defined, pick $\psi \in T\H$ and compute
\begin{align*}
\int_X \extd t'_y
   \|
         [\tilde\alpha_{y,x}\tensor \unit_{\H_x}](\psi_{y,x})
   \|^2
&=
\int_X \extd t^c_y
   \|
         [\alpha_{y,x}\tensor \unit_{\H_x}](\psi_{y,x})
   \|^2 \\
&\leq {\rm ess\, } \sup_{x'} \|\alpha_{y,x'}\|^2
\,\direct _X \extd t_y \|\psi_{y,x} \|^2 < \infty
\end{align*}
where $t^c_y$ is the absolutely continuous part of the Lebesgue
decomposition of $t_y$ with respect to $t'_y$; note that, since
$t^c_y$ is equivalent to $\sqrt{t_y t'_y}$, the field $\alpha$ is
essentially bounded with respect to $t^c_y$. This inequality shows
that the image $(\alpha_\H)_y(\psi)$ belongs to $(T'\H)_y$, and that
$\alpha_\H$ is a field of bounded linear maps, as required. Note also
that the direct integral defining the image does not depend on the
chosen representative of $\alpha$.

To check that $\alpha$ is natural it suffices to choose a morphism
$\phi\maps \H \to \H'$ in $H^X$ and show that the naturality square
\[
\xymatrix{
  T\H \ar[rr]^{T\phi}\ar[dd]_{\alpha_H}&&T\H' \ar[dd]^{\alpha_{\H'}}\\
  \\
  T'\H \ar[rr]_{T'\phi}&&T'\H'
}
\]
commutes; that is,
\[
           \alpha_{\H'}\, (T\phi) = (T'\phi)\,  \alpha_\H .
\]
To check this, apply the operator on the left to
$\psi \in T\H$ and calculate:
\begin{align*}
  (\alpha_{\H'})_y  (T\phi)_y (\psi_y)
      &= (\alpha_{\H'})_y
 \left( \direct_X \extd t_y(x)
[ \unit_{T_{y,x}} \tensor \phi_x](\psi_y) \right) \\
      &= \direct_X \extd t'_y(x) [\tilde\alpha_{y,x} \tensor \unit_{H'_x}]
 [\unit_{T_{y,x}}\tensor \phi_x](\psi_y) \\
      &= \direct_X \extd{t'_y}(x)
 [\unit_{T'_{y,x}} \tensor \phi_x]
[ \alpha_{y,x} \tensor \unit_{\H_x}](\psi_{y})
      =  (T'\phi)_y  (\alpha_\H)_y(\psi_y).
\end{align*}

\begin{defn} Given measurable spaces $X$ and $Y$ and matrix functors
$T, T' \maps H^X \to H^Y$, a natural transformation $\alpha \maps T
\To T'$ of the above sort is called a {\bf matrix natural transformation}.
\end{defn}

However, in analogy to Thm.\ \ref{composition.4}, we have:

\begin{theo}
\label{magic.thm}
Given measurable spaces $X$ and $Y$ and matrix functors
$T, T' \maps H^X \to H^Y$, every bounded natural transformation
$\alpha \maps T  \To T'$ is a matrix natural transformation, and
conversely.
\end{theo}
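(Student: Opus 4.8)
The plan is to mimic the proof of Theorem \ref{composition.4}, with the theory of decomposable operators on direct integrals playing the role that evaluation on the standard basis objects $e_m$ played in the finite-dimensional case. The converse direction --- that every matrix natural transformation is a bounded natural transformation --- is essentially already settled by the construction preceding the theorem, where $\alpha_\H$ was shown to be well defined and natural. The only thing to add is the observation that the estimate carried out there gives $\|(\alpha_\H)_y\| \le {\rm ess\,sup}_x \|\alpha_{y,x}\|$ uniformly in $\H$ and $y$, so the natural transformation is bounded with constant $K = {\rm ess\,sup}_{y,x}\|\alpha_{y,x}\|$. The substance lies in the forward direction, which I now sketch.

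Given a bounded natural transformation $\alpha \maps T \To T'$, first I would extract a candidate field $\alpha_{y,x} \maps T_{y,x} \to T'_{y,x}$ by testing $\alpha$ against the constant field $\C_X$ (assigning $\C$ to every point of $X$). For each bounded measurable $f \maps X \to \C$, viewed as an endomorphism of $\C_X$, naturality gives $\alpha_{\C_X} \circ T(f) = T'(f) \circ \alpha_{\C_X}$; since $T(f)$ and $T'(f)$ act on $(T\C_X)_y = \direct_X \extd t_y\, T_{y,x}$ and $(T'\C_X)_y = \direct_X \extd t'_y\, T'_{y,x}$ as multiplication by $f(x)$, each component $(\alpha_{\C_X})_y$ commutes with the diagonal action of $L^\infty(X)$. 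By the theory of decomposable operators between direct integrals --- applied here with domain and codomain carrying the distinct measures $t_y$ and $t'_y$, so that one passes to the geometric mean measure $\sqrt{t_y t'_y}$ and absorbs the Radon--Nikodym factor exactly as in the rescaling (\ref{rescaling}) --- this forces $(\alpha_{\C_X})_y$ to be the direct integral of a bounded field of operators, yielding a $\sqrt{tt'}$-class of fields $\alpha_{y,x} \maps T_{y,x} \to T'_{y,x}$, bounded because $\alpha$ is. This step, together with the joint measurability of the resulting field in $(y,x)$, is the main analytic input; I would draw it from the decomposable-operator results of Appendix \ref{tools} and \cite[Part II]{Dixmier}, with the measurability in $y$ handled by a measurable-selection argument of the kind underlying the disintegration theorem \ref{disintegration.theo}.

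Let $\beta$ denote the matrix natural transformation built from this field $\alpha_{y,x}$. By construction $\alpha_{\C_X} = \beta_{\C_X}$, and it remains to show $\alpha_\H = \beta_\H$ for an arbitrary object $\H \in H^X$. The same naturality argument against the endomorphisms $f \cdot \unit_\H$ shows each $(\alpha_\H)_y$ is again decomposable, given by some field $\alpha^\H_{y,x} \maps T_{y,x}\tensor \H_x \to T'_{y,x} \tensor \H_x$. To identify it, I would probe $\H$ by its measurable sections: a bounded section $\xi \in \M_\H$ defines a morphism $s_\xi \maps \C_X \to \H$ with $(s_\xi)_x(1) = \xi_x$, and naturality $\alpha_\H \circ T(s_\xi) = T'(s_\xi) \circ \alpha_{\C_X}$ gives, fibrewise, $\alpha^\H_{y,x}(v \tensor \xi_x) = (\alpha_{y,x} v)\tensor \xi_x$. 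Since the third axiom for a measurable field supplies a sequence of sections dense in $\H_x$ at every point, the vectors $\xi_x$ generate $\H_x$, and boundedness then forces $\alpha^\H_{y,x} = \alpha_{y,x}\tensor \unit_{\H_x}$. Hence $\alpha_\H = \beta_\H$ for every $\H$, so $\alpha = \beta$ is a matrix natural transformation. I expect the hardest point to be precisely the decomposable-operator step with two distinct measures and its attendant joint measurability in $y$; the remainder is naturality bookkeeping parallel to Theorem \ref{composition.4}.
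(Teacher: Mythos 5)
Your overall architecture matches the paper's: probe $\alpha$ with the constant field (your $\C_X$ is the paper's $\J$), use naturality against multiplication operators to see that each $(\alpha_\J)_y$ commutes with the diagonal action of bounded measurable functions, decompose it into a field $\alpha_{y,x}$, and then transfer to arbitrary $\H$ by naturality. Your extension step is genuinely different from the paper's, and it works: you probe with the section morphisms $s_\xi \maps \J \to \H$ and use the fundamental sequence of sections from the third axiom for measurable fields, whereas the paper goes through the constant infinite-dimensional field $\K$ via basis inclusions $i_j \maps \J \to \K$ and then realizes $\H$ as a direct summand of $\K$ with a projection $p \maps \K \to \H$. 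Your route in fact lets you bypass decomposability of $(\alpha_\H)_y$ entirely: since the matrix natural transformation $\beta$ built from $\alpha_{y,x}$ is natural and $\alpha_\J = \beta_\J$, both agree on the images of the maps $T(s_{\xi_i})$, which span a dense subspace of $(T\H)_y$, and boundedness finishes the identification. Your converse direction and the uniform bound $K = {\rm ess\,sup}_{y,x}\|\alpha_{y,x}\|$ are fine and correspond to the paper's remark that ``the converse is easy.''

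The genuine gap is the decomposable-operator step ``with two distinct measures,'' which you invoke as a known result. It is not where you cite it: Appendix \ref{tools} contains only measure-theoretic tools (Lebesgue decomposition, geometric means, the chain rule), and Dixmier's theorem---Lemma \ref{magic.lemma} in the paper---covers only the case of a \emph{single} measure $\mu$ on both domain and codomain. The two-measure statement is true, but proving it constitutes the bulk of the paper's proof, and the point your sketch elides is that one cannot simply ``absorb the Radon--Nikodym factor'': one must first show that $\alpha$ annihilates the parts of $t_y$ and $t'_y$ that are mutually singular, and only then does rescaling reduce matters to Dixmier. The paper proceeds in three stages: $t = t'$ (Dixmier directly, via Lemma \ref{magic.lemma}); $t_y \sim t'_y$ (conjugating by the rescaling isomorphism $r_{t,t'}$); and the general case via the Lebesgue decompositions $t = t^{t'} + \overline{t^{t'}}$ and $t' = t'^{t} + \overline{t'^{t}}$, where naturality applied to multiplication by the characteristic functions $\chi_{B_y}$ and $\chi_{B'_y}$ of the singular supports forces $(\alpha_\H)_y$ to vanish on those pieces, yielding the factorization $\alpha = i'[p'\alpha i]p$ through the mutually equivalent parts (the paper's equation (\ref{magic-equality})). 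Since you already exploit naturality against multiplication operators, this argument is within reach of your tools, but as written your proof asserts rather than establishes its central analytic step; the attendant joint measurability in $y$, which you rightly flag, is also settled in the paper only through this explicit reduction, not through a separate selection theorem.
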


\begin{proof} The converse is easy.  So,
suppose $T, T' \maps H^X \to H^Y$ are matrix natural
transformations and $\alpha \maps T \To T'$ is a bounded natural
transformation. Denote by $t$ and $t'$ the families of measures of the two
matrix functors. We will show that $\alpha$ is a matrix
natural transformation in three steps.  We begin by assuming that for
each $y \in Y$, $t_y = t'_y$; we then extend the result to the case
where the measures are only equivalent $t_y \sim t'_y$; then finally we
treat the general case.

Assume first $t = t'$. Let $\J$ be the measurable field
of Hilbert spaces on $X$ with
\[          \J_x = \C \qquad \textrm{for all } x \in X . \]
Then $T\J$ and $T'\J$ are measurable fields of Hilbert spaces on $Y$
with canonical isomorphisms
\beq \label{isoTJ}          (T\J)_y \cong \direct_X \extd t_y \; T_{x,y}, \qquad
           (T'\J)_y \cong \direct_X \extd t_y \; T'_{x,y}
\eeq
Using these, we may think of $\alpha_\J$ as a measurable field of
operators on $Y$ with
\[          (\alpha_\J)_y \maps \direct_X \extd  t_y \; T_{x,y} \to
                                \direct_X \extd t_y \; T'_{x,y} .\]

We now show that for any fixed $y \in Y$
there is a bounded measurable field of operators on $X$, say
\[          \alpha_{y,x} \maps T_{x,y} \to T'_{x,y},  \]
with the property that
\begin{equation}
\label{decomposable.operator}
   (\alpha_\J)_y \maps
   \direct_X \extd t_y  \; \psi_{y,x}
   \mapsto
   \direct_X \extd t_y  \; \alpha_{y,x} (\psi_{y,x})
\end{equation}
for any measurable field of vectors $\psi_{y,x} \in T_{y,x}$. For this, 
note that any  measurable bounded function $f$ on $X$ defines a 
morphism $$f\maps  \J \to \J$$ in $H^X$, mapping a vector field 
$\psi_x$ to $f(x) \psi_x$.
The functors $T$ and $T'$ map $f$ to the some morphisms
\[ T_f \maps T\J \to T\J \qquad \mbox{and} \qquad  T'_f \maps T'\J \to T'\J \]
in $H^Y$.
Using the canonical isomorphisms (\ref{isoTJ}), we may think of $T_f$ 
as a measurable field of {\bf multiplication operators} on $Y$ with
\[
\begin{array}{cccl}
   \displaystyle  (T_f)_y \maps & \direct_X \extd t_y \; T_{y,x} &\to&
   \displaystyle              \direct_X \extd t_y \; T_{y,x} \\ \\
                &  \direct_X \extd t_y  \; \psi_{y,x} &\mapsto&
                   \direct_X \extd t_y  \; f(x) \psi_{y,x}
\end{array}
\]
and similarly for $T'_f$.
The naturality of $\alpha$ implies that the square
\[
\xymatrix{
  T\J \ar[rr]^{T_f}\ar[dd]_{\alpha_{\J}}&& T\J \ar[dd]^{\alpha_{\J}}\\
  \\
  T'\J \ar[rr]_{T'_f}&& T'\J
}
\]
commutes; unraveling this condition it follows that, for each $y\in Y$,
\[           (\alpha_\J)_y \; (T_f)_y = (T'_f)_y \; (\alpha_\J)_y  .\]

Now we use this result:

\begin{lemma}
\label{magic.lemma}
Suppose $X$ is a measurable space and $\mu$ is a measure on $X$
Suppose $T$ and $T'$ are measurable fields of Hilbert spaces on $X$ and
\[          \alpha \maps \direct_X \extd \mu \; T_x \to
\direct_X \extd \mu \; T'_x \]
is a bounded linear operator such that
\[           \alpha \, T_f = T'_f \, \beta  \]
for every $f \in L^\infty(X,\mu)$, where $T_f$ and $T'_f$ are
multiplication operators as above.  Then there exists a uniformly
bounded measurable field of operators
\[           \alpha_x \maps T_x \to T'_x \]
such that
\[ \displaystyle{
   \alpha \maps
   \direct_X \extd \mu \; \psi_{x}
   \mapsto
   \direct_X \extd \mu  \; \alpha_x (\psi_{x})  . }
\]
\end{lemma}

\begin{proof.within.proof}
This can be found in Dixmier's book \cite[Part II Chap.~2 Thm.~1]{Dixmier}.
\end{proof.within.proof}

It follows that for any $y \in Y$
there is a uniformly bounded measurable field of operators on
$X$, say
\[          \alpha_{y,x} \maps T_{x,y} \to T'_{x,y},  \]
satisfying Eq.\ \ref{decomposable.operator}.

Next note that as we let $y$ vary, $\alpha_{y,x}$ defines a uniformly
bounded measurable field of operators on $X \times Y$.  The uniform
boundedness follows from the fact that for all $y$,
\[        {\rm ess\, }
\sup_x \|\alpha_{y,x}\|  =   \|(\alpha_\J)_y \| \le K  \]
since $\alpha$ is a bounded natural transformation.
The measurability follows from the fact that $(\alpha_J)_y$ is
a measurable field of bounded operators on $Y$.

To conclude, we use this measurable field $\alpha_{y,x}$ to
prove that $\alpha$ is a matrix natural transformation.  For this,
we must show that for {\it any} measurable field $\H$ of Hilbert spaces
on $X$, we have
\[
   (\alpha_\H)_y \maps
\direct_X \extd t_y  \; \psi_{y,x}
\mapsto
\direct_X \extd t_y  \; [\alpha_{y,x}\tensor \unit_{\H_x}](\psi_{y,x})
\]

To prove this, first we consider the case where $\K$ is a constant
field of Hilbert spaces:
\[          \K_x = K \qquad \textrm{for all } x \in X , \]
for some Hilbert space $K$ of countably infinite dimension.  We handle this case
by choosing an orthonormal basis $e_j \in K$ and using this to define
inclusions
\[           i_j \maps \J \to \K, \qquad \psi_x \mapsto \psi_x e_j  \]
The naturality of $\alpha$ implies that the square
\[
\xymatrix{
  T\J \ar[rr]^{Ti_j}\ar[dd]_{\alpha_{\J}}&& T\K \ar[dd]^{\alpha_{\K}}\\
  \\
  T'\J \ar[rr]_{T'i_j}&& T'\K
}
\]
commutes; it follows that
\[           (\alpha_\K)_y \; (Ti_j)_y = (T'i_j)_y \; (\alpha_\J)_y  .\]
Since we already know $\alpha_\J$ is given by Eq.\
\ref{decomposable.operator}, writing any vector field in $\K$ in terms 
of the orthonormal basis $e_j$, we obtain that
\begin{equation}
\label{special.case}
   (\alpha_\K)_y \maps
\direct_X \extd t_y  \; \psi_{y,x}
\mapsto
\direct_X \extd t_y  \; [\alpha_{y,x}\tensor \unit_{K}](\psi_{y,x})
\end{equation}

Next, we use the fact that every measurable field $\H$ of
Hilbert spaces is isomorphic to a direct summand of $\K$
\cite[Part II, Chap.~1, Prop.~1]{Dixmier}.   So, we have a
projection
\[           p \maps \K \to \H . \]
The naturality of $\alpha$ implies that the square
\[
\xymatrix{
  T\K \ar[rr]^{Tp}\ar[dd]_{\alpha_{\K}}&& T\H \ar[dd]^{\alpha_{\H}}\\
  \\
  T'\J \ar[rr]_{T'p}&& T'\H
}
\]
commutes; it follows that
\[           (\alpha_\H)_y \; (Tp)_y = (T'p)_y \; (\alpha_\K)_y  .\]
Since we already know $\alpha_\K$ is given by Eq.\
\ref{special.case}, using the fact that any vector field in $\H$ is the 
image by $p$ of a vector field in $\K$, we obtain that
\[
   (\alpha_\H)_y \maps
\direct_X \extd t_y  \; \psi_{y,x}
\mapsto
\direct_X \extd t_y  \; [\alpha_{y,x}\tensor \unit_{\H_x}](\psi_{y,x})
\]

We have assumed so far that the matrix functors $T, T'$ are 
constructed from the same family of measures $t=t'$.
Next, let us relax this hypothesis and suppose that for each 
$y\in Y$, we have $t_y \sim t'_y$. Let $\tilde{T'}$ be the matrix 
functor constructed from the family of measures $t$ and the field 
of Hilbert space $T'$. The bounded measurable field of identity 
operators $\unit_{T'_{y,x}}$ defines a matrix natural transformation
\[r_{t, t'} \maps T \To \tilde{T'}.\]
This natural transformation assigns to any object $\H \in H^X$ a 
morphism $r_{t,t'\H } \maps T\H \to \tilde{T'}\H$  with components:
$$
(r_{t,t'\H} )_y \maps \direct \extd t'_y \psi_{y,x} 
\mapsto \direct \extd t_y \sqrnd{t'_y}{t_y} \psi_{y,x}
$$
Moreover, by equivalence of the measures, $r_{t,t'}$ is a 
natural isomorphism and $r_{t, t'}^{-1} = r_{t',t}$.

Suppose $\alpha \maps T \To T'$ is a bounded natural 
transformation. The composite $r_{t, t'} \alpha \maps T \to \tilde{T'}$ 
is a bounded natural transformation between matrix functors 
constructed from the same families of measures $t$. According to 
the result shown above, we know that this composite is a matrix 
measurable transformation, defined by some measurable field 
of operators
$$
\alpha_{y,x} \maps T_{y,x} \to T'_{y,x}
$$
Writing $\alpha = r_{t', t} ( r_{t, t'} \alpha)$, we conclude that 
$\alpha$ acts on each object $\H \in H^X$ as
$$
(\alpha_\H)_y \maps
\direct_X \extd t_y  \; \psi_{y,x}
\mapsto
\direct_X \extd t_y  \; [\tilde\alpha_{y,x}\tensor \unit_{\H_x}](\psi_{y,x})
$$
where $\tilde{\alpha}$ is the rescaled field
\[
\tilde{\alpha} = \sqrnd{t_y}{t'_y} \alpha
\]
This shows that $\alpha$ is a matrix natural transformation.

Finally, to prove the theorem in its full generality, we consider 
the Lebesgue decomposition of the measures $t_y$ and $t'_y$ 
with respect to each other (see Appendix  \ref{Lebesgue-Radon-Nykodym}):
\[
t = t^{t'} + \overline{t^{t'}}, \qquad t^{t'} \ll t' \quad \overline{t^{t'}} \perp t'
\]
and likewise,
\[
 t' = t'^{t} + \overline{t'^{t}}, \qquad t'^{t} \ll t \quad \overline{t'^{t}} \perp t
\]
where the subscript $y$ indexing the measures is dropped 
for clarity. Prop.\ref{fact1} shows that $t_y^{t'} \perp \overline{t_y^{t'}}$ 
and $t_y'^{t}\perp \overline{t_y'^{t}}$.
Moreover, Prop.\ref{mutual-decomposition} shows that $t_y^{t'} \sim t_y'^{t}$. 
Consequently, for each $y\in Y$, there are disjoint measurable sets 
$A_y, B_y$ and $B'_y$  such that $t_y^{t'}$ and $t_y'^{t}$ are supported 
on $A_y$, that is,
\[
 t_y^{t'}(S) = t_y^{t'}(S \cap A_y) \qquad  t_y'^{t}(S) = t_y'^{t}(S\cap A_y),
\]
for all measurable sets $S$; and such that  $\overline{t_y^{t'}}$ is 
supported on $B_y$, and $\overline{t_y'^{t}}$ is supported on $B'_y$.

Let $\tilde{T}$ be the matrix functor constructed from the family of 
measures $t^{t'}$ and the field of Hilbert spaces $T_{y,x}$; let 
$\tilde{T'}$ be the matrix functor constructed from the the family 
of measures $t'^{t}$ and the field of Hilbert spaces $T'_{y,x}$. 
The bounded measurable field of identity operators $\unit_{T_{y,x}}$ 
define matrix natural transformations:
\[
i \maps \tilde{T}  \To T, \quad p \maps T \To \tilde{T}
\]
Given any object $\H \in H^X$, we get a morphism 
$i_\H \maps \tilde{T}\H \to T\H$, whose components act as inclusions:
\[
(i_\H)_y \maps \direct \extd t^{t'}_y \, \psi_{y,x} \mapsto \direct \extd t_y \, \chi_{A_y}(x) \, \psi_{y,x}
\]
where  $\chi_{A}$ is the characteristic function of the set $A \subset X$:
\[
   \chi_{A}(x) =
\left\{ \begin{array}{cc} 1 & x\in A \\
0 & x\not\in A. \end{array}\right.
\]
We also get a morphism $p_\H \maps T\H \to \tilde{T}\H$, whose components act as projections:
\[
(p_\H)_y \maps \direct \extd t'_y \, \psi_{y,x} \mapsto \direct \extd t^{'t}_y \, \psi_{y,x}.
\]
Likewise, the bounded measurable field of identity operators $\unit_{T'_{y,x}}$ define an inclusion and a projection:
\[
i' \maps \tilde{T'}  \To T', \quad p' \maps T' \To \tilde{T'}
\]

Suppose $\alpha \maps T \To T'$ is a bounded natural transformation. 
The composite $p'\alpha i \maps \tilde{T} \To \tilde{T'}$ is then a bounded 
natural transformation between matrix functors constructed from 
equivalent families of measures.  According to the result shown above, 
we know that this composite is a matrix natural tranformation, defined 
by some measurable field of operators
$$
\alpha_{y,x} \maps T_{y,x} \to T'_{y,x}
$$

We will show below the equality of natural transformations:
\beq \label{magic-equality}
\alpha = i'[p' \alpha i] p
\eeq
This equality leads to our final result. Indeed, for any $\H \in H^X$ 
and each $y \in Y$, it yields:
\[
 (\alpha_\H)_y \maps \direct \extd t_y \, \psi_{y,x} \mapsto \direct \extd t'_y \,  \chi_{A_y}(x) \,\sqrnd{t_y^{t'}}{t_y'^t}\,[\alpha_{y,x} \otimes \unit_{\H_x} ](\psi_{y,x})
\]
and we conclude using the fact that, for all $y$ and $t'_y$-almost all $x$,
\[
 \chi_{A_y}(x) \,\sqrnd{t_y^{t'}}{t_y'^t} = \sqrnd{t^{t'}_y}{t'_y}.
\]

The equality (\ref{magic-equality}) follows from naturality of $\alpha$. 
In fact, naturality implies that, for any morphism $\phi \maps \H \to \H$, 
the square
\[
\xymatrix{
  T\H \ar[rr]^{T\phi}\ar[dd]_{\alpha_{\H}}&& T\H \ar[dd]^{\alpha_{\H}}\\
  \\
  T'\H \ar[rr]_{T'\phi}&& T'\H
}
\]
commutes. It follows that, for each $y\in Y$,
\[
(\alpha_\H)_y (T\phi)_y = (T'\phi)_y (\alpha_\H)_y
\]
Let us fix $y\in Y$. We apply naturality to the morphism 
$$\chi_{B_y} \maps H \to H$$ mapping  any vector field $\psi_x$ 
to the vector field $\chi_{B_y}(x) \psi_x$. Its image by the functor 
$T'$ defines a projection operator
\[
(T'\chi_{B_y})_y \equiv T'_{B_y} = \direct_{B_y} \extd t'_y\, \unit_{T'_{y,x}} \otimes \unit_{\H_x}
\]
Since $B_y$ is a $t'_y$-null set, this operator acts trivially on 
$T'\H$. It then follows from naturality that
\beq \label{Invisible1}
(\alpha_\H)_y T_{B_y} = T'_{B_y} (\alpha_\H)_y = 0.
\eeq
Likewise, applying naturality to the morphism $$\chi_{B'_y} \maps H \to H$$ leads to
\beq \label{Invisible2}
0 =  (\alpha_\H)_y T_{B'_y} = T'_{B'_y} (\alpha_\H)_y
\eeq
We now use the following decompositions of the identities operators 
on the Hilbert spaces $(T\H)_y$ and $(T'\H)_y$ into direct sums of 
projections:
\[
\unit_{(T\H)_y} = T_{A_y}\oplus T_{B_y}, \qquad \unit_{(T'\H)_y} = T'_{A_y} \oplus T'_{B'_y}
\]
to write:
\[
 (\alpha_\H)_y = [T'_{A_y} \oplus T'_{B'_y}]  (\alpha_\H)_y [T_{A_y}\oplus T_{B_y}]
\]
Together with (\ref{Invisible1}) and  (\ref{Invisible2}), it yields:
\[
 (\alpha_\H)_y = T'_{A_y} (\alpha_\H)_y T_{A_y}
\]
To conclude, observe that
\[
 T_{A_y} = (ip_\H)_y, \qquad T'_{A_y} = (i'p'_\H)_y
\]
We finally obtain:
\[
 (\alpha_\H)_y = (i'p'_\H)_y (\alpha_\H)_y (ip_\H)_y
\]
which shows our equality (\ref{magic-equality}). This completes the proof of the theorem.
\end{proof}

This allows an easy definition for the 2-morphisms in $\me$:

\begin{defn} A {\bf measurable natural transformation} is a
bounded natural transformation between measurable functors.
\end{defn}

For our work it will be useful to have explicit formulas
for composition of matrix natural transformations.
So, let us compute the vertical composite of two matrix
natural transformations $\alpha$ and $\alpha'$:
$$
\xymatrix{
  H^X\ar@/^5ex/[rr]^{T, t}="g1"\ar[rr]^(0.35){T', t'}\ar@{}[rr]|{}="g2"
  \ar@/_5ex/[rr]_{T'', t''}="g3"&&H^Y
  \ar@{=>}^{\alpha} "g1"+<0ex,-2ex>;"g2"+<0ex,1ex>
  \ar@{=>}^{\alpha'} "g2"+<0ex,-1ex>;"g3"+<0ex,2ex>
}
$$
For any object $\H \in H^X$, we get morphisms
$\alpha_\H$ and $\alpha'_\H$ in $H^Y$.  Their composite is easy to calculate:
\[
\begin{array}{cccc}
   (\alpha_{\H'})(\alpha_\H)_y \maps &  \displaystyle
            \direct_X \extd t_y  \; T_{y,x} \tensor {\H_x} & \to &
                                      \displaystyle
       \direct_X \extd t''_y  \; T''_{y,x} \tensor {\H_x} \\ \\
      &   \direct_X \extd t_y  \; \psi_{y,x} & \mapsto &
        \direct_X \extd t''_y  \;
[(\tilde \alpha'_{y,x}\tilde \alpha_{y,x})\tensor \unit_{\H_x}](\psi_{y,x})
\end{array}
\]
So, the composite is a measurable natural transformation
$\alpha' \cdot \alpha$ with:
\beq \label{resc-vert2mor}
(\widetilde{\alpha'\cdot\alpha})_{y,x}
= \tilde\alpha'_{y,x} \tilde\alpha_{y,x}  .
\eeq
For some calculations it will be useful to have this equation written 
explicitly in terms of the original fields $\alpha$ and $\alpha'$, rather 
than their rescalings:
\beq \label{vert2mor}
\left(\alpha'\cdot \alpha\right)_{y,x} =
            \sqrnd{t''_y}{t_y}
            \sqrnd{t'_y}{t''_y}
            \sqrnd{t_y}{t'_y} \;
\alpha'_{y,x} \alpha_{y,x}
\eeq
This equality defines the composite field almost everywhere for the 
geometric mean measure $\sqrt{t_y t''_y}$.

Next, let us compute the horizontal composite of two
matrix natural transformations:
\[
  \xymatrix{
  H^X\ar@/^2ex/[rr]^{T, t}="g1"\ar@/_2ex/[rr]_{T',  t'}="g2"&& H^Y
  \ar@/^2ex/[rr]^{U,  u}="g3"\ar@/_2ex/[rr]_{U', u'}="g4"&& H^Z
  \ar@{=>}^{\alpha} "g1"+<0ex,-2.5ex>;"g2"+<0ex,2.5ex>
  \ar@{=>}^{\beta} "g3"+<0ex,-2.5ex>;"g4"+<0ex,2.5ex>
}
\]
Recall that the horizontal composite $\beta \circ \alpha$ is
defined so that
\[
\xymatrix{
  UT\H \ar[rr]^{U\alpha_\H}\ar[dd]_{\beta_{T\H}}\ar[ddrr]|-{(\beta\circ \alpha)_\H}&&UT'\H
\ar[dd]^{\beta_{T'\H}}\\
  \\
  U'T\H \ar[rr]_{U'\alpha_\H}&&U'T'\H
}
\]
commutes. Let us pick an element $\psi \in UT\H$, which can be written
in the form
\[
\psi_z = \direct_X \extd (ut)_z \,\psi_{z,x}, \quad \mbox{with} \quad
\psi_{z,x} = \direct_Y \extd k_{z, x} \, \psi_{z, y, x}
\]
by definition of the composite field $UT$. Note that, thanks to
Eq.\ (\ref{integration_meas}) which defines the family of measures $k_{z,
x}$, the section $\psi_z$ can also be written as
\[
\psi_z = \direct_Y \extd u_z \,\psi_{z,y}, \quad
\mbox{with} \quad \psi_{z, y} = \direct_X \extd t_y\,\psi_{z,y,x}
\]
Having introduced all these notations, we now evaluate the image of
$\psi$ under the morphism $(\beta\circ\alpha)_\H$:
 \begin{align*}
((\beta\circ\alpha)_\H)_z(\psi_z)
&= (U'\alpha_\H)_z\circ(\beta_{T\H})_z (\psi_z)   \\
&= \left (\direct_Y {\extd u'_z} \,\unit_{U'_{z,y}} \tensor
(\alpha_{\H})_y \right)
    \left( \direct_Y \extd u'_z \,[\tilde\beta_{z,y}\tensor
\unit_{(T\H)_y}](\psi_{z, y}) \right)         \\
&=  \direct_Y \extd u'_z \,[\tilde\beta_{z,y} \tensor
(\alpha_{\H})_y](\psi_{z, y})           \\
&=\direct_Y \extd u'_z \direct_X \extd t'_y \,
       [\tilde\beta_{z,y}\tensor \tilde\alpha_{y,x}\tensor
\unit_{\H_x}](\psi_{z,y,x})
\end{align*}
Applying the disintegration theorem, we can rewrite this last direct
integral as an integral over $X$ with respect to the measure
\[
(u't')_z = \int_Y \extd u'_z(y) \, t'_y
\]
We obtain
\begin{align*}
((\beta\circ\alpha)_\H)_z(\psi_z) &=\direct_X \extd (u't')_z
           \direct_Y \extd k'_{z,x} \,
[\tilde\beta_{z,y}\tensor \tilde\alpha_{y,x}\tensor \unit_{\H_x}](\psi_{z,y,x})\\
&= \direct_X \extd (u't')_z
 [(\widetilde{\beta\circ\alpha})_{z, x} \otimes \unit_{\H_x}] (\psi_{z, x})
\end{align*}
where
\beq \label{resc-hor2mor}
(\widetilde{\beta \circ \alpha})_{z,x}(\psi_{z, x}) =
\direct_Y \extd k'_{z,x}\,
[\tilde\beta_{z,y}\tensor \tilde\alpha_{y,x}](\psi_{z, y, x}).
\eeq
Equivalently, in terms of the original fields $\alpha$ and $\beta$:
\beq \label{hor2mor}
\left(\beta \circ \alpha\right)_{z,x}(\psi_{z, x})  =  \sqrnd{(u't')_z}{(ut)_z}   \direct_Y \extd k'_{z,x}\,
[\sqrnd{u_z}{u'_z} \sqrnd{t_y}{t'_y} \beta_{z,y}\tensor \alpha_{y,x}](\psi_{z, y, x})
\eeq

A special case is worth mentioning.  When the source and target
morphisms of $\alpha$ and $\beta$ coincide, we have $k=k'$, and the
horizontal composition formula above simply says
$(\beta\circ\alpha)_{z,x}$ is a direct integral of the fields of
operators $\beta_{z,y}\tensor\alpha_{y,x}$.

Besides composition of 2-morphisms in $\me$ we also need identity
2-morphsms.  Given a matrix functor $T\maps H^X \to H^Y$, its identity
2-morphism $\unit_T\maps T \To T$ is, up $t$-\alme--equivalence,
given by the field of identity operators:
\[
(\unit_T)_{y,x}=\unit_{T_{y,x}}\maps T_{y,x} \arr T_{y,x}.
\]
This acts as an identity for the vertical composition; the identity
2-morphism of an identity morphism, $\unit_{\unit_X}$, acts as an
identity for horizontal composition as well.

In calculations, it is often convenient to be able to describe a
2-morphism either by $\alpha$ or its rescaling $\tilde\alpha$.  The
relationship between these two descriptions is given by the following:
\begin{lemma}
The fields $\alpha_{y,x}$ and $\alpha'_{y,x}$ are $\sqrt{t_y
t'_y}$-equivalent if and only if their rescalings $\tilde\alpha_{y,x}$
and $\tilde\alpha'_{y,x}$ are $t'_y$-equivalent.
\end{lemma}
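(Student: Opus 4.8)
The plan is to fix $y\in Y$ throughout---since both notions of equivalence are defined coordinate $y$ by coordinate---and to work with the single nonnegative function $f(x) := \sqrnd{t_y}{t'_y}$ on $X$, so that the rescaling formula (\ref{rescaling}) reads $\tilde\alpha_{y,x} = f(x)\,\alpha_{y,x}$, and likewise for $\alpha'$. Recall that, by the conventions set up before (\ref{abso-cts}), $\rnd{t_y}{t'_y}$ denotes the Radon--Nikodym derivative of the absolutely continuous part $t^c_y := t_y^{t'_y}$ of $t_y$ with respect to $t'_y$; thus $t^c_y$ has density $f^2$ with respect to $t'_y$, and, as noted in the discussion following (\ref{rescaling}), $t^c_y$ is equivalent to $\sqrt{t_y t'_y}$. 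These two facts are the only analytic inputs required.

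First I would unwind the $t'_y$-condition. Saying that $\tilde\alpha_{y,x}$ and $\tilde\alpha'_{y,x}$ agree for $t'_y$-almost every $x$ means $f(x)\,(\alpha_{y,x}-\alpha'_{y,x}) = 0$ for $t'_y$-\alme. Split $X$ into $P := \{x : f(x) > 0\}$ and its complement $N := \{x : f(x) = 0\}$. On $N$ the equation $f\,(\alpha-\alpha') = 0$ holds automatically, while on $P$ we may cancel the scalar $f(x)>0$; hence the $t'_y$-condition is equivalent to demanding $\alpha_{y,x} = \alpha'_{y,x}$ for $t'_y$-almost every $x \in P$.

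Next I would transfer this last statement to the geometric mean measure. Since $t^c_y(S) = \int_S f^2\,\extd t'_y$ for every measurable $S$, and $f^2 > 0$ precisely on $P$, a subset of $P$ is $t'_y$-null if and only if it is $t^c_y$-null; moreover $t^c_y$ assigns no mass to $N$. Therefore ``$\alpha_{y,x} = \alpha'_{y,x}$ for $t'_y$-\alme\ $x\in P$'' is the same as ``$\alpha_{y,x} = \alpha'_{y,x}$ for $t^c_y$-\alme\ $x$''. Finally, because $t^c_y \sim \sqrt{t_y t'_y}$, these two measures have exactly the same null sets, so the condition is in turn equivalent to $\alpha_{y,x} = \alpha'_{y,x}$ for $\sqrt{t_y t'_y}$-almost every $x$---that is, to the $\sqrt{t_y t'_y}$-equivalence of $\alpha_{y,x}$ and $\alpha'_{y,x}$. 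Reading this chain of equivalences in both directions yields the lemma.

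The only delicate point---and the step I would treat most carefully---is the behaviour on the set $N$ where $f$ vanishes: one must confirm both that the vanishing of $f$ makes the $t'_y$-condition vacuous there, and that $N$ carries no $\sqrt{t_y t'_y}$-mass, so that any discrepancy between $\alpha$ and $\alpha'$ on $N$ is invisible to \emph{both} equivalence relations. Both facts follow immediately from $t^c_y \sim \sqrt{t_y t'_y}$ together with $t^c_y$ having density $f^2$ with respect to $t'_y$, so no further estimates are needed; the rest is bookkeeping.
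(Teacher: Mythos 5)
Your proposal is correct and is essentially the paper's own proof: both arguments hinge on the observation that the set where the rescaled fields differ is exactly $A_y \cap \{x : f(x) \neq 0\}$, where $A_y$ is the set on which $\alpha_{y,x}$ and $\alpha'_{y,x}$ differ and $f = \sqrt{\extd t_y/\extd t'_y}$, and both then transfer null sets via the fact that $\sqrt{t_y t'_y}$ has density $f$ with respect to $t'_y$. Your one deviation---routing through the absolutely continuous part $t^c_y$ (density $f^2$) and its equivalence $t^c_y \sim \sqrt{t_y t'_y}$, rather than integrating $f$ directly over $A_y$ and $A_y - \tilde A_y$ as the paper does---is a cosmetic repackaging of the same computation, not a different method.
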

\begin{proof}
For each $y$, let $A_y$ and $\tilde A_y$ be the subsets of $X$ on
which $\alpha \neq \alpha'$, and $\tilde\alpha\neq \tilde\alpha'$,
respectively. Observe that $\tilde A_y$ is the intersection of $A_y$
with the set of $x$ for which the rescaling factor is non-zero:
\[
\tilde A_y = A_y \cap \left\{x : {\textstyle \sqrnd{t_y}{t'_y}(x)}
\not= 0\right\}.
\]
Supposing first that $\alpha_{y,x}$ and $\alpha'_{y,x}$ are $\sqrt{t_y
t'_y}$-equivalent, we have $\sqrt{ t_y t'_y}\,(A_y) = 0$, so by the
definition of the geometric mean measure
\[
\sqrt{ t_y  t'_y}\,(A_y) =  \int_{A_y} \extd t'_y\, \sqrnd{t_y}{t'_y} = 0.
\]
Thus the rescaling factor vanishes for $t'_y$-almost every $x \in
A_y$; that is, $\tilde A_y$ has $t'_y$-measure zero. Conversely, if
$t'_y(\tilde A_y) =0$, we have:
\[
    \sqrt{ t_y t'_y}\,(A_y) = \sqrt{ t_y t'_y}\,(\tilde A_y) + \sqrt{
    t_y t'_y}\,(A_y - \tilde A_y).
\]
The first term on the right vanishes because $\sqrt{ t_y t'_y} \ll
t'_y$, while the second vanishes since $\sqrnd{t_y}{t'_y} = 0$ on $A_y
- \tilde A_y$.  So, the rescaling $\alpha \mapsto \tilde\alpha$
induces a one-to-one correspondence between $\sqrt{tt'}$-classes of
fields $\alpha$ and $t'$-classes of rescaled fields $\tilde\alpha$.
\end{proof}

%
\subsubsection{Construction of $\me$ as a 2-category}
\label{2-category}
%

\begin{theo} \label{me} There is a sub-2-category $\me$
of $\Cat$ where the objects are measurable categories, the morphisms
are measurable functors, and the 2-morphisms are measurable
natural transformations.
\end{theo}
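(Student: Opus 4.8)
The plan is to follow the pattern already established for Theorem~\ref{twoVe}, replacing $\Vect^N$ by $H^X$, linear equivalences by $C^*$-equivalences, the matrix functors of $\twoVe$ by the matrix functors of $\me$, and natural transformations by bounded natural transformations. Since measurable functors and $C^*$-equivalences are by construction $C^*$-functors, and natural transformations compose in $\Cat$, the only genuine content is to check that the three classes of arrows contain the identities and are closed under the relevant compositions. The identities are immediate: the identity functor $\unit_X\maps H^X \to H^X$ was exhibited above as a matrix functor, so it is measurable, and the identity natural transformation is trivially bounded.

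First I would dispatch the 2-morphisms. Because a measurable natural transformation is merely a bounded natural transformation between measurable functors, closure under vertical and horizontal composition reduces to two observations: that the underlying transformation of a composite is again natural (automatic in $\Cat$), and that boundedness is preserved. For matrix natural transformations between matrix functors this is visible directly from the explicit composition formulas (\ref{vert2mor}) and (\ref{hor2mor}), which express the composite as a uniformly bounded field; Theorem~\ref{magic.thm} then certifies that such a composite is again a matrix natural transformation. For general measurable functors one transports back to the matrix case by conjugating with the bounded natural isomorphisms to composites $GTF$, using that conjugation by a $C^*$-equivalence preserves the bound.

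The heart of the argument is closure of measurable functors under composition, which I would establish through the exact analogues of Lemmas~\ref{composition.0}--\ref{composition.3}. The analogue of Lemma~\ref{composition.0}---that a composite of matrix functors is boundedly naturally isomorphic to a matrix functor---is already sketched in the construction of $\me$: given matrix functors $(T,t)\maps H^X \to H^Y$ and $(U,u)\maps H^Y \to H^Z$, the disintegration theorem (Theorem~\ref{disintegration.theo}) produces the family $k_{z,x}$ satisfying (\ref{swapping_disintegrations}), and formulas (\ref{meas1compo}) and (\ref{convol_1maps}) define the composite matrix functor $(UT,ut)$, with measurability of $ut$ and $UT$ supplied by Yetter. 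The analogues of Lemmas~\ref{composition.2} and~\ref{composition.3} are then formal: one conjugates a measurable functor by arbitrary $C^*$-equivalences, reduces every factor to a matrix functor, and invokes the matrix-functor composition lemma to conclude.

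The hard step---and the place where the measurable theory genuinely departs from the finite-dimensional one---is the analogue of Lemma~\ref{composition.1}: that a $C^*$-equivalence $F\maps H^X \to H^Y$ is itself a measurable functor, boundedly naturally isomorphic to a matrix functor. In the $\twoVe$ case this followed from the rigidity of bases: an equivalence permutes the indecomposable objects $e_i$, forcing $N=M$ and yielding a permutation matrix functor. No such finite combinatorial structure is available for $H^X$, so I would argue analytically instead. The endomorphisms of a suitable generating object of $H^X$ form a commutative algebra encoding the measurable structure of $X$, and a $C^*$-functor must intertwine these algebras; this should force $F$ to arise from a Borel isomorphism of the underlying measurable spaces $X$ and $Y$ together with a uniformly bounded measurable field of invertible operators---precisely the data of a matrix functor. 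Pinning down this induced isomorphism and verifying the measurability and uniform boundedness of the accompanying field is the main obstacle, and it is where the structure theory for measurable fields of Hilbert spaces \cite{Dixmier} and Theorem~\ref{magic.thm} carry the weight.
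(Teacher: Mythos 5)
Your proposal reproduces the paper's proof almost exactly: identities handled via the matrix-functor description of $\unit_X$ and $\unit_T$, closure of measurable natural transformations reduced to Thm.~\ref{magic.thm} together with the explicit formulas (\ref{vert2mor}) and (\ref{hor2mor}), and composition of measurable functors established through the four-lemma ladder (Lemmas \ref{me.composition.0}--\ref{me.composition.3}) mirroring Lemmas \ref{composition.0}--\ref{composition.3} from the $\twoVe$ case. The single point of divergence is the hard step you correctly isolate---that a $C^*$\!-equivalence $F \maps H^X \to H^Y$ comes from a measurable bijection and is boundedly naturally isomorphic to a matrix functor---which the paper does not prove directly but resolves by citing Yetter's Thm.~40 (noting his proof must be repaired by adding the hypothesis that $F$ is linear on morphisms, automatic for a $C^*$\!-functor), and likewise it cites Yetter's Thm.~45 for matrix-functor composition while checking boundedness of the natural isomorphism from equation (\ref{bounded.natural.iso}); your sketched analytic route via commutative algebras of multiplication operators is in the right spirit (it is essentially the von Neumann algebra perspective of Section \ref{conclusion}) but is left as an outline where the paper leans on the external reference.
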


In Section \ref{morphisms} we showed that for any measurable space $X$,
the identity $\unit_X \maps H^X \to H^X$ is a matrix functor.
It follows that the identity on any measurable category is
a measurable functor.  Similarly, in Section \ref{2-morphisms} we showed
that for any matrix functor $T$, the identity $1_T \maps T \To T$
is a matrix natural transformation.  This implies that the identity on
any measurable functor is a measurable natural transformation.
To prove that the composite of measurable functors is measurable,
we will use the sequence of lemmas below.  Since measurable natural
transformations are just bounded natural transformations between
measurable functors, by Thm.\ \ref{magic.thm}, it will then
easily follow that measurable natural transformations are closed under
vertical and horizontal composition.

\begin{lemma}
\label{me.composition.0}
A composite of matrix functors is boundedly naturally isomorphic to a
matrix functor.
\end{lemma}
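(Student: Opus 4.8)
The plan is to follow the pattern of Lemma \ref{composition.0}, but now the sum over the intermediate index is replaced by a direct integral and the matrix product by the convolution formula (\ref{convol_1maps}). In fact the essential data have already been assembled in Section \ref{morphisms}: given matrix functors $(T,t)\maps H^X \to H^Y$ and $(U,u)\maps H^Y \to H^Z$, I would take the candidate composite to be the matrix functor whose family of measures $ut$ is defined by (\ref{meas1compo}) and whose field of Hilbert spaces $UT$ is defined by (\ref{convol_1maps}). The task is then to check that this pair really is a matrix functor and that the genuine composite $UT$ (as a functor) is boundedly naturally isomorphic to it.

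First I would verify that $ut$ is a legitimate uniformly finite $Z$-indexed measurable family of measures on $X$. Uniform finiteness is immediate: if $t_y(X) < M_t$ and $u_z(Y) < M_u$ for all $y,z$, then $(ut)_z(X) = \int_Y \extd u_z(y)\, t_y(X) \le M_t M_u$. For the field of measures $k_{z,x}$ appearing in (\ref{convol_1maps}), I would apply the disintegration theorem (Thm.\ \ref{disintegration.theo}) to the measure $\lambda_z := \int_Y \extd u_z(y)\,(\delta_y \tensor t_y)$ on $Y \times X$, disintegrating it over its $X$-marginal. That marginal is exactly $(ut)_z$, since $\lambda_z(Y \times V) = \int_Y \extd u_z(y)\, t_y(V) = (ut)_z(V)$; hence the hypothesis of the disintegration theorem holds trivially, and we obtain the $k_{z,x}$ satisfying the swapping identity (\ref{swapping_disintegrations}), unique for $(ut)_z$-almost every $x$. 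The measurability of the family $ut$ and of the field $UT$ built from the $k_{z,x}$ are the genuinely technical points; for these I would cite Yetter \cite{Yetter2}, as indicated in Section \ref{morphisms}.

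With $(UT,ut)$ established as a matrix functor, the remaining step is the bounded natural isomorphism (\ref{bounded.natural.iso}). For a fixed object $\H \in H^X$ and fixed $z$, a vector in $(UT\H)_z$ is an $L^2$-section of $U_{z,y}\tensor T_{y,x}\tensor \H_x$, and the two bracketings of the double direct integral---first over $X$ with $t_y$ then over $Y$ with $u_z$, versus first over $Y$ with $k_{z,x}$ then over $X$ with $(ut)_z$---are identified by the Fubini-type identity (\ref{integration_meas}), which is just (\ref{swapping_disintegrations}) read on functions. Since that identity equates the two measures on $Y\times X$ exactly, the induced map on $L^2$-sections is an isometry, in particular a bounded isomorphism with bound $1$. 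Naturality in $\H$ is automatic because a morphism $\phi\maps \H \to \H'$ acts as $\unit\tensor \phi_x$ in either description, so the isomorphism intertwines the two functors; and since the bound is uniform in $\H$, the natural isomorphism is bounded in the required sense.

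The main obstacle is not the algebra of rearranging integrals---that is dictated by the disintegration theorem---but the two measurability claims deferred to Yetter: that $z \mapsto (ut)_z$ is a measurable family of measures, and that $(z,x)\mapsto (UT)_{z,x}$ is a measurable field of Hilbert spaces. These require showing that the disintegrating measures $k_{z,x}$ can be chosen to depend measurably on $(z,x)$, which is exactly the delicate part of the disintegration machinery. Everything else is the bookkeeping verification that the isometry above is natural and that the constructed pair meets the definition of a matrix functor.
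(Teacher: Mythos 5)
Your proposal is correct and takes essentially the same route as the paper, whose proof simply cites Yetter \cite{Yetter2} together with the sketch in Section \ref{morphisms} that you reconstruct: the composite family $(ut)_z$ from (\ref{meas1compo}), the disintegration via Thm.\ \ref{disintegration.theo} yielding $k_{z,x}$ and the swapping identity (\ref{swapping_disintegrations}), the convolution field (\ref{convol_1maps}), and the deferral of the two measurability claims (for $ut$ and $UT$) to Yetter. Your observation that the identification of the two iterated direct integrals is an isometry, uniformly in $\H$, is exactly the paper's remark that the natural isomorphism in (\ref{bounded.natural.iso}) is bounded, a point Yetter did not emphasize.
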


\begin{proof} This was proved by Yetter \cite[Thm.\ 45]{Yetter2},
and we have sketched his argument in Section \ref{morphisms}.
Yetter did not emphasize that the natural isomorphism is bounded, but
one can see from equation (\ref{bounded.natural.iso}) that it is.
\end{proof}

\begin{lemma}
\label{me.composition.1}
If $F \maps H^X \to H^Y$ is a $C^*$\!-equivalence,
then there is a measurable bijection between $X$ and $Y$,
and $F$ is a measurable functor.
\end{lemma}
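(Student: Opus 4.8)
The plan is to follow the pattern of Lemma~\ref{composition.1}, replacing the finite standard basis $e_i$ by the ``point objects'' of $H^X$ and permutations by measurable bijections. For each $x \in X$ let $e_x$ denote the field with $(e_x)_{x'} = \C$ for $x' = x$ and $(e_x)_{x'} = 0$ otherwise. One checks that these are precisely the indecomposable objects of $H^X$: a field splits into a nontrivial direct sum as soon as it is nonzero at two distinct points or has dimension $\ge 2$ at a single point, and $\mathrm{Hom}(e_x, e_{x'}) = 0$ for $x \neq x'$. Since any $C^*$-equivalence preserves direct sums and hence indecomposables, $F$ must carry each $e_x$ to an indecomposable object of $H^Y$, so $F(e_x) \cong e_{\phi(x)}$ for a unique function $\phi \maps X \to Y$. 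Applying the same reasoning to a weak inverse $\bar F$ yields $\psi \maps Y \to X$ with $\bar F(e_y) \cong e_{\psi(y)}$; because $\bar F F \cong \unit$ and $F \bar F \cong \unit$ fix point objects up to isomorphism, $\psi$ and $\phi$ are mutually inverse, so $\phi$ is a bijection.

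Next I would show $\phi$ is bimeasurable, the crucial point being that $F$ preserves the dimension function. Applying $F$ to the idempotent endomorphism of a field $\H$ given by multiplication by $\chi_{\{x\}}$ exhibits the part of $\H$ over $x$, which is a direct sum of $\dim \H_x$ copies of $e_x$; its image under $F$ is a subobject of $F(\H)$ supported at $\phi(x)$ of the same dimension, so $\dim F(\H)_{\phi(x)} \ge \dim \H_x$, and the symmetric argument through $\bar F$ forces $\dim F(\H)_{\phi(x)} = \dim \H_x$ for every field $\H$ and every $x$. In particular, for a measurable set $A \subseteq X$ the field $\C_A$ that equals $\C$ on $A$ and $0$ elsewhere has dimension function $\chi_A$, so $F(\C_A)$ has dimension function $\chi_{\phi(A)}$. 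Since the dimension function of any object of $H^Y$ is measurable (by the dense-sequence axiom for measurable fields), $\phi(A)$ is measurable; the same argument through $\bar F$ shows $\phi^{-1} = \psi$ also carries measurable sets to measurable sets. Hence $\phi$ is a measurable bijection with measurable inverse.

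With $\phi$ established I would realize $F$ as a measurable functor by comparing it with the \emph{transport} matrix functor $\Phi \maps H^X \to H^Y$ determined by the Dirac family $t_y = \delta_{\psi(y)}$ on $X$ and the constant field $\C$ on $Y \times X$, so that $(\Phi\H)_y = \H_{\psi(y)}$. Bimeasurability of $\phi$ is exactly what makes $y \mapsto t_y(A) = \chi_{\phi(A)}(y)$ measurable, so $\Phi$ is a genuine matrix functor and, transporting back along $\psi$, a $C^*$-equivalence. It then suffices to prove that the auto-equivalence $E := \Phi^{-1} F$ of $H^X$ is boundedly naturally isomorphic to the identity. By construction $E$ fixes every point object and preserves dimension functions, and hence, by the naturality manipulations already used in Theorem~\ref{magic.thm}, commutes with the multiplication operators coming from bounded measurable functions on $X$.

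The hard part will be this final step: constructing the natural isomorphism $E \cong \unit$. Unlike the finite-dimensional situation of Lemma~\ref{composition.1}, not every object is a finite direct sum of point objects, so one cannot simply patch the isomorphism together over a basis, and a genuinely measure-theoretic argument is needed. Here I would reuse the strategy from the proof of Theorem~\ref{magic.thm}: fix the universal constant field $\K$ with $\K_x = \ell^2$, use $\dim E(\K)_x = \infty$ to choose an isomorphism $\eta_\K \maps \K \to E\K$, and then, since every field $\H$ is a direct summand of $\K$ \cite[Part~II, Chap.~1, Prop.~1]{Dixmier} via an inclusion $i \maps \H \to \K$ and projection $p \maps \K \to \H$, define $\eta_\H := E(p)\,\eta_\K\, i$. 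Verifying that $\eta_\H$ is independent of the chosen embedding, invertible, bounded, and natural is where the analysis concentrates; the decisive input is that $E$ intertwines the multiplication-operator actions, so that Dixmier's decomposability result (Lemma~\ref{magic.lemma}) forces $\eta_\K$, and hence each $\eta_\H$, to act as a measurable field of pointwise isomorphisms $\H_x \to E(\H)_x$. Granting this, $F \cong \Phi$ is a matrix functor, so $F$ is a measurable functor, completing the proof.
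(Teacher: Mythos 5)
Your proposal is necessarily a different route from the paper's, because the paper does not prove this lemma directly at all: it simply cites Yetter \cite[Thm.\ 40]{Yetter2}, adding only the remark that Yetter's argument must be repaired by requiring $F$ to be linear on morphisms (automatic for a $C^*$\!-equivalence). So you are reconstructing the content of that theorem, and your first two stages are sound: the point objects $e_x$ are exactly the indecomposables of $H^X$, any $C^*$\!-equivalence preserves them (binary direct sums are genuine biproducts here), the retract argument via $e_x^{\oplus n}$ correctly yields $\dim F(\H)_{\phi(x)} = \dim \H_x$ even for infinite-dimensional fibers, and deducing bimeasurability of $\phi$ from measurability of dimension functions is exactly right, as is the reduction to showing the autoequivalence $E = \Phi^{-1}F$ of $H^X$ is boundedly naturally isomorphic to the identity. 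Note that your asserted step ``$E$ commutes with multiplication operators'' is where linearity enters: $E$ induces a unital $\C$-linear endomorphism of $\mathrm{End}(e_x)\cong\C$, hence the identity on scalars, and then testing $E(T_f)$ against morphisms $e_x\to\H$ (which correspond to vectors in $\H_x$) gives $E(T_f)=T_f$ pointwise. This is provable, but it deserves the argument — it is precisely the hypothesis the paper says Yetter omitted, and complex conjugation shows it cannot be waved through.

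The genuine gap is in your final step. An isomorphism $\eta_\K \maps \K \to E\K$ chosen arbitrarily has no reason to satisfy $E(m)\,\eta_\K = \eta_\K\, m$ for all $m \in \mathrm{End}(\K)$, and that identity is exactly what both the well-definedness of $\eta_\H := E(p)\,\eta_\K\, i$ (independence of $(i,p)$ follows from it applied to $m = i'p$) and naturality reduce to; nothing in your construction produces it. The appeal to Lemma~\ref{magic.lemma} is misplaced: that lemma decomposes an operator on a direct integral $\direct_X \extd\mu\, T_x$ commuting with $L^\infty(X,\mu)$, but in the present problem there is no measure on $X$ at all, and every morphism of $H^X$ is already a measurable field of pointwise operators, so the lemma's conclusion is vacuous here and cannot ``force'' anything about $\eta_\K$. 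What is actually needed is this: the automorphism $m \mapsto \eta_\K^{-1}E(m)\eta_\K$ of $\mathrm{End}(\K)$ fixes the multiplication operators, hence restricts at each $x$ to a $*$-automorphism of $B(\ell^2)$; each such automorphism is inner, and the crux is to select implementing unitaries $u_x$ measurably in $x$ and uniformly bounded. That measurable-selection argument is the analytic core of Yetter's Thm.\ 40, i.e.\ exactly the content the paper outsources by citation. Until it is supplied, your $\eta$ is neither well defined nor natural, and the conclusion $F \cong \Phi$ remains unproved.
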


\begin{proof}
This was proved by Yetter \cite[Thm.\ 40]{Yetter2}.   In fact,
Yetter failed to require that $F$ be linear on morphisms,
which is necessary for this result.  Careful examination of his
proof shows that it can be repaired if we include this extra condition,
which holds automatically for a $C^*$\!-equivalence.
\end{proof}

\begin{lemma}
\label{me.composition.2}
If $T\maps \HH \to \HH'$ is a measurable functor
and $F\maps \HH \to H^X$, $G\maps H^Y \to \HH'$
are {\rm arbitrary} $C^*$\!-equivalences,
then $T$ is naturally isomorphic to the composite
\[
\xymatrix{
   \HH \ar[r]^(.4){F}&
   H^X \ar[r]^{\tilde{T}}&
   H^Y \ar[r]^(.6){G}&
   \HH'
}
\]
for some matrix functor $\tilde{T}$.
\end{lemma}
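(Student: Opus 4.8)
The plan is to follow the proof of Lemma~\ref{composition.2} almost verbatim, replacing linear equivalences by $C^*$\!-equivalences, natural isomorphisms by bounded natural isomorphisms, and the permutation-matrix step of Lemma~\ref{composition.1} by its measurable analogue. First I would unfold the definition of measurable functor: since $T\maps\HH\to\HH'$ is measurable, there exist $C^*$\!-equivalences $F'\maps\HH\to H^{X'}$ and $G'\maps H^{Y'}\to\HH'$ and a matrix functor $\tilde{T}'\maps H^{X'}\to H^{Y'}$ such that $T$ is boundedly naturally isomorphic to the composite $G'\,\tilde{T}'\,F'$.

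Next, writing $\bar F\maps H^X\to\HH$ and $\bar G\maps\HH'\to H^Y$ for weak inverses of the given equivalences $F$ and $G$, I would define the candidate matrix functor as the composite
\[
   \tilde{T} \;=\; (\bar G\,G')\,\tilde{T}'\,(F'\,\bar F)
   \maps H^X \arr H^Y .
\]
Here $F'\bar F\maps H^X\to H^{X'}$ and $\bar G\,G'\maps H^{Y'}\to H^Y$ are each composites of $C^*$\!-equivalences, hence $C^*$\!-equivalences between categories of the form $H^{(\cdot)}$. The point of the construction is that, once these two outer factors are shown to be boundedly naturally isomorphic to matrix functors, $\tilde{T}$ becomes a composite of three functors each boundedly naturally isomorphic to a matrix functor, and Lemma~\ref{me.composition.0} collapses it to a single matrix functor. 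It then remains only to check that $G\,\tilde{T}\,F$ recovers $T$: substituting the definition and using $\bar F F\cong\unit_{\HH}$ and $G\bar G\cong\unit_{\HH'}$ gives $G\,\tilde T\,F\cong G'\,\tilde T'\,F'\cong T$, all isomorphisms bounded. Replacing $\tilde T$ by the actual matrix functor it is isomorphic to then yields the statement.

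The one genuinely new input---and the step I expect to be the main obstacle---is the measurable replacement for Lemma~\ref{composition.1}: that a $C^*$\!-equivalence between two categories $H^X$ and $H^Y$ is boundedly naturally isomorphic to a matrix functor. This is where Lemma~\ref{me.composition.1} does the work: it supplies a measurable bijection $\sigma$ between the underlying spaces, and from $\sigma$ one builds an explicit matrix functor, the analogue of the permutation-matrix functor, using the $Y$\!-indexed family of Dirac measures $\delta_{\sigma(y)}$ on $X$ together with the trivial one-dimensional field $\C$ supported on the graph of $\sigma$. I would then verify, just as in the finite-dimensional case, that this matrix functor is boundedly naturally isomorphic to the original $C^*$\!-equivalence; the essential boundedness is automatic because a $C^*$\!-equivalence preserves norms. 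Applying this to the two outer factors $F'\bar F$ and $\bar G\,G'$ completes the argument, exactly as Lemma~\ref{composition.1} was used inside Lemma~\ref{composition.2}.
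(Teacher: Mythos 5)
Your proposal is correct and follows essentially the same route as the paper's proof: unfold the measurability of $T$ to get $T\cong G'\,\tilde T'\,F'$, sandwich with the weak inverses $\bar F,\bar G$ to form $\tilde T=(\bar G\,G')\,\tilde T'\,(F'\,\bar F)$, reduce the outer $C^*$\!-equivalences to matrix functors via Lemma~\ref{me.composition.1}, collapse the triple composite with Lemma~\ref{me.composition.0}, and verify $G\,\tilde T\,F\cong T$ using $\bar F F\cong \unit_{\HH}$ and $G\bar G\cong \unit_{\HH'}$. Your explicit construction of the matrix representative of a $C^*$\!-equivalence from the measurable bijection (Dirac measures $\delta_{\sigma(y)}$ with the constant field $\C$) is exactly the content the paper delegates to Lemma~\ref{me.composition.1} and Yetter's Theorem~40 (and later makes explicit in Theorem~\ref{invert} and Corollary~\ref{representative.equivalences}), so you have simply filled in a cited step rather than taken a different path.
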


\begin{proof} The proof is analogous to the proof
of Lemma \ref{composition.2}.  Since $T$ is measurable we know
there exist $C^*$\!-equivalences $F'\maps \HH \to H^{X'}$, $G'\maps
H^{Y'} \to \HH'$ such that $T$ is boundedly naturally isomorphic to
the composite
\[
\xymatrix{
   \HH \ar[r]^(.4){F'}&
   H^{X'} \ar[r]^{\tilde{T}'}&
   H^{Y'} \ar[r]^(.6){G'}&
   \HH'
}
\]
for some matrix functor $\tilde{T}'$.  By Lemma \ref{me.composition.1}
we may assume $X' = X$ and $Y' = Y$.  So, let $\tilde{T}$ be
the composite
\[
\xymatrix{
   H^X \ar[r]^(.6){\bar{F}}&
   \HH \ar[r]^(.4){F'}&
   H^X \ar[r]^{\tilde{T}'}&
   H^Y \ar[r]^(.6){G'}&
   \HH' \ar[r]^(.4){\bar{G}}&
   H^Y
}
\]
where the weak inverses $\bar{F}$ and $\bar{G}$ are chosen using the
fact that $F$ and $G$ are $C^*$\!-equivalences.  Since $F' \bar{F} \maps
H^X \to H^X$ and $\bar{G} G' \maps H^Y \to H^Y$ are
$C^*$\!-equivalences, they are matrix functors by Lemma
\ref{me.composition.1}.  It follows that $\tilde{T}$ is a composite of
three matrix functors, hence boundedly naturally isomorphic to a
matrix functor by Lemma \ref{me.composition.0}.  Moreover, the
composite
\[
\xymatrix{
   \HH \ar[r]^(.4){F}&
   H^X \ar[r]^{\tilde T}&
   H^Y \ar[r]^(.6){G}&
   \HH'
}
\]
is boundedly naturally isomorphic to $T$.
Since $F$ and $G$ are $C^*$\!-equivalences and $\tilde T$ is boundedly
naturally isomorphic to a matrix functor, it follows that $T$
is a measurable functor.
\end{proof}

\begin{lemma}
\label{me.composition.3}
A composite of measurable functors is measurable.
\end{lemma}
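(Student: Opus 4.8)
The plan is to follow the proof of Lemma~\ref{composition.3} almost verbatim, replacing each ingredient of the finite-dimensional argument by its measurable counterpart: matrix functors in the sense of $\me$ play the role of matrix functors, $C^*$\!-equivalences replace linear equivalences, the standard spaces $H^X$ replace the $\Vect^N$, and \emph{bounded} natural isomorphisms replace plain natural isomorphisms throughout. Since all three supporting lemmas needed for this substitution---Lemmas~\ref{me.composition.0}, \ref{me.composition.1} and \ref{me.composition.2}---are now in hand, the argument will be essentially formal.

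Concretely, suppose we have a composable pair of measurable functors $T \maps \HH \to \HH'$ and $U \maps \HH' \to \HH''$. By the definition of measurable functor, $T$ is boundedly naturally isomorphic to a composite $\HH \stackrel{F}{\to} H^X \stackrel{\tilde T}{\to} H^Y \stackrel{G}{\to} \HH'$, where $\tilde T$ is a matrix functor and $F,G$ are $C^*$\!-equivalences. Let $\bar G \maps \HH' \to H^Y$ be a weak inverse of $G$, which is again a $C^*$\!-equivalence. I would then apply Lemma~\ref{me.composition.2} to $U$, taking $\bar G$ as the prescribed first $C^*$\!-equivalence and any $C^*$\!-equivalence $H \maps H^Z \to \HH''$ as the last (such an $H$ exists because $\HH''$ is a measurable category). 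This shows $U$ is boundedly naturally isomorphic to $\HH' \stackrel{\bar G}{\to} H^Y \stackrel{\tilde U}{\to} H^Z \stackrel{H}{\to} \HH''$ for some matrix functor $\tilde U$. The crucial point, handled inside Lemma~\ref{me.composition.2} via the measurable bijection of Lemma~\ref{me.composition.1}, is that the intermediate space here is exactly the $H^Y$ appearing as the target of $\tilde T$, so that $\tilde U$ and $\tilde T$ are composable.

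Composing, $UT$ is boundedly naturally isomorphic to $\HH \stackrel{F}{\to} H^X \stackrel{\tilde T}{\to} H^Y \stackrel{G}{\to} \HH' \stackrel{\bar G}{\to} H^Y \stackrel{\tilde U}{\to} H^Z \stackrel{H}{\to} \HH''$. Since $\bar G G$ is boundedly naturally isomorphic to $\unit_{H^Y}$, the middle pair $G$ then $\bar G$ collapses, leaving $UT$ boundedly naturally isomorphic to $\HH \stackrel{F}{\to} H^X \stackrel{\tilde U \tilde T}{\to} H^Z \stackrel{H}{\to} \HH''$. By Lemma~\ref{me.composition.0}, the composite $\tilde U \tilde T$ of matrix functors is boundedly naturally isomorphic to a single matrix functor; as $F$ and $H$ are $C^*$\!-equivalences, it follows that $UT$ is a measurable functor.

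I expect no genuine obstacle at this stage: all the analytic difficulty---the disintegration/Fubini manipulation behind Lemma~\ref{me.composition.0} and the reconstruction of a measurable bijection from a $C^*$\!-equivalence behind Lemma~\ref{me.composition.1}---has already been absorbed into the earlier lemmas. The only points demanding care are bookkeeping ones: keeping every natural isomorphism \emph{bounded} (guaranteed because Lemma~\ref{me.composition.0} yields bounded isomorphisms, $C^*$\!-equivalences are defined through bounded isomorphisms, and $C^*$\!-functors are norm-preserving, so that whiskering a bounded natural isomorphism by a matrix functor keeps it bounded), and verifying that the two occurrences of $H^Y$ genuinely coincide so that $G$ and $\bar G$ cancel---which is exactly what the freedom to prescribe the outer $C^*$\!-equivalences in Lemma~\ref{me.composition.2} buys us.
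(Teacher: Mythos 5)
Your proposal is correct and takes essentially the same route as the paper's own proof: both decompose $T$ as $G\,\tilde T\, F$ with $\tilde T$ a matrix functor, apply Lemma \ref{me.composition.2} to $U$ with the weak inverse $\bar G$ as the prescribed incoming $C^*$\!-equivalence (so the intermediate space is the same $H^Y$ and the pair $G$, $\bar G$ cancels), and conclude via Lemma \ref{me.composition.0} that $\tilde U \tilde T$ is boundedly naturally isomorphic to a matrix functor, hence $UT$ is measurable. Your added bookkeeping remarks about boundedness of the whiskered isomorphisms are sound but go no further than what the paper's argument implicitly uses.
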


\begin{proof}
The proof is analogous to the proof of Lemma \ref{composition.3}.
Suppose we have a composable pair of measurable functors
$T\maps \HH \to \HH'$ and $U\maps \HH' \to \HH''$.  By definition,
$T$ is boundedly naturally isomorphic to a composite
\[
\xymatrix{
   \HH \ar[r]^(.4){F}&
   H^X \ar[r]^{\tilde{T}}&
   H^Y \ar[r]^(.6){G}&
   \HH'
}
\]
where $\tilde{T}$ is a matrix functor and $F$ and $G$ are $C^*$\!-equivalences.
By Lemma \ref{me.composition.2}, $U$ is naturally isomorphic
to a composite
\[
\xymatrix{
   \HH' \ar[r]^(.4){\bar{G}}&
   H^Y \ar[r]^{\tilde{U}}&
   H^X \ar[r]^(.55){H}&
   \HH''
}
\]
where $\tilde{U}$ is a matrix functor,
$\bar{G}$ is the chosen weak inverse for $G$, and $H$ is a
$C^*$\!-equivalence.   The composite $UT$ is thus
boundedly naturally isomorphic to
\[
\xymatrix{
   \HH \ar[r]^(.4){F}&
   H^X \ar[r]^{\tilde{U}\tilde{T}}&
   H^Z \ar[r]^(.55){H}&
   \HH''
}
\]
Since $\tilde{U}\tilde{T}$ is a matrix functor by Lemma
\ref{me.composition.0}, it follows that $UT$ is a measurable functor.
\end{proof}

%
\section{Representations on measurable categories}
\label{MeasRep}
%

With the material presented in the previous sections, we now have a
general framework to study representations of 2-groups on measurable
categories---that is, representations in the 2-category $\me$.
Unpacking this representation theory and seeing what it amounts to
concretely is now an essentially computational matter, which we
turn to in this section.

We begin by summarizing the main results.

\subsection{Main results}
\label{main_results}

Let us summarize our main results.  We now assume that $\G$ is a
skeletal 2-group.  In the crossed module description, since the
homomorphism $\d\maps H\to G$ is trivial, $\G$ simply amounts to an
{\em abelian} group $H$ and an action $\rhd$ of a group $G$ as
automorphisms of $H$.  We also assume that all the spaces and maps
involved are measurable.  Under these assumptions we can describe
representations of $\G$, as well as intertwiners and 2-intertwiners,
in terms of familiar geometric constructions---but living in the
category of measurable spaces, rather than smooth manifolds.

To understand these constructions, we first define
$H^*$ to be the set of measurable homomorphisms
\[  \chi \maps H \to \C^\times \]
where $\C^\times$ is the multiplicative group of nonzero complex
numbers.  The set $H^*$ becomes a group under pointwise multiplication:
\[   (\chi \xi)(h) = \chi(h) \xi(h) . \]
Under some mild conditions on $H$, $H^*$ is again a
measurable space, and its group operations are measurable.
The left action $\rhd$ of $G$ on $H$ naturally
induces a right action of $G$ on $H^*$, say $(\chi,g) \mapsto \chi_g$,
given by
\[      \chi_g[h] = \chi[g\rhd h].\]
This promotes $H^*$ to a right $G$-space.

Essentially---ignoring technical conditions on measures, and issues of
\alme-equivalence and categorical equivalence---we then have the following
dictionary relating representation theory to geometry:
\vskip 1em
\begin{center}
{
\begin{tabular}{c|c}
\hline
\\
     \xy (0,0)*{\txt{representation theory of a
 \\ skeletal 2-group $\G=(G,H,\rhd)$   }} \endxy    &   geometry
            \\
    &        \\     \hline \\
a representation of $\G$ on $H^X$  &  a right action of $G$ on $X$, and a map $X \to H^\ast$  \\ & making  $X$ a `measurable $G$-equivariant bundle' over $H^\ast$ \\
\\
an intertwiner between   &   a `$G$-equivariant measurable family of measures' $\mu_y$ on $X$, \\
representations on $H^X$ and $H^Y$       & and a `$G$-equivariant Hilbert space bundle' over $Y \times X$  \\
\\
a 2-intertwiner    &  a map of $G$-equivariant Hilbert space bundles \\ \\
\hline
\end{tabular}} \vskip 2em
\end{center}
Let us now explain this correspondence in more detail.

\subsubsection*{Representations}

Consider a representation $\rho \maps \G \to \me$ on a measurable
category $H^X$.  An essential step in understanding such a representation
is understanding what the measurable automorphisms of the category 
$H^X$ look like.  In Section~\ref{invertible}, we show that any
automorphism of $H^X$ is 2-isomorphic to one induced by pullback 
along some measurable automorphism $f\maps X\to X$.  Such an 
automorphism, which we denote $H^f\maps H^X\to H^X$, acts on
fields of Hilbert spaces and linear maps on $X$, simply by pulling them 
back along $f$.

In Thm.~\ref{thm:repclassify}, we show that if $\rho$ is a representation
on $H^X$ such that for each $g\in G$, $\rho(g)=H^{f_g}$ for some $f_g$,
then $\rho$ is determined, up to equivalence of representations, by:
\begin{itemize}
\item
a right action $\lhd$ of $G$ as measurable transformations of
the measurable space $X$,
\item a map $\chi \maps X \to H^*$ that is $G$-equivariant, i.e.:
\beq
\label{equivariance}
 \chi(x \lhd g)= \chi(x)_g
\eeq
for all $x \in X$ and $g \in G$.
\end{itemize}
Geometrically, this states that the map $\chi \maps X\to H^\ast$ is an  {\bf equivariant fiber bundle} over the `character group' $H^\ast = \hom(H,\C^\times)$:
$$
\xymatrix{X  \ar[d]^{\chi} \\ H^\ast}
$$
We define `measurable representations' of $\G$ to be ones of this form for
which both the map $\chi$ and the actions of $G$ on $X$ and $H^*$ are
measurable, where $H^\ast$ inherits a measurable structure from that of
$H$.  In the rest of this summary of results we consider only measurable
representations.

Two representations on $H^X$ are equivalent, by definition, if they are related
by a pair of intertwiners that are weak inverses of each other.  We discuss
general intertwiners and their geometry below; for now we merely mention that
{\em invertible} intertwiners between measurable representations correspond to
invertible measurable bundle maps:
\[
{\xygraph{
  []!{0;<1.5cm,0cm>:<.75cm, 1.3cm>::}
  []{X}="TM" :@{->}^{\sim} [r] {Y}
      :@{->}^{\chi^{\phantom{Y}}_2} [d] {H^\ast}="M"
        "TM"        :@{->}_{\chi^{\phantom{X}}_1} "M"
  }}
\]
So, equivalence of representations corresponds geometrically to
isomorphism of bundles.

We say that a representation is `indecomposable' if it is not
equivalent to a `2-sum' of nontrivial representations, where a `2-sum'
is a categorified version of the direct sum of ordinary group representations.
We say a representation is `irreducible' if, roughly speaking, it does not
contain any subrepresentations other than itself and the trivial
representation.  Irreducible representations are automatically
indecomposable, but not necessarily vice versa.  An ({\it a priori})
intermediate notion is that of an `irretractable representation'---a
representation $\rho$ such that if any composite of intertwiners
of the form
\[
\xymatrix{\rho' \ar[r]^{} & \rho\rule{0em}{.8em} \ar[r]^{} & \rho'}
\]
is equivalent to the identity intertwiner on $\rho'$, then $\rho'$ is
either trivial or equivalent to $\rho$.  While for ordinary group
representations irretractable representations are the same as
indecomposable ones, this is not true for 2-group representations in
$\me$.  We thus classify both the irretractable and indecomposable
2-group representations in $\me$.  The irreducible ones remain more
challenging: in particular, we do not know if every irretractable
representation is irreducible.

In Thm.~\ref{theo_indecomposable}
we show that a measurable representation of $\G$ on $H^X$
is indecomposable if and only if $G$ acts transitively on $X$.  The study of
indecomposable representations, and hence irreducible and irretractable
representations as special cases, is thus rooted in Klein's geometry of
homogeneous spaces. Recall that for any point $x^o \in
X$, the {\bf stabilizer} of $x^o$ is the subgroup
$S \subseteq G$ consisting of group elements $g$ with $x^o\lhd g = x^o$.
By a standard argument, we have
$$ X \cong G/S .  $$ 
Then, let $\chi^o = \chi(x^o)$.  By equation
(\ref{equivariance}), the image of $\chi\maps X \to H^\ast$ is a
single $G$-orbit in $H^\ast$, and $S$ is contained in the stabilizer
$S^\ast$ of $\chi^o$.  This shows that an indecomposable
representation essentially amounts to an equivariant map of
homogeneous spaces $\chi \maps G/S \to G/S^\ast$, where $S^\ast$ is
the stabilizer of some point in $H^\ast$, and $S\subseteq S^\ast$.  In
other words, indecomposable representations are classified up to
equivalence by a choice of $G$-orbit in $H^\ast$, along with a
subgroup $S$ of the stabilizer of a point $\chi^o$ in the orbit.

In Thm.~\ref{theo_irretractable}, we show an indecomposable
representation $\rho$ is irretractable if and only if $S$ is {\it
equal} to the stabilizer of $\chi^o$; irretractable representations
are thus classified up to equivalence by $G$-orbits in $H^\ast$.

\subsubsection*{Intertwiners}

Next we turn to the main results concerning intertwiners.  To state
these, we first need some concepts from measure theory.  Let $X$ be a
measurable space.  Recall that two measures $\mu$ and $\nu$ on $X$ are
\textbf{equivalent}, or in the same \textbf{measure class}, if they
have the same null sets.  Next, suppose $G$ acts on $X$ as measurable
transformations.  Given a measure $\mu$ on $X$, for each $g$ we define
the `transformed' measure $\mu^g$ by setting \beq \mu^g(A) := \mu(A
\lhd g^{-1}).  \eeq The measure is \textbf{invariant} if $\mu^g = \mu$
for every $g$. If $\mu^g$ and $\mu$ are only equivalent, we say that
$\mu$ is \textbf{quasi-invariant}. It is well-known that if $G$ is a
separable, locally compact topological group, acting measurably and
transitively on $X$, then there exist nontrivial quasi-invariant
measures on $X$, and moreover, all such measures belong to the same
measure class (see Appendix \ref{apx:G-spaces} for further details).

Next, let $X$ and $Y$ be two $G$-spaces. We may consider $Y$\!-indexed
families $\mu_y$ of measures on $X$. Such a family is
\textbf{equivariant}\footnote{Since we do not require {\em equality}, a more 
descriptive term would be `quasi-equivariance'; we stick to `equivariance' for simplicity.}
 under the action of $G$ if for all $g$,
$\mu_{y\lhd g}$ is {\em equivalent} to $\mu_y^g$.

With these definitions we can now give a concrete description of
intertwiners.
Suppose $\rho_1$ and $\rho_2$ are measurable representations of a
skeletal 2-group $\G$ on measurable categories $H^X$ and $H^Y$,
respectively, with corresponding equivariant bundles $\chi_1$ and $\chi_2$:
\[
{\xygraph{
  []!{0;<1.5cm,0cm>:<.75cm, 1.3cm>::}
  []{X}="TM"  [r] {Y}
      :@{->}^{\chi^{\phantom{Y}}_2} [d] {H^\ast}="M"
        "TM"        :@{->}_{\chi^{\phantom{X}}_1} "M"
  }}
\]
Then an intertwiner $\phi \maps \rho_1 \to \rho_2$ is specified, up to equivalence, by:
\begin{itemize}
\item
an equivariant $Y$\!-indexed family of measures $\mu_y$
on $X$, with each $\mu_y$ supported on $\chi_1^{-1}(\chi_2(y))$.
\item
an assignment, for each $g\in G$ and all $y$, of a $\mu_y$-class of
Hilbert spaces $\phi_{y,x}$ and linear
maps
$$\Phi^g_{y,x} \maps \phi_{y,x} \to \phi_{(y,x) \lhd g^{-1}}$$
satisfying the cocycle conditions
$$\Phi^{g'g}_{y,x} = \Phi^{g'}_{(y,x)\lhd g^{-1}} \Phi^{g}_{y,x}
\quad \text{and}\quad
\Phi^1_{y,x}= 1_{\phi_{y,x}}
$$
$\mu$-\alme\ for each pair $g,g'\in G$,
where $(y, x) \lhd g$ is short for $(y \lhd g,x \lhd g)$.
\end{itemize}

There is a more geometric way to think of these intertwiners.  For simplicity, assume that, among the measure class of fields of linear operators
\[
  \Phi^g_{y,x} \maps \phi_{y,x} \to \phi_{(y,x) \lhd g^{-1}}
\]
we may choose a representative such that the cocycle conditions hold everywhere in $Y\times X$ and for all $g \in G$.   We then think of the union of all the Hilbert spaces:
\[
\phi  = \coprod_{(y,x)}  \phi_{y,x}
\]
as a bundle of Hilbert spaces over the product space $Y \times X$.  The group $G$ acts on both the total space and the base space of this bundle.  Indeed, the maps $\Phi^g_{y,x}$ give a map $\Phi^g\maps \phi \to \phi$; the cocycle conditions then become
\[
     \Phi^{g'g} = \Phi^{g'}\Phi^g
     \quad \text{and}\quad
     \Phi^1_{y,x}= 1_{\phi_{y,x}}
\]
which are simply the conditions that $\phi \mapsto \Phi^g \phi$ define a left action of $G$ on $\phi$.  If we turn this into a right action by defining
\[
    \phi^g = \Phi^{g^{-1}}(\phi)
\]
we find that the bundle map is equivariant with respect to this action of $G$ on $\phi$ and the diagonal action of $G$ on $Y\times X$.

It is thus helpful to think of an intertwiner $\phi \maps \rho_1 \to \rho_2$ 
as being given by an equivariant family of measures $\mu_y$ and
a $\mu$-class of $G$-equivariant bundles of Hilbert spaces $\phi_{y,x}$ 
over $Y\times X$.  We emphasize that it is not clear this picture is completely 
accurate for arbitrary intertwiners, particularly when $G$ is an uncountable 
group, since there are separate cocycle equations for each pair $g,g'\in G$, 
each holding only almost everywhere. However, it is a useful heuristic 
picture, and can be made precise at least in important special cases.

As with representations, we introduce and discuss the notions of
reducibility, retractability, and decomposability for intertwiners.

\subsubsection*{2-Intertwiners}

Finally, the main results concerning the 2-intertwiners are as
follows. Consider a pair of representations $\rho_1$ and
$\rho_2$ of the skeletal 2-group $\G$ on the measurable
categories $H^X$ and $H^Y$, and two intertwiners $\phi, \psi \maps
\rho_1 \To \rho_2$.  Suppose $\phi = (\mu, \phi , \Phi)$ and
$\psi = (\nu, \psi, \Psi)$.  For any $y$, we denote by $\sqrt{\mu_y
\nu_y}$ the geometric mean of the measures $\mu_y$ and $\nu_y$. A
2-intertwiner turns out to consist of:
\begin{itemize}
\item an assignment, for each $y$, of a $\sqrt{\mu_y
\nu_y}$-class of linear maps $m_{y,x}\maps \phi_{y,x} \to \psi_{y,x}$,
which satisfies the intertwining rule
\[
\Psi^g_{y,x} \,m_{y,x} = m_{(y,x)g^{-1}} \,\Phi^g_{y,x}
\]
$\sqrt{\mu\nu}$\alme
\end{itemize}

In the geometric picture of intertwiners as equivariant bundles of Hilbert 
spaces, this characterization of a 2-intertwiner simply amounts to a 
{\bf morphism of equivariant bundles}, up to almost-everywhere equality.

The intertwiners satisfy an analogue of \textit{Schur's lemma}.
Namely, in Prop.~\ref{schur-intertwiner} we show that under some mild
technical conditions, any 2-intertwiner between irreducible
intertwiners is either null or an isomorphism.

%
\subsection{Invertible morphisms and 2-morphisms in $\me$}
\label{invertible}
%

A 2-group representation $\rho$ gives {\em invertible}
morphisms $\rho(g)$ and {\em invertible} 2-morphisms $\rho(g,h)$
in the target 2-category.  To understand 2-group representations in $\me$,
it is thus a useful preliminary step to characterize invertible measurable
functors and invertible measurable natural transformations.  We address
these in this section, beginning with the 2-morphisms.

Consider two parallel measurable functors $T$ and $T'$.  A measurable
natural transformation $\alpha \maps T \To T'$
is \textbf{invertible} if it has a vertical
inverse, namely a measurable natural transformation $\alpha'\maps T' \To T$
such that $\alpha'\cdot\alpha = \unit_T$ and $\alpha\cdot \alpha' = \unit_{T'}$.
We often call the invertible 2-morphism $\alpha$ in $\me$ a {\bf 2-isomorphism}, 
for short; we also say $T$ and $T'$ are {\bf 2-isomorphic}.  The following theorem 
classifies 2-isomorphisms in the case where $T$ and $T'$ are matrix
functors.
%
%
\begin{theo} \label{invert2}
Let $(T,t), (T', t')\maps H^X\to H^Y$ be matrix functors.  Then $(T,t)$ and $(T',t')$ 
are boundedly naturally isomorphic if and only if the measures $t_y$ and $t'_y$ 
are equivalent, for every $y$, and there is a measurable field of bounded linear 
operators $\alpha_{y,x}\maps T_{y,x} \to T'_{y,x}$ such that $\alpha_{y,x}$ is an 
isomorphism for each $y$ and $t_y$-\alme\ in $x$.  In this case, there is one 
2-isomorphism $T\To T'$ for each $t$-class of fields $\alpha_{y,x}$.
\end{theo}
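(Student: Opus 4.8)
The plan is to reduce everything to the calculus of matrix natural transformations established in Theorem~\ref{magic.thm}. By that theorem, any bounded natural transformation $\alpha\maps T\To T'$ between matrix functors is a matrix natural transformation, hence is given by a $\sqrt{t\,t'}$-class of bounded fields $\alpha_{y,x}\maps T_{y,x}\to T'_{y,x}$; moreover, by the rescaling formula (\ref{rescaling}) and the vertical composition rule (\ref{resc-vert2mor}), vertical composition of 2-morphisms is just pointwise multiplication of the rescaled fields $\tilde\alpha_{y,x}=\sqrnd{t_y}{t'_y}\,\alpha_{y,x}$, while the identity $\unit_T$ corresponds to the field $\unit_{T_{y,x}}$. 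So $\alpha$ is a 2-isomorphism precisely when there is a second bounded field $\alpha'_{y,x}\maps T'_{y,x}\to T_{y,x}$ whose rescaled version inverts $\tilde\alpha_{y,x}$ on both sides almost everywhere. I will prove the two implications separately and then count the 2-isomorphisms.

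For the ``if'' direction, assume $t_y\sim t'_y$ for every $y$ and that $\alpha_{y,x}$ is an isomorphism $t_y$-\alme\ in $x$ (with bounded inverse field). Since inversion of operators is a measurable operation, the pointwise inverses $\alpha_{y,x}^{-1}$ form a bounded measurable field $\alpha'_{y,x}\maps T'_{y,x}\to T_{y,x}$, defining a matrix natural transformation $\alpha'\maps T'\To T$. Applying the explicit vertical composition formula (\ref{vert2mor}) with $t''=t$ and using the chain rule (\ref{chainrule}), the Radon--Nikodym factors collapse---here the equivalence $t_y\sim t'_y$ guarantees they are \alme\ nonzero and finite---leaving $(\alpha'\cdot\alpha)_{y,x}=\alpha_{y,x}^{-1}\alpha_{y,x}=\unit_{T_{y,x}}$ and, symmetrically, $(\alpha\cdot\alpha')_{y,x}=\unit_{T'_{y,x}}$. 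Thus $\alpha'\cdot\alpha=\unit_T$ and $\alpha\cdot\alpha'=\unit_{T'}$, so $\alpha$ is a 2-isomorphism.

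The ``only if'' direction is the crux, and the work is in forcing the measures to be equivalent. Given a 2-isomorphism $\alpha\maps T\To T'$, evaluate it on the constant field $\J_x=\C$; by (\ref{isoTJ}) this yields, for each $y$, a bounded operator $A_y=(\alpha_\J)_y\maps \direct_X\extd t_y\,T_{y,x}\to \direct_X\extd t'_y\,T'_{y,x}$, which is \emph{invertible} because $\alpha$ has a vertical inverse. Applying naturality of $\alpha$ to the morphisms $f\maps \J\to \J$ given by multiplication by functions $f\in L^\infty(X)$ shows that $A_y$ intertwines the two multiplication representations, $A_y\,M^{t}_f=M^{t'}_f\,A_y$. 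Taking $f=\chi_S$ for a measurable set $S$ and conjugating by the invertible $A_y$ gives $M^{t'}_{\chi_S}=A_y\,M^{t}_{\chi_S}\,A_y^{-1}$, so $M^t_{\chi_S}=0$ iff $M^{t'}_{\chi_S}=0$. Since $t_y$ is concentrated on the support of $T$ (and likewise $t'_y$ on $T'$), the projection $M^t_{\chi_S}$ vanishes exactly when $t_y(S)=0$, and similarly for $t'$; hence $t_y(S)=0\iff t'_y(S)=0$, that is, $t_y\sim t'_y$. With the measures now equivalent, the rescaling factors in (\ref{rescaling}) are \alme\ nonzero, and expanding $\alpha'\cdot\alpha=\unit_T$ and $\alpha\cdot\alpha'=\unit_{T'}$ through (\ref{resc-vert2mor}) shows that $\tilde\alpha_{y,x}$, and therefore $\alpha_{y,x}$ itself, is invertible for each $y$ and $t_y$-\alme\ in $x$. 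The main obstacle throughout is precisely this conjugation argument: isomorphism of the direct-integral Hilbert spaces alone does not see the measures, so one must exploit the $L^\infty$-module structure supplied by naturality to recover $t_y\sim t'_y$.

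Finally, for the counting statement, once $t_y\sim t'_y$ the notions of $t$-class, $t'$-class and $\sqrt{t\,t'}$-class of fields all coincide. By Theorem~\ref{magic.thm} the 2-morphisms $T\To T'$ are in bijection with $\sqrt{t\,t'}$-classes of bounded fields $\alpha_{y,x}$, and by the two implications above such a class yields a 2-\emph{iso}morphism exactly when its representatives are isomorphisms $t_y$-\alme; this gives one 2-isomorphism for each $t$-class of such fields.
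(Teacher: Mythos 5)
Your proposal is correct, and its overall skeleton matches the paper's: reduce everything to matrix natural transformations via Theorem \ref{magic.thm}, handle the ``if'' direction by pointwise inversion in the vertical composition formula (\ref{vert2mor}) together with the chain rule (\ref{chainrule}), and obtain the counting statement by noting that once $t_y \sim t'_y$ the $t$-, $t'$- and $\sqrt{tt'}$-classes coincide. Where you genuinely diverge is the crux step of forcing $t_y \sim t'_y$. The paper stays entirely inside the composition calculus: from $\alpha'\cdot\alpha = \unit_T$ it reads off $\sqrnd{t'_y}{t_y}\sqrnd{t_y}{t'_y}\,\alpha'_{y,x}\alpha_{y,x} = \unit_{T_{y,x}}$ $t_y$-\alme, uses the support condition on a matrix functor to conclude $\unit_{T_{y,x}} \neq 0$ $t_y$-\alme\ and hence that the scalar Radon--Nikodym factor is $t_y$-\alme\ nonzero, and then unwinds the Lebesgue decomposition of $t'_y$ to get $t_y \ll t'_y$, with the symmetric computation giving the reverse inclusion. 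You instead evaluate at the constant field $\J$, note that the invertible operator $A_y = (\alpha_\J)_y$ intertwines the $L^\infty(X)$ multiplication actions by naturality, and recover the null sets by conjugating the projections $M_{\chi_S}$---again using the support condition, this time to identify $M^t_{\chi_S}=0$ with $t_y(S)=0$. This is the standard operator-algebraic fact that the measure class is an invariant of the $L^\infty$-module structure; it is arguably more conceptual, and it recycles the multiplication-operator trick that the paper deploys only inside the proof of Theorem \ref{magic.thm}, whereas the paper's computation is more self-contained given formula (\ref{vert2mor}) and directly produces $\alpha'_{y,x}=\alpha_{y,x}^{-1}$. One further point in your favor: your parenthetical ``with bounded inverse field'' in the ``if'' direction is genuinely needed---a uniformly bounded field that is invertible \alme\ can have essentially unbounded inverses, in which case no \emph{bounded} vertical inverse exists---and the paper's own proof of the converse glosses over exactly this point.
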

%
%
%
\begin{proof}
Suppose $\alpha\maps T \To T'$ is a bounded natural isomorphism, with 
inverse $\alpha'\maps T' \To T$.  By Lemma \ref{magic.lemma}, $\alpha$ and 
$\alpha'$ are both matrix natural transformations, hence defined by fields of 
bounded linear operators $\alpha_{y,x}$ and $\alpha'_{y,x}$ on $Y\times X$.  
By the composition formula (\ref{vert2mor}), the composite $\alpha'\cdot\alpha 
= \unit_T$ is given by
\[
\left(\alpha'\cdot \alpha\right)_{y,x} =
            \sqrnd{t'_y}{t_y}
            \sqrnd{t_y}{t'_y} \;
\alpha'_{y,x} \alpha_{y,x}=
 \unit_{T_{y,x}} \qquad
   \txt{$t_y$-\alme}
\]
We know by the chain rule (\ref{chainrule}) that the product of Radon-Nikodym 
derivatives in this formula equals one $\sqrt{t_yt'_y}$-\alme, but not yet that 
equals one $t_y$-\alme\   However, by definition of the morphism $(T,t)$, the 
Hilbert spaces $T_{y,x}$ are non-trivial $t_y$-\alme; hence $\unit_{T_{y,x}} \neq 0$. 
This shows that the product of Radon-Nikodym derivatives above is 
$t_y$-\alme\ nonzero; in particular,
\[
\frac{\extd {t^{'t}_y}}{\extd t_y}(x) \not= 0 \qquad  \txt{$t_y$-\alme}
\]
where ${t'}^t_y$ denotes the absolutely continuous part of $t'_y$ in its 
Lebesgue decomposition $t'_y  = t^{'t}_y + \overline{t^{'t}_y}$ with respect 
to $t_y$. But this property is equivalent to the statement that the measure $t_y$ 
is absolutely continuous with respect to $t'_y$.  To check this, pick a measurable 
set $A$ and write
\[
t'_y(A) = \int_A \extd t_y(x) \frac{\extd {t^{'t}_y}}{\extd t_y}(x) + \overline{t^{'t}_y}(A)
\]
Now if $t'_y(A) = 0$, both terms of the right-hand-side of this equality 
vanish---in particular the integral term.  But since the Radon-Nikodym derivative 
is a strictly positive function $t_y$-\alme, this requires the $t_y$-measure of $A$ 
to be zero. So we have shown that $t'_y(A) = 0$ implies $t_y(A) = 0$  for any 
measurable set $A$, i.e. $t_y\ll t'_y$.  Starting with $\alpha\cdot \alpha' = \unit_{T'}$, 
the same analysis leads to the conclusion $t_y\ll t'_y$.  Hence the two measures 
are equivalent. From this it is immediate that
\[
 \left(\alpha'\cdot \alpha\right)_{y,x} =
 \alpha'_{y,x} \alpha_{y,x} =
 \unit_{T_{y,x}} \qquad
   \txt{$t_y$-\alme}
\]
and thus $\alpha'_{y,x} = \alpha_{y,x}^{-1}$.  In particular, the 
operators $\alpha_{y,x}$ are invertible $t_y$-\alme\

Conversely, suppose the measures $t_y$ and $t'_y$ are equivalent and 
we are given a measurable field $\alpha\maps T\to T'$ such that for all $y$, 
the operators $\alpha_{y,x}$ are invertible for almost every $x$.  It is easy to 
check, using the formula for vertical composition, that the matrix natural 
transformation defined by $\alpha_{y,x}$ has inverse defined by $\alpha_{y,x}^{-1}$.
\end{proof}

A morphism $T\maps H^X\to H^Y$ is \textbf{strictly invertible} if it
has a strict inverse, namely a 2-morphism $U\maps H^Y\to H^X$ such
that $UT = \unit_X$ and $ TU = \unit_Y$.  In 2-category theory,
however, it is more natural to weaken the notion of invertibility, so
these equations hold only \textit{up to 2-isomorphism}. In this case
we say that $T$ is \textbf{weakly invertible} or an
\textbf{equivalence}.

We shall give two related characterizations of weakly invertible 
morphisms in $\me$.  For the first one, recall that if $f\maps Y \to X$ 
is a measurable function, then any measure $\mu$ on $Y$ pushes 
forward to a measure $f_\ast \mu$ on X, by
\[
       f_\ast \mu (A) = \mu(f^{-1}A)
\]
for each measurable set $A\subseteq X$.  In the case where $\mu = \delta_y$, we have
\[
   f_\ast \delta_y =  \delta_{f(y)}
\]
Denoting by $\delta$ the $Y$\!-indexed family of measures $y\mapsto \delta_y$ on $Y$, the following theorem shows that every invertible matrix functor $T\maps H^X\to H^Y$ is essentially $(\C,f_\ast \delta)$ for some invertible measurable map $f\maps Y \to X$.

As shown by the following theorem, the
condition for a morphism to be an equivalence is very restrictive
\cite{Yetter2}:
%
%
\begin{theo} \label{invert}
A matrix functor $(T, t)\maps H^X\to H^Y$ is a measurable equivalence if and only if there is an invertible measurable function $f\maps Y \to X$ between the underlying spaces such that, for all $y$, the measure $t_{y}$ is equivalent to $\delta_{f(y)}$, and a measurable field of linear operators from $T_{y,x}$ to the constant field $\C$ that is $t_y$-\alme\ invertible.
\end{theo}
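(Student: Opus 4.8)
The plan is to prove both implications, handling the easier `if' direction first. Suppose we are given an invertible measurable map $f\maps Y\to X$ with $t_y$ equivalent to $\delta_{f(y)}$ for every $y$, together with a $t_y$-\alme\ invertible field of operators $T_{y,x}\to\C$. Since $t_y\sim\delta_{f(y)}$ and the field supplies an isomorphism $T_{y,x}\cong\C$ on the support of $t_y$, Theorem \ref{invert2} immediately shows that $(T,t)$ is boundedly naturally isomorphic to the matrix functor $(\C,f_\ast\delta)$ assembled from the constant field $\C$ and the family $y\mapsto\delta_{f(y)}$. A one-line computation gives $((\C,f_\ast\delta)\H)_y=\direct_X\extd\delta_{f(y)}(x)\,\C\tensor\H_x=\H_{f(y)}$, so this functor is nothing but pullback along $f$. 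Because $f$ is an invertible measurable map of standard Borel spaces, pullback along $f^{-1}$ is a weak inverse, whence $(\C,f_\ast\delta)$, and therefore the 2-isomorphic $(T,t)$, is a measurable equivalence.

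For the converse, suppose $(T,t)$ is a measurable equivalence. It is then in particular a $C^\ast$-equivalence, so by Lemma \ref{me.composition.0} and Lemma \ref{me.composition.1} I may take its weak inverse to be a matrix functor $(U,u)\maps H^Y\to H^X$, with bounded natural isomorphisms $UT\cong\unit_X$ and $TU\cong\unit_Y$. The heart of the argument is to feed these into the composition formula (\ref{meas1compo}) and Theorem \ref{invert2}. Since $UT\cong\unit_X=(\C,\delta)$, the composite measure $(ut)_x=\int_Y\extd u_x(y)\,t_y$ must be equivalent to $\delta_x$ for each $x$; unwinding this forces $t_y$ to be supported on $\{x\}$ for $u_x$-almost every $y$. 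Symmetrically, $TU\cong\unit_Y$ forces $u_x$ to be supported on $\{y\}$ for $t_y$-almost every $x$. Chaining the two conditions---for $t_y$-a.e.\ $x$ the measure $u_x$ sits at $\{y\}$, and for each such $x$ the first condition places the nonzero measure $t_y$ at $\{x\}$---shows $t_y$ is concentrated at a single point, which I define to be $f(y)$; the analogous map $g\maps X\to Y$ sending $x$ to the atom of $u_x$ is then a two-sided inverse, so $f$ is an invertible measurable function. Finally, the field condition $(UT)_{x,x}\cong\C$ from Theorem \ref{invert2}, combined with the disintegration (\ref{swapping_disintegrations})--(\ref{convol_1maps}) that presents $(UT)_{x,x}$ as a multiple of $U_{x,g(x)}\tensor T_{g(x),x}$, forces $T_{g(x),x}\cong\C$, i.e.\ $T_{y,f(y)}\cong\C$; choosing such isomorphisms measurably in $y$ produces the required $t_y$-\alme\ invertible field $T_{y,x}\to\C$.

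The main obstacle is the bookkeeping in the converse. The two equivalence conditions hold only almost everywhere with respect to families of measures indexed by points, so I must argue carefully that the pointwise atoms of $t_y$ and $u_x$ genuinely assemble into a globally defined, measurable, everywhere-invertible function $f$, rather than merely an almost-everywhere correspondence. I expect the cleanest route is to verify that $f$ and $g$ are mutually inverse on the relevant conull sets, invoke the standard measurability of the atom of a measurable family of measures to see that $y\mapsto f(y)$ is measurable, and fall back on the measurable bijection $X\cong Y$ guaranteed by Lemma \ref{me.composition.1} should the direct construction of $f$ prove delicate. The remaining manipulations---the disintegration identity for $(UT)_{x,x}$ and the fact that a tensor product of Hilbert spaces is one-dimensional iff each factor is---are routine once the support structure has been pinned down.
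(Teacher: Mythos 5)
Your proposal is correct and takes essentially the same route as the paper's own proof: a matrix-functor weak inverse $(U,u)$, Theorem \ref{invert2} applied to $UT\cong\unit_X$ and $TU\cong\unit_Y$, the same chaining of support conditions to concentrate each nontrivial $t_y$ at a single atom $f(y)$ with two-sided inverse $g$, and the factorization $T_{y,f(y)}\tensor U_{f(y),y}\cong\C$ producing the field of isomorphisms. Your only cosmetic deviations are routing the ``if'' direction through the pullback $H^f$ (inverted by $H^{f^{-1}}$) instead of the paper's explicit matrix weak inverse built from $\delta_{f^{-1}(x)}$ and the constant field $\C$, and your worry that $f$ might be only almost-everywhere defined, which dissolves because Theorem \ref{invert2} gives $(ut)_x\sim\delta_x$ for \emph{every} $x$, so---exactly as in the paper---$f$ and $g$ are everywhere defined and measurability follows from measurability of $y\mapsto t_y(A)$.
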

%
%
\begin{proof}
If $(T, t)$ is an equivalence, it has weak inverse that is also a matrix functor, say $(U, u)$.  The composite $UT$ is 2-isomorphic to the identity morphism $\unit_X$, and $TU$ is 2-isomorphic to $\unit_Y$.  Since $\unit_X\maps H^X \to H^X$ is 2-isomorphic to the matrix functor $(\C,\delta_x)$,  and similarly for $\unit_Y$, Thm.~\ref{invert2} implies that the composite measures $ut$ and $tu$ are equivalent to Dirac measures:
\[
(ut)_x = \int_Y \extd u_x(y) \, t_y \sim \delta_x
\qquad (tu)_y = \int_X \extd t_y(x) \, u_x \sim \delta_y
\]
An immediate consequence is that the measures $u_x$ and $t_y$ must be non-trivial, for all $x$ and $y$.  Also, for all $x$, the subset $X-\{x\}$ has zero $(ut)_x$-measure
\[
\int_Y \extd u_x(y) \, t_y(X-\{x\}) = 0
\]
As a result the nonnegative function $y \mapsto t_y(X-\{x\})$ vanishes $u_x$-almost everywhere. This means that, for all $x$ and $u_x$-almost all $y$, the measure $t_y$ is equivalent to $\delta_x$. Likewise, we find that, for all $y$ and $t_y$-almost all $x$, the measure $u_x$ is equivalent to $\delta_y$.

Let us consider further the consequences of these two properties, by
fixing a point $y_0 \in Y$. For $t_{y_0}$-almost every $x$, we know,
on one hand, that $t_y \sim \delta_x$ for $u_x$-almost all $y$ (since
this actually holds for all $x$), and on the other hand, that $u_x
\sim \delta_{y_0}$ . It follows that for $t_{y_0}$-almost every $x$,
we have $t_{y_0} \sim \delta_x$. The measure $t_{y_0}$ being
non-trivial, this requires $t_{y_0} \sim \delta_{f(y_0)}$ for at least
one point $f(y_0) \in X$; moreover this point is unique, because two
Dirac measures are equivalent only if they charge the same point. This
defines a function $f: Y \to X$ such that $t_y$ is equivalent to
$\delta_{f(y)}$. Likewise, we can define a function $g: X \to Y$ such
that $u_x$ is equivalent to $\delta_{g(x)}$. Finally, by expressing
the composite measures in terms of Dirac measures, we get $fg=\unit_X$
and $gf = \unit_Y$, establishing the invertibility of the function
$f$.

The measurability of the function $f$ can be shown as
follows. Consider a measurable set $A \subseteq X$. Since the family of
measures $t_y$ is measurable, we know the function $y \mapsto
t_y(A)$ is measurable. Since $t_y(A) = \delta_{f(y)}$, so this function
is given by:
\[
y \mapsto t_y(A) =
\begin{cases}
1 \quad \mbox{if} \quad y \in f^{-1}(A) \\
0 \quad \mbox{if not}
\end{cases}
\]
This coincides with the characteristic function of the set $f^{-1}(A)\subseteq Y$, which is measurable precisely when $f^{-1}(A)$ is measurable.  Hence, $f$ is measurable.

Finally, we can use (\ref{convol_1maps}) to compose the fields $U_{x,y}$ and $T_{y,x}$.  Since $(tu)_y \sim \delta_y$, the only essential components of the composite field are the diagonal ones:
$$
(TU)_{y,y} = \direct _X \extd k_{y,y}(x) \,T_{y,x}\otimes U_{x,y}.
$$
Applying (\ref{integration_meas}) in this case, we find that the measures $k_{y,y}$ are defined by the property
$$
\int_{X} \extd k_{y,y}(x) F(x,y) = \int_X \extd \delta_{f(y)}(x) \int_Y \delta_{g(x)}(y) F(x,y)
$$
for any measurable function $F$ on $X\times Y$. From this we obtain $k_{y,y} = \delta_{f(y)}$ and $(TU)_{y,y} = T_{y, f(y)} \otimes U_{f(y), y}$ for all $y\in Y$.  Since we know $TU$ is 2-isomorphic to the matrix functor $(\C,\delta_y)$, we therefore obtain
\[
      (TU)_{y,y} = T_{y, f(y)} \otimes U_{f(y), y} \cong \C \quad \forall y\in Y.
\]
where the isomorphism of fields is measurable.  This can only happen if each factor in the tensor product is measurably isomorphic to the constant field $\C$.

Conversely, if the measures $t_y$ are equivalent to $\delta_{f(y)}$ for an invertible measurable function $f$, and if $T_{y, f(y)} \cong \C$, construct a matrix functor $U\maps H^Y\to H^X$ from the family of measures $\delta_{f^{-1}(x)}$ and the constant field $U_{x, y} = \C$.  One can immediately check that $U$ is a weak inverse for $T$.
\end{proof}

Taken together, these theorems have the following corollary:

\begin{cor}
\label{representative.equivalences}
If $T\maps H^X \to H^Y$ is a weakly invertible measurable functor, there is a unique measurable isomorphism $f\maps Y\to X$ such that $T$ is boundedly naturally isomorphic to the matrix functor $(\C,\delta_{f(y)})$.
\end{cor}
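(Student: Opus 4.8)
The plan is to reduce the general weakly invertible measurable functor to a matrix functor and then assemble the two preceding theorems. Since the source and target are already $H^X$ and $H^Y$, I would first invoke Lemma \ref{me.composition.2} with the identity $C^*$-equivalences $\unit_{H^X}$ and $\unit_{H^Y}$ to produce a matrix functor $(\tilde T, t)\maps H^X \to H^Y$ that is boundedly naturally isomorphic to $T$. Weak invertibility is preserved under bounded natural isomorphism: if $U$ is a weak inverse of $T$, then, since bounded natural isomorphism is 2-isomorphism in $\me$ and whiskering preserves 2-isomorphisms, we have $U\tilde T \cong UT \cong \unit_X$ and $\tilde T U \cong TU \cong \unit_Y$, so $U$ is equally a weak inverse of $(\tilde T, t)$. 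Thus $(\tilde T, t)$ is itself a measurable equivalence.

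Next I would apply Theorem \ref{invert} to $(\tilde T, t)$. This yields an invertible measurable function $f\maps Y \to X$ with $t_y$ equivalent to $\delta_{f(y)}$ for every $y$, together with a measurable field of linear operators $\tilde T_{y,x}\to \C$ that is $t_y$-\alme\ invertible. These are precisely the data Theorem \ref{invert2} requires in order to exhibit a bounded natural isomorphism of matrix functors: taking $(T', t') = (\C, \delta_{f(y)})$, the hypotheses $t_y \sim t'_y$ and the existence of a $t_y$-\alme\ invertible field $\tilde T_{y,x}\to\C$ are both in hand. Hence $(\tilde T, t)$ is boundedly naturally isomorphic to $(\C, \delta_{f(y)})$, and by transitivity so is $T$. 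This establishes existence.

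For uniqueness, suppose $T$ were boundedly naturally isomorphic to both $(\C, \delta_{f(y)})$ and $(\C, \delta_{f'(y)})$. Then these two matrix functors are boundedly naturally isomorphic to each other, so Theorem \ref{invert2} forces $\delta_{f(y)} \sim \delta_{f'(y)}$ for every $y$. Since two Dirac measures are equivalent precisely when they are concentrated at the same point, this gives $f(y) = f'(y)$ for all $y$, i.e.\ $f = f'$.

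I expect the argument to be short, essentially a bookkeeping assembly of the two theorems. The point requiring the most care is matching the output of Theorem \ref{invert}---the map $f$, the measure equivalence $t_y \sim \delta_{f(y)}$, and a $t_y$-\alme\ invertible field into $\C$---exactly with the input of Theorem \ref{invert2}, so that the abstract equivalence $T$ is pinned to the specific model functor $(\C, \delta_{f(y)})$ rather than merely to some equivalent matrix functor. The only genuinely new ingredient beyond the two theorems is the elementary observation that equivalence of Dirac measures detects equality of their base points, which is what upgrades existence to uniqueness.
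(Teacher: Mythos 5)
Your proposal is correct and takes essentially the same route as the paper's proof: reduce $T$ to a matrix functor, extract $f$ and the $t_y$-\alme\ invertible field into $\C$ from Theorem \ref{invert}, feed these into Theorem \ref{invert2} to pin $T$ to the model $(\C,\delta_{f(y)})$, and get uniqueness from Theorem \ref{invert2} plus the fact that equivalent Dirac measures charge the same point. The paper compresses all this into three sentences; your only additions—the explicit check that weak invertibility transfers across a bounded natural isomorphism, and the explicit matching of Theorem \ref{invert}'s output to Theorem \ref{invert2}'s hypotheses—are points the paper leaves implicit.
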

\begin{proof}
Any measurable functor is boundedly naturally isomorphic to a matrix functor, say $T\cong(T_{y,x},t_y)$.  By Thm.~\ref{invert}, we may in fact take $T_{y,x} = \C$ and $t_y=\delta_{f(y)}$ for some measurable isomorphism $f\maps Y \to X$.  By Thm.~\ref{invert2}, two such matrix functors, say $(\C,\delta_{f(y)})$ and $(\C,\delta_{f'(y)})$ are boundedly naturally isomorphic if and only if $f=f'$, so the choice of $f$ is unique.
\end{proof}

We have classified measurable equivalences by giving one representative---a specific {\em matrix} equiv\-alence---of each 2-isomorphism class.  These representatives are quite handy in calculations, but they do have one drawback: matrix functors are not strictly closed under composition.  In particular, the composite of two of our representatives $(\C,f_\ast \delta)$ is isomorphic, but not equal, to another of this form.
While in general this is the best we might expect, it is natural to wonder whether these 2-isomorphism classes have a set of representations that is closed under composition.  They do.

If $X$ and $Y$ are measurable spaces, any measurable function
\[
    f\maps Y \to X
\]
gives a functor $H^f$ called the {\bf pullback}
\[
   H^f \maps H^X \to H^Y
\]
defined by pulling back measurable fields of Hilbert spaces and linear operators along $f$.  Explicitly, given a measurable field of Hilbert spaces $\H\in H^X$, the field $H^f \H$ has components
\[
    (H^f\H)_y = \H_{f(y)}
\]
Similarly, for $\phi\maps \H \to \H'$ a measurable field of linear operators on $X$,
\[
    (H^f\phi)_y = \phi_{f(y)}.
\]
It is easy to see that this is functorial; to check that $\H^f$ is a {\em measurable} functor, we note that it is boundedly naturally isomorphic to the matrix functor $(\C,\delta_{f(y)})$, which sends an object $\H\in H^X$ to
\[
   \direct_X \extd \delta_{f(y)}(x)\, \C\tensor \H_x \cong \H_{f(y)} = (H^f\H)_y
\]
and does the analogous thing to morphisms in $H^X$.
The obvious isomorphism in this equation is natural, and has unit norm, so is bounded.

\begin{prop}
\label{isomorphic_to_pullback}
If $T\maps H^X \to H^Y$ is a weakly invertible measurable functor, there exists a unique measurable isomorphism $f\maps Y\to X$ such that $T$ is boundedly naturally isomorphic to the pullback $H^f$.
\end{prop}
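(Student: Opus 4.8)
The plan is to read this proposition off from Corollary~\ref{representative.equivalences} together with the observation, made in the paragraph just before the statement, that the pullback functor $H^f$ is boundedly naturally isomorphic to the matrix functor $(\C,\delta_{f(y)})$. All the substantive analytic work is therefore already done; what remains is to chain two bounded natural isomorphisms together and to extract the uniqueness of $f$.

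For existence, I would invoke Corollary~\ref{representative.equivalences} to produce a measurable isomorphism $f\maps Y\to X$ for which $T$ is boundedly naturally isomorphic to $(\C,\delta_{f(y)})$. The preceding discussion exhibits a natural isomorphism $H^f \cong (\C,\delta_{f(y)})$ whose components are the canonical identifications $\direct_X \extd\delta_{f(y)}(x)\,\C\tensor\H_x \cong \H_{f(y)}$; these are isometric, so both this isomorphism and its inverse are unit-norm, hence bounded. Composing the bounded natural isomorphism $T\To(\C,\delta_{f(y)})$ supplied by the Corollary with the bounded inverse $(\C,\delta_{f(y)})\To H^f$ yields a bounded natural isomorphism $T\To H^f$, as required.

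For uniqueness, I would suppose that $f'\maps Y\to X$ is another measurable isomorphism with $T$ boundedly naturally isomorphic to $H^{f'}$. Then $H^{f'}\cong(\C,\delta_{f'(y)})$ by the same observation, so $(\C,\delta_{f(y)})$ and $(\C,\delta_{f'(y)})$ are boundedly naturally isomorphic. By Thm.~\ref{invert2} this forces $\delta_{f(y)}$ and $\delta_{f'(y)}$ to be equivalent measures for every $y$; since distinct Dirac measures are never equivalent, $f(y)=f'(y)$ for all $y$, that is, $f=f'$. Alternatively, one may appeal directly to the uniqueness clause already contained in Corollary~\ref{representative.equivalences}.

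I expect no genuine obstacle, since the hard results---the classification of equivalences as Dirac-type matrix functors in Thm.~\ref{invert} and the identification of $H^f$ with such a functor---are in hand. The only point deserving a moment's attention is the boundedness of the natural isomorphism $H^f\cong(\C,\delta_{f(y)})$ and of its inverse, and this is immediate from the fact that integration against a Dirac measure gives an isometric identification of direct integrals.
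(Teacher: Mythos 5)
Your proposal is correct and follows essentially the same route as the paper, which likewise deduces the result by citing Cor.~\ref{representative.equivalences} for the unique $f$ with $T$ boundedly naturally isomorphic to $(\C,\delta_{f(y)})$ and then identifying that matrix functor with $H^f$ via the unit-norm canonical isomorphism. Your extra remarks on boundedness and on uniqueness via Thm.~\ref{invert2} just make explicit what the paper's one-line proof leaves implicit (the uniqueness clause being already contained in the Corollary).
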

\begin{proof}
Any measurable functor from $H^X$ to $H^Y$ is equivalent to some matrix functor; by Cor.~\ref{representative.equivalences}, this matrix functor may be taken to be $(\C,\delta_{f(x)})$ for a {\em unique} isomorphism of measurable spaces $f\maps Y \to X$.  This matrix functor is 2-isomorphic to $H^f$.
\end{proof}

While the pullbacks $H^f$ are closely related to the matrix functors $(\C,f_\ast\delta)$, the former have several advantages, all stemming from the basic equations:
\beq
\label{pullback composition}
   H^{1_X} = \unit_{H^X}  \quad \text{and} \quad H^f H^g = H^{gf}
\eeq
In particular, composition of pullbacks is {\em strictly} associative, and each pullback $H^f$ has {\em strict} inverse $H^{f^{-1}}$.  In fact, there is a 2-category $M$ with measurable spaces as objects, {\em invertible} measurable functions as morphisms, and only identity 2-morphisms.  The assignments $X\mapsto H^X$ and $f\mapsto H^f$ give a contravariant 2-functor $M\to\me$.  The forgoing analysis shows this 2-functor is faithful at the level of 1-morphisms.

If $f, f'$ are distinct measurable isomorphisms, the measurable functors $H^f$ and $H^{f'}$ are never 2-isomorphic.  However, each $H^f$ has many 2-automorphisms:
\begin{theo}
\label{pullback.2-autos}
Let $f\maps Y\to X$ be an isomorphism of measurable spaces, and $H^f\maps H^X\to H^Y$ be its pullback.  Then the group of 2-automorphisms of $H^f$ is isomorphic to the group of measurable maps $Y \to \C^\times$, with pointwise multiplication.
\end{theo}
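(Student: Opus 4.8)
The plan is to reduce the computation to the matrix functor $(\C,\delta_{f(y)})$ and then read the answer off Theorem \ref{invert2}. First I would recall, from the discussion immediately preceding the statement, that the pullback $H^f$ is boundedly naturally isomorphic to the matrix functor $(\C,\delta_{f(y)})$. Fixing one such 2-isomorphism $\beta \maps H^f \To (\C,\delta_{f(y)})$, conjugation $\gamma \mapsto \beta\cdot\gamma\cdot\beta^{-1}$ carries 2-automorphisms of $H^f$ to 2-automorphisms of $(\C,\delta_{f(y)})$ and is a group isomorphism for vertical composition, since $(\beta\gamma_1\beta^{-1})\cdot(\beta\gamma_2\beta^{-1}) = \beta(\gamma_1\cdot\gamma_2)\beta^{-1}$. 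So it suffices to describe the 2-automorphism group of $(\C,\delta_{f(y)})$.

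Next I would apply Theorem \ref{invert2} with $(T,t)=(T',t')=(\C,\delta_{f(y)})$. The measures $t_y=t'_y=\delta_{f(y)}$ are trivially equivalent, so the theorem tells us that 2-isomorphisms $(\C,\delta_{f(y)})\To(\C,\delta_{f(y)})$ are in bijection with $t$-classes of measurable fields $\alpha_{y,x}\maps\C\to\C$ that are invertible for each $y$ and $\delta_{f(y)}$-\alme\ in $x$. Because $\delta_{f(y)}$ is the Dirac measure concentrated at $f(y)$, the $\delta_{f(y)}$-class of such a field (for fixed $y$) is determined entirely by its single value $\alpha_{y,f(y)}$, and the $\delta_{f(y)}$-\alme\ invertibility condition says exactly that this value is nonzero. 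Since a linear map $\C\to\C$ is multiplication by a scalar, each 2-automorphism determines a function $c\maps Y\to\C^\times$ by $c(y):=\alpha_{y,f(y)}$, and conversely every such $c$ arises from a field --- for instance the field constant in $x$, $\alpha_{y,x}:=c(y)$.

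The measurability of this correspondence is the point needing the most care. In the forward direction $y\mapsto(y,f(y))$ is measurable since $f$ is, and composing with the measurable field $\alpha$ shows $c$ is measurable; in the reverse direction $\alpha_{y,x}=c(y)$ is measurable on $Y\times X$ as a composite of $c$ with the projection $Y\times X\to Y$. Hence $\alpha\mapsto c$ is a bijection between 2-automorphisms of $(\C,\delta_{f(y)})$ and measurable maps $Y\to\C^\times$.

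Finally I would check that this bijection respects the group law using the vertical composition formula (\ref{vert2mor}). With all three families of measures equal to $\delta_{f(y)}$, every Radon--Nikodym factor in that formula equals $1$, so $(\alpha'\cdot\alpha)_{y,x}=\alpha'_{y,x}\alpha_{y,x}$; evaluating at $x=f(y)$ gives $c_{\alpha'\cdot\alpha}(y)=c_{\alpha'}(y)\,c_\alpha(y)$, which is precisely pointwise multiplication in $\C^\times$. Thus $\alpha\mapsto c$ is an isomorphism of groups, and composing with the conjugation isomorphism of the first step identifies the 2-automorphism group of $H^f$ with the group of measurable maps $Y\to\C^\times$ under pointwise multiplication. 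I expect the main obstacle to be the bookkeeping around measurability together with the observation that the Dirac measure collapses each $t$-class onto the single fiber over $f(y)$; once that collapse is recognized, the group law falls straight out of (\ref{vert2mor}).
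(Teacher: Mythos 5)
Your proposal is correct and follows essentially the same route as the paper's proof: conjugating by the 2-isomorphism $\beta\maps H^f \To (\C,\delta_{f(y)})$, invoking Thm.~\ref{invert2} to identify 2-automorphisms with $\delta_{f(y)}$-classes of invertible fields $\alpha_{y,x}\maps\C\to\C$, and observing that the Dirac measure collapses each class to the value $\alpha_{y,f(y)}\in\C^\times$. Your verification of the group law via the vertical composition formula (\ref{vert2mor}) and your measurability checks in both directions match the paper's argument in substance.
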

\begin{proof}
Let $\alpha$ be a 2-automorphism of $H^f$, where $f$ is invertible.
\[
  \xymatrix{
  H^X\ar@/^2ex/[rr]^{H^f}="g1"\ar@/_2ex/[rr]_{H^f}="g2"&& H^Y
  \ar@{=>}^{\alpha} "g1"+<0ex,-2ex>;"g2"+<0ex,2ex>
}
\]
Using the 2-isomorphism $\beta\maps H^f\To (\C,f_\ast\delta)$, we can write $\alpha$ as a composite
\[
    \alpha = \beta^{-1}\cdot \tilde\alpha\cdot \beta
\]
By Thm.~\ref{invert2}, $\tilde \alpha \maps (\C,f_\ast\delta) \To (\C,f_\ast\delta)$ is necessarily a matrix functor given by a measurable field of linear operators $\tilde\alpha_{y,x}\maps \C \to \C$, defined and invertible $\delta_{f(y)}$-\alme\ for all $y$.  Such a measurable field is just a measurable function $\tilde\alpha \maps Y \times X \to \C$, with $\tilde\alpha_{y,f(y)}\in \C^\times$.  From the definition of matrix natural transformations, we can then compute for each object $\H\in H^X$, the morphism $\alpha_\H\maps H^f\H \to H^f \H$.   Explicitly,
\[
\xymatrix@C=4em@R=.2em@M=.8em{
\H_{f(y)} \ar[r]^{\beta_\H\phantom{uuuu}} & \direct \extd\delta_{f(y)}(x) H_x
\ar[r]^{\tilde\alpha_\H} & \direct \extd\delta_{f(y)}(x) H_x
\ar[r]^{\phantom{uuu}\beta_\H^{-1}} & \H_{f(y)}
\\
\psi_{f(y)}\ar @{|->} [r] & \direct \extd\delta_{f(y)}(x) \psi_x
\ar @{|->} [r] & \direct \extd\delta_{f(y)}(x)\alpha_{y,x} \psi_x
\ar @{|->} [r] & \tilde \alpha_{y,f(y)}\psi_{f(y)}
}
\]
So, the natural transformation $\alpha$ acts via multiplication by
\[
    \alpha(y) := \tilde \alpha_{y,f(y)} \in \C^\times.
\]
It is easy to show that $\alpha(y)\maps Y \to \C^\times$ is measurable, since $\tilde \alpha$ and $f$ are both measurable.

Conversely, given a measurable map $\alpha(y)$, we get a 2-automorphism $\alpha$ of $H^f$ by letting
\[
      \alpha_\H\maps H^f \H \to H^f \H
\]
be given by
\begin{align*}
      (\alpha_\H)_y\maps \H_{f(y)} &\to \H_{f(y)} \\
        \psi_y &\mapsto \alpha(y)\psi_y
\end{align*}
One can easily check that the procedures just described are inverses, so we get a one-to-one correspondence.  Moreover, composition of 2-automorphisms $\alpha_1$, $\alpha_2$, corresponds to multiplication of the functions $\alpha_1(y)$, $\alpha_2(y)$, so this correspondence gives a group isomorphism.
\end{proof}

It will also be useful to know how to compose pullback 2-automorphisms horizontally:
\begin{prop}
\label{horiz.2auto}
Let $f\maps Y\to X$ and $g\maps Z\to Y$ be measurable isomorphisms, and consider the following diagram in $\me$:
\[
  \xymatrix{
  H^X\ar@/^2ex/[rr]^{H^f}="g1"\ar@/_2ex/[rr]_{H^f}="g2"&& H^Y
  \ar@/^2ex/[rr]^{H^g}="g3"\ar@/_2ex/[rr]_{H^g}="g4"&& H^Z
  \ar@{=>}^{\alpha} "g1"+<0ex,-2.5ex>;"g2"+<0ex,2.5ex>
  \ar@{=>}^{\beta} "g3"+<0ex,-2.5ex>;"g4"+<0ex,2.5ex>
}
\]
where $\alpha$ and $\beta$ are 2-automorphisms corresponding to measurable maps
\[
\alpha \maps Y \to \C^\times \quad \text{and} \quad \beta \maps Z \to \C^\times
\]
as in the previous theorem.  Then the horizontal composite $\beta\circ \alpha$ corresponds to the measurable map from $Z$ to $\C^\times$ defined by
\[
     (\beta\circ \alpha)(z) = \beta(z)\alpha(g(z))
\]
\end{prop}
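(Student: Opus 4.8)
The plan is to pass to the matrix-functor picture, apply the horizontal composition formula in the degenerate case where the source and target measures coincide, and then read off the answer using the dictionary of Theorem~\ref{pullback.2-autos}.

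First I would use Theorem~\ref{pullback.2-autos} to replace $H^f$ and $H^g$ by their matrix-functor representatives $(\C,\delta_{f(y)})$ and $(\C,\delta_{g(z)})$. Under this identification the 2-automorphism $\alpha$ is carried by a measurable field $\tilde\alpha_{y,x}\maps\C\to\C$ with $\alpha(y)=\tilde\alpha_{y,f(y)}$, and $\beta$ by a field $\tilde\beta_{z,y}\maps\C\to\C$ with $\beta(z)=\tilde\beta_{z,g(z)}$. The crucial simplification is that, since $\alpha$ and $\beta$ are \emph{auto}morphisms, their source and target matrix functors coincide: $t_y=t'_y=\delta_{f(y)}$ and $u_z=u'_z=\delta_{g(z)}$. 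Hence every Radon--Nikodym rescaling factor in (\ref{hor2mor}) is trivial (so $\tilde\alpha=\alpha$, $\tilde\beta=\beta$ as fields), we have $k=k'$, and we land in exactly the special case noted just after (\ref{hor2mor}), in which $(\widetilde{\beta\circ\alpha})_{z,x}$ is simply the direct integral of the fields $\tilde\beta_{z,y}\tensor\tilde\alpha_{y,x}$ against $k_{z,x}$.

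Next I would compute the composite measure and its disintegration for these two Dirac families. From (\ref{meas1compo}),
\[
(ut)_z=\int_Y \extd\delta_{g(z)}(y)\,\delta_{f(y)}=\delta_{f(g(z))},
\]
so $(ut)_z$ is concentrated at the single point $f(g(z))$. Evaluating the right-hand side of the disintegration identity (\ref{swapping_disintegrations}) gives $\int_Y \extd\delta_{g(z)}(y)\,(\delta_y\tensor\delta_{f(y)})=\delta_{g(z)}\tensor\delta_{f(g(z))}$, and matching this against the left-hand side $k_{z,f(g(z))}\tensor\delta_{f(g(z))}$ forces $k_{z,f(g(z))}=\delta_{g(z)}$. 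Feeding this into the special-case composition formula, the direct integral over $Y$ collapses to a single term at $y=g(z)$:
\[
(\widetilde{\beta\circ\alpha})_{z,f(g(z))}
=\tilde\beta_{z,g(z)}\tensor\tilde\alpha_{g(z),f(g(z))}
=\beta(z)\tensor\alpha(g(z)),
\]
where the last equality uses $\tilde\beta_{z,g(z)}=\beta(z)$ and $\tilde\alpha_{y,f(y)}=\alpha(y)$ at $y=g(z)$. As these are operators $\C\to\C$, i.e.\ scalar multiplications, their tensor product is multiplication by $\beta(z)\alpha(g(z))$.

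Finally, the composite functor $H^X\to H^Z$ is the pullback $H^{fg}$ along the isomorphism $fg\maps Z\to X$, $z\mapsto f(g(z))$, by (\ref{pullback composition}); so $\beta\circ\alpha$ is a 2-automorphism of $H^{fg}$. Applying Theorem~\ref{pullback.2-autos} once more, now to the map $fg$, identifies it with the measurable function $z\mapsto(\widetilde{\beta\circ\alpha})_{z,f(g(z))}=\beta(z)\alpha(g(z))$, which is the claim. The only genuinely delicate point is the Dirac bookkeeping above: one must check that $k_{z,x}$ is correctly pinned down at the unique support point of $(ut)_z$, and that the rescaled fields $\tilde\alpha,\tilde\beta$ of Theorem~\ref{pullback.2-autos} really agree with the fields in the composition formula---both of which hold precisely because the source and target measures coincide, trivializing the rescalings. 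Everything else is routine evaluation of direct integrals against Dirac measures.
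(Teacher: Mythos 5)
Your proof is correct, and it is exactly the argument the paper has in mind: the paper's own ``proof'' is the one-line assertion that the result is a straightforward computation from the definition of horizontal composition, and your write-up supplies precisely that computation (reduction to the matrix representatives $(\C,\delta_{f(y)})$, $(\C,\delta_{g(z)})$, the Dirac disintegration $k_{z,f(g(z))}=\delta_{g(z)}$, collapse of the direct integral, and re-identification via Theorem~\ref{pullback.2-autos} applied to $fg$). The Dirac bookkeeping and the observation that equal source and target measures trivialize the Radon--Nikodym rescalings are handled correctly, so nothing is missing.
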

\begin{proof}
This is a straightforward computation from the definition of horizontal composition.
\end{proof}

\subsection{Structure theorems}

We now begin the precise description of the representation theory, as outlined
in Section \ref{main_results}.  We first give
the detailed structure of representations, followed by that of intertwiners and 2-intertwiners.

%
\subsubsection{Structure of representations}
%

Given a generic 2-group $\mathcal{G} = (G, H, \rhd, \d)$, we are
interested in the structure of a representation $\rho$ in the target
2-category $\me$.  Since any object of $\me$ is $C^\ast$-equivalent to
one of the form $H^X$, we shall assume that
\[
\rho(\star) = H^X
\]
for some measurable space $X$.  The representation $\rho$ also gives,
for each $g\in G$, a morphism $\rho(g)\maps H^X \to H^X$, and we
assume for now that all of these morphisms are pullbacks of measurable
automorphisms of $X$.

%
\begin{theo}[Representations] \label{thm:repclassify}
Let $\rho$ be a representation of $\G = (G,H,\d,\rhd)$ on $H^X$, and assume that each $\rho(g)$  is of the form $ H^{f_g}$ for some $f_g\maps X\to X$.  Then $\rho$ is determined uniquely by:
\begin{itemize}
  \item a right action $\lhd$ of $G$ as measurable transformations of $X$, and
  \item an assignment to each $x\in X$ of a group
homomorphism $\chi(x)\maps H \to \C^\times$.
\end{itemize}
satisfying the following properties:
\begin{romanlist}
\item for each  $h\in H$, the function $x\mapsto \chi(x)[h]$ is measurable
\item any element of the image of $\d$ acts trivially on $X$ via $\lhd$.
\item the field of homomorphisms is equivariant
under the actions of $G$ on $H$ and $X$:
\[
\chi(x)[g \rhd h] = \chi(x \lhd g) [h].
\]

\end{romanlist}
\end{theo}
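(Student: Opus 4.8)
The plan is to read the data $(\lhd,\chi)$ directly off the 2-functor axioms (\ref{1comp})--(\ref{2comp_hor}), show those axioms force properties (i)--(iii), and then check that the correspondence $\rho\mapsto(\lhd,\chi)$ is a bijection. First I would extract the action. Since $\rho$ is a strict 2-functor, $\rho(1)=\unit_{H^X}$ and $\rho(g'g)=\rho(g')\rho(g)$ by (\ref{1comp}). Feeding in $\rho(g)=H^{f_g}$ and the strict composition law $H^fH^{f'}=H^{f'\circ f}$ from (\ref{pullback composition}), and using that $f\mapsto H^f$ is injective (the remark preceding Thm.~\ref{pullback.2-autos}, together with Prop.~\ref{isomorphic_to_pullback}), I obtain $f_1=\unit_X$ and $f_{g'g}=f_g\circ f_{g'}$. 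Hence $x\lhd g:=f_g(x)$ defines a right action of $G$ by measurable automorphisms of $X$, the invertibility of each $f_g$ coming from the strict invertibility of $\rho(g)$ (with inverse $\rho(g^{-1})$). This is the first piece of data.

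Next I would pin down property (ii) and the characters. The 2-morphism $(1,h)$ runs from $1$ to $\d(h)$, so $\rho(1,h)$ is an \emph{invertible} 2-morphism $H^{f_1}=\unit_{H^X}\To H^{f_{\d h}}$. Because distinct pullbacks are never 2-isomorphic, this forces $f_{\d h}=\unit_X$, i.e.\ $\d(h)$ acts trivially, which is property (ii). As a consequence, for every $g$ the target of $\rho(g,h)$ is $H^{f_{\d(h)g}}=H^{f_g\circ f_{\d h}}=H^{f_g}$, so $\rho(g,h)$ is a 2-\emph{automorphism} of $H^{f_g}$. By Thm.~\ref{pullback.2-autos} each such 2-automorphism is a measurable map $X\to\C^\times$, so I define $\chi(x)[h]:=\rho(1,h)(x)$; this is measurable in $x$ for fixed $h$, giving property (i).

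The remaining structure comes from the two composition laws. Writing $(1,h)\circ(g,1)=(g,h)$ and applying the horizontal composition formula of Prop.~\ref{horiz.2auto}, with $\rho(g,1)=\unit_{H^{f_g}}$ the constant function $1$ and the inner argument supplied by $f_1=\unit_X$, shows that $\rho(g,h)$ is the function $x\mapsto\chi(x)[h]$, \emph{independent of $g$}. Then $(g,1)\circ(1,h)=(g,g\rhd h)$ fed through the same formula --- where now the map $f_g$ of the outer pullback enters the argument --- gives $\chi(x)[g\rhd h]=\chi(x\lhd g)[h]$, which is the equivariance (iii). Finally, vertical composition (\ref{2comp_vert}) applied to the composable pair $(g,h)$ and $(\d(h)g,h')$, whose composite is $(g,h'h)$, becomes pointwise multiplication of 2-automorphisms by Thm.~\ref{pullback.2-autos}, yielding $\chi(x)[h'h]=\chi(x)[h']\chi(x)[h]$; with $\chi(x)[1]=1$ this makes each $\chi(x)$ a homomorphism $H\to\C^\times$.

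To finish, I would establish uniqueness and the converse. The data $(\lhd,\chi)$ determines $\rho$: it fixes each $f_g$, hence $\rho(g)=H^{f_g}$, and fixes each $\rho(g,h)$ as the 2-automorphism $\chi(\cdot)[h]$ through the bijection of Thm.~\ref{pullback.2-autos}, so $\rho\mapsto(\lhd,\chi)$ is injective. Conversely, given $(\lhd,\chi)$ satisfying (i)--(iii), I set $\rho(g)=H^{f_g}$ with $f_g(x)=x\lhd g$ and let $\rho(g,h)$ be the 2-automorphism $\chi(\cdot)[h]$, then verify the axioms: (\ref{1comp}) reduces to the right-action law, (\ref{2comp_vert}) to the homomorphism property of $\chi(x)$, and (\ref{2comp_hor}) to that homomorphism property together with equivariance (iii), obtained by running Prop.~\ref{horiz.2auto} forward. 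The main obstacle is the bookkeeping in the third paragraph: one must get the direction of horizontal composition right so that the map $f_g$ of the outer pullback lands inside the argument of $\chi$, since this is precisely what produces the equivariance condition (iii) rather than a triviality.
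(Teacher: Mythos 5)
Your proposal is correct and follows essentially the same route as the paper's proof: extract the right action from the strict composition law for pullbacks, force $f_{\d h}=1_X$ from the invertibility of $\rho(1,h)$, identify each $\rho(g,h)$ with a measurable function $X\to\C^\times$ via Thm.~\ref{pullback.2-autos}, and then derive $g$-independence, the homomorphism property, and equivariance from the horizontal and vertical composition formulas (your explicit composites $(1,h)\circ(g,1)$ and $(g,1)\circ(1,h)$ are exactly the paper's specializations of its general formulas), finishing with the same reconstruction for the converse. The only cosmetic difference is that you make explicit two points the paper leaves implicit --- the injectivity of $f\mapsto H^f$ and the bijectivity of $\rho\mapsto(\lhd,\chi)$ --- which is fine.
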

%
%
%
\begin{proof}
Consider a representation $\rho$ on $H^X$ and suppose that for each $g$,
\[
    \rho(g) = H^{f_g},
\]
where $f_g\maps X\to X$ is a measurable isomorphism.
Thanks to the strict composition laws (\ref{pullback composition}) for such 2-morphisms, the conditions that $\rho$ respect composition of morphisms and the identity morphism, namely
\[
   \rho(g'g) = \rho(g')\, \rho(g)  \quad \text{and} \quad  \rho(1) = \unit_{H^X}
\]
can be expressed as conditions on the functions $f_g$:
\beq
\label{rightaction}
     f_{g'g} = f_{g}f_{g'}  \quad \text{and} \quad f_1 = 1_X.
\eeq
Introducing the notation
$x \lhd g = f_{g}(x) $, these equations can be rewritten
\[
x\lhd g'g = (x \lhd g') \lhd g  \quad \text{and} \quad  x \lhd 1 = x
\]
Thus, the mapping $(x,g)\mapsto x\lhd g$ is a right action of $G$ on
$X$.

Next, consider a 2-morphism $\rho(u)$, where $u=(g, h)$ is a 2-morphism in $\G$.   Since $u$ is invertible, so is $\rho(u)$.  In particular, applying $\rho$ to the 2-morphism $(1,h)$, we get a 2-isomorphism $\rho(1,h)\maps \unit_{H^X} \To H^{f_{\d h}}$ for each $h\in H$.  Such 2-isomorphisms exists only if $f_{\d h} = 1_X$ for all $h$; that is,
$$ x \lhd \d(h) = x $$ for all $x\in X$ and $h\in H$.  Thus, the image $\d(H)$ of the homomorphism $\d$ fixes every element $x\in X$ under the action $\lhd$.

For arbitrary, $u\in G\times H$, Thm.~\ref{pullback.2-autos} implies $\rho(u)$ is given by a measurable function on $X$, which we also denote by $\rho(g,h)$:
\[
   \rho(g,h)\maps X \to \C.
\]
We can derive
conditions on the these functions from the requirement that $\rho$ respect both kinds of composition of 2-morphisms.

First, by Thm.~\ref{pullback.2-autos}, vertical composition corresponds to pointwise multiplication of functions, so the condition (\ref{2comp_vert}) that $\rho$ respect vertical composition becomes:
\beq
\label{char.vert.comp}
\rho(g,h'h)(x) = \rho(\d h g,h')(x)\, \rho(g,h)(x).
\eeq

Similarly, using the formula for horizontal composition provided by Prop.~\ref{horiz.2auto}, we obtain
\beq
\label{char.hor.comp}
\rho(g'g,h'(g'\rhd h))(x) = \rho(g',h')(x)\, \rho(g,h)(x\lhd g').
\eeq
Applying this formula in the case $g' = 1$ and $h = 1$, we find that the functions $\rho(g,h)$ are independent of $g$:
\[
\rho(g,h)(x) = \rho(1,h)(x)
\]
This allows a drastic simplification of the formula for vertical composition (\ref{char.vert.comp}).  Indeed, if we define
\beq
 \chi(x)[h] = \rho(1,h)(x),
 \eeq
then (\ref{char.vert.comp}) is simply the statement that $h\mapsto \chi(x)[h]$ is a homomorphism for each $x$:
\[
     \chi(x)[h'h] = \chi(x)[h'] \, \chi(x)[h].
\]
To check that the field of homomorphisms $\chi(x)$ satisfies the equivariance
property
\beq \label{covcharacters}
\chi(x\lhd g) [h] = \chi(x)[g \rhd h] ,
\eeq
one simply uses (\ref{char.hor.comp}) again, this time with $g=h'=1$.

To complete the proof, we show how to reconstruct the representation $\rho\maps \G \to \me$, given the measurable space
$X$, right action of $G$ on $X$, and field $\chi$ of homomorphisms
from $H$ to $\C^\times$.   This is a straightforward task.  To the unique
object of our 2-group, we assign $H^X\in \me$.   If $g\in G$ is a morphism in $\G$, we let $\rho(g)=H^{f_g}$, where $f_g(x)=x\lhd g$; if $u=(g,h)\in G\times H$ is a 2-morphism in $\G$, we let $\rho(u)$ be the automorphism of $H^{f_g}$ defined by the measurable function $x\mapsto \chi(x)[h]$.
\end{proof}

This theorem suggests an interesting question: is every representation
of $\G$ on $H^X$ equivalent to one of the above type?  As a weak piece
of evidence that the answer might be `yes', recall from Prop.\
\ref{isomorphic_to_pullback} that any invertible morphism from $H^X$
to itself is isomorphic to one of the form $H^f$.  However, this fact
alone is not enough.

The above theorem also suggests that we view representations of 2-groups
in a more geometric way, as equivariant bundles.  In a representation
of a 2-group $\G$ on $H^X$, the assignment $x \mapsto \chi(x)$ can be
viewed as promoting $X$ to the total space of a kind of bundle over
the set $\hom(H,\C^\times)$ of homomorphisms from $H$ to $\C^\times$:
\[
\xymatrix{X  \ar[d]^{\chi} \\ \hom(H,\C^\times)}
\]
Here we are using `bundle' in a very loose sense: no topology is
involved. The group $G$ acts on both the total space and the
base of this bundle: the right action $\lhd$ of $G$ on $X$ comes from
the representation, while its left action $\rhd$ on $H$
induces a right action $(\chi,g) \mapsto \chi_g$ on $\hom(H,\C^\times)$,
where
\[\chi_g[h] = \chi[g\rhd h].\]
The equivariance property in Thm.~\ref{thm:repclassify} means that the
map $\chi$ satisfies
\[    \chi(x\lhd g)=\chi(x)_g.  \]
So, we say $\chi \maps X \to \hom(H,\C^\times)$
is a `$G$-equivariant bundle'.

So far we have ignored any measurable structure on the groups $G$ and
$H$, treating them as discrete groups.  In practice these groups will
come with measurable structures of their own, and the maps involved
in the 2-group will all be measurable.  For such 2-groups the interesting
representations will be the `measurable' ones, meaning roughly that
all the maps defining the above $G$-equivariant bundle are measurable.

To make this line of thought precise, we need a concept of
`measurable group':

\begin{defn}
\label{defn:measurable_group}
We define a {\bf measurable group} to be a topological group
whose topology is locally compact, Hausdorff, and second countable.
\end{defn}

Varadarajan calls these {\bf lcsc groups}, and his book is an
excellent source of information about them \cite{Varadarajan}.  By
Lemma \ref{lem:standard_Borel}, they are a special case of {\it Polish
groups}: that is, topological groups $G$ that are homeomorphic to
complete separable metric spaces.  For more information on Polish
groups, see the book by Becker and Kechris \cite{BeckerKechris}.

It may seem odd to define a `measurable group' to be a special sort of
{\it topological} group.  The first reason is that every measurable
group has an underlying measurable space, by Lemma
\ref{lem:standard_Borel}.  The second is that by Lemma
\ref{lem:automatic_continuity}, any measurable homomorphism between
measurable groups is automatically continuous.  This implies that the
topology on a measurable group can be uniquely reconstructed from its
group structure together with its $\sigma$-algebra of measurable
subsets.

Next, instead of working with the set $\hom(H,\C^\times)$ of {\it all}
homomorphisms from $H$ to $\C^\times$, we restrict attention to the
{\it measurable} ones:

\begin{defn} If $H$ is a measurable group, let $H^*$ denote the set
of measurable (hence continuous) homomorphisms $\chi \maps H \to \C^\times$.
\end{defn}

We make $H^*$ into a group with pointwise multiplication as the
group operation:
\[   (\chi \chi')[h] = \chi[h] \, \chi'[h] . \]
$H^*$ then becomes a topological group with the compact-open topology.
This is the same as the topology where $\chi_\alpha \to \chi$ when
$\chi_\alpha(h) \to \chi(h)$ uniformly for $h$ in any fixed compact
subset of $H$.

Unfortunately, $H^*$ may not be a measurable group!  An example is the
free abelian group on countably many generators, for which $H^*$ fails
to be locally compact.  However, $H^*$ is measurable when $H$ is a
measurable group with finitely many connected components.  For more
details, including a necessary and sufficient condition for $H^*$
to be measurable, see Appendix \ref{apx:measurable_groups}.

In our definition of a `measurable 2-group', we will demand that $H$
and $H^*$ be measurable groups.  The left action of $G$ on $H$ gives a
right action of $G$ on $H^*$:
\[
\begin{array}{rrcl}
\lhd \maps H^* \times G &\to& H^*  \\
           (\chi, g)    &\mapsto& \chi_g
\end{array}
\]
where
\[\chi_g[h] = \chi[g\rhd h].  \]
We will demand that both these actions be measurable.  We do not know
if these are independent conditions.  However, in Lemma
\ref{lem:continuous_action} we show that if the action of $G$ on $H$
is continuous, its action on $H^*$ is continuous and thus measurable.
This handles most of the examples we care about.

With these preliminaries out of the way, here are the main definitions:

\begin{defn}
\label{measurable_2-group}
A {\bf measurable 2-group} $\G = (G,H,\rhd,\d)$ is a
2-group for which $G$, $H$ and $H^*$ are measurable groups
and the maps
\[
      \rhd\maps G\times H \to H,  \qquad
      \lhd\maps H^* \times G \to H^*, \qquad
      \d\maps H\to G
\]
are measurable.
\end{defn}

\begin{defn}
Let $\G = (G,H,\rhd,\d)$ be a measurable 2-group and suppose the
representation $\rho$ of $\G$ on $H^X$ is specified by the maps
\[       \lhd \maps X \times G \to X , \qquad
         \chi\maps X\to H^\ast \]
as in Thm.~\ref{thm:repclassify}.  Then $\rho$ is a {\bf measurable
representation} if both these maps are measurable.
\end{defn}

From now on, we will always be interested in {\em measurable}
representations of {\em measurable} 2-groups.
For such a representation, Lemma \ref{lem:good_topology} guarantees that we
can choose a topology for $X$, compatible with its structure as a
measurable space, such that the action of $G$ on $X$ is continuous.
This may not make $\chi \maps X \to H^\ast$ continuous.  However,
Lemma \ref{lem:automatic_continuity} implies that each $\chi(x)\maps H
\to \C^\times$ is continuous.

Before concluding this section, we point out a corollary of
Thm.~\ref{thm:repclassify} that reveals an interesting feature of the
representation theory in the 2-category $\me$. This corollary involves
a certain skeletal 2-group constructed from $\G$ (recall that a
2-group is `skeletal' when its corresponding crossed module has $\d =
0$).  Let $\G$ be a 2-group, {\it not necessarily measurable},
with corresponding crossed module
$(G, H, \d, \rhd)$.  Then, let
\[ \bar{G} = G/ \d(H) , \qquad
   \bar{H} = {H/[H,H]}
\]
Note that the image $\d(H)$ is a normal subgroup of $G$ by
(\ref{comp1}), and the commutator subgroup $[H,H]$ is
a normal subgroup of $H$.  One can check that the action $\rhd$
naturally induces an action $\bar{\rhd}$ of $\bar G$ on $\bar H$.
If we also define $\bar \d\maps \bar H \to
\bar G$ to be the trivial homomorphism, it is straightforward to check
that these data define a new crossed module, from which we get a new
2-group:

\begin{defn}
Let $\G$ be a 2-group with corresponding crossed module
$(G, H, \d, \rhd)$.  Then the 2-group $\bar{\G}$ constructed from the
crossed module $(\bar G,\bar H, \bar \d, \bar \rhd)$ is called the
{\bf skeletization} of $\G$.
\end{defn}

Now consider a representation $\rho$ of the 2-group $\G$.  First, by
Thm.~\ref{thm:repclassify}, $\d(H)$ acts trivially on $X$, so $\bar G$
acts on $X$.  Second, the group $\C^\times$ being abelian, $[H,H]$ is
contained in the kernel of the homomorphisms $\chi(x)\maps H \to
\C^\times$ for all $x$. In light of Thm.~\ref{thm:repclassify}, these
remarks lead to the following corollary:

\begin{cor}
For any 2-group, its representations of the form described in
Thm.~\ref{thm:repclassify} are in natural one-to-one correspondence
with representations of the same form of its skeletization.
\end{cor}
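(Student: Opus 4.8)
The plan is to show that the two pieces of data defining a representation of the form in Thm.~\ref{thm:repclassify}---the right action $\lhd$ of $G$ on $X$ and the field of homomorphisms $\chi(x)\maps H \to \C^\times$---both factor canonically through the quotients $G \to \bar{G} = G/\d(H)$ and $H \to \bar{H} = H/[H,H]$, and that this factorization is reversible. The two observations made just above already supply the essential content: by property (ii) of Thm.~\ref{thm:repclassify}, $\d(H)$ acts trivially, so $\lhd$ descends to a right action $\bar\lhd$ of $\bar{G}$ on $X$; and since $\C^\times$ is abelian, $[H,H] \subseteq \ker\chi(x)$ for every $x$, so each $\chi(x)$ descends to a homomorphism $\bar\chi(x)\maps \bar{H} \to \C^\times$.

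First I would check that $(X, \bar\lhd, \bar\chi)$ is a representation of $\bar{\G}$ of the form in Thm.~\ref{thm:repclassify}. Writing $\bar{h}$ for the image of $h$ under $H \to \bar{H}$, the identities $\bar\chi(x)[\bar{h}] = \chi(x)[h]$ and $x \bar\lhd \bar{g} = x \lhd g$ make the three required conditions immediate consequences of those for $\rho$: measurability of $x \mapsto \bar\chi(x)[\bar{h}]$ follows from property (i); the condition on the image of $\bar\d$ is vacuous since $\bar\d$ is trivial; and the equivariance $\bar\chi(x \bar\lhd \bar{g})[\bar{h}] = \bar\chi(x)[\bar{g}\, \bar\rhd\, \bar{h}]$ reduces, after lifting, to the equivariance (iii) for $\chi$ together with the definition of $\bar\rhd$.

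Next I would construct the inverse correspondence by pullback: given a representation of $\bar{\G}$ of this form, compose $\bar\lhd$ with the quotient $G \to \bar{G}$ to obtain a right action of $G$ on $X$, and compose each $\bar\chi(x)$ with $H \to \bar{H}$ to obtain $\chi(x)\maps H \to \C^\times$. Conditions (i) and (iii) are inherited exactly as above, while (ii) holds because $\d(H)$ maps to the identity of $\bar{G}$ and the $G$-action factors through $\bar{G}$. That these two constructions are mutually inverse is then formal: descending and then pulling back returns the original data, since a homomorphism killing $[H,H]$ (respectively an action trivial on $\d(H)$) is recovered from its factorization through the quotient.

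Finally I would address naturality. Because both constructions leave the measurable space $X$ and the underlying maps on $X$ untouched, and merely reinterpret the same characters through $H \to \bar{H}$, the bijection is canonical---involving no arbitrary choices---and an intertwiner between two representations of $\G$ (an equivariant family of measures together with an equivariant Hilbert-space bundle, as described in Section~\ref{main_results}) transports unchanged to an intertwiner between the corresponding representations of $\bar{\G}$. The one step demanding care, and the only place where anything could go wrong, is the bookkeeping with coset representatives: every formula written in terms of a chosen lift must be checked independent of that lift. This well-definedness is guaranteed in each case precisely by the two kernel/triviality observations above, so I expect no genuine obstacle.
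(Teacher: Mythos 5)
Your proof is correct and follows essentially the same route as the paper, whose justification consists precisely of the two observations you cite: $\d(H)$ acts trivially on $X$ by condition (ii) of Thm.~\ref{thm:repclassify}, so the action descends to $\bar{G}=G/\d(H)$, and $[H,H]\subseteq\ker\chi(x)$ since $\C^\times$ is abelian, so each character descends to $\bar{H}=H/[H,H]$, with the inverse given by pullback. One caution about your final paragraph: the claim that intertwiners transport unchanged between $\G$ and $\bar{\G}$ is false in general---the paper explicitly remarks immediately after this corollary that $\G$ and its skeletization ``generally do not have the same intertwiners'' (for non-skeletal $\G$ the pillow condition ties $\phi(g)$ to $\phi(\d h\, g)$, a constraint absent in the skeletal case)---but this does not affect the corollary itself, which concerns only representations.
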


This corollary means measurable representations in $\me$ fail to
detect the `non-skeletal part' of a 2-group.  However, the
representation theory of $\G$ as a whole is generally richer than the
representation theory of its skeletization $\bar\G$.  One can indeed
show that, while $\G$ and $\bar \G$ can not be distinguished by
looking at their {\em representations}, they generally do not have the
same {\em intertwiners}.  In what follows, we will nevertheless
restrict our study to the case of skeletal 2-groups.

Thus, from now on, we suppose the group homomorphism $\d\maps H \to G$
to be trivial, and hence the group $H$ to be abelian.  Considering
Thm.~\ref{thm:repclassify} in light of the preceding discussion, we
easily obtain the following geometric characterization of measurable
representations of skeletal 2-groups.
\begin{theo}
\label{thm:skelrepclassify}
A measurable representation $\rho$ of a measurable skeletal 2-group
$\G = (G,H,\rhd)$ on $H^X$ is determined uniquely by a measurable
right $G$-action on $X$, together with a $G$-equivariant measurable
map $\chi \maps X \to H^\ast$.
\end{theo}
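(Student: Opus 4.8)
The plan is to derive Theorem~\ref{thm:skelrepclassify} as the skeletal, measurable specialization of Theorem~\ref{thm:repclassify}. Since a measurable representation is by definition one specified by the maps $\lhd$ and $\chi$ as in Theorem~\ref{thm:repclassify}, each $\rho(g)$ already has the form $H^{f_g}$, so that theorem applies directly: $\rho$ is determined uniquely by a right action $\lhd$ of $G$ on $X$ together with a field of homomorphisms $\chi(x)\maps H\to\C^\times$ subject to conditions (i)--(iii). The remaining work is to show that, in the skeletal measurable setting, these data collapse to exactly a measurable right $G$-action on $X$ and a $G$-equivariant measurable map $\chi\maps X\to H^\ast$.

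First I would dispose of the conditions that become automatic. Because $\G$ is skeletal, $\d$ is trivial, so $\im\d=\{1\}$ and condition (ii)---that elements of $\im\d$ act trivially on $X$---holds vacuously; moreover $H$ is abelian. Thus only (i) and (iii) survive. Next I would reinterpret (iii): by the very definition of the induced right action of $G$ on $H^\ast$, namely $\chi_g[h]=\chi[g\rhd h]$, the equivariance identity $\chi(x)[g\rhd h]=\chi(x\lhd g)[h]$ is precisely the statement $\chi(x\lhd g)=\chi(x)_g$, i.e.\ that $\chi\maps X\to H^\ast$ is $G$-equivariant.

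The step requiring the most care is to match the measurability hypothesis (i) of Theorem~\ref{thm:repclassify}---that $x\mapsto\chi(x)[h]$ be measurable for each fixed $h$---with the requirement from the definition of a measurable representation that $\chi\maps X\to H^\ast$ be a measurable map into the measurable space $H^\ast$. I would argue these are equivalent. One direction is immediate: each evaluation $\mathrm{ev}_h\maps H^\ast\to\C^\times$ is continuous, hence Borel, so measurability of $\chi$ yields measurability of $x\mapsto\mathrm{ev}_h(\chi(x))=\chi(x)[h]$. For the converse I would use that $H$, being a measurable (hence second countable) group, admits a countable dense subset $\{h_n\}\subseteq H$; the evaluations $\mathrm{ev}_{h_n}$ then separate the points of $H^\ast$ (two continuous homomorphisms agreeing on a dense set coincide). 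Since $H^\ast$ is standard Borel, such a countable separating family of Borel maps generates its Borel $\sigma$-algebra, so componentwise measurability of the maps $x\mapsto\mathrm{ev}_{h_n}(\chi(x))$ forces $\chi$ to be measurable into $H^\ast$; in particular each $\chi(x)$ genuinely lands in $H^\ast$. The pertinent facts about $H^\ast$ as a measurable group are collected in Appendix~\ref{apx:measurable_groups}.

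Assembling these observations, in the skeletal measurable case the data of Theorem~\ref{thm:repclassify} reduce to a measurable right $G$-action $\lhd$ on $X$ together with a $G$-equivariant measurable map $\chi\maps X\to H^\ast$. Conversely, any such pair reconstructs $\rho$ by the recipe in the proof of Theorem~\ref{thm:repclassify}: assign the object $\star$ to $H^X$, each morphism $g$ to $H^{f_g}$ with $f_g(x)=x\lhd g$, and each 2-morphism $u=(g,h)$ to multiplication by the function $x\mapsto\chi(x)[h]$. Uniqueness is inherited verbatim from Theorem~\ref{thm:repclassify}. I expect that the generation of the Borel structure of $H^\ast$ by evaluations is the only genuine technical point; the rest is bookkeeping specializing the earlier structure theorem.
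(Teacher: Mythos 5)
Your proposal is correct and follows essentially the same route as the paper, which offers no separate proof of Theorem \ref{thm:skelrepclassify}: it derives the statement directly (``we easily obtain'') from Theorem \ref{thm:repclassify}, the skeletal hypothesis (making condition (ii) vacuous and $H$ abelian), the reformulation of the equivariance condition (iii) as $\chi(x\lhd g)=\chi(x)_g$, and the definition of measurable representation---exactly the reduction you carry out. What you add beyond the paper is the explicit reconciliation of the pointwise measurability condition (i) of Theorem \ref{thm:repclassify} with measurability of $\chi$ as a map into the standard Borel space $H^\ast$, via the standard fact that a countable separating family of Borel functions on a standard Borel space generates its Borel $\sigma$-algebra, applied to evaluations at a countable dense subset of $H$; this is a genuine technical point the paper leaves implicit, and your argument for it is sound.

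One caveat: your closing clause ``in particular each $\chi(x)$ genuinely lands in $H^\ast$'' is circular. Measurability of the countably many functions $x\mapsto\chi(x)[h_n]$ cannot force the individual homomorphisms $\chi(x)\maps H\to\C^\times$ to be continuous---for fixed $x$, these hypotheses say nothing about the dependence on $h$---and your separating-family argument already uses continuity of each $\chi(x)$ when you invoke that two continuous homomorphisms agreeing on a dense set coincide. So membership $\chi(x)\in H^\ast$ must be assumed, not derived; the stated equivalence of (i) with measurability of $\chi\maps X\to H^\ast$ holds only under that assumption. Fortunately the assumption is supplied by the paper's definition of measurable representation, which stipulates $\chi\maps X\to H^\ast$ outright, so for the theorem itself only the easy direction (continuity of the evaluations $\mathrm{ev}_h$ in the compact-open topology) is needed, and the slip does not affect the validity of your proof.
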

Since we consider only skeletal 2-groups and measurable
representations in the rest of the paper, this is the description of
2-group representations to keep in mind.  It is helpful to think of
this description as giving a `measurable $G$-equivariant bundle'
$$
\xymatrix{X  \ar[d]^{\chi} \\ H^*}
$$

\medskip

%
\subsubsection{Structure of intertwiners} \label{1inter}
%

In this section we study intertwiners between two fixed measurable representations $\rho_1$ and $\rho_2$ of a skeletal 2-group $\G$.
Suppose $\rho_1$ and $\rho_2$ are specified, respectively, by the
measurable $G$-equivariant bundles $\chi_1$ and $\chi_2$ , as in Thm.~\ref{thm:skelrepclassify}:
\[
{\xygraph{
  []!{0;<1.5cm,0cm>:<.75cm, 1.3cm>::}
  []{X}="TM"  [r] {Y}
      :@{->}^{\chi^{\phantom{Y}}_2} [d] {H^\ast}="M"
        "TM"        :@{->}_{\chi^{\phantom{X}}_1} "M"
  }}
\]

To state our main structure theorem for intertwiners, it is convenient to
first define two properties that a $Y$\!-indexed measurable of measures $\mu_y$
on $X$ might satisfy.  First, we say the family $\mu_y$ is \textbf{fiberwise} if
each $\mu_y$ is supported on the fiber, in $X$, over the point $\chi_2(y)$.  That
is, $\mu_y$ is fiberwise if
\[
      \mu_y(X) = \mu_y(\chi_1^{-1}(\chi_2(y)))
\]
for all $y$.  We also recall from Section \ref{main_results} that we say a measurable family of
measures is {\bf\boldmath equivariant} if for every $g\in G$ and $y\in Y$,
$\mu_{y\lhd g}$ is {\em equivalent} to the transformed measure $\mu_y^g$
defined by:
\beq
\mu_y^g(A) := \mu_y(A \lhd g^{-1}).
\eeq
Note that, to check that a given {\em equivariant} family of measures is fiberwise, it is enough to check that, for a set of representatives $y_o$ of the $G$-orbits in $Y$, the measure $\mu_{y_o}$ concentrates on the fiber over $\chi_2(y_o)$.

We are now ready to give a concrete characterization of intertwiners $\phi \maps \rho_1 \to \rho_2$ between measurable representations.  For notational simplicity we now omit the symbol `$\lhd$' for the right $G$-actions on $X$ and $Y$ defined by the representations, using simple concatenation instead.

%
%
\begin{theo}[Intertwiners] \label{thm:1intclassify}
Let $\rho_1, \rho_2$ be measurable representations of $\G=(G,H,\rhd)$, specified
respectively by the $G$-equivariant bundles $\chi_1\maps X \to H^\ast$
and $\chi_2\maps Y \to H^\ast$, as in Thm.~\ref{thm:skelrepclassify}. Given an intertwiner
$\phi\maps \rho_1\to \rho_2$, we can extract the following data:
\begin{romanlist}
\item \label{eq_and_fiber} an equivariant and fiberwise $Y$\!-indexed measurable family of measures $\mu_y$
on $X$;
\item a $\mu$-class of fields of Hilbert spaces $\phi_{y,x}$ on $Y\times X$;
\item for each $g\in G$, a $\mu$-class of fields of invertible linear maps
$\Phi^g_{y,x}\maps \phi_{y,x} \to \phi_{(y,x)g^{-1}}$
such that, for all $g,g'\in G$, the cocycle condition
$$
\Phi^{g'g}_{y,x} = \Phi^{g'}_{(y,x)g^{-1}} \Phi^g_{y,x}
$$
holds for all $y$ and $\mu_{y}$-almost every $x$.
\end{romanlist}
Conversely, such data can be used to construct an intertwiner.
\end{theo}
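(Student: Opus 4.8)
The plan is to peel the intertwiner apart into its underlying measurable functor and its family of weakening 2-isomorphisms, and to read off the three pieces of data by applying the classification results already established for morphisms and 2-morphisms of $\me$. Recall that an intertwiner $\phi\maps\rho_1\to\rho_2$ consists of a measurable functor $\phi\maps H^X\to H^Y$ together with 2-isomorphisms $\phi(g)\maps\rho_2(g)\,\phi\To\phi\,\rho_1(g)$ satisfying the normalization (\ref{norm1int}), the composition law (\ref{compatib}), and the pillow identity (\ref{pillow}). By Lemma~\ref{me.composition.2} and Thm.~\ref{magic.thm}, we may replace $\phi$, up to bounded natural isomorphism (and hence, after transporting the $\phi(g)$ along it, up to equivalence of intertwiners), by a matrix functor; this supplies a $Y$\!-indexed family of measures $\mu_y$ on $X$ and a $\mu$-class of fields of Hilbert spaces $\phi_{y,x}$, giving data (ii) and the measure in (i).

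First I would compute the two composites occurring in $\phi(g)$. Since each $\rho_i(g)$ is pullback along the right action, a change of variables in the direct integral shows that $\rho_2(g)\,\phi$ is the matrix functor $(\phi_{y\lhd g,\,x},\,\mu_{y\lhd g})$ while $\phi\,\rho_1(g)$ is $(\phi_{y,\,x\lhd g^{-1}},\,\mu_y^g)$, where $\mu_y^g(A)=\mu_y(A\lhd g^{-1})$. Applying Thm.~\ref{invert2} to the 2-isomorphism $\phi(g)$ between these matrix functors forces $\mu_{y\lhd g}\sim\mu_y^g$ for every $y$---which is precisely the equivariance condition in (i)---and presents $\phi(g)$ as a field of invertible operators that, after the relabeling $\Phi^g_{y,x}:=\phi(g)_{y\lhd g^{-1},\,x}$, becomes the maps $\Phi^g_{y,x}\maps\phi_{y,x}\to\phi_{(y,x)g^{-1}}$ of (iii). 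Rewriting the whiskerings and vertical composite in the composition law (\ref{compatib}) by means of the matrix formulas (\ref{vert2mor}) and (\ref{hor2mor}) then turns it into exactly the cocycle identity $\Phi^{g'g}_{y,x}=\Phi^{g'}_{(y,x)g^{-1}}\Phi^g_{y,x}$, while (\ref{norm1int}) yields $\Phi^1_{y,x}=\unit_{\phi_{y,x}}$ (which in any case follows from the cocycle by invertibility).

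The fiberwise condition comes from the pillow (\ref{pillow}). Here, by Thm.~\ref{pullback.2-autos}, the scalar 2-automorphisms $\rho_1(g,h)$ and $\rho_2(g,h)$ are given by $x\mapsto\chi_1(x)[h]$ and $y\mapsto\chi_2(y)[h]$. Taking $g=1$, so that $\phi(1)=\unit_\phi$, the identity (\ref{pillow}) reduces via Prop.~\ref{horiz.2auto} to $\chi_1(x)[h]\,\unit_{\phi_{y,x}}=\chi_2(y)[h]\,\unit_{\phi_{y,x}}$ holding $\mu$-\alme; since $\phi_{y,x}\neq 0$ on the support of $\mu$, this means $\chi_1(x)=\chi_2(y)$ for $\mu$-almost every $(y,x)$, i.e.\ each $\mu_y$ is supported on $\chi_1^{-1}(\chi_2(y))$. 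For general $g$ the pillow imposes nothing new: the two scalars it compares are $\chi_2(y)[h]$ and $\chi_1(x\lhd g^{-1})[h]$, and these agree on the support of $\mu_y^g$ by the $g=1$ case together with the equivariance of $\chi_1$, so any invertible $\Phi^g$ satisfies it automatically.

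For the converse I would run these computations backwards: given data (i)--(iii), take $\phi$ to be the matrix functor $(\phi_{y,x},\mu_y)$, use the equivariance $\mu_{y\lhd g}\sim\mu_y^g$ together with Thm.~\ref{invert2} to build the 2-isomorphisms $\phi(g)$ out of the $\Phi^g$, and verify (\ref{norm1int}), (\ref{compatib}), (\ref{pillow}) from $\Phi^1=\unit$, the cocycle condition, and the fiberwise condition respectively; the phrase ``up to equivalence'' is absorbed into the bounded natural isomorphism used to reduce $\phi$ to a matrix functor and into passing to $\mu$-classes of the fields. I expect the main obstacle to be the measure-theoretic bookkeeping in the middle two steps: the horizontal-composition formula (\ref{hor2mor}) carries Radon--Nikodym rescaling factors such as $\sqrnd{\mu_{y\lhd g}}{\mu_y^g}$ that must be shown to cancel so that the cocycle emerges clean, and every identity holds only almost everywhere, so that when $G$ is uncountable the separate a.e.\ cocycle equations---one for each pair $g,g'$---cannot be intersected into a single everywhere-statement. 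This is exactly why the theorem is phrased ``up to equivalence'' and why the geometric bundle picture sketched in Section~\ref{main_results} is only a heuristic.
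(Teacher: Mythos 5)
Your proposal is correct and follows essentially the same route as the paper's own proof: reduce $\phi$ to a matrix functor up to an invertible 2-intertwiner, read off equivariance of $\mu_y$ and the invertible field $\Phi^g_{y,x}=\phi(g)_{yg^{-1},x}$ from Thm.~\ref{invert2} applied to the $\phi(g)$, turn the composition law (\ref{compatib}) into the cocycle identity via the whiskering formulas, extract the fiberwise condition from the pillow using the nonvanishing of $\phi_{y,x}$ on the support of $\mu_y$, and construct the converse directly. The only (cosmetic) difference is that you derive fiberwise-ness from the $g=1$ pillow and then note the general-$g$ pillow is automatic by covariance of the characters, whereas the paper unpacks the general-$g$ pillow directly and uses invertibility of $\Phi^g_{y,x}$ to reach the same conclusion.
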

%
%
%
\medskip

Before commencing with the proof, note what this theorem does {\em not} state.  It does not state that the data extracted from an intertwiner are unique, nor that starting with these data and constructing an intertwiner gives `the same' intertwiner.  This does turn out to be essentially true, at least for an certain broad class of intertwiners.  The sense in which this result classifies intertwiners will be clarified in Propositions \ref{equ1int} and \ref{equ-trans1int}. 

\medskip

\begin{proof}
Recall that an intertwiner provides a morphism $\phi\maps H^X \to H^Y$ in $\me$, together with a family
$$\phi(g)\maps \rho_2(g)\phi \To \phi\rho_1(g) \quad g \in G$$
 of invertible 2-morphisms, subject to the compatibility conditions (\ref{norm1int}), (\ref{compatib}) and (\ref{pillow}), namely
\beq \label{norm1int'}
\phi(1) = \unit_\phi
\eeq
and
\beq \label{compatib'}
\left[\phi(g') \circ \unit_{\rho_1(g)} \right]
\cdot
\left[\unit_{\rho_2(g')} \circ \phi(g) \right]
=
\phi(g'g)
\eeq
and
\beq \label{pillow'}
\left[ \unit_\phi \circ \rho_1(u) \right] \cdot \phi(g) =
\phi(g) \cdot \left[\rho_2(u) \circ \unit_\phi \right]
\eeq
where $u = (g, h)$.

Let us show first that we may assume $\phi$ is a matrix functor.  Since $\phi$ is a measurable functor, we can pick a bounded natural isomorphism
\[
    m\maps \phi \To \tilde\phi
\]
where $\tilde\phi$ is a matrix functor.  We then define, for each $g\in G$, a measurable natural transformation
\[
    \tilde\phi(g) = \left[ m\circ \unit_{\rho_1(g)} \right] \cdot \phi(g)\cdot \left[\unit_{\rho_2(g)}\circ m^{-1}\right]
\]
chosen to make the following diagram commute:
\[
\xymatrix{
  H^X\ar[rrr]^{\rho_1(g)}
    \ar@{}[rrr]^(0.85){}="fx"\ar@/_3ex/[ddd]_{\phi}="a"\ar@/^3ex/[ddd]^{\tilde\phi}="b"
  &&&H^X\ar@{-->}@/_3ex/[ddd]_{\phi}="c"\ar@/^3ex/[ddd]^{\tilde\phi}="d"\\
  \\
  \\
  H^Y\ar[rrr]_{\rho_2(g)}\ar@{}[rrr]_(0.15){}="fy"&&&H^Y\\
  \ar@{=>}^{m} "a"+<2.5ex,0pt>;"b"+<-2.5ex,0pt>
  \ar@{:>}^{m} "c"+<2.5ex,0pt>;"d"+<-2.5ex,0pt>
  \ar@{} "fy";"c"|(0.3){}="f1"
  \ar@{} "fy";"c"|(0.7){}="f2"
  \ar@{} "b";"fx"|(0.3){}="b1"
  \ar@{} "b";"fx"|(0.7){}="b2"
  \ar@{=>} "f1";"f2"^{\tilde\phi(g)}
  \ar@{:>} "b1";"b2"_{\phi(g)}
}
\]
The matrix functor $\tilde\phi$, together with the family of measurable natural transformations $\tilde\phi(g)$, gives an intertwiner, which we also denote $\tilde \phi$.  The natural isomorphism $m$ gives an invertible 2-intertwiner $m\maps \phi\to \tilde\phi$.   So, every intertwiner is equivalent to one for which $\phi\maps H^X \to H^Y$ is a matrix functor.

Hence, we now assume $\phi = (\phi,\mu)$ is a matrix functor, and work out what equations (\ref{compatib'}) and (\ref{pillow'}) amount to in this case.  We use the following result, which simply collects in one place several useful composition formulas:
\begin{lemma}
\label{composition.lemma}
Let $\rho_1$ and $\rho_2$ be representations corresponding to $G$-equivariant bundles $X$ and $Y$ over $H^\ast$, as in the theorem.
\begin{enumerate}
\item Given any matrix functor $(T,t)\maps H^X \to H^Y$:
\begin{itemize}
\item  The composite $T\rho_1(g)$ is a matrix functor; in particular, it is defined by the field of Hilbert spaces $T_{y,xg^{-1}}$ and the family of measures $t^g_y$.
\item  The composite $\rho_2(g) T$ is a matrix functor; in particular, it is defined by the field of Hilbert spaces $T_{yg,x}$ and the family of measures $t_{yg}$.
\end{itemize}
\item Given a pair of such matrix functors $(T,t)$, $(T',t')$, and any matrix natural transformation $\alpha\maps T \To T'$:
\begin{itemize}
\item  Whiskering by $\rho_1(g)$ produces a matrix natural transformation whose field of linear operators is $\alpha_{yg,x}$.
\item  Whiskering by $\rho_2(g)$ produces a matrix natural transformation whose field of linear operators is $\alpha_{yg,x}$.
\end{itemize}
\end{enumerate}
That is:
\[
\xymatrix@C=5em{H^X \ar[r]^{\rho_1(g)} & H^X \ar@/^3ex/[r]^{T_{y,x},\,t_y}="f1" \ar@/_3ex/[r]_{T'_{y,x},\, t'_y}="f2" & H^Y
 \ar@{=>}^{\alpha_{y,x}} "f1"+<-2ex,-2.5ex>;"f2"+<-2ex,2.5ex>
}
\quad = \quad
\xymatrix@C=5em{H^X \ar@/^3ex/[r]^{T_{g,xg^{-1}},\, t_{y}^g}="f1" \ar@/_3ex/[r]_{T'_{y,xg^{-1}},\, {t'_y}^g}="f2" & H^Y
 \ar@{=>}^{\alpha_{y,xg^{-1}}} "f1"+<-2ex,-2.5ex>;"f2"+<-2ex,2.5ex>
}
\]
and
\[
\xymatrix@C=5em{H^X \ar@/^3ex/[r]^{T_{y,x},\,t_y}="f1" \ar@/_3ex/[r]_{T'_{y,x},\, t'_y}="f2" & H^Y \ar[r]^{\rho_2(g)} & H^Y
 \ar@{=>}^{\alpha_{y,x}} "f1"+<-2ex,-2.5ex>;"f2"+<-2ex,2.5ex>
}
\quad = \quad
\xymatrix@C=5em{H^X \ar@/^3ex/[r]^{T_{yg,x},\, t_{yg}}="f1" \ar@/_3ex/[r]_{T'_{yg,x},\, t'_{yg}}="f2" & H^Y
 \ar@{=>}^{\alpha_{yg,x}} "f1"+<-2ex,-2.5ex>;"f2"+<-2ex,2.5ex>
}
\]
\end{lemma}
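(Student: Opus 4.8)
The plan is to reduce every statement to the explicit matrix-functor composition formulas already derived in Section~\ref{morphisms}, together with the observation that each $\rho_i(g)$ is, up to bounded natural isomorphism, a particularly simple matrix functor. Indeed, by Thm.~\ref{thm:skelrepclassify} we have $\rho_1(g) = H^{f_g}$ and $\rho_2(g) = H^{f_g}$ with $f_g(x) = x\lhd g$, and by the remark preceding Prop.~\ref{isomorphic_to_pullback} the pullback $H^{f_g}$ is boundedly naturally isomorphic to the matrix functor whose field is the constant field $\C$ and whose family of measures is $x'\mapsto \delta_{x'\lhd g}$ (and likewise on $Y$). So the whole lemma becomes a matter of feeding these Dirac-measure, trivial-field data into the formulas (\ref{meas1compo}) and (\ref{convol_1maps}) for composites of matrix functors, and into (\ref{hor2mor}) for horizontal composites of matrix natural transformations. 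The only real work is the disintegration bookkeeping, which collapses because all the measures involved are Dirac.

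For part~(1), consider first $T\rho_1(g)$. The composite family of measures is $\int_X \extd t_y(x')\,\delta_{x'\lhd g}$, which is exactly the pushforward of $t_y$ along $x'\mapsto x'\lhd g$; unwinding the definition $t^g_y(A) = t_y(A\lhd g^{-1})$ shows this equals $t^g_y$. To obtain the field, I would feed the Fubini relation (\ref{integration_meas}) the input measures $\delta_{x'\lhd g}$ and the composite measures $t^g_y$; the disintegration $k_{y,x}$ is then forced to be the Dirac measure $\delta_{x\lhd g^{-1}}$, and (\ref{convol_1maps}) collapses to $(T\rho_1(g))_{y,x} = T_{y,\,x\lhd g^{-1}}$. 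The computation for $\rho_2(g)T$ is the mirror image: precomposing with the Dirac family $y\mapsto\delta_{y\lhd g}$ on $Y$ gives composite measures $t_{y\lhd g}$ and, by the same Dirac disintegration, the field $T_{y\lhd g,\,x}$.

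For part~(2), the whiskered natural transformations are the horizontal composites $\alpha\circ\unit_{\rho_1(g)}$ and $\unit_{\rho_2(g)}\circ\alpha$. Here I would invoke the special case of the horizontal composition formula noted just after (\ref{hor2mor}): since the source and target matrix functors of the identity 2-morphism on $\rho_i(g)$ coincide, the disintegrating measures $k$ and $k'$ agree, no rescaling factors survive, and the composite field is simply a direct integral of $\unit_{\C}\tensor \alpha_{y,x}$ against the Dirac disintegration already computed in part~(1). That direct integral evaluates to the reindexed field $\alpha_{y,\,x\lhd g^{-1}}$ for the $\rho_1(g)$ whiskering and $\alpha_{y\lhd g,\,x}$ for the $\rho_2(g)$ whiskering, which matches the fields assigned to the whiskered functors in part~(1).

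The main obstacle is less conceptual than a matter of keeping the disintegration and the reindexing straight: one must verify that with Dirac input measures the disintegration theorem (Thm.~\ref{disintegration.theo}) really returns a Dirac family, and that the resulting change of variable $x\mapsto x\lhd g^{-1}$ (resp.\ $y\mapsto y\lhd g$) is applied on the correct factor. A secondary point to handle with care is that $\rho_i(g)$ is only \emph{boundedly naturally isomorphic} to the matrix functor used in the computation, rather than equal to it; so strictly speaking each identity above holds only up to a canonical bounded natural isomorphism, and one should check that these isomorphisms are compatible with the whiskering in part~(2). Both points are routine given the explicit Dirac-measure formulas, so no genuinely new analysis is required beyond what is already in Section~\ref{morphisms}.
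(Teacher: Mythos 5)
Your proposal is correct and follows essentially the same route as the paper, whose proof of Lemma~\ref{composition.lemma} consists of the single assertion that it is ``a direct computation from the definitions of composition for functors and natural transformations''; your Dirac-measure bookkeeping via (\ref{meas1compo}), (\ref{convol_1maps}) and (\ref{hor2mor}), with the disintegration collapsing to $k_{y,x}=\delta_{x\lhd g^{-1}}$ (resp.\ $\delta_{y\lhd g}$) and the Radon--Nikodym rescalings cancelling by the chain rule (\ref{chainrule}), is exactly that computation carried out. Two minor remarks: your fields $\alpha_{y,x\lhd g^{-1}}$ and $\alpha_{y\lhd g,x}$ match the displayed diagrams in the lemma (the first bullet of part~2, which reads $\alpha_{yg,x}$, is a typo in the statement), and since in this context $\rho_i(g)$ is literally the pullback $H^{f_g}$ rather than merely isomorphic to a matrix functor, the change of variables in the direct integral identifies the composites with the stated matrix functors directly, so the compatibility caveat you flag at the end is harmless.
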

\begin{proof.within.proof}
This is a direct computation from the definitions of composition for functors and natural transformations.
\end{proof.within.proof}

We return to the proof of the theorem.  Using this lemma, we immediately obtain explicit descriptions of the source and target of each $\phi(g)$: we find that composites $\rho_2(g) \phi$ and $\phi\rho_1(g)$ are the matrix functors whose families of measures are given by
\[
\mu_{yg}  \quad \mbox{and} \quad \mu^{g}_{y}
\]
respectively, and whose fields of Hilbert spaces read
\[
[\rho_2(g)\phi]_{y,x} = \phi_{yg, x} \quad \mbox{and} \quad [\phi\rho_1(g)]_{y,x} = \phi_{y, xg^{-1}}
\]
An immediate consequence is that the family $\mu_y$ is equivariant.
Indeed, since each $\phi(g)$ is a matrix natural isomorphism, Thm.~\ref{invert2}
implies the source and target measures $\mu_{yg}$ and $\mu^{g}_{y}$ are
equivalent for all $g$.  Thus, for all $g$, the 2-morphism $\phi(g)$ defines a field of invertible operators
\beq \label{source_target}
\phi(g)_{y,x} : \phi_{yg, x} \arr \phi_{y,xg^{-1}},
\eeq
determined for each $y$ and $\sqrt{\mu_{y}^g\mu_{yg}}$-\alme\ in $x$, or equivalently $\mu_{yg}$-\alme\ in $x$, by equivariance.

The lemma also helps make the compatibility condition (\ref{compatib'}) explicit.
The composites $\phi(g') \circ \unit_{\rho_1(g)}$ and $\unit_{\rho_2(g')} \circ \phi(g)$ are matrix natural transformations whose fields of operators read
\[
\left[\phi(g')\circ \unit_{\rho_1(g)}\right]_{y,x} = \phi(g')_{y, xg^{-1}}
\quad \mbox{and} \quad
\left[\unit_{\rho_2(g')} \circ \phi(g)\right]_{y,x} =\phi(g)_{yg', x}
\]
Hence, (\ref{compatib'}) can be rewritten as
\[
   \phi(g')_{y,xg^{-1}}\, \phi(g)_{yg',x} = \phi(g'g)_{y,x}.
\]
Defining a field of linear operators
\beq
\Phi^g_{y,x} \equiv \phi(g)_{yg^{-1}, x}\,,
\eeq
the condition (\ref{compatib'}) finally becomes:
\beq \label{rule1int}
\Phi^{g'g}_{y,x} = \Phi^{g'}_{(y,x)g^{-1}} \Phi^g_{y,x}.
\eeq
We note that since $\phi(g)_{y,x}$ is defined and invertible $\mu_{yg}$-\alme, $\Phi^g_{y,x}$ is defined and invertible $\mu_y$-\alme\

Finally, we must work out the consequences of the  ``pillow condition'' (\ref{pillow'}). We start by
evaluating the ``whiskered'' compositions $\rho_2(u) \circ \unit_\phi$ and $\unit_\phi \circ \rho_1(u)$, using the formula (\ref{hor2mor}) for horizontal composisiton.  By the lemma above, the composites $\rho_2(g) \phi$ and $\phi \rho_1(g)$ are matrix functors.  Hence  the 2-isomorphisms $\left[\rho_2(u) \circ \unit_\phi\right]$ and $\left[\unit_\phi\circ \rho_1(u)\right]$ are necessarily matrix natural transformations.   We can work out their matrix components using the definition of horizontal composition,
\[
\left[\rho_2(u) \circ \unit_\phi\right]_{y,x} = \chi_2(y)[h]
\]
and
\[
\left[\unit_\phi\circ \rho_1(u)\right]_{y,x} = \chi_1(xg^{-1})[h].
\]
The vertical compositions with $\phi(g)$ can then be performed with (\ref{vert2mor}); since all
the measures involved are equivalent to each other, these compositions reduce to pointwise compositions of operators---here multiplication of complex numbers. Thus the condition (\ref{pillow'}) yields the equation
\[
(\chi_2(y)[h] - \chi_1(xg^{-1})[h]) \,\phi(g)_{y,x} = 0
\]
which holds for all $h$, all $y$ and $\mu_{yg}$-almost every $x$. Thanks to the covariance of the fields of characters,
this equation can equivalently be written
as
\beq  \label{unpackpillow}
(\chi_2(y) - \chi_1(x)) \,\Phi^g_{y,x} = 0
\eeq
for all $y$ and $\mu_y$-almost every $x$.

This last equation actually expresses a condition for the
family of measures $\mu_y$. Indeed, it requires that, for every $y$, the subset of the $x \in X$ such that $\chi_1(x)\not=\chi_2(y)$ as well as
$\Phi^g_{y,x} \not= 0$ is a null set for the measure $\mu_y$. But we know that, for $\mu_y$-almost every $x$, $\Phi^g_{y,x}$ is an invertible operator
with a non-trivial source space $\phi_{y,x}$, so that it does not vanish. Therefore the condition expressed by (\ref{unpackpillow}) is that for each $y$, the measure $\mu_y$ is supported within the set $\{x \in X | \, \chi_1(x) = \chi_2(y)\} = \chi_1^{-1}(\chi_2(y))$.  So, the family $\mu_y$ is fiberwise.

Conversely, given an equivariant and fiberwise $Y$\!-indexed measurable family $\mu_y$ of measures on $X$, a measurable field of Hilbert spaces $\phi_{y,x}$ on $Y\times X$, and a measurable field  of invertible linear maps $\Phi^g_{y,x}$ satisfying the cocycle condition (\ref{rule1int}) $\mu$-\alme\ for each $g,g'\in G$, we can easily construct an intertwiner.  The  pair $(\mu_y, \phi_{y,x})$ gives a morphism $\phi\maps H^X\to H^Y$.  For each $g\in G$, $y\in Y$, and $x\in X$, we let
\[
\phi(g)_{y,x} = \Phi^g_{yg,x}\maps [\rho_2(g)\phi]_{y,x} \to [\phi\rho_1(g)]_{y,x}
\]
This gives a 2-morphism in $\me$ for each morphism in $\G$.  The cocycle condition, and the fiberwise property of $\mu_y$, ensure that the equations (\ref{compatib'}) and (\ref{pillow'}) hold.
\end{proof}

Given that the maps $\Phi^{g}_{y,x}$ are invertible, the cocycle condition (\ref{rule1int}) implies that $g=1$ gives the identity:
\beq
\label{preserves.1}
        \Phi^{1}_{y,x} = \unit_{\phi_{y,x}} \quad \text{$\mu$-\alme}
\eeq
In fact, given given the cocycle condition, this equation is clearly equivalent to the statement that the maps $\Phi^{g}_{y,x}$ are \alme-invertible.  We also easily get a useful formula for inverses:
\beq
\label{inverse.cocycle}
       \left(\Phi^{g}_{y,x}\right)^{-1} =  \Phi^{g^{-1}}_{(y,x)g^{-1}} \quad
       \text{$\mu$-\alme\ for each $g$.}
\eeq

Following our classification of representations, we noted that only some
of them deserve to be called `measurable representations' of a measurable
2-group.  Similarly, here we introduce a notion of `measurable intertwiner'.

First, in the theorem, there is no statement to the effect that the linear maps $\Phi^g_{y,x}\maps \phi_{y,x} \to \phi_{(y,x)g^{-1}}$ are `measurably indexed' by $g\in G$.
To correct this, for an intertwiner to be `measurable' we will demand that
$\Phi^g_{y,x}$ give a measurable field of linear operators on $G\times Y \times X$, where the field $\phi_{y,x}$ can be thought of as a measurable field of Hilbert spaces on $G\times Y \times X$ that is independent of its $g$-coordinate.

Second, in the theorem, for each pair of group elements $g,g'\in G$, we have a {\em separate} cocycle condition
\[
        \Phi^{g'g}_{y,x} = \Phi^{g'}_{(y,x)g^{-1}} \Phi^g_{y,x} \quad \text{$\mu$-\alme}
\]
In other words, for each choice of $g,g'$, there is a set $U_{g,g'}$ with $\mu_y(X-U_{g,g'})=0$ for all $y$, such that the cocycle condition holds on $U_{g,g'}$.
Unless the group $G$ is countable, the union of the sets $X-U_{g,g'}$ may have positive measure.  This seems to cause serious problems for characterization of such intertwiners, unless we impose further conditions. For an intertwiner to be `measurable', we will thus demand that the cocycle condition hold outside some null set, independently of $g,g'$.   Similarly, the theorem implies $\Phi^g_{y,x}$ is invertible $\mu$-\alme,  but {\em separately} for each $g$; for measurable intertwiners we demand invertibility outside a fixed null set, independently of $g$.

Let us now formalize these concepts:

\begin{defn}
Let $\Phi^g_{y,x}\maps\phi_{y,x} \to \phi_{(y,x)g^{-1}}$ be a measurable field of linear operators on $G\times Y \times X$, with $Y,X$ measurable $G$-spaces.  We say $\Phi$ is {\bf invertible} at $(y,x)$ if $\Phi^g_{y,x}$ is invertible for all $g\in G$; we say
$\Phi$ is {\bf cocyclic} at $(y,x)$ if $\Phi^{g'g}_{y,x} = \Phi^{g'}_{(y,x)g^{-1}} \Phi^g_{y,x}$ for all $g,g'\in G$.
\end{defn}

\begin{defn}
\label{mble.interwiner}
An intertwiner $(\phi, \Phi, \mu)$, of the form described in Thm.~\ref{thm:1intclassify}, is {\bf measurable} if:
\begin{itemize}
\item The fields $\Phi^g$ are obtained by restriction of a measurable field of linear operators $\Phi$ on $G \times Y\times X$;
\item $\Phi$ has a representative (from within its $\mu$-class) that is invertible and cocyclic at all points in some fixed subset $U\subseteq Y \times X$ with $\bar\mu_y(Y\times X)=\bar\mu_y(U)$ for all $y$.
\end{itemize}
More generally a {\bf measurable intertwiner} is an intertwiner that is isomorphic to one like this.
\end{defn}
The generalization in the last sentence of this definition is needed for two composable measurable intertwiners to have measurable composite.
From now on, we will always be interested in measurable intertwiners between measurable representations; we sometimes omit the word ``measurable'' for brevity, but it is always implicit.

The measurable field $\Phi^g_{y,x}$ in an intertwiner is very similar to a kind of cocycle used in the theory of induced representations on locally compact groups (see, for example, the discussion in Varadarajan's book \cite[Sec.~V.5]{Varadarajan}).  However, one major difference is that our cocycles here are much better behaved with respect to null sets.  In particular, we easily find that $\Phi$ is cocyclic and invertible at a point, it is satisfies the same properties everywhere on an $G$-orbit:
\begin{lemma}
\label{strict.on.orbits}
Let $\Phi^g_{y,x}\maps\phi_{y,x} \to \phi_{(y,x)g^{-1}}$ be a measurable field of linear operators on $G\times Y \times X$.  If $\Phi$ is invertible and cocyclic at $(y_o,x_o)$, then it is invertible and cocyclic at every point on the $G$-orbit of $(y_o,x_o)$.
\end{lemma}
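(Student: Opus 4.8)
The plan is to reduce the statement, which is a purely pointwise algebraic fact about the operators $\Phi^g_{y,x}$ (no measure theory enters), to the two hypotheses at the base point by means of a single explicit formula. Write $p=(y_o,x_o)$ and abbreviate $\Phi^g_p := \Phi^g_{y_o,x_o}$; for $a\in G$ let $pa=(y_o\lhd a,x_o\lhd a)$ denote a typical point of the $G$-orbit of $p$ (using the paper's convention that $(y,x)a$ is the diagonal action). Since every point of the orbit is of the form $pa$ for some $a$, it suffices to prove that $\Phi$ is invertible and cocyclic at $pa$ for each fixed $a$.

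The key observation is that the cocycle relation at $p$ is not a pointwise statement but one that already mixes operators all along the orbit: the identity $\Phi^{g'g}_p=\Phi^{g'}_{pg^{-1}}\Phi^g_p$ refers to the operators $\Phi^{g'}_{pg^{-1}}$ based at the orbit points $pg^{-1}$. Specializing this relation to $g=a^{-1}$ and then renaming $g'$ as $g$ yields $\Phi^{ga^{-1}}_p=\Phi^g_{pa}\,\Phi^{a^{-1}}_p$. Because $\Phi$ is invertible at $p$, the operator $\Phi^{a^{-1}}_p$ is invertible, so I can solve for
\[
   \Phi^g_{pa} = \Phi^{ga^{-1}}_p \bigl(\Phi^{a^{-1}}_p\bigr)^{-1},
\]
where a check of sources and targets confirms that both sides are maps $\phi_{pa}\to\phi_{pag^{-1}}$. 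This formula makes invertibility at $pa$ immediate, since its right-hand side is a composite of two operators that are invertible by the invertibility hypothesis at $p$; hence $\Phi^g_{pa}$ is invertible for every $g$.

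For cocyclicity at $pa$ I must verify $\Phi^{g'g}_{pa}=\Phi^{g'}_{(pa)g^{-1}}\Phi^g_{pa}$ for all $g,g'$, and the plan is to substitute the displayed formula into each factor. Writing $(pa)g^{-1}=p(ag^{-1})$ and applying the formula with $ag^{-1}$ in place of $a$ gives $\Phi^{g'}_{pag^{-1}}=\Phi^{g'ga^{-1}}_p\bigl(\Phi^{ga^{-1}}_p\bigr)^{-1}$, while the formula itself gives $\Phi^g_{pa}=\Phi^{ga^{-1}}_p\bigl(\Phi^{a^{-1}}_p\bigr)^{-1}$. Their composite telescopes: the inner pair $\bigl(\Phi^{ga^{-1}}_p\bigr)^{-1}\Phi^{ga^{-1}}_p$ cancels, leaving $\Phi^{g'ga^{-1}}_p\bigl(\Phi^{a^{-1}}_p\bigr)^{-1}$, which is precisely the formula for $\Phi^{g'g}_{pa}$. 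This establishes the cocycle identity at $pa$ and completes the proof.

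Since the computation is entirely formal, I do not expect any serious analytic obstacle. The only points requiring care are the bookkeeping of sources and targets—so that each claimed composite is genuinely composable—and remembering exactly where the invertibility hypothesis at $p$ is used, namely to form $\bigl(\Phi^{a^{-1}}_p\bigr)^{-1}$ in the key formula. It is worth recording the conceptual reason the propagation works: the cocycle condition at a single point couples the operators across the entire orbit, and it is this feature that lets both invertibility and cocyclicity spread from one point to all points of the orbit.
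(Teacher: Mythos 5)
Your proof is correct and follows essentially the same route as the paper: both hinge on solving the cocycle relation at the base point for the operator at an orbit point, namely $\Phi^{g}_{pa}=\Phi^{ga^{-1}}_{p}\bigl(\Phi^{a^{-1}}_{p}\bigr)^{-1}$, which is exactly the paper's rearranged identity $\Phi^{g'}_{(y_o,x_o)g^{-1}}=\Phi^{g'g}_{y_o,x_o}\bigl(\Phi^{g}_{y_o,x_o}\bigr)^{-1}$ in different letters. Your telescoping computation simply makes explicit the cocyclicity verification that the paper leaves as an easy check.
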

\begin{proof}
If $\Phi$ is invertible and cocyclic at $(y_o,x_o)$, then for any $g,g'$ we have
\[
    \Phi^{g'}_{(y_o,x_o)g^{-1}} = \Phi^{g'g}_{y_o,x_o}\left(\Phi^{g}_{y_o,x_o}\right)^{-1}
\]
so $\Phi^{g'}$ at $(y_o,x_o)g^{-1}$ is the composite of two invertible maps, hence is invertible.  Since $g,g'$ were arbitrary, this shows $\Phi$ is invertible everywhere on the orbit.   Replacing $g'$ in the previous equation with a product $g''g'$, and using only the cocycle condition at $(y_o,x_o)$, we easily find that $\Phi$ is cocyclic at $(y_o,x_o)g^{-1}$ for arbitrary $g$.
\end{proof}

This lemma immediately implies, for any measurable intertwiner $(\phi,\Phi,\mu)$, that a representative of $\Phi$ may be chosen to be invertible and cocyclic not only on some set with null compliment, but actually everywhere on any orbit that meets this set.  This fact simplifies many calculations.

\begin{defn}
The measurable field of linear operators $\Phi^g_{y,x}\maps \phi_{y,x} \to \phi_{(y,x)g^{-1}}$ on $G\times Y\times X$ is called a {\bf\boldmath strict $G$-cocycle} if the equations
\[
\Phi^1_{y,x} = 1_{\phi_{y,x}} \quad \text{and} \quad
\Phi^{g'g}_{y,x} = \Phi^{g'}_{(y,x)g^{-1}} \Phi^g_{y,x}
\]
hold for {\em all} $(g,y,x)\in G\times Y\times X$.  An intertwiner $(\Phi,\phi,\mu)$ for which the measure-class of $\Phi^g_{y,x}$ has such a strict representative is a {\bf measurably strict} intertwiner.
\end{defn}

An interesting question is which measurable intertwiners are measurably strict.    This may be a difficult problem in general.  However, there is one case in which it is completely obvious from Lemma \ref{strict.on.orbits}: when the action of $G$ on $Y\times X$ is {\em transitive}.  In fact, it is enough for the $G$-action on
$Y\times X$ to be `essentially transitive', with respect to the family of
measures $\mu$.  We introduce a special case of intertwiners for which this is true:
\begin{defn}
\label{Transitive-Int}
A $Y$\!-indexed measurable family of measures $\mu_y$ on X is {\bf transitive} if there is a single $G$-orbit $o$ in $Y\times X$ such that, for every $y\in Y$, $\bar\mu_y = \delta_y \tensor \mu_y$ is supported on $o$.   A {\bf transitive intertwiner} is a measurable intertwiner $(\Phi, \phi, \mu)$ such that the family $\mu$ is transitive.
\end{defn}

It is often convenient to have a description of transitive families of measures using the measurable field $\mu_y$ of measures on $X$ directly, rather than the associated fibered measure distribution $\bar\mu_y$.  It is easy to check that $\mu_y$ is transitive if and only if there is a $G$-orbit $o\subseteq Y\times X$ such that whenever $(\{y\} \times A) \cap o =\emptyset$, we have $\mu_y(A)=0$.
Transitive intertwiners will play an important role in our study of intertwiners.
\begin{theo}
A transitive intertwiner is measurably strict.
\end{theo}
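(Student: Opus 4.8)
The plan is to leverage transitivity to confine everything to a single $G$-orbit, where Lemma~\ref{strict.on.orbits} already does the heavy lifting, and then to dispose of the (null) complement by hand. Recall that for a transitive intertwiner the family $\mu$ is supported on one $G$-orbit $o\subseteq Y\times X$, and that measurability (Def.~\ref{mble.interwiner}) furnishes a representative $\Phi$ that is invertible and cocyclic at every point of a fixed measurable set $U$ with $\bar\mu_y(U)=\bar\mu_y(Y\times X)$ for all $y$. First I would observe that, since $\bar\mu_y$ is concentrated on $o$, we have $\bar\mu_y(U\cap o)=\bar\mu_y(Y\times X)$, so (discarding the trivial case $\mu\equiv 0$, in which the zero field is already a strict representative) the orbit $o$ meets $U$. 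The remark following Lemma~\ref{strict.on.orbits} then lets me choose the representative $\Phi$ so that it is invertible and cocyclic at \emph{every} point of $o$, not merely almost everywhere.

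Next I would manufacture a genuinely strict global representative $\Psi$ in the same $\mu$-class. On $o$ I keep $\Psi^g_{y,x}=\Phi^g_{y,x}$ with the field $\phi_{y,x}$ unchanged; on the complement $(Y\times X)\setminus o$ I redefine the Hilbert-space field to be the zero space and set $\Psi^g_{y,x}=0$. Because $o$ is $G$-invariant, for any $g$ the points $(y,x)$ and $(y,x)g^{-1}$ lie on the same side of this partition, so each $\Psi^g_{y,x}$ has a well-defined source and target. The strict cocycle equations then hold at every point: on $o$ because $\Phi$ is cocyclic there and satisfies $\Phi^1_{y,x}=\unit_{\phi_{y,x}}$ by~(\ref{preserves.1}); off $o$ trivially, since $\unit$ of the zero space is the zero map and any composite of zero maps is zero. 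As $o$ has full $\bar\mu$-measure, $\Psi$ agrees with $\Phi$ $\mu$-almost everywhere, so it represents the same $\mu$-class; hence the intertwiner is measurably strict.

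The step that requires real care---and which I expect to be the main obstacle---is checking that this $\Psi$ is an honest \emph{measurable} field, which reduces to showing that the orbit $o$ is a measurable subset of $Y\times X$. Here I would invoke the standing hypotheses: by Lemma~\ref{lem:good_topology} the (diagonal) action of the lcsc group $G$ on $Y\times X$ may be taken continuous, so fixing a base point $(y_o,x_o)\in o\cap U$ the orbit map factors as a continuous bijection of the Polish space $G/S_o$ onto $o$, where $S_o$ is the closed stabilizer of $(y_o,x_o)$. By the Lusin--Souslin theorem an injective continuous image of a Polish space is Borel, so $o$ is measurable; the field equal to $\phi$ on the Borel set $o$ and $0$ off it is then a measurable field of Hilbert spaces, and $\Psi$ a measurable field of operators between the corresponding pullbacks. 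With $o$ measurable the construction of the previous paragraph goes through, completing the proof. The conceptual content is entirely in Lemma~\ref{strict.on.orbits}; the remaining work is the measure-theoretic bookkeeping of transporting cocyclicity from a full-measure set to the whole orbit and then extending trivially across the null complement.
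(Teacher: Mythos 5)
Your proof is correct and follows essentially the same route as the paper's: take the representative trivial off the orbit $o$, propagate invertibility and cocyclicity along $o$ via Lemma~\ref{strict.on.orbits}, and note the cocycle equations hold vacuously on the complement. The only difference is bookkeeping: where the paper simply cites Lemma~\ref{measurable_orbits} for the measurability of $o$, you reprove it via Lusin--Souslin, and you make explicit two points the paper leaves implicit (that $U$ must meet $o$ when $\mu\not\equiv 0$, and that $G$-invariance of $o$ keeps sources and targets of $\Psi^g$ on the same side of the partition)---both sound additions rather than deviations.
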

\begin{proof}
The orbit $o\subseteq Y\times X$, on which the measures $\bar\mu_y$ are supported, is a measurable set (see Lemma \ref{measurable_orbits}).  We may therefore take a representative of $\Phi^g_{y,x}\maps\phi_{y,x} \to \phi_{(y,x)g^{-1}}$ for which $\phi_{y,x}$ is trivial on the null set $(Y\times X)-o$.  The cocycle condition then automatically holds not only on $o$, by Lemma \ref{strict.on.orbits}, but also on its compliment.
\end{proof}

In fact, it is clear that a transitive intertwiner has an essentially {\em unique} field representative.  Indeed, any two representatives of $\Phi$ must be equal at almost every point on the supporting orbit, but then Lemma \ref{strict.on.orbits} implies they must be equal {\em everywhere} on the orbit.

Let us turn to the geometric description of intertwiners.  For simplicity, we restrict our attention to measurably strict intertwiners, for which the geometric correspondence is clearest.  Following Mackey, we can view the measurable field $\phi$ as a measurable bundle of Hilbert spaces over $Y\times X$:
$$
\xymatrix{\phi  \ar[d]^{} \\ Y\times X}
$$
whose fiber over $(y,x)$ is the Hilbert space $\phi_{y,x}$. As pointed out in Section \ref{main_results}, the strict cocycle $\Phi_{y,x}$ can be viewed as a left action of $G$ on the `total space' $\phi$ of this bundle since, by (\ref{rule1int}) and (\ref{preserves.1}), $\Phi^g\maps \phi \to \phi$ satisfies
\[
     \Phi^{g'g} = \Phi^{g'}\Phi^g
     \quad \text{and}\quad
     \Phi^1= 1_{\phi}.
\]
The corresponding right action $g\mapsto \Phi^{g^{-1}}$ of $G$ on $\phi$ is then an action of $G$ over the diagonal action on $Y\times X$:
\[
\xymatrix@R=3em@C=4em{
  \phi \ar[r]^{\Phi^{g^{-1}}}\ar[d]_{}& \phi \ar[d]^{}\\
  Y\times X \ar[r]^{\cdot \lhd g}& Y\times X
}
\]
So, loosely speaking, an intertwiner can be viewed as providing a `measurable $G$-equivariant bundle of Hilbert spaces' over $Y\times X$.   The associated equivariant family of measures $\mu$ serves to indicate, via $\mu$-\alme\ equivalence, when two such Hilbert space bundles actually describe the same intertwiner.

While these `Hilbert space bundles' are determined only up to measure-equivalence, in general, they do share many of the essential features of their counterparts in the topological category.  In particular, the `fiber' $\phi_{y,x}$ is a linear representation of the stabilizer group $S_{y,x}\subseteq G$, since the cocyle condition reduces to:
\[
\Phi^{s's}_{y,x} = \Phi^{s'}_{y,x} \Phi^s_{y,x} \maps \phi_{y,x}\to \phi_{y,x}
\]
for $s,s'\in S_{y,x}$.
\begin{defn}
Given any measurable intertwiner $\phi= (\phi,\Phi,\mu)$, we define the {\bf stabilizer representation} at $(y,x)\in Y\times X$ to be the linear representation of $S_{y,x} = \{s\in G : (y,x)s=(y,x)\}$ on $\phi_{y,x}$ defined by
\[
             \RR^\phi_{y,x}(s) = \Phi^s_{y,x}.
\]
These representations are defined $\mu_y$-\alme\ for each $y$.
\end{defn}

Along a given $G$-orbit $o$ in $Y\times X$, the stabilizer groups are all conjugate in $G$, so if we choose $(y_o,x_o)\in o$ with stabilizer $S_o=S_{y_o,x_o}$, then the stabilizer representations elsewhere on $o$ can be viewed as representations of $S_o$.
Explicitly,
\[
s \mapsto \RR^\phi_{y,x}({g^{-1} s g})
\]
defines a linear representation of $S_o$ on $\phi_{y,x}$, where $(y,x)=(y_o,x_o)g$.  Moreover, the cocycle condition implies
\[
\xymatrix@R=3em@C=8em{
  \phi_{y,x} \ar[r]^{\displaystyle\RR^\phi_{y,x}({g^{-1} s g})}\ar[d]_{\displaystyle\Phi^g_{y,x}}& \phi_{y,x} \ar[d]^{\displaystyle\Phi^g_{y,x}}\\
  \phi_{y_o,x_o} \ar[r]_{\displaystyle\RR^\phi_{y_o,x_o}(s)}& \phi_{y_o,x_o}
  }
\]
commutes for all $s\in S_o$, and all $g$ such that $(y,x)=(y_o,x_o)g$.  In other words, the maps $\Phi^g$ are intertwiners between stabilizer representations.
We thus see that the assignment $\phi_{y,x}, \Phi^g_{y,x}$ defines, for each orbit in $Y\times X$, a representation of the stabilizer group as well as a consistent way to `transport' it along the orbit with invertible intertwiners.

In the case of a transitive intertwiner, the only relevant Hilbert spaces are the ones over the special orbit $o$, so we may think of a transitive intertwiner as a Hilbert space bundle over a single orbit in $Y \times X$:
$$
\xymatrix{\phi  \ar[d]^{} \\ o}
$$
  We have also observed that the Hilbert spaces on the orbit $o$ are {\em uniquely} determined, so there is no need to mod out by $\mu$-equivalence.  We therefore obtain:
\begin{theo}
A transitive intertwiner is uniquely determined by:
\begin{itemize}
\item A transitive family of measures $\mu_y$ on $X$, with $\bar\mu_y$ supported on the $G$-orbit $o$,
\item A measurable field of linear operators $\Phi^g_{y,x}\maps\phi_{y,x} \to \phi_{(y,x)g^{-1}}$ on $G\times o$ that is cocyclic and invertible at some (and hence every) point.
\end{itemize}
\end{theo}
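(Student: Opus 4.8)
The plan is to read this theorem as the final repackaging of everything established for transitive intertwiners, and to prove it by exhibiting a bijection between transitive intertwiners and the two pieces of listed data. Most of the work has already been done: the genuinely new content is not existence but \emph{uniqueness} — the claim that, once we restrict attention to the supporting orbit $o$, the field of Hilbert spaces and maps is determined on the nose, with no residual $\mu$-class ambiguity. So the proof splits into extracting the data from a transitive intertwiner, checking that this extraction is canonical, and reconstructing an intertwiner from the data.

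First I would extract the data. Given a transitive intertwiner $(\Phi,\phi,\mu)$, its family $\mu$ is transitive by Def.~\ref{Transitive-Int}, so there is a single $G$-orbit $o\subseteq Y\times X$ with $\bar\mu_y=\delta_y\tensor\mu_y$ supported on $o$ for every $y$; this is the first datum. For the second, I invoke the theorem that a transitive intertwiner is measurably strict: the measure-class of $\Phi^g_{y,x}$ then possesses a representative that is cocyclic and invertible for \emph{all} $(g,y,x)$. Restricting this representative to $G\times o$ gives a measurable field $\Phi^g_{y,x}\maps\phi_{y,x}\to\phi_{(y,x)g^{-1}}$ on $G\times o$ that is cocyclic and invertible at every point, so a fortiori at one; the phrase ``at some (and hence every)'' is then justified by Lemma~\ref{strict.on.orbits}.

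The crux is to verify that the restricted field is \emph{unique}, so the correspondence is well defined. Here I would use the transport identity that follows from the strict cocycle relation: fixing a basepoint $(y_o,x_o)\in o$ and writing any orbit point as $(y,x)=(y_o,x_o)h^{-1}$, the cocycle condition at $(y_o,x_o)$ gives
\[
\Phi^g_{(y_o,x_o)h^{-1}} = \Phi^{gh}_{y_o,x_o}\,\bigl(\Phi^h_{y_o,x_o}\bigr)^{-1}.
\]
Thus the whole field on $G\times o$ is determined by its values at the single point $(y_o,x_o)$. Two strict representatives of $\Phi$ in the same $\mu$-class agree $\mu$-almost everywhere; since $\mu$ is supported on $o$ and nontrivial, they therefore agree at some point of $o$, and the transport identity propagates this agreement to every point of $o$. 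This is exactly the essential-uniqueness observation recorded after the measurably-strict theorem, and it shows the Hilbert spaces and maps over $o$ need not be taken modulo $\mu$-\alme\ equivalence.

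Finally, for the converse I would reconstruct a transitive intertwiner from the data. Extend $\Phi$ off the orbit by declaring $\phi_{y,x}$ to be the zero Hilbert space on the $\mu$-null set $(Y\times X)-o$, so the cocycle and invertibility conditions hold trivially there, and, by the hypothesis together with Lemma~\ref{strict.on.orbits}, everywhere on $o$. The pair $(\mu_y,\phi_{y,x})$ defines a matrix functor $\phi\maps H^X\to H^Y$, and the recipe of Thm.~\ref{thm:1intclassify} turns $(\mu,\phi,\Phi)$ into a measurable intertwiner (Def.~\ref{mble.interwiner}), transitive because $\mu$ is; the equivariance and fiberwise hypotheses of Thm.~\ref{thm:1intclassify} are inherited from, or imposed on, the transitive family $\mu$ supported on the matching orbit. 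I expect the main obstacle to be precisely the uniqueness step: one must argue carefully that $\mu$-almost-everywhere agreement of two strict representatives upgrades to agreement \emph{everywhere} on $o$, which is where the rigidity of strict cocycles on a homogeneous space, encapsulated in the transport identity and Lemma~\ref{strict.on.orbits}, does the real work.
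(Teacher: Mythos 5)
Your proposal is correct and takes essentially the same route as the paper, which states this theorem without a separate proof as a direct corollary of the measurable-strictness result for transitive intertwiners, Lemma \ref{strict.on.orbits}, and the preceding remark that two representatives of $\Phi$ agreeing almost everywhere on the supporting orbit must agree everywhere on it. Your transport identity $\Phi^{g}_{(y_o,x_o)h^{-1}} = \Phi^{gh}_{y_o,x_o}\bigl(\Phi^{h}_{y_o,x_o}\bigr)^{-1}$ is precisely the computation inside the proof of Lemma \ref{strict.on.orbits} that the paper implicitly invokes for the uniqueness step, and your extension-by-zero reconstruction off the orbit reproduces the paper's argument that a transitive intertwiner is measurably strict.
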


%
\subsubsection{Structure of 2-intertwiners}
%

We now turn to the problem of classifying the all 2-intertwiners between a fixed pair of parallel intertwiners $\phi, \psi\maps \rho_1\to\rho_2$.  If $\rho_1$ and $\rho_2$ are representations of the type described in Thm.~\ref{thm:repclassify}, then $\phi$ and $\psi$ are, up to equivalence, of the type described in Thm.~\ref{thm:1intclassify}.  Thus, we let $\phi$ and $\psi$ be given respectively by the equivariant and fiberwise families of measures $\mu_y$ and $\nu_y$, and the (classes of) fields of Hilbert spaces and invertible maps $\phi_{y,x}, \Phi^g_{y,x}$ and $\psi_{y,x}, \Psi^g_{y,x}$.    A characterization of 2-intertwiners between such intertwiners is given by the following theorem:
%
%
\begin{theo}[2-Intertwiners] \label{thm:2intclassify}
Let $\rho_1,\rho_2$ be representations on $H^X$ and $H^Y$, and let intertwiners $\phi, \psi\maps \rho_1 \to \rho_2$ be specified by the data $(\phi,\Phi,\mu)$ and $(\psi,\Phi,\nu)$ as in Thm.~\ref{thm:1intclassify}.  A 2-intertwiner $m\maps \phi \to \psi$ is specified uniquely by a $\sqrt{\mu\nu}$-class of fields of linear
maps $m_{y,x}\maps \phi_{y,x}\to \psi_{y,x}$ satisfying
\[
\Psi^g_{y,x} \,m_{y,x} = m_{(y,x)g^{-1}} \,\Phi^g_{y,x}
\]
$\sqrt{\mu\nu}$-\alme
\end{theo}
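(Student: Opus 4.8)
The plan is to unpack the single modification equation (\ref{eq_pillow}) componentwise, exactly as the compatibility and pillow conditions were unpacked in the proof of Thm.~\ref{thm:1intclassify}. Since $\phi$ and $\psi$ are given as matrix functors $(\phi_{y,x},\mu_y)$ and $(\psi_{y,x},\nu_y)$, a 2-intertwiner $m\maps\phi\To\psi$ is by definition a measurable natural transformation between them satisfying (\ref{eq_pillow}); by Thm.~\ref{magic.thm} it is automatically a \emph{matrix} natural transformation, hence given by a $\sqrt{\mu\nu}$-class of bounded fields $m_{y,x}\maps\phi_{y,x}\to\psi_{y,x}$. The entire content of the theorem is therefore that equation (\ref{eq_pillow}) is equivalent to the stated intertwining rule, so I would simply evaluate both sides of (\ref{eq_pillow}) as fields of operators.

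To do this I would use Lemma~\ref{composition.lemma}: the whiskered transformation $\unit_{\rho_2(g)}\circ m$ has field $m_{yg,x}$ and $m\circ\unit_{\rho_1(g)}$ has field $m_{y,xg^{-1}}$, while, as in the proof of Thm.~\ref{thm:1intclassify}, the structure 2-isomorphisms have fields $\phi(g)_{y,x}=\Phi^g_{yg,x}$ and $\psi(g)_{y,x}=\Psi^g_{yg,x}$. Both sides of (\ref{eq_pillow}) are then 2-morphisms from $\rho_2(g)\phi$ (source measure $\mu_{yg}$) to $\psi\rho_1(g)$ (target measure $\nu^g_y$), each computed as a vertical composite via formula (\ref{vert2mor}). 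The left composite has intermediate measure $\nu_{yg}$ and the right composite has intermediate measure $\mu^g_y$, producing the triple products of square-root Radon--Nikodym derivatives $\sqrt{\tfrac{\extd\nu^g_y}{\extd\mu_{yg}}}\sqrt{\tfrac{\extd\nu_{yg}}{\extd\nu^g_y}}\sqrt{\tfrac{\extd\mu_{yg}}{\extd\nu_{yg}}}$ and $\sqrt{\tfrac{\extd\nu^g_y}{\extd\mu_{yg}}}\sqrt{\tfrac{\extd\mu^g_y}{\extd\nu^g_y}}\sqrt{\tfrac{\extd\mu_{yg}}{\extd\mu^g_y}}$ respectively, times the operator products $\Psi^g_{yg,x}\,m_{yg,x}$ and $m_{y,xg^{-1}}\,\Phi^g_{yg,x}$. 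Using the equivariance equivalences $\mu_{yg}\sim\mu^g_y$ and $\nu_{yg}\sim\nu^g_y$ together with the chain rule (\ref{chainrule}), both triple products collapse to the common factor $\sqrt{\extd\mu_{yg}/\extd\nu^g_y}$, which is nonzero almost everywhere and cancels between the two sides (where it vanishes both sides vanish identically).

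After this cancellation (\ref{eq_pillow}) reduces to the pointwise identity $\Psi^g_{yg,x}\,m_{yg,x}=m_{y,xg^{-1}}\,\Phi^g_{yg,x}$; substituting $y\mapsto yg^{-1}$ and writing $m_{yg^{-1},xg^{-1}}=m_{(y,x)g^{-1}}$ yields exactly the claimed rule $\Psi^g_{y,x}\,m_{y,x}=m_{(y,x)g^{-1}}\,\Phi^g_{y,x}$, holding $\sqrt{\mu\nu}$-\alme. For the converse I would read the computation backwards: any $\sqrt{\mu\nu}$-class of fields $m_{y,x}$ obeying this rule reassembles into a matrix natural transformation satisfying (\ref{eq_pillow}), hence a genuine 2-intertwiner, and uniqueness is just the uniqueness of the field representing a matrix natural transformation (Thm.~\ref{magic.thm}).

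I expect the main obstacle to be purely measure-theoretic bookkeeping rather than anything conceptual: one must be careful that each field equality is asserted over the correct measure class, since the two composites naturally live as $\sqrt{\mu_{yg}\nu^g_y}$-classes which must be matched against the $\sqrt{\mu\nu}$-class of $m$ only after the reindexing $y\mapsto yg^{-1}$, and one must verify that the two Radon--Nikodym prefactors genuinely \emph{coincide} (so that they cancel) rather than merely being individually equivalent to $1$. The equivariance and fiberwise conditions established in Thm.~\ref{thm:1intclassify} are precisely what guarantee the measure-equivalences needed for the chain-rule cancellation, so once these are invoked the argument goes through cleanly and the remaining steps are routine.
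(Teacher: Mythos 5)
Your proposal is correct and follows essentially the same route as the paper's own proof: invoking Thm.~\ref{magic.thm} to see $m$ is automatically a matrix natural transformation, Lemma~\ref{composition.lemma} for the whiskered composites, the vertical-composition formula (\ref{vert2mor}) producing exactly the two triple Radon--Nikodym products the paper writes down, and equivariance plus the chain rule (\ref{chainrule}) to reduce everything to the pointwise operator equation before reindexing $y\mapsto yg^{-1}$. The only slip is cosmetic: after the equivariance substitutions the common prefactor on both sides is $\sqrnd{\nu^g_y}{\mu_{yg}}\,\sqrnd{\mu_{yg}}{\nu^g_y}$, which by the chain rule equals $1$ $\sqrt{\mu_{yg}\nu^g_y}$-\alme, rather than the single derivative $\sqrnd{\mu_{yg}}{\nu^g_y}$ you name---so nothing needs to ``cancel''; both composites simply reduce to pointwise composition of the operator fields, exactly as in the paper.
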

%
%
As usual, by $\sqrt{\mu\nu}$-class of fields we mean equivalence class of fields modulo identification of the fields which coincide for all $y$ and $\sqrt{\mu_y\nu_y}$-almost every $x$.

\medskip

\begin{proof}  By definition, a 2-intertwiner $m$ between the given intertwiners defines a
2-morphism in $\me$ between the morphisms $(\phi, \mu)$ and $(\psi, \nu)$, which satisfies the pillow condition (\ref{eq_pillow}), namely
\beq \label{eq_pillow'}
\psi(g) \cdot \left[\unit_{\rho_2(g)} \circ m \right] =
\left[m \circ \unit_{\rho_1(g)}\right] \cdot \phi(g)
\eeq
By Thm.~\ref{magic.thm}, since $m$ is a measurable natural transformation between matrix functors, it is automatically a {\em matrix} natural transformation.  We thus have merely to show that the conditions (\ref{eq_pillow'}) imposes on its matrix components $m_{y,x}$ are precisely those stated in the theorem.

First, using Lemma~\ref{composition.lemma}, the two whiskered composites $\unit_{\rho_2(g)} \circ m$ and $m \circ \unit_{\rho_1(g)}$ in (\ref{eq_pillow'}) are matrix natural transformations whose fields of
operators
read
\[
[\unit_{\rho_2(g)} \circ m]_{y,x} = m_{yg, x}
\quad \mbox{and}
\quad [m \circ
\unit_{\rho_1(g)}]_{y,x} = m_{y, xg^{-1}}
\]
respectively.
Next, we need to perform the vertical compositions on both sides
of the equality (\ref{eq_pillow'}). For this, we use the general formula (\ref{vert2mor}) for vertical composition of matrix natural transformations, which involves the square root of a product of three Radon-Nykodym derivatives. These derivatives are, in the present context:
\beq \label{products}
\rnd{\nu^g_y}{\mu_{yg}} \,\rnd{\nu_{yg}}{\nu_y^g} \,\rnd{\mu_{yg}}{\nu_{yg}}
\qquad \mbox{and} \qquad
\rnd{\nu^g_y}{\mu_{yg}}\, \rnd{\mu_y^g}{\nu_y^g} \,\rnd{\mu_{yg}}{\mu_y^g}
\eeq
for the left and right sides of (\ref{eq_pillow'}), respectively. Now the equivariance of the families $\mu_y,
\nu_y$ yields
\[
\rnd{\mu_{yg}}{\nu_{yg}} = \rnd{\mu_{yg}}{\nu^g_{y}}\rnd{\nu^g_{y}}{\nu_{yg}},
\qquad
\rnd{\mu_y^g}{\nu_y^g} = \rnd{\mu_{yg}}{\nu_y^g}\rnd{\mu_y^g}{\nu^g_y}
\]
so that both products in (\ref{products}) reduce to
\[
\rnd{\nu^g_y}{\mu_{yg}}\rnd{\mu_{yg}}{\nu_y^g}
\]
Thanks to the chain rule (\ref{chainrule}), namely
\[
\rnd{\mu}{\nu}\rnd{\nu}{\mu} = 1 \qquad \sqrt{\mu\nu}-\alme,
\]
this last term equals 1 almost everywhere for the geometric mean of the source and target measures for the 2-morphism described by either side of
(\ref{eq_pillow'}). This shows that the vertical composition reduces to the pointwise composition
of the fields of operators. Performing this composition and reindexing, (\ref{eq_pillow'}) takes the form
\beq \label{rule2int}
\Psi^g_{y,x} \,m_{y,x} = m_{(y,x)g^{-1}} \,\Phi^g_{y,x}
\eeq
as we wish to show.  This equation holds for all $y$ and $\sqrt{\mu_y\nu_y}$-almost every $x$.
\end{proof}

Thus, a 2-intertwiner $m \maps \phi \To \psi$ essentially assigns linear maps $m_{y,x}: \phi_{y,x} \to \psi_{y,x}$ to elements $(y,x)\in Y\times X$, in such a way that (\ref{rule2int}) is satisfied.  Diagrammatically, this equation can be written:
\[
\xymatrix@R=3em@C=4em{
  \phi_{y,x} \ar[r]^{\Phi^g_{y,x}}\ar[d]_{m_{y,x}}& \phi_{(y,x)g^{-1}} \ar[d]^{m_{(y,x)g^{-1}}}\\
  \psi_{y,x} \ar[r]_{\Psi^g_{y,x}}& \psi_{(y,x)g^{-1}}
}
\]
which commutes $\sqrt{\mu\nu}$-\alme\ for each $g$.  It is helpful to think of this as a generalization of the equation for an intertwiner between ordinary group representations.  Indeed, when restricted to elements of the stabilizer $S_{y,x}\subseteq G$ of $(y,x)$ under the diagonal action on $Y\times X$, it becomes:
\[
\RR^\psi_{{y,x}}(s) \,m_{{y,x}} = m_{{y,x}} \, \RR^\phi_{{y,x}}(s)  \qquad s\in S_{y,x}
\]
This states that $m_{y,x}$ is an intertwining operator, in the ordinary group-theoretic sense, between the stabilizer representations of $\phi$ and $\psi$.

If equation (\ref{rule2int}) is satisfied everywhere along some $G$-orbit $o$ in $Y\times X$, the maps $m_{y,x}$ of such an assignment are determined by the one $m_o \maps \phi_o \to \psi_o$ assigned to a fixed point $(y_o,x_o)$, since for $(y,x)=(y_o,x_o)g^{-1}$,
we have
\[
m_{y,x} = \Psi^g_o m_o \left(\Phi^g_o\right)^{-1}
\]
If the measure class of $m_{y,x}$ has a representative for which equation (\ref{rule2int}) is satisfied everywhere, $m_{y,x}$ is determined by its values at one representative of each $G$-orbit.

In the previous two sections, we introduced `measurable' versions of representations and intertwiners.  For 2-intertwiners, there are no new data indexed by morphisms or 2-morphisms in our 2-group.  Since a 2-group has a unique object, there are no new measurability conditions to impose.  We thus make the following simple definition.

\begin{defn}
A {\bf measurable 2-intertwiner} is a 2-intertwiner between measurable intertwiners, as classified in Thm.~\ref{thm:2intclassify}.
\end{defn}
\subsection{Equivalence of representations and of intertwiners}

In the previous sections we have characterized representations of a 2-group $\G$ on measurable categories, as well as intertwiners and 2-intertwiners.  In this section we would like to describe the \textit{equivalence
classes} of representations and intertwiners. The general notions of equivalence for representations and intertwiners was introduced, for a general target 2-category, in Section \ref{sec:2Rep}.  Recall from that section that two representations are equivalent when there is a (weakly) invertible intertwiner between them.  In the case of representations in $\me$, it is natural to specialize to `measurable equivalence' of representations:
\begin{defn}
Two measurable representations of a 2-group are {\bf measurably equivalent} if they are related by a pair of measurable intertwiners that are weak inverses of each other.
\end{defn}
In what follows, by `equivalence' of representations we always mean measurable equivalence.

Similarly, recall that two parallel intertwiners are equivalent when there is an invertible 2-intertwiner between them.  Since measurable 2-intertwiners are simply 2-intertwiners with measurable source and target, there are no extra conditions necessary for equivalent intertwiners to be `measurably' equivalent.

\medskip

Let $\rho_1$ and $\rho_2$ be measurable representations of $\G=(G,H,\rhd)$ on the measurable categories $H^X$ and $H^Y$ defined by $G$-equivariant bundles $\chi_1\maps X \to H^\ast$ and $\chi_2\maps Y \to H^\ast$.  We use the same symbol ``$\lhd$'' for the action of $G$ on both $X$ and $Y$. The following theorem explains the geometric meaning of equivalence of representations.
%
%
\begin{theo}[Equivalent representations]
\label{equirep}
Two measurable representations $\rho_1$ and $\rho_2$ are equivalent if and only if the corresponding $G$-equivariant bundles $\chi_1\maps X\to H^\ast$ and $\chi_2\maps Y \to H^\ast$ are isomorphic.  That is, $\rho_1\sim\rho_2$ if and only if there is an invertible measurable function $f\maps Y \to
X$ that is $G$-equivariant:
\[f(y\lhd g) = f(y) \lhd g\]
and fiber-preserving:
\[ \chi_1(f(y)) = \chi_2(y).\]
\end{theo}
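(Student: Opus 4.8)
The plan is to reduce the statement to two results already in hand: the structure theorem for intertwiners (Thm.~\ref{thm:1intclassify}) and the classification of measurable equivalences (Thm.~\ref{invert}, together with Cor.~\ref{representative.equivalences} and Prop.~\ref{isomorphic_to_pullback}). Recall that, by definition, $\rho_1\simeq\rho_2$ means there is a weakly invertible measurable intertwiner $\phi\maps\rho_1\to\rho_2$, and that the underlying functor of such an intertwiner, $\phi\maps H^X\to H^Y$, must then be a measurable equivalence. The whole argument consists of matching the data classifying an intertwiner against the very restrictive form forced on the data of an \emph{invertible} intertwiner.

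First I would treat the forward direction. Suppose $\rho_1\simeq\rho_2$, and choose a weakly invertible measurable intertwiner $\phi$. Replacing $\phi$ by an equivalent intertwiner if necessary (which does not change the equivalence class of representations), Thm.~\ref{thm:1intclassify} describes $\phi$ by an equivariant, fiberwise $Y$\!-indexed family of measures $\mu_y$ on $X$, a field of Hilbert spaces $\phi_{y,x}$, and a cocycle $\Phi^g_{y,x}$. Since the underlying functor is a measurable equivalence, Thm.~\ref{invert} forces $\mu_y$ to be equivalent to $\delta_{f(y)}$ for a unique invertible measurable map $f\maps Y\to X$. The two required properties of $f$ now drop out by comparing measure classes. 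The fiberwise condition says $\mu_y$ is concentrated on $\chi_1^{-1}(\chi_2(y))$; since $\mu_y\sim\delta_{f(y)}$, the point $f(y)$ must lie in $\chi_1^{-1}(\chi_2(y))$, i.e.\ $\chi_1(f(y))=\chi_2(y)$. Equivariance of the family, $\mu_{y\lhd g}\sim\mu_y^g$, combined with the identity $(\delta_{f(y)})^g=\delta_{f(y)\lhd g}$, yields $\delta_{f(y\lhd g)}\sim\delta_{f(y)\lhd g}$; as two Dirac measures are equivalent only when concentrated at the same point, this gives $f(y\lhd g)=f(y)\lhd g$. Thus $f$ is $G$-equivariant and fiber-preserving.

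For the converse, given an invertible measurable $f\maps Y\to X$ that is $G$-equivariant and fiber-preserving, I would build the intertwiner directly from the data $\mu_y=\delta_{f(y)}$, the constant field $\phi_{y,x}=\C$, and the trivial cocycle $\Phi^g_{y,x}=\unit_\C$, whose underlying functor is the pullback $H^f$. The fiber-preserving hypothesis $\chi_1(f(y))=\chi_2(y)$ is exactly the statement that $\mu_y$ is fiberwise, and equivariance of $f$ gives $\mu_{y\lhd g}=\delta_{f(y)\lhd g}=\mu_y^g$, so the family is equivariant; the cocycle condition and equation (\ref{preserves.1}) are automatic since all maps are the identity of $\C$. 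By Thm.~\ref{thm:1intclassify} this data defines a genuine intertwiner. To see it is weakly invertible, I would take $H^{f^{-1}}$ with the analogous trivial data as candidate inverse: by the strict composition laws (\ref{pullback composition}) the functor $H^f$ has strict inverse $H^{f^{-1}}$, and the trivial cocycles compose to trivial cocycles, so both composite intertwiners are identity intertwiners. Hence $\phi$ is weakly invertible and $\rho_1\simeq\rho_2$.

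The main obstacle is the forward direction: one must be sure that the abstract weakly invertible intertwiner really can be put into the matrix/pullback normal form without altering the equivalence class of representations, and then carefully translate the measure-class statements $\mu_y\sim\delta_{f(y)}$ back into the pointwise identities $\chi_1(f(y))=\chi_2(y)$ and $f(y\lhd g)=f(y)\lhd g$. The key measure-theoretic fact powering both translations is that $\delta_a\sim\delta_b$ holds if and only if $a=b$, which is precisely what converts equivalence of measures into equalities of points; this is where $\sigma$-finiteness and the standing standard-Borel hypotheses quietly do their work. By contrast, the converse is essentially bookkeeping, made clean by the strictness of pullback composition.
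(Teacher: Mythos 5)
Your proposal is correct and follows essentially the same route as the paper's own proof: the forward direction applies Thm.~\ref{invert} to the underlying morphism of the weakly invertible intertwiner to force $\mu_y\sim\delta_{f(y)}$, then reads off equivariance and fiber-preservation of $f$ from the equivariant and fiberwise properties of the family (using that equivalent Dirac measures charge the same point), and the converse constructs the intertwiner from $\delta_{f(y)}$, the constant field $\C$, and the identity cocycle, exactly as in the paper. Your extra detail on weak invertibility via $H^{f^{-1}}$ and the strict composition laws (\ref{pullback composition}) merely fleshes out what the paper dismisses with ``one can immediately construct,'' so it is a welcome but inessential elaboration.
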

%
%
\begin{proof}
Suppose first the representations are equivalent, and let $\phi$ be an invertible intertwiner between them. Recall that each intertwiner defines a morphism in $\me$; moreover, as shown by the law (\ref{compo_1int}), the morphism defined by the composition of two intertwiners in the 2-category of representations $\Rep(\G)$ coincides with the composition of the two morphisms in $\me$. As a consequence, the invertibility of $\phi$ yields the invertibility of its associated morphism $(\phi, \mu)$. By Theorem
\ref{invert}, this means the measures $\mu_y$ are equivalent to Dirac measures $\delta_{f(y)}$ for some invertible (measurable) function $f\maps Y \to X$.

On the other hand, by definition of an intertwiner, the family $\mu_y$ is equivariant. This means here that the measure $\delta_{f(y\lhd g)}$ is equivalent to the measure $\delta^g_{f(y)} =
\delta_{f(y)\lhd g}$.  Thus, the two Dirac measures charge the same point, so $f(y\lhd g) = f(y) \lhd g$.
We also know that the support of $\mu_y$, that is, the singlet $\{f(y)\}$, is included in the set $\{x\in X |\, \chi_1(x) = \chi_2(y)\}$. This yields $\chi_1(f(y)) = \chi_2(y)$.

Conversely, suppose there is a function $f$ which satisfies the conditions of the theorem.
One can immediately construct from it an invertible intertwiner between the two representations, by considering the family of measures  $\delta_{f(y)}$, the constant field of one-dimensional spaces $\C$ and the constant field of identity maps $\unit$.
\end{proof}

\medskip

We now consider two intertwiners $\phi$ and $\psi$ between the same pair of representations $\rho_1$ and $\rho_2$, specified by equivariant and fiberwise families of measures $\mu_y$ and $\nu_y$, and classes of fields $\phi_{y,x}, \Phi^g_{y,x}$ and $\psi_{y,x}, \Psi^g_{y,x}$. As we know, these carry standard linear representations $\mathcal{R}^\phi_{y,x}$ and $\mathcal{R}^\psi_{y,x}$ of the stabilizer $S_{y,x}$ of $(y,x)$ under the diagonal action of $G$, respectively in the Hilbert spaces $\phi_{y,x}$ and $\psi_{y,x}$.

The following proposition gives necessary conditions for intertwiners to be equivalent:
%
%
\begin{prop} \label{equ1int}
If the intertwiners $\phi$ and $\psi$ are equivalent, then for all $y\in Y$, $\mu_y$ and $\nu_y$  are in the
same measure class and  the stabilizer representations $\mathcal{R}^\phi_{y,x}$ and $\mathcal{R}^\psi_{y,x}$ are equivalent for $\mu_y$-almost every $x \in X$.
\end{prop}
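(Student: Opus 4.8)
The plan is to unpack what it means for $\phi$ and $\psi$ to be equivalent, then extract the two stated conclusions in turn. By Definition \ref{equival1int}, equivalence of the intertwiners means there is an invertible 2-intertwiner $m \maps \phi \To \psi$. By Theorem \ref{thm:2intclassify}, such a 2-intertwiner is given by a $\sqrt{\mu\nu}$-class of fields of linear maps $m_{y,x} \maps \phi_{y,x} \to \psi_{y,x}$ satisfying the intertwining rule
\[
\Psi^g_{y,x}\, m_{y,x} = m_{(y,x)g^{-1}}\, \Phi^g_{y,x}
\qquad \sqrt{\mu\nu}\text{-}\alme
\]
for each $g$. The first step is to understand the constraint that invertibility of $m$ places on the measures. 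Since $m$ has an inverse 2-intertwiner $n \maps \psi \To \phi$ with $n\cdot m = \unit_\phi$ and $m\cdot n = \unit_\psi$, and since vertical composition of matrix 2-morphisms involves the rescaling factors recorded in equation (\ref{vert2mor}), the composite being an identity forces the relevant Radon--Nikodym factors between $\mu_y$ and $\nu_y$ to be nonzero almost everywhere — by exactly the argument used in the proof of Theorem \ref{invert2}, where nontriviality of the fiber Hilbert spaces was used to conclude that $\mu_y$ and $\nu_y$ are mutually absolutely continuous. This yields $\mu_y \sim \nu_y$ for each $y$, the first conclusion.

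Once the measures are known to be equivalent, the fields $\phi_{y,x}$ and $\psi_{y,x}$ live over the same measure class, and the maps $m_{y,x}$ are genuine linear maps between fibers, invertible for $\mu_y$-almost every $x$ (invertibility of $m$ as a 2-intertwiner descending to invertibility $\alme$ of its field of operators, again as in Theorem \ref{invert2}). The second step is then to restrict the intertwining rule to the stabilizer. Fixing $y$ and a point $x$ at which $m_{y,x}$ is invertible, for any $s \in S_{y,x}$ we have $(y,x)s^{-1} = (y,x)$, so the rule specializes to
\[
\Psi^s_{y,x}\, m_{y,x} = m_{y,x}\, \Phi^s_{y,x}.
\]
Recalling the definition of the stabilizer representations $\RR^\phi_{y,x}(s) = \Phi^s_{y,x}$ and $\RR^\psi_{y,x}(s) = \Psi^s_{y,x}$, this reads
\[
\RR^\psi_{y,x}(s)\, m_{y,x} = m_{y,x}\, \RR^\phi_{y,x}(s)
\qquad s \in S_{y,x},
\]
so $m_{y,x}$ is an intertwining operator between the two stabilizer representations. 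Since $m_{y,x}$ is invertible for $\mu_y$-almost every $x$, these representations are equivalent $\mu_y$-\alme, giving the second conclusion.

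The main obstacle is bookkeeping with the almost-everywhere qualifiers: the intertwining rule in Theorem \ref{thm:2intclassify} holds only $\sqrt{\mu_y\nu_y}$-\alme\ and separately for each $g$, whereas I want it to hold simultaneously for all $s$ in the stabilizer at a fixed point. I expect to handle this by first passing to $\mu_y \sim \nu_y$ (which makes $\sqrt{\mu_y\nu_y}$ equivalent to $\mu_y$), and then invoking the strictification machinery — Lemma \ref{strict.on.orbits} and the surrounding discussion — to choose representatives of $\Phi$, $\Psi$, and $m$ for which the cocycle and intertwining identities hold genuinely everywhere along each orbit, so that the restriction to $S_{y,x}$ is legitimate pointwise rather than merely $\alme$. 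Care is needed because the proposition asserts only a necessary condition, so I need not establish a converse; the argument terminates once equivalence of the stabilizer representations is read off from invertibility of $m_{y,x}$ off a $\mu_y$-null set.
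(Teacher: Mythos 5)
Your proposal is correct and follows essentially the same route as the paper's proof: invertibility of the 2-intertwiner $m$ passes to invertibility of its underlying 2-morphism in $\me$, whence Thm.~\ref{invert2} (whose proof you invoke rather than merely its statement) gives $\mu_y \sim \nu_y$ and the $\mu_y$-\alme\ invertibility of the field $m_{y,x}$, and restricting the intertwining rule of Thm.~\ref{thm:2intclassify} to $s \in S_{y,x}$ exhibits $m_{y,x}$ as an intertwiner of the stabilizer representations $\RR^\phi_{y,x}$ and $\RR^\psi_{y,x}$. Your closing discussion of the almost-everywhere bookkeeping, resolved via the strictification machinery around Lemma~\ref{strict.on.orbits}, is in fact more careful than the paper, which simply asserts the \alme\ intertwining property without addressing the $g$-dependence of the null sets; this extra care does not change the approach.
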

%
%
%
\begin{proof}
Assume $\phi \sim \psi$, and let $m \maps \phi \To \psi$ be an invertible 2-intertwiner. Recall that any 2-intertwiner defines a 2-morphism in $\me$; moreover, the morphism defined by the composition of two 2-intertwiners in the 2-category of representations $\Rep(\G)$ coincides with the composition of the two 2-morphisms in $\me$. As a consequence, the invertibility of $m$ yields the invertibility of its associated 2-morphism.
By Thm.~\ref{invert2}, this means  that the measures of the source and the target of $m$ are equivalent. Thus, for all $y$,  $\mu_y$ and $\nu_y$  are in the same measure class.

We know that $m$ defines a $\mu$-class of fields of linear maps $m_{y,x}\maps \phi_{y,x} \to \psi_{y,x}$, such that for all $y$ and $\mu_y$-almost every $x$, $m_{y,x}$ intertwines the stabilizer representations $\mathcal{R}^\phi_{y,x}$ and $\mathcal{R}^\psi_{y,x}$. Moreover, since $m$ is invertible as a 2-morphism in $\me$, we know by Thm.~\ref{invert2} that the maps  $m_{y,x}$ are invertible.  Thus, for all $y$ and almost every $x$, the two group representations $\mathcal{R}^\phi_{y,x}$ and $\mathcal{R}^\psi_{y,x}$ are  equivalent.
\end{proof}

This proposition admits a partial converse, if one restricts
to transitive intertwiners:

\begin{prop}[Equivalent transitive intertwiners]  \label{equ-trans1int}
Suppose the intertwiners $\phi$ and $\psi$ are transitive. If for all $y$, $\mu_y$ and $\nu_y$ are in the same measure class and  the stabilizer representations $\mathcal{R}^\phi_{y,x}$ and $\mathcal{R}^\psi_{y,x}$ are equivalent for $\mu_y$-almost every $x \in X$, then $\phi$ and $\psi$ are equivalent.
\end{prop}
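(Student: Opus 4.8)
The plan is to produce an explicit invertible $2$-intertwiner $m\maps\phi\To\psi$. By Thm.~\ref{thm:2intclassify} such a $2$-intertwiner amounts to a $\sqrt{\mu\nu}$-class of fields $m_{y,x}\maps\phi_{y,x}\to\psi_{y,x}$ satisfying $\Psi^g_{y,x}\,m_{y,x}=m_{(y,x)g^{-1}}\,\Phi^g_{y,x}$, and by Thm.~\ref{invert2} such an $m$ is invertible as a $2$-morphism in $\me$ precisely when $\mu_y\sim\nu_y$ for all $y$ and the $m_{y,x}$ are invertible almost everywhere. Since the first condition is given by hypothesis, the whole task reduces to constructing an invertible field $m_{y,x}$ obeying the intertwining rule. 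The first observation I would record is that $\phi$ and $\psi$ are supported on a \emph{common} orbit: each $\mu_y$ (resp.\ $\nu_y$) is supported on the orbit on which $\phi$ (resp.\ $\psi$) is transitive, and since $\mu_y\sim\nu_y$ these supports agree, so there is a single orbit $o\subseteq Y\times X$ carrying both intertwiners.

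Next I would exploit transitivity twice. First, because $\phi$ and $\psi$ are transitive they are measurably strict, so I may choose strict representatives of $\Phi^g_{y,x}$ and $\Psi^g_{y,x}$, for which the cocycle and identity conditions hold for \emph{all} $(g,y,x)$ with $(y,x)\in o$. Second, the hypothesis gives $\RR^\phi_{y,x}\simeq\RR^\psi_{y,x}$ for $\mu_y$-almost every $x$; since $\mu$ is supported on $o$ I may fix a base point $p=(y_o,x_o)\in o$ at which the stabilizer representations of $S_o=S_{y_o,x_o}$ on $\phi_{y_o,x_o}$ and $\psi_{y_o,x_o}$ are equivalent, and choose an invertible linear map $m_o\maps\phi_{y_o,x_o}\to\psi_{y_o,x_o}$ with $\RR^\psi_{y_o,x_o}(s)\,m_o=m_o\,\RR^\phi_{y_o,x_o}(s)$ for all $s\in S_o$. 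I then transport $m_o$ along the orbit by setting, for $(y,x)=p\,g^{-1}$,
\[
m_{y,x}=\Psi^g_{p}\,m_o\,(\Phi^g_{p})^{-1}\maps\phi_{y,x}\to\psi_{y,x},
\]
and $m_{y,x}=0$ off the null set $(Y\times X)-o$.

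The crux of the argument is well-definedness of this transport, and this is exactly where the intertwining property of $m_o$ is used. If $p\,g^{-1}=p\,g'^{-1}$ then $s:=g'^{-1}g\in S_o$, and the strict cocycle condition at $p$ (using $p\,s^{-1}=p$) gives $\Phi^g_{p}=\Phi^{g'}_{p}\,\RR^\phi_{p}(s)$ and $\Psi^g_{p}=\Psi^{g'}_{p}\,\RR^\psi_{p}(s)$; substituting these and cancelling $\RR^\psi_{p}(s)\,m_o\,\RR^\phi_{p}(s)^{-1}=m_o$ shows the two candidate values of $m_{y,x}$ coincide. Thus $m_{y,x}$ is a well-defined, everywhere-invertible field on $o$, and a short computation with the strict cocycle identities $\Psi^{ga}_p=\Psi^g_{pa^{-1}}\Psi^a_p$ and $\Phi^{ga}_p=\Phi^g_{pa^{-1}}\Phi^a_p$ verifies the intertwining rule $\Psi^g_{y,x}m_{y,x}=m_{(y,x)g^{-1}}\Phi^g_{y,x}$ for all $g$ and all $(y,x)\in o$.

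The remaining point, which I expect to be the main technical obstacle, is \emph{measurability} of the field $m_{y,x}$. Here I would invoke the existence of a measurable cross-section for the transitive action of the lcsc group $G$ on the standard Borel orbit $o\cong G/S_o$ (see Appendix \ref{apx:G-spaces}): choosing a measurable $\sigma\maps o\to G$ with $p\,\sigma(y,x)^{-1}=(y,x)$, the formula $m_{y,x}=\Psi^{\sigma(y,x)}_{p}\,m_o\,(\Phi^{\sigma(y,x)}_{p})^{-1}$ exhibits $m_{y,x}$ as a composite of measurable fields (recall $\Phi,\Psi$ are measurable on $G\times Y\times X$), hence measurable; by the well-definedness just established its value is independent of the section, so this is the same field. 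Having produced a measurable, everywhere-invertible field satisfying the intertwining rule, with $\mu_y\sim\nu_y$, Thm.~\ref{invert2} applied through Thm.~\ref{thm:2intclassify} shows $m$ is an invertible $2$-intertwiner, and therefore $\phi\simeq\psi$.
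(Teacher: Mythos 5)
Your proposal is correct and follows essentially the same route as the paper's proof: fix a base point on the common supporting orbit at which the stabilizer representations are equivalent, choose an invertible intertwining map $m_o$ there, transport it along the orbit via the cocycles $\Phi$ and $\Psi$ (with well-definedness coming from the intertwining property of $m_o$), and invoke Thm.~\ref{invert2} to conclude the resulting 2-intertwiner is invertible. The only detail the paper adds is a one-line disposal of the case where the family $\mu_y$ is trivial (both intertwiners then being null, hence trivially equivalent)---a case your argument implicitly needs to exclude before picking the base point---while you add welcome explicit care about measurability of $m_{y,x}$ via a measurable section, which the paper leaves tacit.
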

\begin{proof}
Let $o$ be an orbit of $Y\times X$ such that $\mu_y(A)=0$ for each $\{y\} \times A $ in  $(Y \times X) - o$. First of all, if the family $\mu_y$ is trivial, so is $\nu_y$; and in that case the intertwiners are obviously equivalent. Otherwise, there is a point  $u_o=(y_o, x_o)$ in $o$ at which the representations $\mathcal{R}^\phi_o$ and $\mathcal{R}^\psi_o$ of the stabilizer $S_o$ are equivalent.
Now, assume the two intertwiners are specified by the assignments of  Hilbert spaces $\phi_u, \psi_u$  and  invertible maps $\Phi^g_u \maps \phi_u \to \phi_{ug^{-1}}$ and $\Psi^g_u \maps \psi_u \to \psi_{ug^{-1}}$ to the points of the orbit, satisfying  cocycle conditions. These yield, for  $u=u_ok^{-1}$,
\beq \label{partcase:cocycle}
\Phi^g_{u} = \Phi^{gk}_{o} \left(\Phi^k_o\right)^{-1}, \qquad
\Psi^g_{u} = \Psi^{gk}_{o} \left(\Psi^k_o\right)^{-1}
\eeq
where $\phi_o, \Phi^g_o$ denote the value of the fields at the point $u_o$.  Now,  let $m_o \maps \phi_o \to \psi_o$ be an invertible intertwiner between the representations $\mathcal{R}^\phi_o$ and $\mathcal{R}^\psi_o$. Then for $u=u_ok^{-1}$, the formula
\[
m_u = \Psi^k_o m_o \left(\Phi^k_o\right)^{-1}
\]
defines invertible maps $m_u \maps \phi_u \to  \psi_u$. It is then straightforward to show that (\ref{partcase:cocycle}) yields the intertwining equation  $\Psi^g_{u} \,m_{u} = m_{ug^{-1}} \,\Phi^g_{u}$.
Thus, the maps $m_u$ define a 2-intertwiner $m \maps \phi \To \psi$. We furthermore deduce from the  Thm.~\ref{invert2} that $m$ is invertible. Thus, the intertwiners $\phi$ and $\psi$ are equivalent.
\end{proof}

In fact, any transitive intertwiner is equivalent to one for which the field of Hilbert spaces $\phi_{y,x}$ is {\em constant}, $\phi_{y,x}\equiv \phi_o$. More generally, this is true, for any intertwiner, on any single $G$-orbit  $o\subseteq Y\times X$ on which the cocycle is strict.  To see this, pick $u_o= (y_o,x_o)$ in $o$ and let $S_o = S_{y_o,x_o}$ be its stabilizer.  Since $o\cong G/S_o$ is a homogeneous space of $G$, there is a measurable section (see Lemma \ref{meas.sections})
\[
     \sigma\maps o \to G
\]
defined by the properties
\[
      \sigma(y_o,x_o)=1\in G \quad \text{and}\quad
          (y_o,x_o)\sigma(y,x) = (y,x)
\]
If we define $\phi_o = \phi_{y_o,x_o}$, then for each $(y,x)\in o$, we get a specific isomorphism of $\phi_{y,x}$ with $\phi_o$:
\[
      \alpha_{y,x} = \Phi^{\sigma(y,x)}_{y,x}\maps \phi_{y,x} \to  \phi_o
\]
If we then define
\[
     P^{g}_{y,x} = \alpha_{(y,x)g^{-1}} \Phi^g_{y,x} \left(\alpha_{y,x} \right)^{-1}
\]
a straightforward calculation shows that $P$ is cocyclic:
\begin{align*}
    P^{g'g}_{y,x} &= \alpha_{(y,x)(g'g)^{-1}} \Phi^{g'g}_{y,x} \left(\alpha_{y,x} \right)^{-1} \\
              &=\alpha_{(y,x)g^{-1}g'^{-1}} \Phi^{g'}_{(y,x)g^{-1}} \left(\alpha_{(y,x)g^{-1}} \right)^{-1}
                  \alpha_{(y,x)g^-1} \Phi^g_{y,x} \left(\alpha_{y,x} \right)^{-1} \\
        &= P^{g'}_{(y,x)g^{-1}} P^g_{y,x}
\end{align*}
We thus get a new measurable intertwiner $(\phi_o, P, \mu)$, which is equivalent to the original intertwiner $(\phi,\Phi,\mu)$ via an invertible 2-intertwiner defined by $\alpha_{y,x}$.

In geometric language, this shows that any `measurable $G$-equivariant bundle' can be trivialized by via a `measurable bundle isomorphism', while maintaining $G$-equivariance.  So there are no global `twists' in such `bundles', as there are in topological or smooth categories.

%
\subsection{Operations on representations}
%
\label{operations}

Some of the most interesting features of ordinary group representation theory arise because there are natural notions of `direct sum' and `tensor product', which we can use to build new representations from old.  The same is true of 2-group representation theory.  In the group case, these sums and products of representations are built from the corresponding operations in $\Vect$.  Likewise, for sums and products in our representation theory, we first need to develop such notions in the 2-category $\me$.

Thus, in this section, we first consider direct sums and tensor products of measurable categories and measurable functors.  We then use these to describe direct sums and tensor products of measurable representations, and measurable intertwiners.

\subsubsection{Direct sums and tensor products in $\me$}
\label{sec:operations}

We now introduce important operations on `higher vector spaces', analogous to
taking `tensor products' and `direct sums' of ordinary vector spaces.
These operations are well understood in the case of $\twoVe$ \cite{BarrettMackaay,KV}; here we discuss their generalization to $\me$.

We begin with `direct sums'.  As emphasized by Barrett and Mackaay
\cite{BarrettMackaay} in the
case of $\twoVe$, there are several levels of `linear structure' in a
2-category of higher vector spaces.  In ordinary linear algebra, the
{\em set} $\Vect(V,V')$ of all linear maps between fixed vector spaces
$V,V'$ is itself a {\em vector space}.  But the {\em category} $\Vect$ has a
similar structure: we can take
direct sums of both vector spaces and linear maps, making $\Vect$
into a (symmetric) {\em monoidal category}.

In {\em categorified} linear algebra, this `microcosm' of linearity goes one
layer deeper.   Here we can add {\em 2-maps} between fixed maps,
so the top-dimensional hom sets form vector spaces.
But there are now two distinct ways
of taking `direct sums' of {\em maps}.  Namely, since
we can think of a map between 2-vector spaces as a `matrix of
vector spaces', we can either take the `matrix of direct
sums', when the matrices have the same size, or, more generally,
we can take the `direct sum of matrices'.   These ideas lead
to two distinct operations which we call the `direct sum'
and the `2-sum'.  The direct sum leads to the idea that the hom {\em categories}, consisting of all maps between fixed 2-vector spaces, as well as 2-maps between those, should be {\em monoidal} categories; the second leads to the idea that a 2-category of 2-vector spaces should itself be a `monoidal 2-category'.

Let us make these ideas more precise, in the case of $\me$.
The most obvious level of linear structure in $\me$ applies only at
2-morphism level.  Since sums and constant multiples of bounded natural transformations are bounded, the set of measurable natural transformations between fixed measurable functors is a complex vector space.

Next, fixing two measurable spaces $X$ and $Y$, let $\Mat(X,Y)$ be the
category with:
\begin{itemize}
  \item matrix functors $(T,t)\maps H^X \to H^Y$ as objects
  \item matrix natural transformations as morphisms
\end{itemize}
$\Mat(X,Y)$ is clearly a linear category, since composition is bilinear with respect to the vector space structure on each hom set.

Next, there is a notion of \textit{direct sum} in $\Mat(X, Y)$, which
corresponds to the intuitive idea of a `matrix of direct
sums'.   Intuitively, given two matrix functors $(T,t),(T',t')\in \Mat(X,Y)$, we
would like to form a new matrix functor with matrix components
$T_{y, x} \oplus T'_{y,x}$.  This makes sense as long as the families of
measures $t_y$ and $t'_y$ are equivalent, but in general we must
be a bit more careful.  We first define a $y$-indexed measurable
family of measures $t\oplus t'$ on $X$ by
\beq
\label{directsum-meas}
       (t\oplus t')_y = t_y + t'_y
\eeq
This will be the family of measures for a matrix functor we will call the
direct sum of $T$ and $T'$.
To obtain the corresponding field of Hilbert spaces, we use the
Lebesgue decompositions of the measures with respect to each other:
\[
t = t^{t'}+ \overline{t^{t'}}, \qquad t' = t'^t + \overline{t'^t}
\]
with $t^{t'} \ll t'$ and $\overline{t^{t'}} \perp t'$, and similarly
$t'^{t} \ll t$ and $\overline{t'^{t}} \perp t$. The subscript
$y$ indexing the measures has been dropped for simplicity.
The measures $t^{t'}$ and ${t'}^t$ are equivalent,
and these are singular with respect to both $\overline{t^{t'}}$
and $\overline{{t'}^t}$; moreover, these latter two measures are
mutually singular.
For each $y\in Y$, we can thus write $X$ as a disjoint union
\[
 X = A_y \amalg B_y \amalg C_y
\]
with $\overline{t^{t'}}$ supported on $A_y$, $\overline{t'^{t}}$ supported on $B_y$, and $t^{t'}, t'^t$ supported on $C_y$.  (In particular, $t_y$ is supported on $A_y \amalg C_y$ and $t'_y$ is supported on $B_y \amalg C_y$.) We then define a new $(t\oplus t')$-class of fields of Hilbert spaces $T\oplus T'$ by setting
\beq
\label{directsum-mor}
[T\oplus T']_{y,x} =
\left\{\begin{array}{cl}
T_{y,x} & x \in A_y \\
T'_{y,x} &  x \in B_y \\
T_{y, x} \oplus T'_{y,x} &  x \in C_y
\end{array}\right.
\eeq
The $(t\oplus t')$-class does not depend on the choice of sets $A_y$,
$B_y$, $C_y$, so the data $(T\oplus T',t\oplus t')$ give a well defined
matrix functor $H^X \to H^Y$,
an object of $\Mat(X,Y)$.   We call this the {\bf direct sum} of $(T,t)$
and $(T',t')$, and denote it by $(T,t)\oplus (T',t')$, or simply $T\oplus T'$ for short.
Note that this direct sum is boundedly naturally
isomorphic to the functor mapping $\H\in H^X$ to
the $H^Y$-object with components $(T\H)_y \oplus (T'\H)_y$.

There is an obvious unit object $0\in \Mat(X,Y)$ for the tensor product, defined by the trivial $Y$-indexed family of measures on $X$, $\mu_y \equiv 0$.  In fact, this is a {\em strict} unit object, meaning that we have the equations:
$$(T,t)\oplus 0 = (T,t) = 0\oplus  (T,t)$$
for any object $(T,t) \in \Mat(X,Y)$.
We might expect these to hold only up to isomorphism, but since $t + 0 = t$, and $T$ is defined up to measure-class, the equations hold strictly.

Also, given any pair of 2-morphisms in $\Mat(X,Y)$, say matrix natural transformations $\alpha$ and $\alpha'$:
\[
  \xymatrix{
  H^X\ar@/^2ex/[rr]^{T, t}="g1"\ar@/_2ex/[rr]_{U, u}="g2"&& H^Y
  \ar@{=>}^{\alpha} "g1"+<0ex,-2.5ex>;"g2"+<0ex,2.5ex>
}
\qquad
\text{ and }
\qquad
  \xymatrix{
  H^X\ar@/^2ex/[rr]^{T', t'}="g1"\ar@/_2ex/[rr]_{U', u'}="g2"&& H^Y
  \ar@{=>}^{\alpha'} "g1"+<0ex,-2.5ex>;"g2"+<0ex,2.5ex>
}
\]
we can construct their direct sum, a matrix natural transformation
\[
  \xymatrix@C=4em{
  H^X\ar@/^2ex/[rr]^{T\oplus T', t\oplus t'}="g1"\ar@/_2ex/[rr]_{U\oplus U', u\oplus u'}="g2"&& H^Y
  \ar@{=>}^{\alpha\oplus \alpha'} "g1"+<-3ex,-2.5ex>;"g2"+<-3ex,2.5ex>
}
\]
 as follows.  Again, dealing with measure-classes is the tricky part.
This time, let us decompose $X$ in two ways, for each $y$:
\[
 X = A_y \amalg B_y \amalg C_y = A'_y \amalg B'_y \amalg C'_y
\]
with $t_y$ supported on $A_y \amalg C_y$, $u_y$ on $B_y \amalg C_y$,
and $t_y$ and $u_y$ equivalent on $C_y$, and similarly,
$t'_y$ supported on $A'_y \amalg C'_y$, $u'_y$ on $B'_y \amalg C'_y$,
and $t'_y$ and $u'_y$ equivalent on $C'_y$.  We then
define
\beq
\label{directsum-2mor}
[\alpha \oplus \alpha']_{y,x} =
\left\{\begin{array}{cl}
\alpha_{y,x}\oplus\alpha'_{y,x} & x \in C_y\cap C'_y \\
\alpha_{y,x} &  x \in C_y - C'_y \\
\alpha'_{y,x} &  x \in C'_y - C_y \\
0 & \text{otherwise}
\end{array}\right.
\eeq
For this to determine a matrix natural transformation between the indicated matrix functors, we must show that our formula determines the field of linear operators for each $y$ and $\mu_y$-almost every $x$, where
\[
\mu_y = \sqrt{(t_y + t'_y)(u_y + u'_y)}
\]
On the set $C_y\cap C'_y$, the measures $t_y, u_y, t'_y, u'_y$ are all equivalent, hence are also equivalent to $\mu$, so $\alpha$ is clearly determined on this set.  On the set $C_y - C'_y$, we have $t_y\sim u_y$, while $t'_y \bot u'_y$.  Using these facts, we show that
\[
     \mu_y \sim  \sqrt{(t_y + t'_y)(t_y + u'_y)}
     \sim t_y + \sqrt{t'_y u'_y}  = t_y \sim \sqrt{t_y u_y}
     \qquad
     \text{on $C_y - C'_y$}.
\]
But the matrix components of $\alpha\oplus \alpha$ given in (\ref{directsum-2mor}) are determined precisely $\sqrt{t_y u_y}$-\alme, hence $\mu_y$-\alme\ on $C_y - C'_y$.
By an identical argument with primed and un-primed symbols reversing roles, we find
\[
     \mu_y \sim \sqrt{t'_y u'_y}
          \qquad
     \text{on $C'_y - C_y$}.
\]
So the components of $\alpha\oplus \alpha'$ are determined $\mu_y$-\alme\ for each $y$, hence give a matrix natural transformation.

We have defined the `direct sum' in $\Mat(X, Y)$ as a binary operation on objects (matrix functors) and a binary operation on morphisms (matrix natural transformations).  One can check that the direct sum is functorial, i.e. it respects composition and identities:
\[
    (\beta \cdot \alpha) \oplus (\beta' \cdot \alpha') =
    (\beta \oplus \beta') \cdot (\alpha \oplus \alpha')
\]
and
\[
    \unit_{T} \oplus \unit_{T'} = \unit_{T\oplus T'}.
\]
%
\begin{defn} The {\bf direct sum} in $\Mat(X, Y)$ is the functor:
$$\oplus\maps \Mat(X, Y) \times \Mat(X,Y)\to\Mat(X,Y).$$
defined by
\begin{itemize}
\item
The direct sum of objects $T, T' \in \Mat(X, Y)$ is the object $T \oplus
T'$
specified by the family of measures $t\oplus t'$ given in
(\ref{directsum-meas}), and by the $t\oplus t'$-class of fields $[T \oplus
T']_{y,x}$ given in (\ref{directsum-mor});
\item
The direct sum morphisms $\alpha \maps T \to U$ and $\alpha' \maps T'
\to U'$ is the morphism $\alpha \oplus \alpha' \maps T\oplus T' \to
U\oplus U'$ specified by the $\sqrt{(t+u)(t'+u')}$-class of fields of linear maps given in
(\ref{directsum-2mor}).
\end{itemize}
\end{defn}
%

The direct sum can be used to promote $\Mat(X,Y)$ to a monoidal category.
There is an obvious `associator' natural transformation; namely, given objects $T,T',T''\in \Mat(X,Y)$,
we get a morphism
\[
 A_{T,T',T''} \maps (T\oplus T')\oplus T''\to T\oplus (T'\oplus T'')
\]
obtained by using the usual associator for direct sums of Hilbert spaces, on the common support of the respective measures $t$, $t'$, and $t''$.  The left and right `unit laws', as mentioned already, are identity morphisms.  A straightforward exercise shows that that $\Mat(X,Y)$ becomes a monoidal category under direct sum.

There is also an obvious `symmetry' natural transformation in $\Mat(X,Y)$,
\[
         S_{T,T'} \maps T\oplus T' \to T'\oplus T
\]
making $\Mat(X,Y)$ into a {\em symmetric} monoidal category.

We can go one step further.  Given {\em any} measurable categories $\HH$ and $\HH'$, the `hom-category' $\me(\HH,\HH')$ has
\begin{itemize}
  \item measurable functors $T\maps \HH \to \HH$ as objects
  \item measurable natural transformations as morphisms
\end{itemize}
An important corollary of Thm.~\ref{magic.thm} is that this category is equivalent to some $\Mat(X,Y)$.  Picking an adjoint pair of equivalences:
\[
\xymatrix{ \me(\HH,\HH') \ar @< 2pt> [r]^{F} &
                  \Mat(X,Y) \ar @< 2pt> [l]^{\overline F}  }
\]
we can transport the (symmetric) monoidal structure on $\Mat(X,Y)$ to one on $\me(\HH,\HH')$ by a standard procedure.  For example, we define a tensor product of $T,T'\in \me(\HH,\HH')$ by
\[
  T \oplus T' = \overline F ( F(T)\oplus F(T') ).
\]
This provides a way to take direct sums of arbitrary parallel measurable functors, and arbitrary measurable natural transformations between them.

\medskip

We now explain the notion of `{2-sum}', which is a kind of sum that applies not only to measurable functors and natural transformations, like the direct sum defined above, but also to measurable categories themselves.

First, we define to 2-sum of measurable categories of the form $H^X$ by the formula
\[
  H^X \boxplus H^{X'} = H^{X \amalg X'}
\]
where $\amalg$ denotes disjoint union.  Thus, an object of $H^X \boxplus H^{X'}$ consists of a measurable field of Hilbert spaces on $X$, and one on $X'$.

Next, for arbitrary matrix functors $(T, t) \maps H^X \to H^Y$ and $(T', t') \maps H^{X'} \to H^{Y'}$, we will define a matrix functor $(T\boxplus T',t \boxplus t')$ called the 2-sum of $T$ and $T'$.  Intuitively, whereas the `direct sum' was like a `matrix of direct sums', the `2-sum' should be like a `direct sum of matrices'.  Thus, we use the
fields of Hilbert spaces $T$ on $Y\times X$ and $T'$ on $Y'\times X'$
to define a field $T\boxplus T'$ on $Y\amalg Y' \times X \amalg
X'$, given by
\beq \label{2sum-mor}
[T\boxplus T']_{y,x} =
\left\{\begin{array}{cl}
T_{y,x} & (y,x) \in Y \times X \\
T'_{y,x} & (y,x) \in Y' \times X' \\
0 & \text{otherwise}
\end{array}\right.
\eeq
This is well defined on measure-equivalence classes, almost everywhere with respect to the $Y\amalg Y'$-indexed family $t \boxplus t'$ of measures on $X \amalg X'$, defined by:
\beq \label{2sum-meas}
[t \boxplus t']_{y} =
\left\{\begin{array}{cc} t_{y} & y \in Y \\ t'_{y} & y \in Y' \end{array}\right.
\eeq
In this definition we have identified $t_y$ with its obvious extension
to a measure on $X \amalg X'$.

Finally, suppose we have two arbitrary matrix natural transformations,
defined by the
fields of linear maps $\alpha_{y,x} \maps T_{y,x} \to
U_{y,x}$ and $\alpha'_{y', x'} \maps T'_{y',x'} \to U'_{y',x'}$.
From these, we construct a new field of maps from $[T\boxplus T']_{y,x}$ to
$[U\boxplus U']_{y,x}$, given by
\beq \label{2sum-2mor}
[\alpha\boxplus \alpha']_{y,x} =
\left\{\begin{array}{cc} \alpha_{y,x} & (y,x) \in Y \times X \\ \alpha'_{y,x} & (y,x) \in Y' \times X'  \\
0 & \text{otherwise}
\end{array}\right.
\eeq
This is determined $\sqrt{(t \boxplus t')(u\boxplus u')}$-\alme, and hence
defines a matrix natural transformation $\alpha\boxplus \alpha' \maps T\boxplus T' \To U \boxplus U'$.

\begin{defn} The term {\bf 2-sum} refers to any of the following binary operations, defined on certain objects, morphisms, and 2-morphisms in $\me$:

\begin{itemize}
\item
The {\bf 2-sum} of measurable categories $H^X$ and $H^{X'}$ is the measurable category $H^X \boxplus H^{X'} = H^{X \amalg X'}$;

\item
The {\bf 2-sum} of matrix functors $(T, t) \maps H^X \to H^Y$ and $(T', t') \maps H^{X'} \to H^{Y'}$ is the matrix functor $(T \boxplus T', t \boxplus t') \maps H^{X \amalg X'} \to H^{Y \amalg Y'}$ specified by the family of measures $t \boxplus t'$ given in (\ref{2sum-meas}) and the class of fields $T \boxplus T'$ given in (\ref{2sum-mor});

\item
The {\bf 2-sum} of matrix natural transformations $\alpha \maps (T, t) \To (U , u)$ and $\alpha' \maps (T', t') \To (U , u')$ is the matrix natural transformation $\alpha \boxplus \beta \maps (T \boxplus  T', t \boxplus t') \To (U \boxplus U', u\boxplus u')$ specified by the class of fields of linear operators given in (\ref{2sum-2mor}).
\end{itemize}
\end{defn}
%

It should be possible to extend the notion of 2-sum to apply to arbitrary objects, morphisms, or 2-morphisms in $\me$, and define additional structure so that $\me$ becomes a `monoidal 2-category'.
While we believe our limited definition of `2-sum' is a good starting point for a more thorough treatment, we make no such attempts here.   For our immediate purposes, it suffices to know how to take 2-sums of objects, morphisms, and 2-morphisms of the special types described.

There is an important relationship
between the direct sum $\oplus$ and the 2-sum $\boxplus$.
Given arbitrary---not necessarily parallel---matrix functors $(T, t) \maps H^X \to H^Y$ and $(T', t')
\maps H^{X'} \to H^{Y'}$, their 2-sum can be written as a direct sum:
\beq \label{2-sum-from-direct-sum}
     T\boxplus T' \cong [T\boxplus 0'] \oplus [0\boxplus T']
\eeq
Here $0$ and $0'$  denote the unit objects in the monoidal categories $\Mat(X,Y)$ and $\Mat(X',Y')$.  A similar relation holds for matrix natural transformations.

\medskip

We now briefly discuss `tensor products'.  As with the additive structures discussed above, there may be multiple layers of related multiplicative structures.  In particular, we can presumably use the ordinary tensor product of Hilbert spaces and linear maps to turn each $\Mat(X,Y)$, and ultimately each $\me(\HH,\HH')$, into a (symmetric) monoidal category.  But, we should also be able to turn $\me$ itself into a monoidal 2-category, using a `tensor 2-product' analogous to the `direct 2-sum'.

We shall not develop these ideas in detail here, but it is perhaps worthwhile outlining the general structure we expect.  First, the tensor product in $\Mat(X,Y)$ should be given as follows:
\begin{itemize}
  \item Given objects $(T,t)$, $(T',t')$, define their tensor product $(T\tensor T', t\tensor t')$ by the family of measures \[
  (t\tensor t')_y = \sqrt{t_yt'_y}
\]
and the field of Hilbert spaces
\[
[T\otimes T']_{y,x} =
T_{y,x}\tensor T_{y,x}
\]
  \item Given morphisms $\alpha \maps T \to U$ and $\alpha' \maps T'
\to U'$, define their tensor product $\alpha \tensor \alpha' \maps T\tensor T' \to
U\tensor U'$ by the class of fields defined by
\[
   (\alpha\tensor \alpha')_{y,x} = \alpha_{y,x} \tensor \alpha'_{y,x}
\]
\end{itemize}
These are simpler than the corresponding formulae for the direct sum, as null sets turn out to be easier to handle.  As with the direct sum, we expect the tensor product to give $\Mat(X,Y)$ the structure of a symmetric monoidal category, allowing us to transport this structure to any hom-category $\me(\HH,\HH')$ in $\me$.

Next, let us describe the `tensor 2-product'.
\begin{itemize}
\item
Given two measurable categories of the form $H^X$ and $H^{X'}$, we define their tensor 2-product to be
\[
      H^X\boxtimes H^{X'} := H^{X\times X'}
\]

\item Given matrix functors $(T, t) \maps H^X \to H^Y$ and $(T', t') \maps H^{X'} \to H^{Y'}$, define their tensor 2-product to be the matrix functor $(T\boxtimes T', t\boxtimes t')$
defined by the $Y \times Y'$-indexed family of measures on $X \times X'$
\[
[t\boxtimes t']_{y,y'} = t_y \otimes t'_{y'},
\]
where $\tensor$ on the right denotes the ordinary tensor product of measures,
and the field of Hilbert spaces
\[
[T \boxtimes T']_{(y, y'), (x, x')} = T_{y, x} \otimes T_{y', x'}.
\]

\item Given matrix natural transformations $\alpha \maps (T, t) \To (U , u)$ and $\alpha' \maps (T', t') \To (U , u')$, define their tensor 2-product to be the matrix natural transformation $\alpha \boxtimes \beta \maps (T \boxtimes  T', t \boxtimes t') \To (U \boxtimes U', u\boxtimes u')$ specified by:
\[
[\alpha \boxtimes \alpha']_{(y, y'), (x, x')}  = \alpha_{y, x}  \otimes \alpha_{y', x'}
\]
determined almost everywhere with respect to the family of geometric mean measures:
$$
\sqrt{(t\boxtimes t')( u\boxtimes u')} = \sqrt{tu} \boxtimes \sqrt{t'u'}
$$
\end{itemize}
As with the 2-sum, it should be possible to use this tensor 2-product to make $\me$ into a monoidal 2-category.   We leave this to further work.

\subsubsection{Direct sums and tensor products in $\Rep(\G)$}
\label{sec:operations-in-2Rep}

Now let $\G$ be a skeletal measurable 2-group, and consider the representation 2-category $\Rep(\G)$ of (measurable) representations of $\G$ in $\me$.  Monoidal structures in $\me$ give rise to monoidal structures in this representation category in a natural way.

Let us consider the various notions of  `sum' that $\Rep(\G)$ inherits from $\me$.  First, and most obvious, since the 2-morphisms in $\me$ between a fixed pair of morphisms form a vector space, so do the 2-intertwiners between fixed intertwiners.

Next, fix two representations $\rho_1$ and $\rho_2$, on the measurable categories $H^X$ and $H^Y$, respectively.  An intertwiner $\phi\maps \rho_1 \to \rho_2$ gives an object of $\phi\in \me(H^X,H^Y)$ and for each $g\in G$ a morphism in $\me(H^X,H^Y)$.   Since $\me(H^X,H^Y)$ is equivalent to $\Mat(X,Y)$, the former becomes a symmetric monoidal category with direct sum, and this in turn induces a direct sum of intertwiners between $\rho_1$ and $\rho_2$.    We get a direct sum of 2-intertwiners in an analogous way.
%
\begin{defn}
Let $\rho_1$, $\rho_2$ be representations on $H^X$ and $H^Y$.  The {\bf direct sum of intertwiners} $\phi,\phi'\maps \rho_1 \to \rho_2$ is the intertwiner $\phi\oplus \phi' \maps \rho_1 \to \rho_2$  given by the morphism $\phi \oplus  \phi'$ in $\me$, together with the 2-morphisms $\phi(g) \oplus \phi'(g)$ in $\me$.  The {\bf direct sum of 2-intertwiners} $m\maps \phi \to \psi$ and $m'\maps \phi' \to \psi'$ is the 2-intertwiner given by the measurable natural transformation $m\oplus m' \maps \phi \oplus\phi' \to \psi \oplus\psi'$.
\end{defn}
%
The intertwiners define families of measures $\mu_y$ and $\mu'_y$, and classes of fields of Hilbert spaces $\phi_{y,x}$ and $\phi'_{y, x}$ and invertible maps $\Phi^g_{y,x}$ and $\Phi'^g_{y, x}$ that are invertible and cocyclic.  It is straightforward to deduce the structure of the direct sum of intertwiners in terms of these data:
%
\begin{prop} \label{directsumInt}
Let $\phi= (\phi,\Phi,\mu)$, $\phi'= (\phi',\Phi',\mu')$ be measurable intertwiners with the same source and target representations.  Then the intertwiner $\phi\oplus \phi'$ specified by the family of measures $\mu + \mu'$, and the classes of fields $\phi_{y,x} \oplus \phi'_{y, x}$ and $\Phi^g_{y,x} \oplus \Phi'^g_{y, x}$, is a direct sum for $\phi$ and $\phi'$.
\end{prop}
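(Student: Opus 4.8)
The statement is really a matter of unpacking, since by the definition preceding the proposition $\phi\oplus\phi'$ is \emph{defined} to be the intertwiner whose underlying morphism in $\me$ is the direct sum $\phi\oplus\phi'$ in $\me(H^X,H^Y)\simeq\Mat(X,Y)$, together with the family of $2$-morphisms $g\mapsto\phi(g)\oplus\phi'(g)$. So the plan is not to verify a universal property, but to compute the Theorem~\ref{thm:1intclassify} data of this already-named intertwiner and check it equals $(\mu+\mu',\ \phi_{y,x}\oplus\phi'_{y,x},\ \Phi^g_{y,x}\oplus\Phi'^g_{y,x})$.

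First I would treat the morphism part. By (\ref{directsum-meas}) and (\ref{directsum-mor}), the morphism $\phi\oplus\phi'$ is the matrix functor with family of measures $\mu+\mu'$ and field $[\phi\oplus\phi']_{y,x}$ given piecewise on the Lebesgue-decomposition sets $A_y\amalg B_y\amalg C_y$. The key observation, used repeatedly, is that this piecewise field agrees, \emph{as a $(\mu+\mu')$-class}, with the naive block sum $\phi_{y,x}\oplus\phi'_{y,x}$: since $\phi'_{y,x}$ is only data up to $\mu'_y$-class and $\mu'_y$ is null on $A_y$, we may take the representative $\phi'_{y,x}=0$ there, so that $\phi_{y,x}\oplus\phi'_{y,x}=\phi_{y,x}$ on $A_y$; symmetrically $\phi_{y,x}\oplus\phi'_{y,x}=\phi'_{y,x}$ on $B_y$; and on $C_y$ the two measures are equivalent so both summands are genuinely present. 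Thus the morphism data is exactly $(\mu+\mu',\ \phi_{y,x}\oplus\phi'_{y,x})$.

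Next the cocycle part. Using the reindexing $\Phi^g_{y,x}=\phi(g)_{yg^{-1},x}$ from the proof of Theorem~\ref{thm:1intclassify}, I would compute the field of the matrix natural transformation $\phi(g)\oplus\phi'(g)$ from (\ref{directsum-2mor}). By Lemma~\ref{composition.lemma} the source and target of $\phi(g)\oplus\phi'(g)$ are $\rho_2(g)(\phi\oplus\phi')$ and $(\phi\oplus\phi')\rho_1(g)$, whose families of measures $(\mu+\mu')_{yg}$ and $(\mu+\mu')^g_y$ are equivalent by equivariance. Hence in (\ref{directsum-2mor}) the mutually singular $A,B$-parts are absent and the formula collapses to the block-diagonal field $\phi(g)_{y,x}\oplus\phi'(g)_{y,x}$, with the same null-set convention handling the points where $\mu_{yg}$ and $\mu'_{yg}$ disagree. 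Reindexing $y\mapsto yg^{-1}$ then gives $\Phi^g_{y,x}\oplus\Phi'^g_{y,x}$, as claimed.

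Finally I would check the three conditions of Theorem~\ref{thm:1intclassify}. Equivariance and the fiberwise property of $\mu+\mu'$ are immediate, since a sum of equivariant (resp.\ fiberwise) measures is again such, using that $a\sim a'$ and $b\sim b'$ imply $a+b\sim a'+b'$. The cocycle condition and invertibility for $\Phi^g\oplus\Phi'^g$ hold block-by-block; to get them on a \emph{fixed} full-measure set as required by Def.~\ref{mble.interwiner}, I would invoke that $\Phi,\Phi'$ are cocyclic and invertible on sets $U,U'$ of full $\mu$- resp.\ $\mu'$-measure, so that the sum has these properties on $(A_y\cap U)\cup(B_y\cap U')\cup(C_y\cap U\cap U')$, which has full $(\mu+\mu')$-measure. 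That $\phi\oplus\phi'$ obeys the intertwiner axioms (\ref{norm1int}), (\ref{compatib}), (\ref{pillow}) in the first place follows from the functoriality of $\oplus$, which commutes with vertical composition, identities, and—via Lemma~\ref{composition.lemma}—with whiskering by the $\rho_i(g)$. The main obstacle throughout is precisely this null-set bookkeeping: reconciling the genuinely piecewise formulas (\ref{directsum-mor})–(\ref{directsum-2mor}), forced by the need to make sense of $T_{y,x}\oplus T'_{y,x}$ when $t_y,t'_y$ are mutually singular, with the clean block-diagonal statement. Everything hinges on the single principle that a field is data only up to its measure class, so that on a null set of one measure the corresponding summand may be taken to vanish; applied consistently to the measures, the fields, the cocycle maps, and their reindexed versions in the $2$-morphisms, the identification is routine.
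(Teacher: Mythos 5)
Your proposal is correct and coincides with the paper's intended argument: the paper gives no proof of this proposition beyond calling it ``straightforward to deduce,'' and your computation---unpacking the definition of the direct sum of intertwiners via (\ref{directsum-meas})--(\ref{directsum-2mor}) and Lemma \ref{composition.lemma}, with the piecewise formulas collapsing to block sums because a field may be taken to vanish on a null set of its own measure---is precisely that deduction. One small repair in your final bookkeeping: invertibility and cocyclicity at a fixed $(y,x)$ quantify over all $g,g'$ and hence refer to the entire $G$-orbit, so your $y$-stratified set $(A_y\cap U)\cup(B_y\cap U')\cup(C_y\cap U\cap U')$ does not by itself exclude $\Phi'^g_{y,x}$ mapping a zero fiber $\phi'_{y,x}=0$ to a nonzero fiber $\phi'_{(y,x)g^{-1}}\neq 0$; the clean fix is to invoke Lemma \ref{strict.on.orbits} (as the paper's remark after it suggests) to take $U$ and $U'$ orbit-saturated and choose representatives with $\phi=0$ off $U$ and $\phi'=0$ off $U'$, after which $\Phi\oplus\Phi'$ is invertible and cocyclic everywhere and no stratification is needed.
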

%
%

The intertwiner specified by the family of trivial measures, $\mu_y \equiv 0$, plays the role of unit for the direct sum. This unit is the {\bf null intertwiner} between $\rho_1$ and $\rho_2$.

Finally, $\Rep(\G)$ inherits a notion of `2-sum'.  We begin with the representations.
%
%
%
\begin{defn}
The {\bf 2-sum of representations} $\rho \boxplus \rho'$ is the representation defined by
\[
(\rho \boxplus\rho')(\varsigma) = \rho(\varsigma)\boxplus\rho'(\varsigma)
\]
where $\varsigma$ denotes the object $\star$, or any morphism or 2-morphism in $\G$.
\end{defn}
%
%
%
We immediately deduce, from the definition of the 2-sum in $\me$, the structure of the 2-sum of representations:
%
\begin{prop} Let $\rho$, $\rho'$ be measurable representations of $\G = (G,H,\rhd)$, with corresponding equivariant maps $\chi\maps X \to H^\ast$, $\chi'\maps X \to H^\ast$.  The {\bf 2-sum of representations} $\rho \boxplus \rho'$ is the representation on the measurable category $H^{X\amalg X'}$, specified by  the action of $G$ induced by the actions on $X$ and $X'$, and the obvious equivariant map $\chi\amalg\chi' \maps X \amalg X' \to H^\ast$.
\end{prop}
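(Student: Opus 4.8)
The plan is to evaluate the defining formula $(\rho \boxplus \rho')(\varsigma) = \rho(\varsigma) \boxplus \rho'(\varsigma)$ on the object, the morphisms, and the 2-morphisms of $\G$ in turn, read off the resulting data as a right $G$-action on $X \amalg X'$ together with a map to $H^\ast$, and then appeal to the reconstruction half of Thm.~\ref{thm:repclassify} (equivalently Thm.~\ref{thm:skelrepclassify}) to conclude that these data assemble into a genuine measurable representation, which must therefore be $\rho \boxplus \rho'$. This simultaneously confirms well-definedness and identifies the geometric data, in the same spirit as the preceding Prop.~\ref{directsumInt}. On the unique object this is immediate: by the definition of the 2-sum of measurable categories, $(\rho \boxplus \rho')(\star) = H^X \boxplus H^{X'} = H^{X \amalg X'}$.

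For a morphism $g \in G$, recall the standing hypothesis that $\rho(g) = H^{f_g}$ and $\rho'(g) = H^{f'_g}$, with $f_g(x) = x \lhd g$ and $f'_g(x') = x' \lhd g$ the right actions on $X$ and $X'$. Unwinding the 2-sum formula (\ref{2sum-mor}) applied to the matrix functors underlying these pullbacks, I would check that $H^{f_g} \boxplus H^{f'_g}$ is boundedly naturally isomorphic to the pullback $H^{f''_g}$ along the map $f''_g \maps X \amalg X' \to X \amalg X'$ restricting to $f_g$ on $X$ and to $f'_g$ on $X'$: a field on $X \amalg X'$ pulled back along $f''_g$ restricts on each summand to the corresponding pullback, which is precisely the block-diagonal recipe of (\ref{2sum-mor}). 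Since $f_g$ and $f'_g$ define right $G$-actions, so does $f''_g$, and this is the action on $X \amalg X'$ induced by those on $X$ and $X'$. For a 2-morphism $u = (g,h)$ of $\G$ (with $\d = 0$, so $u \maps g \To g$), Thm.~\ref{thm:repclassify} presents $\rho(u)$, via Thm.~\ref{pullback.2-autos}, as the 2-automorphism of $H^{f_g}$ attached to the measurable function $x \mapsto \chi(x)[h]$, and likewise $\rho'(u)$ to $x' \mapsto \chi'(x')[h]$. By Thm.~\ref{pullback.2-autos} their 2-sum is a 2-automorphism of $H^{f''_g}$ corresponding to a measurable map $X \amalg X' \to \C^\times$, and from (\ref{2sum-2mor}) this map is $\chi(\cdot)[h]$ on $X$ and $\chi'(\cdot)[h]$ on $X'$, that is, $z \mapsto (\chi \amalg \chi')(z)[h]$, where $\chi \amalg \chi' \maps X \amalg X' \to H^\ast$ is $\chi$ on $X$ and $\chi'$ on $X'$. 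It then remains only to note that $\chi \amalg \chi'$ is measurable (the disjoint union carries the obvious standard Borel structure and each summand map is measurable) and $G$-equivariant: for $z$ in either summand, $(\chi \amalg \chi')(z \lhd g) = (\chi \amalg \chi')(z)_g$ follows at once from the equivariance of $\chi$ and of $\chi'$ separately.

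The only real obstacle is bookkeeping, arising because the 2-sum is defined in the excerpt merely as a partial operation on special objects, morphisms, and 2-morphisms, so one must ensure that $\varsigma \mapsto \rho(\varsigma) \boxplus \rho'(\varsigma)$ actually respects composition of morphisms and both compositions of 2-morphisms rather than being only formally defined. The cleanest resolution is the route above: having matched the values on generators with the data $(X \amalg X',\, f''_g,\, \chi \amalg \chi')$, the reconstruction in Thm.~\ref{thm:repclassify} produces a bona fide 2-functor from exactly this data, and the uniqueness clause forces $\rho \boxplus \rho'$ to agree with it. Should one prefer to verify 2-functoriality by hand, the strict composition law (\ref{pullback composition}) for pullbacks, together with the horizontal composition rule of Prop.~\ref{horiz.2auto}, reduces each axiom to the corresponding already-established axiom for $\rho$ and for $\rho'$, the vanishing off-diagonal blocks contributing nothing.
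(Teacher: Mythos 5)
Your proposal is correct and takes essentially the approach the paper intends: the paper offers no argument beyond ``we immediately deduce, from the definition of the 2-sum in $\me$,'' and your unwinding of $(\rho\boxplus\rho')$ on the object, on the pullbacks $H^{f_g}$ (identifying $H^{f_g}\boxplus H^{f'_g}$ with $H^{f''_g}$ for the induced action), and on the 2-automorphism data (yielding $\chi\amalg\chi'$), capped by the reconstruction and uniqueness clauses of Thm.~\ref{thm:repclassify}, is exactly the computation being alluded to. Your explicit treatment of the bookkeeping issue---that $\boxplus$ is only a partial operation on matrix functors, so strict 2-functoriality must be secured via the reconstruction (or via the strict composition law (\ref{pullback composition}) and Prop.~\ref{horiz.2auto})---is in fact more careful than the paper itself.
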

%
The empty space $X = \emptyset$ defines a representation\footnote{Note that the measurable category $H^{\emptyset}$ is the category with just one object and one morphism.} which plays the role of unit element for the
direct sum. This unit element is the \textbf{null representation}.

There is a notion of 2-sum for intertwiners, which allows one to define the sum of intertwiners that are not necessarily parallel.  This notion can essentially be deduced from that of the direct sum, using (\ref{2-sum-from-direct-sum}).   Indeed, if $\phi=(\phi,\Phi,\mu)$ is a measurable intertwiner, a 2-sum of the form $\phi \boxplus 0$ is simply given by the trivial extensions of the fields $\phi,\Phi,\mu$ to a disjoint union, and likewise for $0\boxplus \phi'$; we then simply write $\phi \boxplus \phi'$ as a direct sum via (\ref{2-sum-from-direct-sum}) and the analogous equation for 2-morphisms.

\medskip

There should also be notions of `tensor product' and `tensor 2-product'  in the representation 2-category $\Rep(\G)$.  Since we have not constructed these products in detail in $\me$, we shall not give the details here; the constructions should be analogous to the `direct sum' and `2-sum' just described.

%
\subsection{Reduction, retraction, and decomposition}
\label{reduction}
%

In this section, we introduce notions of reducibility and
decomposability, in analogy with group representation theory, as well
as an {\it a priori} intermediate notion, `retractability'.  These
notions make sense not only for representations, but also for
intertwiners. We classify the indecomposable, irretractable and
irreducible measurable representations, and intertwiners between
these, up to equivalence.

\subsubsection{Representations}
\label{irreps}

Let us start with the basic definitions.

\begin{defn}
A representation $\rho'$ is a {\bf subrepresentation} of a given 
representation $\rho$ if there exists a weakly monic intertwiner 
$\rho'\to\rho$.
\end{defn}
We remind the reader that an intertwiner $\phi\maps \rho' \to \rho$ is
(strictly) {\bf monic} if whenever $\xi,\xi'\maps \tau \to \rho$ are
intertwiners such that $\phi\cdot \xi = \phi \cdot \xi'$, we have
$\xi=\xi'$; we say it is {\bf weakly monic} if this holds up to
invertible 2-intertwiners, i.e.\ $\phi\cdot \xi \cong \phi \cdot \xi'$
implies $\xi\cong\xi'$.
\begin{defn}
A representation $\rho'$ is a {\bf retract} of $\rho$ if there exist
intertwiners $\phi \maps \rho' \to \rho $ and $\psi \maps \rho \to
\rho'$ whose composite $\psi \phi$ is equivalent to the identity
intertwiner of $\rho'$
\[
\xymatrix{\rho' \ar[r]^{\phi} & \rho \ar[r]^{\psi} & \rho'}
\quad \simeq \quad
\xymatrix{\rho' \ar[r]^{\unit_{\rho'}} & \rho'}
\]
\end{defn}

\begin{defn}
A representation $\rho'$ is a {\bf 2-summand} of $\rho$ if $\rho
\simeq \rho' \boxplus \rho''$ for some representation $\rho''$.
\end{defn}

It is straightforward to show that any 2-summand is automatically a
retract, since the diagram
\[
        \rho' \to \rho' \boxplus \rho'' \to \rho',
\]
built from the obvious `injection' and `projection' intertwiners, is
equivalent to the identity.  On the other hand, we shall see that a
representation $\rho$ generally has retracts that are not 2-summands;
this is in stark contrast to linear representations of ordinary
groups, where summands and retracts coincide.

Similarly, any retract is automatically a subrepresentation, since
$\psi\phi\simeq 1$ easily implies $\phi$ is weakly monic.

Any representation $\rho$ has both itself and the null representation
as subrepresentations, as retracts, and as summands.  This leads us
to the following definitions:

\begin{defn}
A representation $\rho$ is {\bf irreducible} if it has exactly two
subrepresentations, up to equivalence, namely $\rho$ itself and the
null representation.
\end{defn}

\begin{defn}
A representation $\rho$ is {\bf irretractable} if it has exactly two
retracts, up to equivalence, namely $\rho$ itself and the null
representation.
\end{defn}

\begin{defn}
A representation $\rho$ is {\bf indecomposable} if it has exactly two
2-summands, up to equivalence, namely $\rho$ itself and the null
representation.
\end{defn}
Note that according to these definitions, the null representation is
neither irreducible, nor indecomposable, nor irretractable.  An
irreducible representation is automatically irretractable, and an
irretractable representation is automatically indecomposable.  A
priori, neither of these implications is reversible.

Indecomposable representations are characterized by the following theorem:
%
%
\begin{theo}[Indecomposable representations] \label{theo_indecomposable}
Let $\rho$ be a measurable representation on $H^X$, making $X$ into a
measurable $G$-space. Then $\rho$ is indecomposable if and only if $X$
is nonempty and $G$ acts transitively on $X$.
\end{theo}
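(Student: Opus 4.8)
The plan is to reduce the statement to an elementary fact about $G$-spaces by exploiting the classification of equivalences in Thm.~\ref{equirep}. The key observation is that a $2$-sum $\rho'\boxplus\rho''$ lives on $H^{X'\amalg X''}$, and that the induced $G$-action on $X'\amalg X''$ preserves each summand. Hence, via Thm.~\ref{equirep}, writing $\rho$ as a $2$-sum is the same as exhibiting a partition of $X$ into two $G$-invariant measurable subsets, namely those over which the equivariant map $\chi$ restricts to the two summands. Thus ``$\rho$ is indecomposable'' will translate into ``$X$ has no nontrivial $G$-invariant measurable subset'', and the theorem becomes the assertion that, for nonempty $X$, this holds precisely when $G$ acts transitively.

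First I would treat the ``if'' direction. Assume $X$ is nonempty and $G$ acts transitively, and suppose $\rho\simeq\rho'\boxplus\rho''$ with $\rho'$, $\rho''$ carried by $H^{X'}$, $H^{X''}$. By Thm.~\ref{equirep} there is a $G$-equivariant measurable isomorphism $f\maps X'\amalg X''\to X$. Since $X'$ is $G$-invariant in the disjoint union and $f$ is an equivariant bijection, $f(X')$ is a $G$-invariant measurable subset of $X$; by transitivity the only such subsets are $\emptyset$ and $X$. If $f(X')=\emptyset$ then $X'=\emptyset$ and $\rho'$ is null; if $f(X')=X$ then $X''=\emptyset$, so $\rho''$ is null and $f$ restricts to an equivalence $\rho'\simeq\rho$. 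As $X\neq\emptyset$, $\rho$ is not null, so these two cases are genuinely distinct, and $\rho$ has exactly the two $2$-summands $\rho$ itself and the null representation. Hence $\rho$ is indecomposable.

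For the ``only if'' direction I would argue by contraposition. If $X=\emptyset$ then $\rho$ is the null representation, which by definition is not indecomposable. If $X\neq\emptyset$ but the action is not transitive, pick $x_o\in X$ whose orbit $o$ is not all of $X$. By Lemma~\ref{measurable_orbits}, $o$ is measurable; it is nonempty, $G$-invariant, and has nonempty complement, so $X=o\amalg(X-o)$ is a partition into two nonempty $G$-invariant measurable subsets. Restricting the action and the equivariant map $\chi$ to each piece gives measurable representations $\rho|_o$ and $\rho|_{X-o}$, and the inclusion $o\amalg(X-o)\to X$ is a fiber-preserving $G$-equivariant measurable isomorphism, so Thm.~\ref{equirep} yields $\rho\simeq\rho|_o\boxplus\rho|_{X-o}$ with both summands non-null. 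Thus $\rho$ is decomposable, completing the contrapositive.

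The only genuinely non-formal ingredient is the measurability of a single orbit, which I expect to be the main obstacle were it not already available: a priori an orbit is merely the image of the continuous map $g\mapsto x_o\lhd g$, hence only analytic, and establishing that it is in fact Borel requires the descriptive set theory of Polish group actions (factoring the orbit map through the quotient by the stabilizer and invoking a Lusin--Souslin type argument). Since this is precisely the content of Lemma~\ref{measurable_orbits}, everything else is bookkeeping with Thm.~\ref{equirep} together with the definitions of the $2$-sum and of the null representation.
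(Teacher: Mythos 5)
Your proposal is correct and takes essentially the same approach as the paper: both directions reduce to the correspondence between $2$-summands of $\rho$ and partitions of $X$ into $G$-invariant measurable subsets, with measurability of a single orbit as the one nontrivial input. You merely make explicit two steps the paper leaves implicit—invoking Thm.~\ref{equirep} to transport the splitting $X'\amalg X''$ onto $X$, and citing Lemma~\ref{measurable_orbits} for the orbit used in the contrapositive—so the argument is the same, just with the bookkeeping spelled out.
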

%
%
\begin{proof}
Observe first that, since the null representation is not
indecomposable, the theorem is obvious for the case $X =
\emptyset$. We may thus assume $\rho$ is not the null representation.

Assume first $\rho$ indecomposable, and let $U$ and $V$ be two
disjoint $G$-invariant subsets such that $X = U \amalg V$. $\rho$
naturally induces representations $\rho_U$ in $H^U$ and $\rho_V$ in
$H^V$, and furthermore $\rho = \rho_U \boxplus \rho_V$. Since by
hypothesis $\rho$ is indecomposable, at least one of these
representations is the null representation. Consequently $U =
\emptyset$ or $V = \emptyset$. This shows that the $G$-action is
transitive.

Conversely, assume $G$ acts transitively on $X$, and suppose $\rho
\sim \rho_1 \boxplus \rho_2$ for some representations $\rho_i$ in
$H^{X_i}$. There is then a splitting $X = X'_1 \amalg X'_2$, where
$X'_i$ is measurably identified with $X_i$ and $G$-invariant. Since by
hypothesis $G$ acts transitively on $X$, we deduce that $X'_i =
\emptyset = X_i$ for at least one $i$. Thus, $\rho_i$ is the null
representation for at least one $i$; hence $\rho$ is indecomposable.
\end{proof}

Let $o$ be any  $G$-orbit in $H^\ast$; pick a point $x_o^\ast$, and let $S_o^\ast$ denote its stabilizer group.
The orbit can be identified with the homogeneous space $G /S_o^\ast$. Let also $S \subset S^\ast_o$ be any closed subgroup of $S$. Then $X:=G/S$ is a measurable $G$-space (see Lemma \ref{lem:quotient_space} in the Appendix). The canonical projection onto $G /S_o^\ast$ defines a $G$-equivariant map $\chi \maps X \to H^\ast$. This map is measurable: to see this, write $\chi = \pi s$, where $s$ is a {\bf measurable section} of $G/S$ as in Lemma \ref{meas.sections}, and $\pi \maps G \to G/S^\ast_o$ is the measurable projection. Hence, the pair $(o, S)$ defines a measurable representation; this representations is clearly indecomposable.

Next, consider the representations given by two pairs $(o, S)$ and $(o', S')$. When are they equivalent? Equivalence means that there is an isomorphism $f \maps G/S \to G/S'$ of measurable $G$-equivariant bundles over $H^\ast$. Such isomorphism exists if and only if the orbits are the same $o=o'$ and the subgroups $S, S'$ are conjugate in $S_o^\ast$. Hence, there is class of inequivalent indecomposable representations labelled by an orbit $o$ in $H^\ast$ and a conjugacy class of subgroups $S \subset S^\ast_o$.

Now, let $\rho$ be any indecomposable representation on
$H^X$. Thm.~\ref{theo_indecomposable} says $X$ is a transitive
measurable $G$-space. Transitivity forces the $G$-equivariant map
$\chi \maps X \to H^\ast$ to map onto a single orbit $o \simeq
G/S^\ast_o$ in $H^\ast$. Moreover, it implies that $X$ is isomorphic
as a $G$-equivariant bundle to $G/S$ for some closed subgroup $S
\subset S^\ast_o$. Hence, $\rho$ is equivalent to the representation
defined by the orbit $o$ and the subgroup $S$.

These remarks yield the following:
%
\begin{cor}
Indecomposable representations are classified, up to equivalence, by a
choice of $G$-orbit $o$ in the character group $H^\ast$, along with a
conjugacy class of closed subgroups $S \subset S^\ast_o$ of the
stabilizer of one of its points.
\end{cor}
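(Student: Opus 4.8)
The plan is to exhibit an explicit bijection between equivalence classes of indecomposable measurable representations and pairs consisting of a $G$-orbit $o$ in $H^\ast$ together with a conjugacy class of closed subgroups $S\subset S^\ast_o$. The crucial simplification is that Thm.~\ref{theo_indecomposable} identifies indecomposability with transitivity of the $G$-action on $X$, so throughout I may restrict to transitive measurable $G$-spaces $X$ and use Thm.~\ref{equirep} to translate equivalence of representations into isomorphism of the associated $G$-equivariant bundles $\chi\maps X\to H^\ast$. The argument then splits into the two directions of the correspondence, which are already sketched in the paragraphs preceding the statement; the proof merely assembles them into a clean classification.

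For the direction that every pair yields an indecomposable representation, I would start from a chosen basepoint $x^\ast_o\in o$ with stabilizer $S^\ast_o$, so that $o\cong G/S^\ast_o$, together with a closed subgroup $S\subset S^\ast_o$. Setting $X=G/S$, Lemma~\ref{lem:quotient_space} guarantees $X$ is a measurable $G$-space, and the canonical projection $G/S\to G/S^\ast_o\cong o\subset H^\ast$ supplies a $G$-equivariant map $\chi$. Measurability of $\chi$ follows by writing $\chi=\pi s$, where $s$ is a measurable section furnished by Lemma~\ref{meas.sections} and $\pi\maps G\to G/S^\ast_o$ is the measurable projection. By Thm.~\ref{thm:skelrepclassify} the pair $(X,\chi)$ defines a measurable representation, and since the $G$-action on $G/S$ is transitive, Thm.~\ref{theo_indecomposable} makes it indecomposable.

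For the converse, given any indecomposable $\rho$ on $H^X$, Thm.~\ref{theo_indecomposable} tells me $X$ is a nonempty transitive $G$-space, so fixing a basepoint with stabilizer $S$ gives $X\cong G/S$ as a $G$-space. Equivariance of $\chi$ together with transitivity forces the image of $\chi$ to be a single orbit $o$, and comparing stabilizers shows $S$ is carried into the stabilizer $S^\ast_o$ of the image of the basepoint, i.e.\ $S\subseteq S^\ast_o$. Thus $\rho$ is equivalent, by Thm.~\ref{equirep}, to the representation built from the pair $(o,S)$.

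The main obstacle — and the step demanding the most care — is pinning down precisely when two pairs give equivalent representations. By Thm.~\ref{equirep} this amounts to the existence of an isomorphism $f\maps G/S\to G/S'$ of $G$-equivariant bundles over $H^\ast$. Such an $f$ must both commute with the $G$-actions and cover the identity on $H^\ast$; equivariance forces $f$ to be implemented by some $g\in G$ with $g^{-1}Sg=S'$, while the requirement that $f$ respect the projections to $H^\ast$ confines this $g$ to the stabilizer $S^\ast_o$. Hence equivalence holds exactly when $o=o'$ and $S,S'$ are conjugate within $S^\ast_o$, which is the content of the claimed classification. Care is needed here to ensure the conjugating element lies in $S^\ast_o$ rather than merely in $G$, and to confirm that all maps involved remain measurable; both points follow from the homogeneous-space structure and the measurable-section lemmas already invoked.
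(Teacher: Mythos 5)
Your proposal is correct and follows essentially the same route as the paper: reduce to transitive $G$-spaces via Thm.~\ref{theo_indecomposable}, construct the representation attached to a pair $(o,S)$ from $X = G/S$ with the canonical projection (using Lemmas~\ref{lem:quotient_space} and~\ref{meas.sections}), and translate equivalence of representations into isomorphism of $G$-equivariant bundles via Thm.~\ref{equirep}. Your explicit check that an equivariant, fiber-preserving isomorphism $G/S \to G/S'$ must be implemented by a single element of $S^\ast_o$ conjugating $S$ to $S'$ merely spells out a step the paper asserts without proof, so it is an elaboration of the same argument rather than a different one.
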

%

Irretractable representations are characterized by the following theorem:
%
\begin{theo}[Irretractable representations] \label{theo_irretractable}
Let $\rho$ be a measurable representation, given by a measurable
$G$-equivariant map $\chi \maps X \to H^\ast$, as in
Thm.~\ref{thm:skelrepclassify}.  Then $\rho$ is irretractable if and
only if $\chi$ induces a $G$-space isomorphism between $X$ and a
single $G$-orbit in $H^\ast$.
\end{theo}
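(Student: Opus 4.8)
The plan is to prove both implications by reducing to the structure of indecomposable representations and then analyzing the composite of a retracting pair of intertwiners at the level of their families of measures. First I would reduce the statement. Since every $2$-summand is a retract, an irretractable $\rho$ is in particular indecomposable, so by Thm~\ref{theo_indecomposable} the action of $G$ on $X$ is transitive; fixing a basepoint $x^o$ with stabilizer $S$, writing $\chi^o=\chi(x^o)$ and letting $S^\ast\subseteq G$ be the stabilizer of $\chi^o$, equivariance of $\chi$ gives $S\subseteq S^\ast$, and $\chi$ becomes the canonical projection $X\cong G/S\to o\cong G/S^\ast$ onto a single orbit $o\subseteq H^\ast$. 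In these terms the condition ``$\chi$ induces a $G$-space isomorphism onto a single orbit'' is exactly injectivity of $G/S\to G/S^\ast$, i.e.\ $S=S^\ast$. Thus the whole theorem reduces to: $\rho$ is irretractable if and only if $S=S^\ast$.

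For \emph{necessity} I would argue by contraposition: assuming $S\subsetneq S^\ast$, I construct a proper retract. Let $\rho^\ast$ be the measurable representation on $H^{X^\ast}$ with $X^\ast:=o$ and $\chi^\ast\maps X^\ast\hookrightarrow H^\ast$ the inclusion, and let $q\maps X\to X^\ast$ be the projection (the orbit map $\chi$). Using Thm~\ref{thm:1intclassify} I define $\phi\maps\rho^\ast\to\rho$ by the Dirac family $\mu_x=\delta_{q(x)}$, and $\psi\maps\rho\to\rho^\ast$ by the family $\nu_{x^\ast}$ obtained as the equivariant disintegration along $q$ of a quasi-invariant measure on $X$ (existence of both the measure and its disintegration comes from Appendix~\ref{apx:G-spaces}); both carry the constant field $\C$ and the trivial cocycle. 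Since composition of intertwiners agrees with composition in $\me$, the composite family is $(\nu\mu)_{x^\ast}=\int_X \extd\nu_{x^\ast}(x)\,\delta_{q(x)}=\nu_{x^\ast}(X)\,\delta_{x^\ast}\sim\delta_{x^\ast}$ by (\ref{meas1compo}), and since all fibers are $\C$ with trivial cocycles, $\psi\phi$ and $\unit_{\rho^\ast}$ are transitive intertwiners of the same measure class with equivalent (trivial) stabilizer representations, so Prop~\ref{equ-trans1int} gives $\psi\phi\simeq\unit_{\rho^\ast}$. Hence $\rho^\ast$ is a retract; it is not null, and it is not equivalent to $\rho$, because a fiber-preserving $G$-equivariant Borel bijection as in Thm~\ref{equirep} would split $q$ and force the fibers $S^\ast/S$ to be trivial, i.e.\ $S=S^\ast$. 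This third retract contradicts irretractability.

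For \emph{sufficiency}, assume $\chi$ is injective (so $X\cong o$ is a single orbit) and let $\rho'$ on $H^{X'}$ be any retract, with $\phi\maps\rho'\to\rho$, $\psi\maps\rho\to\rho'$ and $\psi\phi\simeq\unit_{\rho'}$. Writing the families $\mu_x$ (for $\phi$) and $\nu_{x'}$ (for $\psi$) as in Thm~\ref{thm:1intclassify}, injectivity of $\chi$ makes each $\nu_{x'}$ supported on the at-most-one-point set $\chi^{-1}(\chi'(x'))$. Prop~\ref{equ1int} forces $(\nu\mu)_{x'}\sim\delta_{x'}$ for \emph{every} $x'$; combining this with (\ref{meas1compo}) and the fiberwise support conditions (which hold for all index points, so no measure on $X'$ is needed), I deduce that $\chi'(x')\in o$ for all $x'$ and that $\mu_{\xi(x')}\sim\delta_{x'}$, where $\xi:=\chi^{-1}\!\circ\chi'\maps X'\to X$ is a well-defined $G$-equivariant measurable map. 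A single measure cannot be concentrated at two distinct points, so $\mu_{\xi(x')}\sim\delta_{x'}$ forces $\xi$ to be injective; as $X$ is a single orbit, its nonempty $G$-invariant image $\xi(X')$ is all of $X$, making $\xi$ a fiber-preserving $G$-equivariant Borel bijection $X'\to X$. Thm~\ref{equirep} then gives $\rho'\simeq\rho$ (and $\rho'$ is null when $X'=\emptyset$), so the only retracts of $\rho$ are itself and the null representation, i.e.\ $\rho$ is irretractable.

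The main obstacle I anticipate is in the necessity direction: producing a genuinely $G$-equivariant family of measures on the fibers of $q$ (an equivariant disintegration of a quasi-invariant measure) and verifying that the resulting composite is equivalent to the identity intertwiner \emph{as a transitive intertwiner}, rather than merely sharing its measure class. The sufficiency direction is largely bookkeeping once the composite-measure formula and the fiberwise support conditions are combined, the one delicate point being to use that those support conditions hold at all index points, which is precisely what lets the argument proceed without equipping $X'$ with a measure.
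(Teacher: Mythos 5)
Your proposal is correct and is essentially the paper's own proof: the same retract $\rho^\ast$ onto a single orbit (the Dirac family $\delta_{\chi(x)}$ in one direction, quasi-invariant fiber measures with constant field $\C$ and trivial cocycle in the other, with $\psi\phi \simeq \unit_{\rho^\ast}$ certified by the composite measure $\nu_{x^\ast}(\chi^{-1}(x^\ast))\,\delta_{x^\ast} \sim \delta_{x^\ast}$), the same use of Thm.~\ref{equirep} to convert an equivalence of representations into a fiber-preserving $G$-equivariant bijection, and an identical converse argument via $\xi = \chi^{-1}\chi'$; your only repackaging is to run necessity contrapositively through the classification of Thm.~\ref{theo_indecomposable} rather than, as the paper does, constructing the retract directly and concluding $\chi = f^{-1}$. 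The one obstacle you flag---producing the family $\nu_{x^\ast}$ by an \emph{equivariant disintegration} along $q$---is exactly what the paper sidesteps, and your transitivity reduction lets you do the same: the fiber over a basepoint $x_o^\ast$ is a homogeneous $S^\ast$-space (an orbit $\cong S^\ast/S$), so one picks a quasi-invariant measure $\nu_{x_o^\ast}$ on that fiber (Appendix~\ref{apx:G-spaces}) and transports it by a measurable section $n \maps G/S^\ast \to G$ via $\nu_{x^\ast} := \nu^{n(k)}_{x_o^\ast}$, making equivariance automatic and dispensing with the disintegration theorem entirely.
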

%
%
\begin{proof}
First observe that, since a $G$-orbit in $H^\ast$ is always nonempty, and the null representation is not irretractable, the theorem is obvious for the case $X = \emptyset$.  We may thus assume $\rho$ is not the null representation.

Now suppose $\rho$ is irretractable, and consider a single $G$-orbit
$X^\ast$ contained in the image $\chi(X) \subset H^\ast$. $X^\ast$ is
a measurable subset (see Lemma.~\ref{measurable_orbits} in the
Appendix), so it naturally becomes a measurable $G$-space, with
$G$-action induced by the action on $H^\ast$. The canonical injection
$X^\ast \to H^\ast$ makes $X^\ast$ a measurable equivariant bundle
over the character group. These data give a non-null representation
$\rho^\ast$ of the 2-group on the measurable category $H^{X^\ast}$.

We want to show that $\rho^\ast$ is a retract of $\rho$. To do so, we
first construct an $X$-indexed family of measures $\mu_x$ on $X^\ast$
as follows: if $\chi(x) \in X^\ast$, we choose $\mu_x$ to be the Dirac
measure $\delta_{\chi(x)}$ which charges the point $\chi(x)$;
otherwise we choose $\mu_x$ to be the trivial measure. This family is
fiberwise by construction; the covariance of the field of characters
ensures that it is also equivariant:
\[
    \delta^g_{\chi(x)} = \delta_{\chi(x)g} = \delta_{\chi(xg)}.
\]
To check that the family is measurable, pick a measurable subset
$A^\ast \subset X^\ast$. The function $x \mapsto \mu_x(A^\ast)$
coincides with the characteristic function of the set $A
=\chi^{-1}(A^\ast)$, whose value at $x$ is $1$ if $x \in A$ and $0$
otherwise; this function is measurable if the set $A$ is.  Now, since
we are working with measurable representations, the map $\chi$ is
measurable: therefore $A$ is measurable, as the pre-image of the
measurable $A^\ast$. Thus, the family of measures $\mu_x$ is
measurable. So, together with the $\mu$-classes of one-dimensional
fields of Hilbert spaces and identity linear maps, it defines an
intertwiner $\phi \maps \rho^\ast \to \rho$.

Next, we want to construct a $X^\ast$-indexed equivariant and
fiberwise family of measures $\nu_{x^\ast}$ on $X$.  To do so, pick an
element $x_o^\ast \in X^\ast$, denote by $S_o^\ast \subset G$ its
stabilizer group.  We require some results from topology and measure
theory (see Appendix \ref{apx:G-spaces}).  First, $S^\ast_o$ is a
closed subgroup, and the orbit $X^\ast$ can be measurably identified
with the homogenous space $G/S^\ast_o$; second, there exists a
measurable section for $G/S^\ast_o$, namely a measurable map $n \maps
G / S^\ast_o \to G$ such that $\pi n =\mbox{Id}$, where $\pi \maps G
\to G/S^\ast_o$ is the canonical projection, and $n\pi(e)=e$. Also,
the action of $G$ on $X$ induces a measurable $S^\ast_o$-action on the
fiber over $x_o^\ast$; any orbit of this fiber can thus be measurably
identified with a homogeneous space $S^\ast_o/S$, on which nonzero
quasi-invariant measures are known to exist.

So let $\nu_{x^{\ast}_o}$ be (the extension to $X$ of) a
$S^\ast_o$-quasi-invariant measure on the fiber over $x^\ast_o$.
Using a measurable section $n \maps G / S^\ast_o \to G$, each $x^\ast
\in X^\ast$ can then be written unambiguously as $x^\ast_o n(k)$ for
some coset $k \in G/S^\ast_o$. Define
\[
\nu_{x^\ast} := \nu^{n(k)}_{x_o^\ast}
\]
where by definition $\nu^g(A) = \nu(Ag^{-1})$.  We obtain by this
procedure a measurable fiberwise and equivariant family of measures on
$X$.  Together with the ($\nu$-classes of) constant one-dimensional
field(s) of Hilbert spaces $\C$ and constant field of identity linear
maps, this defines an intertwiner $\psi \maps \rho \to \rho^\ast$.

We can immediately check that the composition $\psi\phi$ of these two
intertwiners defined above is equivalent to the identity intertwiner
$\unit_{\rho^{\ast}}$, since the composite measure at $x^\ast$,
\[
\int_X \extd \nu_{x^\ast}(x) \mu_{x} = \nu_{x^\ast}(\chi^{-1}(x^\ast))
\, \delta_{x^\ast}
\]
is equivalent to the delta function $\delta_{x^\ast}$. This shows that
$\rho^\ast$ is a retract of $\rho$.

Now, by hypothesis $\rho$ is irretractable; since the retract
$\rho^\ast$ is not null, it must therefore be equivalent to $\rho$. We
know by Thm.~\ref{equirep} that this equivalence gives a measurable
isomorphism $f: X^\ast \to X$, as $G$-equivariant bundles over
$H^\ast$.  In our case, $f$ being a bundle map means
\[
\chi(f(x^\ast)) = x^\ast.
\]
Together with the invertibility of $f$, this relation shows that the
image of the map $\chi$ is $X^\ast$, and furthermore that $\chi =
f^{-1}$. We have thus proved that $\chi\maps X \to X^\ast$ is an
invertible map of $X$ onto the orbit $X^\ast\subseteq H^\ast$.

Conversely, suppose $\chi$ is invertible and maps $X$ to a single
orbit $X^\ast$ in $H^\ast$ and consider a non-null retract $\rho'$ of
$\rho$.  We denote by $X'$ the underlying space and by $\chi'$ the
field of characters associated to $\rho'$. Pick two intertwiners $\phi
\maps \rho' \to \rho $ and $\psi \maps \rho \to \rho'$ such that $\psi
\phi \simeq \unit_{\rho'}$. These two intertwiners provide an
$X$-indexed family of measures $\mu_{x}$ on $X'$ and a $X'$-indexed
family of measures $\nu_{x'}$ on $X$ which satisfy the property that,
for each $x'$, the composite measure at $x'$ is equivalent to a Dirac
measure: 
\beq \label{pty} \int_X \extd \nu_{x'}(x) \mu_{x} \, \sim \, \delta_{x'} 
\eeq

An obvious consequence of this property is that the measures
$\nu_{x'}$ are all non-trivial.  Since $\nu_{x'}$ concentrates on the
fiber over $\chi'(x')$ in $X$, this fiber is therefore not empty. This
shows that $\chi'(X')$ is included in the $G$-orbit $\chi(X) =
X^\ast$. The $G$-invariance of the subset $\im\chi'$ shows furthermore
that this inclusion is an equality, so $\chi(X) =
\chi'(X')$. Consequently the map $f = \chi^{-1} \chi'$ is a well
defined measurable function from $X'$ to $X$; it is surjective,
commutes with the action of $g$ and obviously satisfies $\chi f =
\chi'$. Now, by hypothesis, the fiber over $\chi'(x')$ in $X$, on
which $\nu_{x'}$ concentrates, consists of the singlet $\{f(x')\}$: we
deduce that $\nu_{x'} \sim \delta_{f(x')}$.  The property (\ref{pty})
thus reduces to $\mu_{f(x')} \sim \delta_{x'}$ for all $x'$, which
requires $f$ to be injective. Thus, we have found an invertible
measurable map $f\maps X' \to X$ that is $G$-equivariant and preserves
fibers of $\chi\maps X \to H^\ast$.  By Thm.~\ref{equirep}, the
representations $\rho$ and $\rho'$ are equivalent; hence $\rho$ is
irretractable.
\end{proof}

Any irretractable representation is indecomposable; up to equivalence,
it thus takes the form $(o, S)$, where $o$ is a $G$-orbit in $H^\ast$
and $S$ is a subgroup of $S^\ast_o$. However, the converse is not
true: there are in general many indecomposable representations $(o,
S)$ that are retractable. Indeed, $(o, S)$ defines an invertible map
$\chi \maps G/S \to G/S^\ast_o$ only when $S=S^\ast_o$. The existence
of retractable but indecomposable representations has been already noted
by Barrett and Mackaaay \cite{BarrettMackaay} in the context of the
representation theory of 2-groups on finite dimensional 2-vector
spaces.  We see here that this is also true for representations on more
general measurable categories.

%
%
\begin{cor}
Irretractable measurable representations are classified, up to equivalence, by $G$-orbits in the character group $H^\ast$.
\end{cor}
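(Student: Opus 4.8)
The plan is to combine Theorem~\ref{theo_irretractable} with the classification of indecomposable representations to collapse the two-parameter labelling $(o,S)$ down to a single parameter, the orbit $o$. Recall that the preceding Corollary classifies indecomposable representations up to equivalence by pairs $(o,S)$, where $o$ is a $G$-orbit in $H^\ast$ and $S$ is a conjugacy class of closed subgroups of the stabilizer $S^\ast_o$ of a chosen point $\chi^o \in o$. Since every irretractable representation is in particular indecomposable, each irretractable representation is of the form $(o,S)$, and the task is simply to determine \emph{which} such pairs give irretractable representations.

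First I would invoke Theorem~\ref{theo_irretractable}, which states that a representation given by $\chi\maps X \to H^\ast$ is irretractable precisely when $\chi$ induces a $G$-space isomorphism from $X$ onto a single orbit in $H^\ast$. For the representation labelled by $(o,S)$, the space is $X = G/S$ and the map $\chi$ is the canonical projection $G/S \to G/S^\ast_o \cong o$. This projection is a bijection if and only if $S = S^\ast_o$; whenever $S$ is a proper subgroup of $S^\ast_o$, the fibers of $\chi$ are the nontrivial cosets $S^\ast_o/S$, so $\chi$ fails to be injective and hence cannot be a $G$-space isomorphism onto $o$. Thus the condition ``irretractable'' selects exactly those pairs with $S = S^\ast_o$, i.e.\ those for which $X \cong o$ itself.

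The conclusion then follows immediately: among the indecomposable representations $(o,S)$, the irretractable ones are exactly those with $X$ measurably $G$-isomorphic to a single orbit $o$, and these are in bijection with the orbits $o$ in $H^\ast$. Two such representations, coming from orbits $o$ and $o'$, are equivalent if and only if $o = o'$, by the equivalence criterion already established (via Theorem~\ref{equirep}, an equivalence is an isomorphism of equivariant bundles over $H^\ast$, which forces the images to coincide). This gives the claimed one-to-one correspondence between equivalence classes of irretractable measurable representations and $G$-orbits in the character group $H^\ast$.

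I expect no serious obstacle here, since all the substantive work has already been done in Theorem~\ref{theo_irretractable} and the indecomposability corollary; the proof is essentially a bookkeeping step that discards the subgroup data. The only point requiring a little care is confirming that the equivalence relation on irretractable representations is detected purely by the orbit---but this is exactly what the bundle-isomorphism characterization of equivalence provides, since an isomorphism of equivariant bundles restricting to orbits must preserve the image orbit in $H^\ast$.
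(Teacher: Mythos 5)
Your proposal is correct and follows essentially the same route as the paper: irretractable implies indecomposable, hence of the form $(o,S)$ by the preceding corollary, and Theorem~\ref{theo_irretractable} forces the projection $\chi \maps G/S \to G/S^\ast_o$ to be invertible, which happens exactly when $S = S^\ast_o$. Your closing check that equivalence is detected by the orbit alone, via the bundle-isomorphism criterion of Theorem~\ref{equirep}, matches the paper's implicit use of that same fact.
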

%
%
%

%
%
%

\subsubsection{Intertwiners}
\label{irrint}

Because 2-group representation theory involves not only intertwiners between representations, but also 2-intertwiners between intertwiners, there are obvious analogs for intertwiners of the concepts
discussed in the previous section for representations.  We define sub-intertwiners, retracts and 2-summands of intertwiners in a precisely analogous way, obtaining notions of irreducibility, irretractability, and indecomposability for intertwiners, as for representations.
\begin{defn}
An intertwiner $\phi'\maps \rho_1 \to \rho_2$ is a {\bf sub-intertwiner} of $\phi\maps \rho_1 \to \rho_2$ if there exists a monic 2-intertwiner $m\maps \phi' \To \phi$.
\end{defn}
We remind the reader that a 2-intertwiner $m\maps \phi' \To \phi$ is {\bf monic} if whenever $n,n'\maps \psi \To \phi'$ are 2-intertwiners such that $m\cdot n = m \cdot n'$, we have $n=n'$.
%
%
\begin{defn}
An intertwiner $\phi'\maps \rho_1 \to \rho_2$ is a {\bf retract} of $\phi\maps \rho_1 \to \rho_2$ if there  exist 2-intertwiners $m\maps \phi' \To \phi$ and $n\maps \phi \To \phi'$ such that the vertical product $n \cdot m$ equals the identity 2-intertwiner of $\phi'$
\[
\xymatrix{
   \rho_1\ar@/^4ex/[rr]^{\phi'}="g1"\ar[rr]^(0.35){\phi}\ar@{}[rr]|{}="g2"
  \ar@/_4ex/[rr]_{\phi'}="g3"&&\rho_2
  \ar@{=>}^{m} "g1"+<0ex,-2ex>;"g2"+<0ex,1ex>
  \ar@{=>}^{n} "g2"+<0ex,-1ex>;"g3"+<0ex,2ex>
}
\quad = \quad
\xymatrix{
  \rho_1 \ar@/^2ex/[rr]^{\phi'}="g1"\ar@/_2ex/[rr]_{\phi'}="g2"&& \rho_2
  \ar@{=>}^{\unit_{\phi'}} "g1"+<0ex,-2.5ex>;"g2"+<0ex,2.5ex>
}
\]
\end{defn}
%
%
\begin{defn}
An intertwiner $\phi'\maps \rho_1 \to \rho_2$ is a {\bf summand} of $\phi\maps \rho_1 \to \rho_2$ if $\phi \cong \phi' \boxplus \phi''$ for some intertwiner $\phi''$.
\end{defn}

Any summand is a retract, and any retract is a sub-intertwiner.  Recall from Section \ref{sec:operations} that the {\bf null intertwiner} between measurable representations on $H^X$ and $H^Y$ is defined by the trivial family of measures, $\mu_y = 0$ for all $y$.  It is easy to see that the null intertwiner is a summand (hence also a retract, and a sub-intertwiner) of any intertwiner.

\begin{defn}
An intertwiner $\phi$ is {\bf irreducible} if it has exactly two sub-intertwiners, up to 2-isomorphism, namely $\phi$ itself and the null intertwiner.
\end{defn}
%
%
\begin{defn}
An intertwiner $\phi$ is {\bf irretractable} if it has exactly two retracts, up to 2-isomorphism, namely $\phi$ itself and the null intertwiner.  
\end{defn}
%
%
\begin{defn}
An intertwiner $\phi$ is {\bf indecomposable} if it has exactly two summands, up to 2-isomorphism, namely $\phi$ itself and the null intertwiner.
\end{defn}

According to these definitions, the null representation is neither irreducible, nor indecomposable, nor irretractable.
An irreducible intertwiner is automatically irretractable, and an irretractable intertwiner is automatically indecomposable.  A priori, neither of these implications is reversible.

\medskip

To dig deeper into these notions, we need some concepts from ergodic theory: ergodic measures, and their generalization to measurable families of measures.  In what follows, we denote by $\triangle$ the symmetric difference operation on sets:
\[
       U\,\triangle\, V = (U \cup V) - (U \cap V)
\]
When $U$ is a subset of a $G$-set $X$, we use the notation $Ug = \{ug\, | \, u\in U\}$.
%
\begin{defn}
\label{ergodic.measure}
 A measure $\mu$ on $X$ is {\bf ergodic} under a $G$-action if for any measurable subset $U \subset X$ such
that $\mu(U \,\triangle\, Ug) = 0$ for all $g$, we have either $\mu(U) = 0$ or $\mu(X - U) = 0$.
\end{defn}
%
In the case of quasi-invariant measures, there is a useful alternative criterion for ergodicity.  Roughly speaking, an ergodic quasi-invariant measure has as many null sets as possible without vanishing entirely.  More precisely, we have the following lemma:
\begin{lemma} \label{ergodic}
Let $\mu$ be a quasi invariant measure with respect to a $G$-action. Then $\mu$ is ergodic if and only if any quasi-invariant measure $\nu$ that is absolutely continuous with respect to $\mu$ is  either zero or equivalent to $\mu$.
\end{lemma}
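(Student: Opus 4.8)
The plan is to reduce both implications to bookkeeping about the Radon--Nikodym derivative $\rnd{\nu}{\mu}$ and the $\mu$-essential support it determines. Throughout I will lean on two facts: that $\mu$ and $\nu$ are each quasi-invariant, so $\mu^g\sim\mu$ and $\nu^g\sim\nu$ for all $g$; and that absolute continuity is preserved by the action, since $\nu\ll\mu$ together with $\mu^g(A)=\mu(A\lhd g^{-1})=0$ gives $\nu(A\lhd g^{-1})=0$, i.e.\ $\nu^g(A)=0$, so $\nu^g\ll\mu^g$.

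For the forward implication I would assume $\mu$ ergodic and take $\nu\ll\mu$ quasi-invariant. Set $f=\rnd{\nu}{\mu}$ and let $U=\{x:f(x)>0\}$ be the $\mu$-essential support of $\nu$, characterized by $\nu(A)=0\iff\mu(A\cap U)=0$. The decisive step is to show $U$ is invariant modulo $\mu$-null sets, i.e.\ $\mu(U\,\triangle\,Ug)=0$ for every $g$. I would do this by locating the essential support of $\nu^g$: from $\nu^g(A)=\nu(A\lhd g^{-1})=0\iff\mu\bigl((A\lhd g^{-1})\cap U\bigr)=0$, the set identity $(A\lhd g^{-1})\cap U=(A\cap Ug)\lhd g^{-1}$, and $\mu^g\sim\mu$, one obtains $\nu^g(A)=0\iff\mu(A\cap Ug)=0$; thus $Ug$ is the essential support of $\nu^g$. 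Since $\nu^g\sim\nu$ share the same essential support, $\mu(U\,\triangle\,Ug)=0$. Ergodicity then forces $\mu(U)=0$ or $\mu(X-U)=0$. In the first case $\nu(X)=\int_U f\,\extd\mu=0$, so $\nu=0$; in the second, $\nu(A)=0$ gives $\mu(A\cap U)=0$, whence $\mu(A)=\mu(A\cap U)=0$ using $\mu(X-U)=0$, so $\mu\ll\nu$ and $\nu\sim\mu$.

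For the converse I would assume every quasi-invariant $\nu\ll\mu$ is null or equivalent to $\mu$, and take $U$ measurable with $\mu(U\,\triangle\,Ug)=0$ for all $g$. Define $\nu=\mu|_U$ by $\nu(A)=\mu(A\cap U)$, which is visibly absolutely continuous with respect to $\mu$. The one thing to check is quasi-invariance: computing $\nu^g(A)=\mu\bigl((A\cap Ug)\lhd g^{-1}\bigr)=\mu^g(A\cap Ug)$ and replacing $Ug$ by $U$ up to $\mu$- (hence $\mu^g$-) null sets yields $\nu^g(A)=\mu^g(A\cap U)$, so $\nu^g(A)=0\iff\mu(A\cap U)=0\iff\nu(A)=0$, giving $\nu^g\sim\nu$. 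The hypothesis then applies: if $\nu=0$ then $\mu(U)=\nu(X)=0$, while if $\nu\sim\mu$ then $\nu(X-U)=\mu((X-U)\cap U)=0$ forces $\mu(X-U)=0$. Either alternative is exactly the defining condition of ergodicity.

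I expect the main obstacle to be the invariance of the essential support in the forward direction, namely the identification of $Ug$ as the essential support of $\nu^g$, since this is the one place where both quasi-invariance hypotheses and the behaviour of $\lhd g^{-1}$ on intersections must be combined correctly; once the essential support is recognized as an almost-invariant set, everything else is routine.
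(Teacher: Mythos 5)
Your proof is correct, and its overall skeleton matches the paper's: in the forward direction you attach to $\nu$ a set $U$ that is invariant up to $\mu$-null sets and apply ergodicity, and your converse (setting $\nu(A)=\mu(A\cap U)$ and deducing quasi-invariance of $\nu$ from $\mu(U\,\triangle\,Ug)=0$) is essentially identical to the paper's. The genuine difference is in how the forward direction produces $U$ and proves its almost-invariance. The paper takes the Lebesgue decomposition $\mu=\mu^\nu+\overline{\mu^\nu}$, chooses $U$ carrying $\mu^\nu$ with $\overline{\mu^\nu}(U)=0$, and uses $\mu^\nu\sim\nu$ (via Props.~\ref{fact1} and \ref{mutual-decomposition}); you instead work with $f=\rnd{\nu}{\mu}$ and $U=\{f>0\}$, and obtain $\mu(U\,\triangle\,Ug)=0$ by showing $Ug$ is the $\mu$-essential support of $\nu^g$ (using the identity $(A\lhd g^{-1})\cap U=(A\cap Ug)\lhd g^{-1}$ and $\mu^g\sim\mu$) and then invoking $\nu^g\sim\nu$ together with uniqueness of essential supports modulo $\mu$-null sets. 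The two choices of $U$ agree up to $\mu$-null sets, but your route is arguably tighter at the one delicate point: the paper's display $\mu(U\,\triangle\,Ug)=0$ is asserted after explicitly checking only that $\mu^\nu(U\,\triangle\,Ug)=0$, and controlling the singular part requires in addition that $\overline{\mu^\nu}(Ug)=0$, i.e.\ quasi-invariance of $\overline{\mu^\nu}$---which does hold, since the Lebesgue decomposition of $\mu^g$ with respect to $\nu^g\sim\nu$ is $(\mu^\nu)^g+(\overline{\mu^\nu})^g$ and equivalent measures have equivalent decompositions, but the paper leaves this implicit. Your essential-support argument sidesteps that subtlety entirely, at no cost beyond the Radon--Nikodym theorem for $\sigma$-finite measures, which the paper already assumes.
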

\begin{proof}
Assume first $\mu$ is ergodic. Let $\nu$ be a quasi-invariant measure with $\nu \ll \mu$.
Consider the Lebesgue decomposition $\mu = \mu^{\nu} + \overline{\mu^\nu}$.  As shown in Prop.~\ref{fact1}, the
two measures are mutually singular, so there is a measurable set $U$ such that $\mu^{\nu}(A) = \mu^{\nu}(A \cap U)$ for every measurable set $A$, and $\overline{\mu^\nu}(U) = 0$.
Hence, for all $g\in G$, $\mu^\nu(Ug - U) = 0$.  Now, $\nu \ll \mu$ implies $\mu^{\nu} \sim \nu$,
so we know $\mu^{\nu}$ is also quasi-invariant.  This implies
$\mu^\nu(U - Ug) = \mu((Ug^{-1} - U)g) = 0$ for all $g$.
We then have
\[
 \mu(U\,\triangle\, Ug) = \mu(Ug - U) + \mu(U-Ug) = 0
\]
for all $g\in G$. Since $\mu$ is ergodic, we conclude that either $\mu(U) = 0$,
in which case $\mu^\nu = 0$ and therefore $\nu=0$, or $\mu(X - U) = 0$, in which case $\mu\sim\mu^\nu$, and hence $\mu\sim \nu$.

Conversely, suppose every quasi-invariant measure subordinate to $\mu$ is either zero or equivalent to $\mu$.
Let $U$ be a measurable set such that $\mu(U\,\triangle\, Ug) = 0$ for all $g\in G$.
Define a measure $\nu$ by setting $\nu(A) = \mu(A \cap U)$ for each measurable set $A$.  Obviously $\nu \ll \mu$.   Since $U\,\triangle\,Ug$ is $\mu$-null,
\[
    \nu(A) = \mu(A\cap U) = \mu(A\cap Ug)
\]
for all $g$ and every measurable set $A$.  In particular, applying this to $Ag$,
\[
   \nu(Ag) = \mu(Ag\cap U) = \mu((A \cap U)g),
\]
so quasi-invariance of $\nu$ follows from that of $\mu$.
Thus, $\nu$ is a quasi-invariant measure such that $\nu \ll \mu$; this, by hypothesis, yields either $\nu=0$,
hence $\mu(U) = 0$, or $\nu \sim \mu$, hence $\mu(X - U) = 0$. We conclude that $\mu$ is ergodic.
\end{proof}

The notion of ergodic measure has an important generalization to the case of measurable families of measures:
\begin{defn}
\label{minimal.family} Let $X$ and $Y$ be measurable $G$-spaces.  A $Y$\!-indexed equivariant family of measures $\mu_y$ on $X$ is {\bf minimal}
if:
\begin{romanlist}
\item \label{minimal.family.1} there exists a $G$-orbit $Y_o$ in $Y$
 such that $\mu_y = 0$ for all $y\in Y- Y_o$, and
\item \label{minimal.family.2} for all $y$, $\mu_y$ is ergodic under the action of the
stabilizer $S_y \subset G$ of $y$.
\end{romanlist}
\end{defn}

Notice that an ergodic measure is simply a minimal family whose index space is the one-point $G$-space.  The criterion given in the previous lemma extends to the case of minimal equivariant families of measures:
\begin{lemma} \label{minimal family}
Let $\mu_y$ be an equivariant family of measures. The family is minimal if and only if, for  any equivariant family $\nu_y$
such that $\nu_y \ll \mu_y$ for all $y$, $\nu_y$ is either trivial or satisfies $\nu_y \sim \mu_y$ for all $y$.
\end{lemma}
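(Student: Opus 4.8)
The plan is to reduce the statement, in both directions, to the single-measure ergodicity criterion of Lemma \ref{ergodic}, and then to transport that criterion across a $G$-orbit using equivariance. Throughout I write $Y_o$ for the orbit on which the family lives (in the forward direction this orbit is given by Definition \ref{minimal.family}, while in the converse it must be produced), fix a base point $y_o\in Y_o$ with stabilizer $S_{y_o}$, and use the elementary observation that equivariance of \emph{any} family $\lambda_y$ forces $\lambda_{y_o}\sim\lambda_{y_o}^s$ for all $s\in S_{y_o}$, so that $\lambda_{y_o}$ is automatically $S_{y_o}$-quasi-invariant. I also use repeatedly that the relation $\sim$ is preserved by the transformation $\lambda\mapsto\lambda^g$.

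For the forward direction, suppose $\mu_y$ is minimal and let $\nu_y$ be equivariant with $\nu_y\ll\mu_y$. Since $\mu_y=0$ off $Y_o$, also $\nu_y=0$ there. At $y_o$ both $\mu_{y_o}$ and $\nu_{y_o}$ are $S_{y_o}$-quasi-invariant, $\nu_{y_o}\ll\mu_{y_o}$, and $\mu_{y_o}$ is ergodic under $S_{y_o}$ by Definition \ref{minimal.family}; hence Lemma \ref{ergodic} gives the dichotomy $\nu_{y_o}=0$ or $\nu_{y_o}\sim\mu_{y_o}$. I would then globalize: writing an arbitrary $y\in Y_o$ as $y=y_o\lhd g$ and using $\nu_y\sim\nu_{y_o}^g$ and $\mu_y\sim\mu_{y_o}^g$ from equivariance, the alternative at $y_o$ propagates to the whole orbit, yielding either $\nu\equiv 0$ or $\nu_y\sim\mu_y$ for all $y$ (for $y\notin Y_o$ both measures vanish, so the second clause holds there too).

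For the converse I would verify the two clauses of Definition \ref{minimal.family} separately by feeding suitable test families into the hypothesis; assume $\mu$ is not the trivial family, the trivial case being immediate. For clause (i), note that the support $\{y:\mu_y(X)>0\}$ is measurable and $G$-invariant; let $Y_o$ be an orbit meeting it and set $\nu_y=\mu_y$ for $y\in Y_o$, $\nu_y=0$ otherwise. Since $Y_o$ is measurable (Lemma \ref{measurable_orbits}) this is a measurable equivariant family with $\nu_y\ll\mu_y$ and is nontrivial, so by hypothesis $\nu_y\sim\mu_y$ for all $y$, which forces $\mu_y=0$ off $Y_o$. For clause (ii) it suffices, by conjugacy of stabilizers along the orbit, to establish ergodicity of $\mu_{y_o}$ under $S_{y_o}$, which by Lemma \ref{ergodic} amounts to showing that every $S_{y_o}$-quasi-invariant $\lambda\ll\mu_{y_o}$ is either $0$ or $\sim\mu_{y_o}$. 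Given such a $\lambda$, I would spread it over the orbit using a measurable section $\sigma\maps Y_o\to G$ (Lemma \ref{meas.sections}), defining $\nu_y=\lambda^{\sigma(y)}$ on $Y_o$ and $0$ elsewhere, and then apply the hypothesis to $\nu$ to conclude $\lambda=\nu_{y_o}=0$ or $\lambda\sim\mu_{y_o}$.

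The main obstacle is precisely this equivariant-extension step in the converse: one must check that $\nu_y=\lambda^{\sigma(y)}$ is a genuine measurable family of measures (from measurability of $\sigma$ and of $\lambda$), that the equivariance relation $\nu_{y\lhd g}\sim\nu_y^g$ holds, and that $\nu_y\ll\mu_y$ along the entire orbit rather than only at $y_o$. The equivariance check is where the quasi-invariance of $\lambda$ is essential: the ambiguity $\sigma(y\lhd g)=s\,\sigma(y)g$ with $s\in S_{y_o}$ is absorbed by $\lambda^s\sim\lambda$, and the dominance $\nu_y\ll\mu_y$ follows from $\lambda\ll\mu_{y_o}$ together with $\mu_{y_o}^{\sigma(y)}\sim\mu_y$. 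The remaining verifications—measurability of the support and of the restricted family in (i), and the transfer of ergodicity between conjugate stabilizers—are routine.
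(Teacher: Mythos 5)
Your proof is correct, and most of it parallels the paper's: the forward direction fills in, via base point plus equivariant propagation along the orbit, exactly what the paper dismisses as ``a direct application of Lemma \ref{ergodic}'' (the propagation is genuinely needed, to rule out the mixed case where $\nu_y$ vanishes at some points of the orbit but not others, and you handle it correctly), and your clause (i) test family is identical to the paper's. Where you genuinely diverge is clause (ii) of the converse. The paper never invokes Lemma \ref{ergodic} there: it verifies ergodicity of $\mu_{y_o}$ directly from Definition \ref{ergodic.measure}, taking an $S_o$-almost-invariant set $U$ and building the subordinate family $\nu_y(A) = \mu_y(A \cap Ug)$ for $y = y_o g$, well-defined because $\mu_{y_o g}(Ug \,\triangle\, Usg) = 0$; the hypothesis then forces $\mu_{y_o}(U)=0$ or $\mu_{y_o}(X-U)=0$. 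You instead transport a general $S_{y_o}$-quasi-invariant $\lambda \ll \mu_{y_o}$ over the orbit as $\nu_y = \lambda^{\sigma(y)}$ using a measurable section (Lemma \ref{meas.sections}), and apply Lemma \ref{ergodic} a second time. The two mechanisms are structurally parallel---the paper absorbs the coset ambiguity $g'=sg$ by almost-invariance of $U$, you absorb $\sigma(y\lhd g)=s\,\sigma(y)g$ by $\lambda^s\sim\lambda$---but your route buys uniformity (Lemma \ref{ergodic} carries both directions) and, to its credit, makes explicit a measurability issue the paper glosses over: the paper's family $\mu_y(A\cap Ug)$ also depends on a choice $g=g(y)$, whose measurable selection requires a section just as yours does. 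One small point to add to your ``routine'' verifications: the paper's definition of a measurable family of measures demands uniform finiteness, so replace $\lambda$ by an equivalent finite measure before extending---harmless, since quasi-invariance, ergodicity, and the dichotomy of Lemma \ref{ergodic} depend only on the measure class, and then $\nu_y(X)=\lambda(X)$ is constant, while measurability of $y \mapsto \nu_y(A)$ follows from measurability of $\sigma$ and the action together with Fubini.
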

\begin{proof}
The `only if' part of the statement is a direct application of Lemma \ref{ergodic}; let us prove the `if' part.

Suppose every equivariant family subordinate to $\mu_y$ is either zero or equivalent to $\mu_y$.  We first show that $\mu_y$ satisfies property $(i)$ in Def.~\ref{minimal.family}.   Assuming the family $\mu_y$ is non-trivial, let $Y_o$ be a G-orbit in $Y$ on
which $\mu_y \not=0$.  Define an equivariant family $\nu_y$ by setting $\nu_y = \mu_y$ if $y \in Y_o$ and $0$ otherwise.
This family is non-trivial and obviously satisfies $\nu_y \ll \mu_y$; this by hypothesis yields $\mu_y \sim \nu_y$.
Therefore $\mu_y = 0$ for all $y\in Y - Y_o$.

We now turn to property $(i)$ in Def.~\ref{minimal.family}.  Fix $y_o\in Y_o$, and let  $S_o \subseteq G$ be its stabilizer. To show that $\mu_{y_o}$ is ergodic under the ation of $S_o$ pick a measurable subset $U$ such that $\mu_{y_o}(U \,\triangle\, Us) = 0$ for all $s \in S_o$.  By equivariance of the family $\mu_y$, this implies
\beq
\label{equivariant.symmetric.differences}
    \mu_{y_o g} (Ug \,\triangle\, Usg)
\eeq
for all $s \in S_o$ and all $g\in G$.  Then, for every $y = y_o g$ in $Y_o$, define a measure $\nu_y$ by setting $\nu_y(A) = \mu_y(A \cap Ug)$. This is well defined, since any $g'$ such that $y = y_o g'$ is given by $g'=sg$ for some $s\in S_o$, and by (\ref{equivariant.symmetric.differences}) we have $\mu_y(A \cap Ug) = \mu_y(A \cap U sg)$.

The family $\nu_y$ is equivariant; indeed, for any $g\in G$, and $y = y_o g'\in Y_o$:
\[
\nu_{yg}(Ag) = \mu_{yg}((A \cap Ug')g),
\]
so equivariance of $\nu_y$ follows from that of $\mu_y$. Since we also obviously have $\nu_y \ll \mu_y$ for all $y$, by hypothesis the family $\nu_y$ is either trivial, or satisfies $\nu_y \sim \mu_y$ for all $y$. In the former case, $\mu_{y_o}(U) = 0$; in the latter, $\mu_{y_o}(X - U) = 0$. Thus, $\mu_{y_o}$ is ergodic under the action of $S_o$.  Since $y_o$ was arbitrary, $(ii)$ is proved, and the family $\mu_y$ is minimal.
\end{proof}

{\em Transitive} families of measures, for which there exists a $G$-orbit $o$ in $Y\times X$ such that $\mu_y(A) = 0$ for every measurable $\{y\} \times A$ in the complement $Y\times X- \, o$,  are particular examples of minimal families. Indeed, the obvious projection $Y\times X \to Y$ maps the orbit $o$ into an orbit $Y_o$ such that $\mu_y=0$ unless $y\in Y_o$; furthermore for all $y \in Y_o$,   $\mu_y$ is quasi-invariant under the action of the stabilizer $S_y$ of $y$ and concentrates on a single orbit,  so it is clearly ergodic.

It is useful to investigate the converse: Is a minimal family of measures necessarily transitive?

This is not the case, in general. To understand this, we need to dwell further on the notion of quasi-invariant ergodic measure. First note that each orbit in $X$ naturally defines a measure class of  such measures: we indeed know that an orbit defines a measure class of quasi-invariant measures; now the uniqueness of such a class yields the minimality property stated in Lemma.~\ref{ergodic}, hence the ergodicity of the measures.

However, not every quasi-invariant and ergodic measure need belong to one of the classes defined by the orbits.
In fact, given a measure $\mu$ on $X$, quasi-invariant and ergodic under a measurable $G$-action, there should be {\em at most} one orbit with positive measure, and its complement in $X$ should be a null set. If there is an orbit with positive measure, $\mu$ belongs to the class that the orbit defines.
But it may also very well be that {\em all} $G$-orbits are null sets. Consider for example the group $G = \Z$,  acting on the unit circle  $X = \{z \in \C \, | \,  |z| = 1\}$  in the complex plane as $e^{i\theta} \mapsto e^{i \theta + \alpha \pi}$, where $\alpha  \in \R - \Q$ is some fixed irrational number.  It can be shown that the linear measure $\extd \theta$ on $X$ is ergodic, whereas the orbits, which are all countable, are null sets.

This makes the classification of the equivalence classes of ergodic quasi-invariant measures quite difficult in general. Luckily, there is a simple criterion, stated in the following lemma, that precludes the kind of behaviour illustrated in the above example.
For $X$ a measurable $G$-space, we call a measurable subset $N \subset X$ a {\bf measurable cross-section} if it intersects each $G$-orbit in exactly one point.
\begin{lemma} \label{lemma.minmeas}
{\bf \cite[Lemma 6.14]{Varadarajan}}
Let $X$ be a measurable $G$-space. If $X$ has a measurable cross-section, then any ergodic measure on $X$ is supported on a single $G$-orbit.
\end{lemma}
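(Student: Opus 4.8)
The plan is to push $\mu$ forward along the map that collapses each orbit to its unique representative in the cross-section $N$, thereby reducing the claim to the elementary fact that a finite two-valued measure on a standard Borel space is a point mass. As a preliminary reduction, I would first replace $\mu$ by an equivalent \emph{finite} measure. Since $\mu$ is $\sigma$-finite we may write $X=\bigcup_i X_i$ with $\mu(X_i)<\infty$ and form a probability measure $\mu'$ in the same measure class (e.g.\ a weighted sum of the normalized restrictions $\mu(\,\cdot\,\cap X_i)$). Both the ergodicity condition of Def.~\ref{ergodic.measure} and the property ``supported on a single orbit'' refer only to null sets, so they are invariant under passing to an equivalent measure; hence we may assume $\mu$ is finite.

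Next I would set up the orbit projection. Let $p\maps X\to N$ send $x$ to the unique point of $N$ lying on the orbit $Gx$; its fibers are precisely the orbits, and for measurable $A\subseteq N$ the saturation $p^{-1}(A)=\{ng : n\in A,\ g\in G\}$ is $G$-invariant. \textbf{The measurability of $p$ is the technical heart of the argument.} Here I would use that $G$ is a standard Borel group acting measurably, so that $X$, $N$, and $G\times N$ are all standard Borel by Lemma~\ref{lem:standard_Borel}. The set $R=\{(x,n)\in X\times N : n\in Gx\}$ is the image of the measurable map $G\times N\to X\times N$, $(g,n)\mapsto(ng,n)$, hence analytic; and because $N$ meets each orbit exactly once, $R$ is the graph of $p$. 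For Borel $A\subseteq N$, the set $p^{-1}(A)$ is the projection to $X$ of the analytic set $R\cap(X\times A)$, hence analytic, while its complement $p^{-1}(N-A)$ is analytic by the same reasoning; by Suslin's theorem a set that is simultaneously analytic and coanalytic is Borel, so $p^{-1}(A)$ is measurable and $p$ is measurable. (This is exactly where the standard-Borel hypotheses built into our notions of measurable space and measurable group are used; alternatively one may simply invoke Varadarajan's treatment.)

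With $p$ measurable, I would let $\nu=p_\ast\mu$, a finite measure on $N$. For each measurable $A\subseteq N$ the set $p^{-1}(A)$ is $G$-invariant, so $\mu\bigl(p^{-1}(A)\,\triangle\,p^{-1}(A)g\bigr)=0$ for every $g$, whence ergodicity of $\mu$ forces $\nu(A)=\mu(p^{-1}(A))$ to equal either $0$ or $\mu(X)$. Thus $\nu$ is two-valued. To finish, take a countable family $\{B_n\}$ of Borel subsets of the standard Borel space $N$ that generates the $\sigma$-algebra and separates points; for each $n$ exactly one of $B_n$, $N-B_n$ is $\nu$-null, and letting $C_n$ be the conull one, the intersection $\bigcap_n C_n$ is conull (its complement is a countable union of null sets) and, by separation, consists of a single point $n_0$. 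Hence $\nu$ is concentrated at $n_0$, which means $\mu$ is concentrated on $p^{-1}(\{n_0\})=Gn_0$, a single $G$-orbit; this orbit is measurable by Lemma~\ref{measurable_orbits}. This is the desired conclusion. The only genuinely nontrivial step is the measurability of $p$; all remaining steps are routine once that is in hand.
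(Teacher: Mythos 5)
The paper offers no proof of this lemma at all: it is quoted directly from Varadarajan \cite[Lemma 6.14]{Varadarajan}, so there is no in-paper argument to compare yours against, and your proposal has to be judged on its own terms. On those terms it is correct, and it is essentially the standard argument (close in spirit to Varadarajan's own). The reduction to a finite measure is legitimate, since both the ergodicity condition of Def.~\ref{ergodic.measure} and the property of being supported on a single orbit depend only on the measure class. The genuine crux, measurability of the orbit projection $p \maps X \to N$, is handled properly: the graph of $p$ is analytic as the image of the standard Borel space $G \times N$ under a Borel map, both $p^{-1}(A)$ and its complement $p^{-1}(N-A)$ are then analytic, and Suslin's theorem upgrades them to Borel; this is exactly where the standing standard-Borel hypotheses of the paper earn their keep. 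Since the fibers of $p$ are precisely the orbits, each $p^{-1}(A)$ is strictly $G$-invariant, so ergodicity forces $\nu = p_\ast\mu$ to take only the values $0$ and $\mu(X)$, and your countable-separating-family argument correctly identifies such a two-valued measure on a standard Borel space as a point mass; measurability of the resulting orbit is supplied by Lemma~\ref{measurable_orbits}, as you note.

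Two minor caveats. First, you should exclude the zero measure explicitly: it is vacuously ergodic in the sense of Def.~\ref{ergodic.measure}, your normalization to a probability measure is impossible for it, and the nonemptiness of the conull intersection $\bigcap_n C_n$ rests on $\nu(N) = \mu(X) > 0$. This is harmless, since the conclusion is trivial for $\mu = 0$, but the hypothesis $\mu \neq 0$ should be stated. Second, a purely notational slip: the paper's actions are right actions, so the orbit of $x$ is $xG$ rather than $Gx$; your formula $p^{-1}(A) = \{ng : n \in A,\ g \in G\}$ already uses the right-action convention, so only the symbol $Gx$ needs adjusting.
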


Roughly speaking, the existence of a measurable cross-section ensures
that the orbit space is ``nice enough''. Thus, for example, making
such assumption is equivalent to requiring that the orbit space is
countably separated as a Borel space; or, in the case of a continuous
group action, that it is a $T_0$ space \cite{Glimm}.

\medskip

Having introduced these concepts, we now begin our study of indecomposable, irretractable and irreducible intertwiners. Consider a pair of representations $\rho_1$ and $\rho_2$ on the measurable categories $H^X$ and $H^Y$; denote by $\chi_1$ and $\chi_2$ the corresponding fields of characters.  Let $\phi \maps \rho_1 \to \rho_2$ be  an intertwiner; denote by $\mu_y$ the corresponding equivariant and fiberwise family $\mu_y$ of measures on $X$.

The following proposition gives a necessary condition for the intertwiner to be indecomposable (hence to be irretractable or irreducible):
%
\begin{prop} \label{cond.IrretInt}
If the intertwiner $\phi=(\phi,\Phi,\mu)$ is indecomposable, its family of measures $\mu_y$ is minimal.
\end{prop}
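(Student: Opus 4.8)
The plan is to prove the contrapositive: if the family $\mu_y$ is not minimal, then $\phi$ admits a nontrivial direct-sum decomposition, and hence is not indecomposable. The natural starting point is the measure-theoretic characterization of minimality in Lemma \ref{minimal family}. Assuming $\mu_y$ is not minimal, that lemma supplies an equivariant family of measures $\nu_y$ with $\nu_y \ll \mu_y$ for all $y$, which is neither trivial nor equivalent to $\mu_y$. This $\nu$ is the seed from which I will carve off a proper summand of $\phi$.

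First I would split $\mu_y$ into two mutually singular equivariant pieces. For each $y$, since $\nu_y\ll\mu_y$ there is a Radon--Nikodym derivative $f_y = \rnd{\nu_y}{\mu_y}$; let $N_y = \{x : f_y(x)>0\}$ be its support, and set $\mu'_y(A) = \mu_y(A\cap N_y)$ and $\mu''_y(A) = \mu_y(A - N_y)$. Then $\mu_y = \mu'_y + \mu''_y$ with $\mu'_y \perp \mu''_y$, and $\mu'_y \sim \nu_y$ by construction. The part $\mu'$ is nontrivial because $\nu$ is, and $\mu''$ is nontrivial because otherwise $\mu \sim \mu' \sim \nu$, contradicting $\nu \not\sim \mu$. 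The point to verify here is that $\mu'$ and $\mu''$ are again \emph{equivariant} families: this follows from the equivariance of both $\mu$ and $\nu$, since the support $N_y$ transforms correctly, up to $\mu_y$-null sets, under the $G$-action, using $\nu_{yg}\sim\nu_y^g$ and $\mu_{yg}\sim\mu_y^g$.

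Next I would transfer this splitting to the Hilbert-space field and cocycle. The set $\hat N = \{(y,x) : x\in N_y\}\subseteq Y\times X$ is measurable and, by the equivariance just established, essentially $G$-invariant under the diagonal action. Restricting the data $(\phi_{y,x},\Phi^g_{y,x})$ to $\hat N$ and to its complement yields two fields of Hilbert spaces with cocycles: because $\Phi^g_{y,x}\maps\phi_{y,x}\to\phi_{(y,x)g^{-1}}$ and $\hat N$ is $G$-invariant, the cocycle preserves the splitting, so each restriction is again cocyclic and invertible almost everywhere. This produces measurable intertwiners $\phi' = (\phi|_{\hat N},\Phi|_{\hat N},\mu')$ and $\phi'' = (\phi|_{\hat N^c},\Phi|_{\hat N^c},\mu'')$. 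Since $\mu'\perp\mu''$, the direct-sum construction of Prop.~\ref{directsumInt}, in particular formula (\ref{directsum-mor}) where the overlap region is now null, reconstructs the original data, giving $\phi \cong \phi'\oplus\phi''$. Finally, $\phi'$ is a summand that is neither null, since its measure $\mu'\sim\nu$ is nonzero, nor equivalent to $\phi$, since by Prop.~\ref{equ1int} such an equivalence would force $\mu'_y\sim\mu_y$ for all $y$, contradicting $\mu'\sim\nu\not\sim\mu$. Hence $\phi$ has a summand distinct from itself and the null intertwiner, so it is decomposable.

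The step I expect to be the main obstacle is the measure-theoretic bookkeeping of the first two paragraphs: checking that the singular splitting $\mu = \mu'+\mu''$ descends to genuinely \emph{equivariant} families, and that $\hat N$ is measurable and essentially $G$-invariant so that $\Phi$ restricts cleanly to each piece. The usual difficulty with measurable intertwiners---that the cocycle and invertibility conditions hold only $\mu$-almost everywhere and separately for each $g$ over a possibly uncountable $G$---is mitigated here, since we merely restrict an already-given measurable intertwiner to an essentially invariant measurable subset, so all of the almost-everywhere conditions are inherited from those of $\phi$ itself.
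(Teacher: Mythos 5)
Your proof is correct and is essentially the paper's argument run in the contrapositive: your splitting $\mu_y = \mu'_y + \mu''_y$ along the support $N_y$ of $\rnd{\nu_y}{\mu_y}$ is exactly the Lebesgue decomposition $\mu_y = \mu_y^{\nu_y} + \overline{\mu_y^{\nu_y}}$ that the paper uses, combined with the same key Lemma \ref{minimal family}, the same mutual-singularity observation, and the same reconstruction $\phi \cong \phi' \oplus \phi''$ via Prop.~\ref{directsumInt} (with Prop.~\ref{equ1int} ruling out $\phi' \simeq \phi$, a point the paper's direct version also implicitly relies on). One dispensable technicality: you need not restrict the fields to $\hat N$, whose measurability would require joint measurability of $(y,x)\mapsto \rnd{\nu_y}{\mu_y}(x)$; since $\mu' \ll \mu$ and $\mu'' \ll \mu$, the paper simply reuses the $\mu$-class data $(\phi_{y,x},\Phi^g_{y,x})$ viewed in their induced $\mu'$- and $\mu''$-classes, and every almost-everywhere condition is inherited automatically.
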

%
%
\begin{proof}
Assume $\phi$ is indecomposable, and  consider an equivariant family of measures $\nu_y$ such that $\nu_y \ll \mu_y$ for all $y$. The Lebesgue decompositions:
\[
\mu_y = \mu_y^{\nu_y} + \overline{\mu_y^{\nu_y}}
\]
define two new fiberwise and equivariant families measures. Together with the $\mu^\nu$-classes of fields and the $\overline{\mu^{\nu}}$-classes of fields induced by the $\mu$-classes of $\phi$, these specify two intertwiners $\psi, \overline{\psi}$.

The measures $\mu_y^{\nu_y}$ and $\overline{\mu_y^{\nu_y}}$ are mutually singular for all $y$. Using the definition of the direct sum of intertwiners, we find
\[
\phi = \psi \oplus \overline{\psi}.
\]

Now, by hypothesis $\phi$ is indecomposable, so that either $\psi$ or $\overline{\psi}$ is the null intertwiner. In the former case, the family $\mu^\nu$ is trivial. This means that $\nu_y \perp \mu_y$ for all $y$; since, furthermore,  $\nu_y \ll \mu_y$, it implies that $\nu$ is trivial. In the latter case, the family $\overline{\mu^\nu}$ is trivial. This mean that $\nu_y \sim \mu_y$ for all $y$. We conclude with Lemma~\ref{minimal family} that the family $\mu_y$ is minimal.
\end{proof}

We can be more precise by focusing on the {\em transitive}
intertwiners, as defined in Def.~\ref{Transitive-Int}.  Suppose the
intertwiner $\phi\maps \rho_1 \to \rho_2$ is transitive, and specified
by the assignments $\phi_{y,x}, \Phi^g_{y,x}$ of Hilbert spaces and
invertible maps to the points of an $G$-orbit $o$ in $Y \times
X$. These define ordinary linear representations
$\mathcal{R}^\phi_{y,x}$ of the stabilizer $S_{y,x}$ of $(y,x)$ under
the diagonal action of $G$.

The following propositions give a criterion for $\phi$ to be indecomposable, irretractable, or irreducible:
%
%
\begin{prop}[Indecomposable and irretractable transitive interwiners]
\label{Indec-Irret-TransInt}
Let $\phi=(\phi, \Phi, \mu)$ be a transitive intertwiner.  Then the following are equivalent:
\begin{itemize}
  \item $\phi$ is indecomposable 
  \item $\phi$ is irretractable
  \item the stabilizer representations $\mathcal{R}^\phi_{y,x}$ are indecomposable.
\end{itemize}
\end{prop}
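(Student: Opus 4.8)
The plan is to prove the cyclic chain stabilizer representations indecomposable $\Rightarrow$ $\phi$ irretractable $\Rightarrow$ $\phi$ indecomposable $\Rightarrow$ stabilizer representations indecomposable, the middle step being automatic since every irretractable intertwiner is indecomposable. Throughout I would fix a base point $(y_o,x_o)$ on the supporting orbit $o\subseteq Y\times X$, write $S_o$ for its stabilizer and $\mathcal{R}^\phi_o$ for the stabilizer representation of $S_o$ on $\phi_o := \phi_{y_o,x_o}$. Because the maps $\Phi^g$ conjugate the stabilizer representations into one another along $o$, the condition that $\mathcal{R}^\phi_{y,x}$ be indecomposable does not depend on the chosen point, so it suffices to argue at $(y_o,x_o)$. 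Since $\phi$ is transitive, Def.~\ref{Transitive-Int} and Lemma~\ref{strict.on.orbits} let me work with a strict representative of the cocycle, so all identities below may be read as holding everywhere on $o$ rather than merely almost everywhere.

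For the implication from stabilizer indecomposable to irretractable, let $\psi$ be a non-null retract of $\phi$, with 2-intertwiners $m\maps \psi \To \phi$ and $n\maps \phi \To \psi$ satisfying $n\cdot m = \unit_\psi$. By Thm.~\ref{thm:2intclassify} these are given by fields $m_{y,x}\maps \psi_{y,x}\to \phi_{y,x}$ and $n_{y,x}\maps \phi_{y,x}\to \psi_{y,x}$ intertwining the cocycles. First I would observe that $\psi$ is supported on $o$: off the orbit $\phi_{y,x}=0$ forces $m_{y,x}=0$, whence $\unit_{\psi_{y,x}}=n_{y,x}m_{y,x}=0$ and so $\psi_{y,x}=0$ there. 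Thus $\psi$ is transitive on $o$ and, by uniqueness of the quasi-invariant measure class on a transitive $G$-space, its family of measures lies in the same class as that of $\phi$. Evaluating the cocycle-intertwining relations at elements of $S_o$ shows $m_o := m_{y_o,x_o}$ and $n_o := n_{y_o,x_o}$ are $S_o$-intertwiners between $\mathcal{R}^\psi_o$ and $\mathcal{R}^\phi_o$ with $n_o m_o = \unit_{\psi_o}$. Then $e := m_o n_o$ is an $S_o$-equivariant idempotent on $\phi_o$, so indecomposability of $\mathcal{R}^\phi_o$ gives $e=0$ or $e=\unit_{\phi_o}$. The case $e=0$ yields $m_o = e m_o = 0$ and hence $\psi_o=0$, contradicting non-nullity; so $e=\unit_{\phi_o}$, making $m_o$ an isomorphism $\mathcal{R}^\psi_o\cong\mathcal{R}^\phi_o$. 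By Prop.~\ref{equ-trans1int} this forces $\psi\simeq\phi$, so $\phi$ is irretractable.

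For the implication from indecomposable to stabilizer indecomposable I would argue contrapositively. Suppose $\mathcal{R}^\phi_o$ decomposes as $\phi_o=V_1\oplus V_2$ into nonzero $S_o$-invariant subspaces. Transporting this splitting around the orbit, using a measurable section $\sigma$ of $o\cong G/S_o$ and the isomorphisms $\alpha_{y,x}=\Phi^{\sigma(y,x)}_{y,x}\maps\phi_{y,x}\to\phi_o$ exactly as in the trivialization of transitive intertwiners, produces $G$-invariant subbundles $\phi^i_{y,x}=\alpha_{y,x}^{-1}(V_i)$. The key computation is that in the trivialized picture the cocycle takes values in $\mathcal{R}^\phi_o(S_o)$, so it preserves each $V_i$; this is the statement that an $S_o$-subrepresentation of $\phi_o$ determines a subbundle of the associated bundle. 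Each $(\phi^i,\Phi|_{\phi^i},\mu)$ is then a non-null transitive intertwiner, and Prop.~\ref{directsumInt} gives $\phi\cong\phi^1\oplus\phi^2$, exhibiting $\phi$ as decomposable.

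The remaining implication, from irretractable to indecomposable, is the general fact recorded before the theorem, which closes the cycle. The main obstacle is the bookkeeping at the boundary between the measure-theoretic and the algebraic pictures: I must ensure that the idempotent $e$ and the transported subbundles $\phi^i$, both constructed at a single point, are compatible with the cocycle everywhere on the orbit. This is precisely what the strictness of transitive intertwiners (Lemma~\ref{strict.on.orbits}, Def.~\ref{Transitive-Int}) together with the measurable-section trivialization provide, converting almost-everywhere relations into genuine identities on $o$; once that is in place, everything reduces to the elementary representation theory of the single group $S_o$.
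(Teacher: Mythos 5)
Your proposal is correct and takes essentially the same route as the paper: the identical cycle (stabilizer representations indecomposable $\Rightarrow$ irretractable $\Rightarrow$ indecomposable $\Rightarrow$ stabilizer representations indecomposable), with the retract analyzed at a base point $(y_o,x_o)$, the absolute continuity $\mu'_y \ll \mu_y$ forced by the equation $n \cdot m = \unit_{\psi}$, and the uniqueness of the quasi-invariant measure class on the supporting orbit, exactly as in the paper's proof. Your only departures are cosmetic: you package the retract step via the equivariant idempotent $m_o n_o$ (the paper says ``retract, hence direct summand,'' which is the same argument), and you prove the last implication contrapositively by transporting a nontrivial invariant splitting of $\phi_o$ along the orbit via the section-trivialization, which is the paper's construction of the summand $\phi'$ read in reverse.
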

%
%
%
\begin{prop}[Irreducible transitive interwiners]
\label{Irred-TransInt}
Let $\phi=(\phi, \Phi, \mu)$ be a transitive intertwiner.  Then $\phi$ is irreducible if and only if the stabilizer representations $\mathcal{R}^\phi_{y,x}$ are irreducible.
\end{prop}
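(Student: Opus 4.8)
The plan is to reduce everything to a single stabilizer representation and then set up a correspondence between the sub-intertwiners of $\phi$ and the invariant subspaces of $\mathcal{R}^\phi_o$, exactly parallel to the treatment of summands and retracts in Prop.~\ref{Indec-Irret-TransInt}. Since $\phi$ is transitive on a single orbit $o\subseteq Y\times X$ (Def.~\ref{Transitive-Int}), I would fix a base point $u_o=(y_o,x_o)\in o$ with stabilizer $S_o$ and fibre $\phi_o=\phi_{y_o,x_o}$. Because the stabilizer representations along $o$ are mutually equivalent (via the maps $\Phi^g$, as noted in the discussion preceding Def.~\ref{Transitive-Int}), irreducibility of the whole field $\mathcal{R}^\phi_{y,x}$ is the same as irreducibility of the single representation $\mathcal{R}^\phi_o$ of $S_o$ on $\phi_o$. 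The target is then to prove that, up to $2$-isomorphism, sub-intertwiners of $\phi$ biject with $S_o$-invariant subspaces $W\subseteq\phi_o$, with the null intertwiner corresponding to $W=0$ and $\phi$ itself to $W=\phi_o$.

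The backward half of the correspondence is the easy one. Given an $S_o$-invariant subspace $W\subseteq\phi_o$, I would transport it along the orbit by setting $\phi'_u=\Phi^g_{u_o}(W)$ for $u=u_og^{-1}$; the $S_o$-invariance of $W$ together with the cocycle identity (\ref{rule1int}) makes this independent of the choice of $g$. Restricting $\Phi$ to this sub-bundle and keeping the same family $\mu$ produces a transitive intertwiner $\phi_W$, and the fibrewise inclusions $\phi'_u\hookrightarrow\phi_u$ satisfy the intertwining rule of Thm.~\ref{thm:2intclassify}, hence give a $2$-intertwiner $m\maps\phi_W\To\phi$ that is monic because it is pointwise injective. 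The forward half is the delicate one. Starting from a nonzero sub-intertwiner $\phi'$ with monic $m\maps\phi'\To\phi$, Thm.~\ref{thm:2intclassify} presents $m$ as a field $m_{y,x}\maps\phi'_{y,x}\to\phi_{y,x}$ obeying the intertwining relation (\ref{rule2int}); unpacking monicity at the level of fields forces the source measures $\mu'_y$ to satisfy $\mu'_y\ll\mu_y$ and each $m_{y,x}$ to be injective almost everywhere. Since a transitive family is minimal, Lemma~\ref{minimal family} then forces $\mu'$ to be either trivial---so $\phi'$ is null---or equivalent to $\mu$, in which case $\phi'$ is itself transitive on $o$. Evaluating (\ref{rule2int}) at $u_o$ on elements $s\in S_o$ shows that $m_{u_o}$ is an injective intertwining operator from $\mathcal{R}^{\phi'}_o$ to $\mathcal{R}^\phi_o$, so $W:=\im(m_{u_o})$ is an invariant subspace carrying a representation equivalent to $\mathcal{R}^{\phi'}_o$.

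Finally I would invoke Prop.~\ref{equ-trans1int}: two transitive intertwiners on $o$ built from the same measure class are equivalent precisely when their stabilizer representations are equivalent, so the correspondence descends to one between sub-intertwiners up to $2$-isomorphism and invariant subspaces up to $S_o$-equivalence, with null $\leftrightarrow 0$ and $\phi\leftrightarrow\phi_o$. The implication ($\Leftarrow$) is then immediate: if $\mathcal{R}^\phi_o$ is irreducible its only invariant subspaces are $0$ and $\phi_o$, so $\phi$ admits only the null intertwiner and $\phi$ as sub-intertwiners. For ($\Rightarrow$) I argue by contraposition: a proper nonzero invariant subspace $W$ of a reducible $\mathcal{R}^\phi_o$ yields a nonnull sub-intertwiner $\phi_W$, and using Prop.~\ref{equ1int} (equivalence of intertwiners forces equivalence of stabilizer representations) one identifies $\phi_W$ as a sub-intertwiner distinct from both the null one and $\phi$, contradicting irreducibility. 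I expect the main obstacle to lie entirely in the forward reduction: converting the abstract monic condition into the concrete statements $\mu'\ll\mu$ and fibrewise injectivity, and then using minimality of transitive families to confine every nonzero sub-intertwiner to the orbit $o$ with $\mu'\sim\mu$, together with the matching care needed in the contrapositive to certify that the sub-intertwiner from a proper invariant subspace is genuinely inequivalent to $\phi$. Once that reduction is secured, the proposition collapses to the classical fact that a transitive intertwiner is governed by its stabilizer representation, and irreducibility transfers through the correspondence.
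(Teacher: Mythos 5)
Your proposal follows essentially the same route as the paper's proof (which establishes Props.~\ref{Indec-Irret-TransInt} and \ref{Irred-TransInt} together): in one direction an invariant subspace of $\mathcal{R}^\phi_o$ is transported along the orbit by the cocycle $\Phi^g$ to yield a transitive sub-intertwiner with a monic inclusion, and in the other a monic 2-intertwiner is unpacked into an almost-everywhere injective field $m_{y,x}$ with $\mu'_y \sim \mu_y$, whose images are invariant subspaces, so that irreducibility of the stabilizer representation forces $m$ to be invertible and hence $\phi' \simeq \phi$. Your substitution of Lemma~\ref{minimal family} (minimality of transitive families) for the paper's direct appeal to uniqueness of quasi-invariant measure classes on the orbit is a harmless variant, and the delicate point you flag in the contrapositive---certifying that the sub-intertwiner built from a proper invariant subspace is genuinely inequivalent to $\phi$---is passed over with exactly the same brevity in the paper's own argument.
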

%

Let us prove these two propositions together:

\begin{proof}
Fix a point $y_o \in Y$ such that $\mu_{y_o} \not=0$, and let $S_{y_o}$ be its stabilizer. The action of $G$ on $X$ induces an action of $S_{y_o}$ on the fiber over $\chi_2(y_o)$ in $X$. Since by hypothesis $\phi$ is transitive, $\mu_{y_o}$ concentrates on a single $S_{y_o}$-orbit $\imath_o\subseteq X$.
Next, fix $x_o$ in $\imath_o$,  and let $S_o = S_{y_o,x_o}$ denote stabilizer of $(y_o, x_o)$ under the diagonal action.  Let also $\phi_o = \phi_{y_o,x_o}, \Phi^g_o= \Phi^g_{y_o,x_o}$ be the space and maps  assigned to the point $(y_o, x_o)$, and let $\mathcal{R}^\phi_o$ be the corresponding linear representation $s\mapsto\Phi^s_o$ of $S_o$. Note that the representations $\mathcal{R}^\phi_{y,x}$ are all indecomposable (or irreducible) if  $\mathcal{R}^\phi_o$ is.

We begin with Prop.~\ref{Indec-Irret-TransInt}. Suppose first that $\phi$ is indecomposable. Consider a Hilbert space decomposition $\phi_o = \phi'_o \oplus \phi''_o$ that is invariant under $\mathcal{R}^\phi_o$; assume that $\phi'_o$ is non-trivial.  Given a point $(y,x) = (y_o,x_o)g^{-1}$ in the orbit, the isomorphism $\Phi^g_o\maps \phi_o \to \phi_{y,x}$ gives a splitting
\[
 \phi_{y,x}= \phi'_{y,x}\oplus \phi''_{y,x}  \quad \text{where}\quad \phi'_{y,x} = \Phi^g_o(\phi'_o),\;
  \phi''_{y,x} = \Phi^g_o(\phi''_o)
\]
This decomposition is independent of the representative of $gS_o$ chosen, hence depends only on the point $(y,x)$; indeed, for every $s\in S_o$, we have
\[
   \Phi_o^{gs}(\phi'_o) = \Phi^g_o \Phi^s_o(\phi'_o) = \Phi^g_o(\phi'_o) = \phi'_{y,x}
\]
by invariance of $\phi'_o$,  and likewise for $\phi''$.  The decomposition of $\phi_{y,x}$ is also invariant under the representation $\mathcal{R}^\phi_{y,x}$ of $S_{y,x}$:
\[
    \Phi^s_{y,x}(\phi'_{y,x}) = \Phi^{sg}_{y,x}(\phi'_o) = \Phi^{s(g^{-1}sg)}(\phi'_o) = \Phi^g_o(\phi'_o) = \phi'_{y,x}
\]
since for any $s\in S_{y,x}$, we have $(g^{-1}sg)\in S_o$, and likewise for $\phi''_{y,x}$.
These data give us a transitive intertwiner $\phi' = (\phi'_{y,x}, \Phi'^g_{y,x},\mu_y)$, where $\Phi'^g_{y,x}$ simply denotes the restriction of $\Phi^g_{y,x}$ to $\phi'_{y,x}$.

By construction, $\phi'$ is a summand of  $\phi$, distinct from the null intertwiner.  Now, we have assumed $\phi$ is indecomposable; so we have that $\phi' \simeq \phi$. We then deduce from Prop.~\ref{equ1int} that the representation $\mathcal{R}_{o}^\phi$ is equivalent to its restriction to $\phi'_{o}$. Thus,  $\mathcal{R}_{o}^\phi$ is indecomposable.

Next, suppose that the linear representations $\mathcal{R}_{y,x}^\phi$ are indecomposable. We will show that $\phi$ is irretractable; since an irretractable is automatically indecomposable, this will complete the proof of Prop.~\ref{Indec-Irret-TransInt}.

Let  $\phi'$ be a retract of $\phi$, specified by the family of measures $\mu'_y$ and the assignments of Hilbert spaces $\phi'_{y,x}$ and invertible maps $\Phi'^g_{y,x}$. By definition one can find 2-intertwiners $m\maps \phi \To \phi'$ and
$n \maps \phi' \To \phi$ such that $n\cdot m = \unit_{\phi'}$.  This last equality requires that the geometric mean measures $\sqrt{\mu_y\mu_y'}$ be equivalent to $\mu'_y$, or equivalently that $\mu'_y \ll \mu_y$.  Hence, the $S_o$-quasi-invariant measure $\mu'_{y_o}$ concentrates on the orbit $\imath_o$.  Non-trivial $S_o$-quasi-invariant measures on $\imath_o$ are unique up to equivalence, so we conclude that $\mu'_{y_o}$ is either trivial or equivalent to $\mu_{y_o}$. In the first case, $\phi'$ is trivial, so we are done.

In the second case, where $\mu'_y \sim \mu_y$ for all $y$, the linear maps $m_{y,x} \maps \phi'_{y,x} \to \phi_{y,x}$ and $n_{y,x} \maps \phi_{y,x} \to \phi'_{y,x}$ are intertwining operators between the representations $\mathcal{R}^{\phi'}_{y,x}$ and $\mathcal{R}^\phi_{y,x}$; they satisfy $n_{y,x} m_{y,x} = \unit_{\phi'_{y,x}}$. Thus,  $\mathcal{R}^{\phi'}_{y,x}$ is a retract of $\mathcal{R}^\phi_{y,x}$, hence a direct summand.  But $\mathcal{R}^\phi_{y,x}$ is indecomposable: so the two representations must be equivalent. Hence the map $m_{y,x}$ is invertible.  The 2-intertwiner $m\maps \phi' \To \phi$ is thus invertible, which shows $\phi'$ and $\phi$ are equivalent. We conclude that $\phi$ is irretractable.

We now prove Prop.~\ref{Irred-TransInt}. Suppose first that $\phi$ is irreducible.
Consider a non-trivial subspace $\phi'_o \subset \phi_o$ that is invariant under $\mathcal{R}^\phi_o$. Given a point $(y,x) = (y_o,x_o)g^{-1}$ in the orbit, the isomorphism $\Phi^g_o\maps \phi_o \to \phi_{y,x}$ gives a subspace $\phi'_{y,x} := \Phi^g_{y,x}(\phi'_o)$ of $\phi_{y,x}$ that is invariant under $\mathcal{R}^\phi_{y,x}$.  These data give us a transitive intertwiner $\phi' = (\phi'_{y,x}, \Phi'^g_{y,x},\mu_y)$, where $\Phi'^g_{y,x}$ simply denotes the restriction of $\Phi^g_{y,x}$ to $\phi'_{y,x}$.

The canonical injections $\imath_{y,x} \maps \phi'_{y,x} \to \phi_{y,x}$ define a monic 2-intertwiner $\imath \maps \phi' \to \phi$; this shows that $\phi'$ is a sub-intertwiner of $\phi$. But $\phi$ is irreducible: we therefore have that $\phi' \simeq \phi$. This means that $\mathcal{R}_{o}^\phi$ is equivalent to its restriction to $\phi'_{o}$. Thus, $\mathcal{R}_{o}^\phi$ is irreducible.

Conversely, suppose that the representations $\mathcal{R}_{y,x}^\phi$ are irreducible. Consider a sub-intertwiner $\phi'$ of $\phi$, giving a family of measures $\mu'_y$. First of all, note that the existence of a monic 2-intertwiner $m \maps \phi' \to \phi$ forces $\mu'$ to be transitive, with $\bar{\mu}_y$ supported on the orbit $o$. In particular, we have that $\mu'_y \sim \mu_y$ for all $y$.

Next, fix a monic 2-intertwiner $m$. It gives injective linear maps $m_{y,x} \maps \phi'_{y,x} \to \phi_{y,x}$; these define subspaces $m_{y,x}(\phi'_{y,x})$ in $\phi_{y,x}$ that are invariant under $\mathcal{R}^\phi_{y,x}$. Since the representations are by hypothesis irreducible, this means that the maps $m_{y,x}$, and hence $m$, are invertible. We obtain that $\phi' \simeq \phi$, and conclude that $\phi$ is irreducible.
\end{proof}

These results allow us to classify, up to equivalence, the indecomposable and irreducible intertwiners between fixed measurable representations $\rho_1, \rho_2$. We may assume that these representations are indecomposable, and given by the pairs $(o, S_1)$ and $(o, S_2)$. They are thus specified by the $G$-equivariant bundles $X = G/S_1$ and $Y = G/S_2$ over the same $G$-orbit $o \simeq G/S^\ast_o$ in $H^\ast$; $S_1$ and  $S_2$ are some closed subgroups of $S^\ast_o$. In the following, we denote $y_o = S_2 e$, and fix a (not necessarily measurable) cross-section of the $S_2$-space $S^\ast_o/S_1$ -- namely, a subset that intersects each $S_2$-orbit $\imath_o$ in exactly one point $x_o := S_1k_o$.

Let $\phi\maps \rho_1 \to \rho_2$ an indecomposable (resp.\ irreducible) intertwiner. We will assume that $\phi$ is {\sl transitive}, keeping in mind the following consequence of Prop.~\ref{cond.IrretInt} and Lemma \ref{lemma.minmeas}:
\begin{lemma}
Suppose that the $S_2$-space $S^\ast_o/S_1$ has a measurable cross-section. Then every indecomposable intertwiner $\phi\maps (o, S_1) \to (o, S_2)$ is transitive.
\end{lemma}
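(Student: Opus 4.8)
The plan is to derive transitivity from the minimality of the family of measures, which indecomposability already supplies via Prop.~\ref{cond.IrretInt}, together with the cross-section hypothesis and Lemma~\ref{lemma.minmeas}. First I would record the geometry. Writing $y_o = S_2 e \in Y = G/S_2$, its stabilizer under the $G$-action is $S_2$, and since $\chi_2$ sends $y_o$ to the base point $x^\ast_o = S^\ast_o e$ of the orbit $o \cong G/S^\ast_o$, the fiberwise property of the intertwiner forces $\mu_{y_o}$ to be supported on $\chi_1^{-1}(x^\ast_o)$. Under the identification $X = G/S_1$, this fiber is exactly the set of $S_1$-cosets lying inside $S^\ast_o$, i.e.\ the $S_2$-space $S^\ast_o/S_1$, which is invariant under the restricted action of $S_{y_o} = S_2$.

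Next I would invoke minimality. Because $\phi$ is indecomposable, Prop.~\ref{cond.IrretInt} shows its family $\mu_y$ is minimal in the sense of Def.~\ref{minimal.family}. As $Y$ is a single $G$-orbit, condition (i) of that definition is automatic, while condition (ii) gives that $\mu_{y_o}$ is ergodic under the action of its stabilizer $S_2$. Since $\mu_{y_o}$ is supported on the $S_2$-invariant subset $S^\ast_o/S_1$, its restriction there is still $S_2$-ergodic. The cross-section hypothesis then lets me apply Lemma~\ref{lemma.minmeas} to the $S_2$-space $S^\ast_o/S_1$: every ergodic measure on it is supported on a single $S_2$-orbit. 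Hence there is a single $S_2$-orbit $\imath_o \subseteq S^\ast_o/S_1$ carrying $\mu_{y_o}$; fix a point $x_o \in \imath_o$.

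Finally I would propagate this to all of $Y$ using equivariance, producing the single supporting $G$-orbit required by Def.~\ref{Transitive-Int}. Let $o'\subseteq Y\times X$ be the $G$-orbit of $(y_o,x_o)$. For an arbitrary $y = y_o \lhd g$, equivariance gives $\mu_y \sim \mu_{y_o}^g$, whose support is $\imath_o \lhd g = \{x_o \lhd (sg) : s\in S_2\}$. On the other hand, $\{x : (y,x)\in o'\}$ consists precisely of the points $x_o\lhd(sg)$ with $s\in S_2$, since $y_o\lhd g' = y$ forces $g'\in S_2 g$. These two sets coincide, so $\bar\mu_y = \delta_y \tensor \mu_y$ is supported on $o'$ for every $y$; thus the family $\mu_y$ is transitive and so is $\phi$.

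I expect the only genuinely delicate step to be the passage from $S_2$-ergodicity of $\mu_{y_o}$ on the ambient space $X$ to ergodicity on the invariant fiber $S^\ast_o/S_1$ (which is what licenses the application of Lemma~\ref{lemma.minmeas}), together with the bookkeeping that checks the support of $\mu_{y_o}^g$ matches the fiber of $o'$ over $y$. Everything else is a direct translation of the minimality and cross-section hypotheses into the language of Def.~\ref{Transitive-Int}.
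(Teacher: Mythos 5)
Your proof is correct and follows exactly the route the paper intends: the paper states this lemma without a separate proof, presenting it as a direct consequence of Prop.~\ref{cond.IrretInt} (indecomposability implies minimality of the family $\mu_y$) and Lemma~\ref{lemma.minmeas} (an ergodic measure on a $G$-space with a measurable cross-section is supported on a single orbit), and your argument is precisely that chain with the implicit details filled in. The two points you flag as delicate---restricting $S_2$-ergodicity of $\mu_{y_o}$ to the invariant fiber $S^\ast_o/S_1$, and checking via equivariance that the support of $\mu_{y_o}^g$ coincides with the fiber of the $G$-orbit of $(y_o,x_o)$ over $y = y_o g$ (independently of the choice of $g$, by $S_2$-invariance of $\imath_o$)---are both handled correctly.
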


The intertwiner $\phi$ gives a non-trivial $S_2$-quasi-invariant measure $\mu_{y_o}$ in the fiber $S^\ast_o/S_1 \subset X$ over $S^\ast_o e$. Moreover, the transitivity of $\phi$ implies that this measure is supported on a single $S_2$-orbit $\imath^{\phi}_o$ in $S^\ast_o/S_1$. Note that any two such measures are equivalent.
$\phi$  also gives an indecomposable (resp. irreducible) linear representation $\mathcal{R}^\phi_o$ of the group
\[
S_o = k_o^{-1} S_1 k_o \cap S_2.
\]
So, $\phi$ gives a pair $(\imath^\phi_o, \mathcal{R}^\phi_o)$, where $\imath^\phi_o$ is a $S_2$-orbit in 
$S^\ast_o /S_1$ and $\mathcal{R}^\phi_o$ is an indecomposable (resp. irreducible) representation of $S_o$. 
We easily deduce from Prop.~\ref{equ-trans1int} that two equivalent transitive  intertwiners give two pairs with the same orbit and equivalent linear representations.

Conversely, given any orbit $\imath_o$ and any linear representation $\mathcal{R}_o$ of $S_o$ on some Hilbert space $\phi_o$, there is an intertwiner $\phi = (\phi, \Phi, \mu)$ such that $\imath_o^\phi = \imath_o$ and $\mathcal{R}_o^\phi = \mathcal{R}_o$.
Indeed, a measurable equivariant and fiberwise family of measures is obtained by choosing a $S_2$-quasi-invariant measure $\mu_o$ supported on  $\imath_o$ and a measurable section $n \maps G/S_2 \to G$, and by setting, for each $y = y_o n(k)$:
\[
\mu_{y} := \mu^{n(k)}_{o}
\]
To construct the measurable fields of spaces and linear maps, fix a measurable section $\bar{n} \maps G/S_o \to G$, denote by $\pi \maps G \to G/S_o$ the canonical projection, and consider the function $\alpha \maps G \to S_o$ given by:
\[
\alpha(g) = (\bar{n}\pi)(g^{-1}) g.
\]
This function satisfies the property that $\alpha(gs) = \alpha(g) s$ for all $s \in S_o$. Using this, we define a family $\Phi^g_o$ of isomorphisms of $\phi_o$ as:
\[
\Phi^g_o = \mathcal{R}_o(\alpha(g))
\]
and construct a measurable field $\Phi^g_{y,x}$ by setting, for each $(y,x) = (y_o, x_o) k^{-1}$:
\[\Phi^g_{y,x} = \Phi^g_o (\Phi^k_o)^{-1}.\]
These data specify a transitive intertwiner $\phi$; this intertwiner is  indecomposable (resp. irreducible) if $\mathcal{R}_o$ is.

These remarks yield the following:

%
\begin{cor} \label{classification-TransIrredInt}
Indecomposable (resp. irreducible)  transitive intertwiners $\phi \maps (o, S_1) \to (o, S_2)$ are classified, up to equivalence, by a choice of a $S_2$-orbit $\imath_o$ in  $S^\ast_o / S_1$, along with an equivalence class of indecomposable (resp. irreducible) linear representations $\mathcal{R}_o$ of the group $k_o^{-1} S_1 k_o \cap S_2$.
\end{cor}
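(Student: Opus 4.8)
The plan is to assemble the classification from the pieces already established, treating the ``forward'' direction (an intertwiner yields the classifying data) and the ``converse'' direction (the data yields an intertwiner) together with the equivalence statement. The starting point is the reduction already carried out in the preceding discussion: since $\rho_1$ and $\rho_2$ are the indecomposable representations $(o,S_1)$ and $(o,S_2)$, the spaces are $X = G/S_1$ and $Y = G/S_2$ fibered over the single orbit $o \simeq G/S^\ast_o$ in $H^\ast$. Fixing $y_o = S_2 e$ and the cross-section point $x_o = S_1 k_o$ in the fiber $S^\ast_o/S_1$, the relevant small stabilizer is $S_o = k_o^{-1} S_1 k_o \cap S_2$, and the whole point is that a transitive intertwiner localizes to data living over $(y_o,x_o)$.

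\textbf{The forward map.} First I would take a transitive indecomposable (resp.\ irreducible) intertwiner $\phi = (\phi,\Phi,\mu)$ and extract its invariants. By Prop.~\ref{Indec-Irret-TransInt} (resp.\ Prop.~\ref{Irred-TransInt}), the stabilizer representation $\mathcal{R}^\phi_o$ of $S_o$ on $\phi_o = \phi_{y_o,x_o}$ is indecomposable (resp.\ irreducible). The measure $\mu_{y_o}$ is a nontrivial $S_2$-quasi-invariant measure on $S^\ast_o/S_1$, and transitivity of $\phi$ forces it to concentrate on a single $S_2$-orbit $\imath^\phi_o$; any two such measures are equivalent, so $\imath^\phi_o$ is a well-defined invariant. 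This produces the pair $(\imath^\phi_o, \mathcal{R}^\phi_o)$. That equivalent transitive intertwiners yield the same orbit and equivalent representations is exactly the content of Prop.~\ref{equ-trans1int}: equivalence of intertwiners gives equivalence of measure classes (hence the same supporting orbit) and equivalence of the stabilizer representations.

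\textbf{The converse and the main obstacle.} For surjectivity I would, given a choice of $S_2$-orbit $\imath_o$ and an indecomposable (resp.\ irreducible) representation $\mathcal{R}_o$ of $S_o$ on $\phi_o$, reconstruct a transitive intertwiner realizing this data. The family of measures is built by choosing an $S_2$-quasi-invariant measure $\mu_o$ on $\imath_o$ and a measurable section $n\maps G/S_2 \to G$ (Lemma~\ref{meas.sections}), then transporting via $\mu_y := \mu^{n(k)}_o$ for $y = y_o n(k)$. The delicate part---and the step I expect to be the main obstacle---is producing a \emph{measurable} strict cocycle $\Phi^g_{y,x}$ whose restriction to $S_o$ recovers $\mathcal{R}_o$, because the naive transport of $\mathcal{R}_o$ along cosets requires a consistent choice of coset representatives that need not be a homomorphism. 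The device that resolves this is the function $\alpha\maps G \to S_o$ defined from a measurable section $\bar n\maps G/S_o \to G$ by $\alpha(g) = (\bar n\pi)(g^{-1})\,g$, which satisfies $\alpha(gs) = \alpha(g)s$ for $s \in S_o$; setting $\Phi^g_o = \mathcal{R}_o(\alpha(g))$ and then $\Phi^g_{y,x} = \Phi^g_o (\Phi^k_o)^{-1}$ for $(y,x) = (y_o,x_o)k^{-1}$ gives a field that one checks is cocyclic and invertible at the base point, hence everywhere on the orbit by Lemma~\ref{strict.on.orbits}. Measurability follows from measurability of $\bar n$, $\pi$, and $n$. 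That the resulting transitive intertwiner is indecomposable (resp.\ irreducible) exactly when $\mathcal{R}_o$ is follows once more from Prop.~\ref{Indec-Irret-TransInt} (resp.\ Prop.~\ref{Irred-TransInt}), and that this construction inverts the forward map up to equivalence follows from Prop.~\ref{equ-trans1int}. Taken together these establish the claimed bijection, which is the statement of the corollary.
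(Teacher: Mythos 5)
Your proposal is correct and follows essentially the same route as the paper: the same extraction of the pair $(\imath^\phi_o, \mathcal{R}^\phi_o)$ via Props.~\ref{Indec-Irret-TransInt} and \ref{Irred-TransInt}, the same reconstruction using a quasi-invariant measure transported by a measurable section and the function $\alpha(g) = (\bar n\pi)(g^{-1})g$, and the same appeal to Prop.~\ref{equ-trans1int} for the equivalence statement. Your explicit use of Lemma~\ref{strict.on.orbits} to propagate cocyclicity from the base point is a small clarification of a step the paper leaves implicit, but the argument is the same.
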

%

We close this section with a version of Schur's lemma for
irreducible intertwiners:
%
%
\begin{prop}[Schur's Lemma for Intertwiners]
\label{schur-intertwiner}
Let $\phi, \psi \maps (o, S_1), \to (o, S_2)$ be two irreducible transitive intertwiners. Then any 2-intertwiner $m \maps \phi \To \psi$ is either null or an isomorphism. In the latter case, $m$ is unique, up to a normalization factor.
\end{prop}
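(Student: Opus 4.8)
The plan is to reduce the statement to the ordinary Schur lemma for the irreducible stabilizer representations carried by $\phi$ and $\psi$, and then to transport the conclusion across the supporting orbit. First I would apply Thm.~\ref{thm:2intclassify}: a 2-intertwiner $m \maps \phi \To \psi$ is a $\sqrt{\mu\nu}$-class of fields $m_{y,x}\maps \phi_{y,x}\to\psi_{y,x}$ satisfying $\Psi^g_{y,x}\,m_{y,x} = m_{(y,x)g^{-1}}\,\Phi^g_{y,x}$. Because both intertwiners are transitive, I may fix a base point $(y_o,x_o)$ on the common supporting orbit $o$ and restrict the intertwining rule to the stabilizer $S_o = S_{y_o,x_o}$, where it reads $\mathcal{R}^\psi_o(s)\,m_o = m_o\,\mathcal{R}^\phi_o(s)$; thus $m_o := m_{y_o,x_o}$ is an intertwining operator between the stabilizer representations. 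Conversely, the transport formula $m_{y,x} = \Psi^g_o\, m_o\,(\Phi^g_o)^{-1}$ for $(y,x)=(y_o,x_o)g^{-1}$ shows that $m$ is completely determined by $m_o$, and that every intertwining operator of the stabilizer representations extends to a 2-intertwiner. So the vector space of 2-intertwiners $\phi\To\psi$ is isomorphic to the space of intertwining operators $\mathcal{R}^\phi_o\to\mathcal{R}^\psi_o$.

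Next I would invoke Prop.~\ref{Irred-TransInt}, which guarantees that $\mathcal{R}^\phi_o$ and $\mathcal{R}^\psi_o$ are irreducible, and apply Schur's lemma at the base point. If $m_o=0$, the transport formula forces $m_{y,x}=0$ everywhere, so $m$ is null. If $m_o\neq 0$, then $\ker m_o$ is an invariant subspace of $\phi_o$ and $\overline{\im m_o}$ is an invariant subspace of $\psi_o$, so irreducibility forces $m_o$ to be an isomorphism; the transport formula then makes each $m_{y,x}$ invertible, and by Thm.~\ref{invert2} the 2-morphism $m$ is invertible in $\me$, i.e.\ an isomorphism of intertwiners. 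For uniqueness, if $m_o$ and $m_o'$ are both nonzero then $m_o^{-1}m_o'$ commutes with $\mathcal{R}^\phi_o$, hence is a scalar by Schur, so $m_o' = \lambda m_o$; since $m$ is determined by $m_o$, the 2-intertwiner is unique up to the factor $\lambda$.

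The delicate step, and the main obstacle, is the Schur argument when $\phi_o$ and $\psi_o$ are infinite-dimensional and the stabilizer representations are given by possibly non-unitary invertible operators $\Phi^s_o,\Psi^s_o$. There, irreducibility only immediately yields that a nonzero $m_o$ is injective with dense range; upgrading this to a bounded invertible map, and establishing that the commutant of an irreducible stabilizer representation is exactly $\C$, requires more than the formal bookkeeping above. I would handle this by first replacing the cocycle along the orbit by a unitary representative (using the trivialization of the field $\phi_{y,x}$ discussed after Prop.~\ref{equ-trans1int}), so that the stabilizer representations become unitary and the classical Schur lemma for irreducible unitary representations applies directly; this is the only point at which genuine analysis, rather than the transport argument, enters.
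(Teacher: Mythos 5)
Your reduction is the same one the paper uses---restrict the intertwining rule to the stabilizer at a base point, observe that $m$ is determined by $m_o = m_{y_o,x_o}$ via the transport formula, and apply Schur's lemma to the irreducible stabilizer representations---but you have skipped a case that the paper handles first. Two transitive intertwiners $\phi,\psi \maps (o,S_1) \to (o,S_2)$ need \emph{not} be supported on a common $G$-orbit in $Y\times X$: transitivity only says each family of measures concentrates on \emph{some} orbit, and $\imath^\phi_o \neq \imath^\psi_o$ is perfectly possible (indeed Cor.~\ref{classification-TransIrredInt} classifies transitive intertwiners precisely by the choice of orbit together with a stabilizer representation). Your phrase ``the common supporting orbit $o$'' conflates the orbit in $H^\ast$ labelling the representations with the supporting orbits of the intertwiners. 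When the orbits differ, $\mu_y$ and $\nu_y$ have disjoint supports, the geometric mean $\sqrt{\mu_y \nu_y}$ is trivial, and every 2-intertwiner is null by Thm.~\ref{thm:2intclassify}; this is a one-line fix, but without it your argument does not cover all 2-intertwiners allowed by the hypothesis. Only after disposing of this case may you assume $\mu_y \sim \nu_y$ and run the base-point argument.

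Your final ``delicate step'' is also not sound as stated. The trivialization discussed after Prop.~\ref{equ-trans1int} replaces the field $\phi_{y,x}$ along the orbit by a \emph{constant} field $\phi_o$; it says nothing about making the cocycle, and hence the stabilizer representation $\mathcal{R}^\phi_o$, \emph{unitary}. A bounded (even invertible) representation of a noncompact stabilizer group on a Hilbert space need not be unitarizable, and no inner-product-preserving structure is guaranteed anywhere in the construction, so you cannot ``replace the cocycle by a unitary representative'' and then invoke the unitary Schur lemma. The paper itself does not attempt this: its proof simply treats $m_o$ as a standard intertwining operator between the irreducible linear representations $\mathcal{R}^\phi_o$ and $\mathcal{R}^\psi_o$ and invokes Schur's lemma in that form, consistent with the notion of irreducibility established in Prop.~\ref{Irred-TransInt}. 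You are right that in infinite dimensions, for non-unitary representations, the strong conclusion (zero or invertible, commutant $= \C$) requires more than topological irreducibility---a nonzero $m_o$ is a priori only injective with dense range---but the cure is to identify the correct form of Schur's lemma for the class of representations actually in play, not to assert unitarizability that the data do not provide.
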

%
%
%
\begin{proof}
We may assume $\phi$ and $\psi$ are given by the pairs
$(\imath^\phi_o, \mathcal{R}^\phi_o)$ and $(\imath^\psi_o,
\mathcal{R}^\psi_o)$ of $S_2$-orbits in $S^\ast_o /S_1$ and
irreducible linear representations.  Let $\mu_y$ and $\nu_y$ denote
the two families of measures. If the orbits are distinct
$\imath^\phi_o \not= \imath^\psi_o$, the measures $\mu_{y}$ and
$\nu_{y}$ have disjoint support, so that their geometric mean is
trivial. In this case, any 2-intertwiner $m \maps \phi \To \psi$ is
trivial.

Suppose now $\imath^\phi_o = \imath^\psi_o$. In this case, we have
that $\mu_y \sim \nu_y$ for all $y$. Let $m \maps \phi \To \psi$ be a
2-intertwiner, given by the assignment of linear maps $m_{y,x} \maps
\phi_{y,x} \to \psi_{y,x}$. Because of the intertwining rule
(\ref{rule2int}), the assignment is entirely specified by the data
$m_o := m_{y_o, x_o}$.

Now, $m_o$ defines a standard intertwiner between the irreducible
linear representations $\mathcal{R}^\phi_o$ and
$\mathcal{R}^\psi_o$. Therefore $m_o$, and hence $m$, is either
trivial or invertible; in the latter case, it is unique, up to a
normalization factor.
\end{proof}

\section{Conclusion}
\label{conclusion}

We conclude with some possible avenues for future investigation.
First, it will be interesting to study examples of the general theory
described here.  As explained in the Introduction, representations of
the Poincar\'e 2-group have already been studied by Crane and
Sheppeard \cite{CraneSheppeard}, in view of obtaining a 4-dimensional
state sum model with possible relations to quantum gravity.
Representations of the Euclidean 2-group (with $G = {\rm SO}(4)$
acting on $H = \R^4$ in the usual way) are somewhat more tractable.
Copying the ideas of Crane and Sheppeard, this 2-group gives a state
sum model \cite{BaratinFreidel1, BaratinFreidel2} with interesting
relations to the more familiar Ooguri model.

There are also many other 2-groups whose representations are worth
studying.  For example, Bartlett has studied representations of finite
groups $G$, regarded as 2-groups with trivial $H$ \cite{Bartlett}.  He
considers {\it weak} representations of these 2-groups, where
composition of 1-morphisms is preserved only up to 2-isomorphism.
More precisely, he considers {\it unitary} weak representations on
finite-dimensional 2-Hibert spaces.  These choices lead him to a
beautiful geometrical picture of representations, intertwiners and
2-intertwiners --- strikingly similar to our work here, but with
$\U(1)$ gerbes playing a major role.  So, it will be very interesting
to generalize our work to weak representations, and specialize it to
unitary ones.

To define {\it unitary} representations of measurable 2-groups, we
need them to act on something with more structure than a measurable
category: namely, some sort of infinite-dimensional 2-Hilbert space.
This notion has not yet been defined.  However, we may hazard a guess
on how the definition should go.

In Section \ref{measurable_categories}, we argued that the measurable
category $H^X$ should be a categorified analogue of $L^2(X)$, with
direct integrals replacing ordinary integrals.  However, we never
discussed the inner product in $H^X$.  We can define this only after
choosing a measure $\mu$ on $X$.  This measure appears in the formula
for the inner product of vectors $\psi, \phi \in L^2(X)$:
\[       \langle \psi, \phi \rangle = \int \overline{\psi}(x)
\phi(x) \, \extd \mu(x)  \in \C .\]
Similarly, we can use it to define the {\bf inner product} of
fields of Hilbert spaces $\H, \K \in H^X$:
\[       \langle \H, \K \rangle = \direct \overline{\H}(x)
\otimes \K(x) \, \extd \mu(x) \in \Hilb . \]
Here $\overline{\H}(x)$ is the complex conjugate of the
Hilbert space $\H(x)$, where multiplication by $i$ has been
redefined to be multiplication by $-i$.  This is naturally
isomorphic to the Hilbert space dual $\H(x)^*$, so we can also
write
\[       \langle \H, \K \rangle \cong \direct {\H}(x)^*
\otimes \K(x) \, \extd \mu(x) . \]

Recall that throughout this paper we are assuming our measures are
$\sigma$-finite; this guarantees that the Hilbert space $\langle \H,
\K \rangle$ is {\it separable}.  So, we may give a preliminary
definition of a `separable 2-Hilbert space' as a category of the form
$H^X$ where $X$ is a measurable space equipped with a measure $\mu$.

As a sign that this definition is on the right track, note that when
$X$ is a finite set equipped with a measure, $H^X$ is a
finite-dimensional 2-Hilbert space as previously defined \cite{Baez2}.
Moreover, every finite-dimensional 2-Hilbert space is equivalent
to one of this form \cite[Sec.\ 2.1.2]{Bartlett}.

The main thing we lack in the infinite-dimensional case, which we
possess in the finite-dimensional case, is an intrinsic definition of
a 2-Hilbert space.  An intrinsic definition should not refer to the
measurable space $X$, since this space merely serves as a `choice of
basis'.  The problem is that it seems tricky to define direct
integrals of objects without mentioning this space $X$.

The same problem afflicted our treatment of measurable categories.
Instead of giving an intrinsic definition of measurable categories, we
defined a measurable category to be a $C^*$\!-category that is
$C^*$\!-equivalent to $H^X$ for some measurable space $X$.  This made
the construction of $\me$ rather roundabout.  We could try a similar
approach to defining a 2-category of separable 2-Hilbert spaces, but
it would be equally roundabout.

Luckily there is another approach, essentially equivalent to the one
just presented, that does not mention measure spaces or measurable
categories!  In this approach, we think of a 2-Hilbert space as a
category of representations of a commutative von Neumann algebra.

The key step is to notice that when $\mu$ is a measure on a measurable
space $X$, the algebra $L^\infty(X,\mu)$ acts as multiplication
operators on $L^2(X,\mu)$.  Using this one can think of
$L^\infty(X,\mu)$ as a commutative von Neumann algebra of operators on
a separable Hilbert space.  Conversely, any commutative von Neumann
algebra of operators on a separable Hilbert space is isomorphic---as
a $C^*$\!-algebra---to one of this form \cite[Part I, Chap.\ 7, Thm.\
1]{Dixmier}.  The technical conditions built into our definition of
`measurable space' and `measure' are precisely what is required to
make this work (see Defs.\ \ref{defn:measurable_space} and
\ref{defn:measure}).

This viewpoint gives a new outlook on fields of Hilbert spaces.
Suppose $A$ is commutative von Neumann algebra of operators on a
separable Hilbert space.  As a $C^*$\!-algebra, we may identify $A$ with
$L^\infty(X,\mu)$ for some measure $\mu$ on a measurable space $X$.
Define a {\bf separable representation} of $A$ to be a representation of
$A$ on a separable Hilbert space.  It can then be shown that every
separable representation of $A$ is equivalent to the representation of
$L^\infty(X,\mu)$ as multiplication operators on $\direct \H(x) \extd
\mu(x)$ for some field of Hilbert spaces $\H$ on $X$.  Moreover, this
field $\H$ is essentially unique \cite[Part I, Chap. 6, Thms.\ 2 and
3]{Dixmier}.

This suggests that we define a {\bf separable 2-Hilbert space} to be a
category of separable representations of some commutative von Neumann
algebra of operators on a separable Hilbert space.  More generally, we
could drop the separability condition and define a {\bf 2-Hilbert space}
to be a category of representations of a commutative von Neumann
algebra.

While elegant, this definition is not quite right.  Any category
`equivalent' to the category of representations of a commutative von
Neumann algebra---in a suitable sense of `equivalent', probably stronger
than $C^*$\!-equivalence---should also count as a 2-Hilbert space.  A
better approach would give an intrinsic characterization of categories
of this form.  Then it would become a {\it theorem} that every
2-Hilbert space is equivalent to the category of representations of
a commutative von Neumann algebra.

Luckily, there is yet another simplification to be made.  After all,
a commutative von Neumann algebra can be recovered, up to
isomorphism, from its category of representations.  So, we can
forget the category of representations and focus on the von Neumann
algebra itself!

The problem is then to redescribe morphisms between 2-Hilbert spaces,
and 2-morphisms between these, in the language of von Neumann algebras.
There is a natural guess as to how this should work, due to
Urs Schreiber.  Namely, we can define a bicategory $\twoHi$ for which:
\begin{itemize}
\item
objects are commutative von Neumann algebras $A,B, \dots$,
\item
a morphism $\H \maps A \to B$ is a Hilbert space $\H$ equipped with
the structure of a $(B,A)$-bimodule,
\item
a 2-morphism $f \maps \H \to \K$ is a homomorphism of $(B,A)$-bimodules.
\end{itemize}
Composition of morphisms corresponds to tensoring bimodules.  Note
also that given an $(B,A)$-bimodule and a representation of $A$,
we can tensor the two and get a representation of $B$.  This is how
an $(B,A)$-bimodule gives a functor from the category of representations
of $A$ to the category of representations of $B$.  Similarly, a
homomorphism of $(B,A)$-bimodules gives a natural transformation between
such functors.

Let us briefly sketch the relation between this version of $\twoHi$
and the 2-category $\me$ described in this paper.  First, given
separable commutative von Neumann algebras $A$ and $B$, we can write
$A \cong L^\infty(X,\mu)$ and $B \cong L^\infty(Y,\nu)$ where $X,Y$
are measurable spaces and $\mu,\nu$ are measures.  Then, given an
$(B,A)$-bimodule, we can think of it as a representation of $B
\otimes A \cong L^\infty(Y \times X, \nu \tensor \mu)$.  By the remarks
above, this representation comes from a field of Hilbert spaces on $Y
\times X$.  Then, given a 2-morphism $f \maps \H \to \K$, we can
represent it as a measurable field of bounded operators between the
corresponding fields of Hilbert spaces.

While the details still need to be worked out, all this suggests that
a theory of 2-Hilbert spaces based on commutative von Neumann algebras
should be closely linked to the theory of measurable categories described
here.

Even better, the bicategory $2\Hilb$ just described sits inside a
larger bicategory where we drop the condition that the von Neumann
algebras be commutative.  Representations of 2-groups in this larger
bicategory should also be interesting.  The reason is that Schreiber
has convincing evidence that the work of Stolz and Teichner
\cite{StolzTeichner} provides a representation of the so-called
`string 2-group' \cite{BCSS} inside this larger bicategory.  For
details, see the last section of Schreiber's recent paper on two
approaches to quantum field theory \cite{Schreiber}.  This is yet
another hint that infinite-dimensional representations of 2-groups
may someday be useful in physics.

\subsection*{Acknowledgments}

We thank Jeffrey Morton for collaboration in the early stages of this
project.  We also thank Jerome Kaminker, Benjamin Weiss, and the
denizens of the $n$-Category Caf\'e, especially Bruce Bartlett and Urs
Schreiber, for many useful discussions.  Yves de Cornulier and Todd
Trimble came up with most of the ideas in Appendix
\ref{apx:measurable_groups}.  Our work was supported in part by the
National Science Foundation under grant DMS-0636297, and by the
Perimeter Institute for Theoretical Physics.

\appendix

\section{Tools from measure theory}
\label{tools}

This Appendix summarizes some tools of measure theory used in the
paper. The first section recalls basic terminology and states
the well-known Lebesgue decomposition and Radon-Nikodym theorems. The
second section defines the geometric mean of two measures
and derives of some of its key properties.  The third section studies
measurable abelian groups and their duals.  Finally, the fourth section
presents a few standard results about measure theory on $G$-spaces.

Recall that for us, a {\bf measurable space} is shorthand for a
{\bf standard Borel space}: that is, a set $X$ with a $\sigma$-algebra
$\mathcal{B}$ of subsets generated by the open sets for some
second countable locally compact Hausdorff topology on $X$.
We gave two other equivalent definitions of this concept in
Prop.\ \ref{lem:standard_Borel}.

Also recall that for us, all measures are $\sigma$-finite.  So,
a {\bf measure} on $X$ is a function $\mu \maps \mathcal{B} \to
[0,+\infty]$ such that
$$
\mu(\bigcup_n A_n) = \sum_n \mu(A_n)
$$
for any sequence $(A_n)_{n\in \N}$ of mutually disjoint measurable
sets, such that $X$ is a countable union of
$S_i \in \mathcal{B}$ with $\mu(S_i) < \infty$.

\subsection{Lebesgue decomposition and Radon-Nikodym derivatives}
\label{Lebesgue-Radon-Nykodym}

In a fixed measurable space $X$, a measure $t$ is {\bf absolutely
continuous} with respect to a measure $u$, written $t\ll u$, if every
$u$-null set is also $t$-null.  The measures are {\bf equivalent},
written $t \sim u$, if they are absolutely continuous with respect to
each other: in other words, they have the same null sets.  The two
measures are {\bf mutually singular}, written $t \perp u$, if we can
find a measurable set $A \subseteq X$ such that
\[   t(A) = u(X - A) = 0.\]
If $A \subseteq X$ is a measurable set with $u(X - A) = 0$ we say the
measure $u$ is {\bf supported} on $A$.

%
\begin{theo}[Lebesgue decomposition]
Let $t$ and $u$ be ($\sigma$-finite) measures on $X$. Then there is a
unique pair of measures $t^u$ and $\overline{t^u}$ such that
 \[
     t = t^u + \overline{t^u}
     \qquad \mbox{with} \quad t^u \ll u \quad \mbox{and}
\quad \overline{t^u} \perp u.
\]
\end{theo}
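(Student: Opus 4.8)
The plan is to follow von Neumann's Hilbert-space argument for existence, and a short direct comparison for uniqueness. First I would reduce to the case where both $t$ and $u$ are finite: since both measures are $\sigma$-finite, I can write $X$ as a countable disjoint union of measurable sets $X_n$ on which $t$ and $u$ are both finite, establish the decomposition on each $X_n$ separately, and then sum the resulting pieces. Checking that the summed measures $t^u := \sum_n t^u|_{X_n}$ and $\overline{t^u} := \sum_n \overline{t^u}|_{X_n}$ are $\sigma$-finite and still satisfy $t^u \ll u$ and $\overline{t^u} \perp u$ is routine; for singularity one takes the union of the supporting null sets.

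For the finite case, I would set $\nu = t + u$ and consider the linear functional $f \mapsto \int_X f \, \extd t$ on $L^2(X,\nu)$. Since $\left| \int f \, \extd t \right| \le \int |f| \, \extd\nu \le \nu(X)^{1/2} \, \|f\|_{L^2(\nu)}$ by Cauchy--Schwarz, this functional is bounded, so the Riesz representation theorem supplies a function $g \in L^2(X,\nu)$ with $\int f \, \extd t = \int f g \, \extd\nu$ for all $f \in L^2(X,\nu)$. Testing this identity on indicators of $\{g < 0\}$ and $\{g > 1\}$ and using $t, u \ge 0$ shows that $0 \le g \le 1$ holds $\nu$-almost everywhere. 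I would then split $X$ into $A = \{g < 1\}$ and $B = \{g = 1\}$, and define $t^u(E) = t(E \cap A)$ and $\overline{t^u}(E) = t(E \cap B)$.

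It remains to verify the two defining properties. Rewriting the Riesz identity as $\int f \, \extd u = \int f (1-g)\, \extd\nu$, the choice $f = \chi_B$ gives $u(B) = 0$; since $\overline{t^u}$ is supported on $B$, this yields $\overline{t^u} \perp u$. For absolute continuity, if $u(E) = 0$ then $\int_E (1-g)\,\extd\nu = 0$, forcing $g = 1$ holding $\nu$-almost everywhere on $E$, so $E \cap A$ is $\nu$-null and hence $t^u(E) = t(E \cap A) = 0$; thus $t^u \ll u$. Finally, for uniqueness, given two decompositions $t = t_1 + s_1 = t_2 + s_2$ with $t_i \ll u$ and $s_i \perp u$ (again working in the finite case so that subtraction of measures is unproblematic), the measure $t_1 - t_2 = s_2 - s_1$ is simultaneously absolutely continuous and singular with respect to $u$: it is carried by a $u$-null set, on which any $u$-absolutely continuous measure must vanish, so it is zero, giving $t_1 = t_2$ and $s_1 = s_2$.

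The main obstacle I anticipate is not any single deep step but rather the bookkeeping in the $\sigma$-finite reduction---ensuring the piecewise decompositions glue to genuine $\sigma$-finite measures and that both singularity and absolute continuity survive the countable sum---together with the care needed to make the uniqueness comparison rigorous when the measures are \emph{a priori} infinite, which is precisely why I reduce to the finite case before subtracting.
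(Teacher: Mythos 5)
Your proof is correct. The paper itself offers no proof of this statement: it is recorded in Appendix A.1 as a classical background result (the standard reference being Rudin's \emph{Real and Complex Analysis}, which appears in the paper's bibliography), so there is no in-paper argument to compare against. What you have written is precisely the standard von Neumann proof from that literature: reduce to finite measures via a common countable partition, apply the Riesz representation theorem to the bounded functional $f \mapsto \int_X f \, \extd t$ on $L^2(X, t+u)$, derive $0 \le g \le 1$ $\nu$-almost everywhere by testing on indicators, split along $\{g<1\}$ and $\{g=1\}$, and settle uniqueness by subtracting finite measures carried by a common $u$-null set. All the steps check out, including the two points that need mild care and that you handle correctly: the identity $\int f \, \extd u = \int f(1-g)\,\extd\nu$ used to get both $u(B)=0$ and $t^u \ll u$, and the restriction of a global decomposition to each finite piece before invoking uniqueness (note that $t_i|_{X_n} \ll u|_{X_n}$ and $s_i|_{X_n} \perp u|_{X_n}$ do follow from the global hypotheses, as your gluing argument implicitly requires). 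The only cosmetic omission is that $g$ should be redefined on a $\nu$-null set so that $0 \le g \le 1$ holds everywhere before $A$ and $B$ are formed, so that these are genuinely measurable sets partitioning $X$; this is routine.
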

The notation chosen here is particularly useful when we have more than
two measures around and need to distinguish between Lebesgue
decompositions with respect to different measures.

This result is completed by the following useful propositions. Fix
two measures $t$ and $u$ on $X$.
\begin{prop} \label{fact1}
In the Lebesgue decomposition $t = t^u + \overline{t^u}$, we have
$t^u \perp \overline{t^u}$.
\end{prop}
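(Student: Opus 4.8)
We have two $\sigma$-finite measures $t$ and $u$ on a measurable space $X$, and the Lebesgue decomposition $t = t^u + \overline{t^u}$, where $t^u \ll u$ and $\overline{t^u} \perp u$. I want to show that the two pieces of this decomposition, $t^u$ and $\overline{t^u}$, are themselves mutually singular: $t^u \perp \overline{t^u}$.

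Let me think about how to prove this.

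The key is to extract a carrier set for the singular part from the hypothesis $\overline{t^u} \perp u$, and then show this same set witnesses the mutual singularity of $t^u$ and $\overline{t^u}$.

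The plan is as follows. First I would use the defining property of the singular part: since $\overline{t^u} \perp u$, by definition there is a measurable set $A \subseteq X$ with $\overline{t^u}(A) = 0$ and $u(X - A) = 0$. I claim this same set $A$ witnesses $t^u \perp \overline{t^u}$. Indeed, to establish mutual singularity I need a measurable set on which one measure vanishes and off which the other vanishes. I already have $\overline{t^u}(A) = 0$, so it remains to show $t^u(X - A) = 0$. This follows from absolute continuity: since $t^u \ll u$ and $u(X-A) = 0$, every $u$-null set is $t^u$-null, so $t^u(X - A) = 0$ as well.

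Putting this together, the single set $A$ satisfies $t^u(X - A) = 0$ and $\overline{t^u}(A) = 0$, which is exactly the condition (with the roles of the two measures assigned appropriately) that $t^u \perp \overline{t^u}$. The whole argument is essentially a one-line consequence of unwinding the two defining properties $\overline{t^u}\perp u$ and $t^u \ll u$ and observing that the carrier set for the first, combined with absolute continuity for the second, does double duty. There is no real obstacle here: the only thing to be slightly careful about is matching the set $A$ to the asymmetric definition of $\perp$ given in the excerpt (namely that $s \perp s'$ means there is a measurable $A$ with $s(A) = u(X-A) = 0$), but since mutual singularity is symmetric this is purely bookkeeping. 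This proposition is a routine lemma whose role is to support later decomposition arguments such as those in Prop.~\ref{mutual-decomposition} and the proof of Thm.~\ref{magic.thm}.
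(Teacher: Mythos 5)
Your proof is correct and is essentially identical to the paper's: both extract the carrier set $A$ from the hypothesis $\overline{t^u} \perp u$ and then use $t^u \ll u$ to conclude $t^u(X - A) = 0$, so that the same set $A$ witnesses $t^u \perp \overline{t^u}$. No gaps; this matches the paper's argument step for step.
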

\begin{proof}
Given that $\overline{t^u} \perp u$, there is a
measurable set $A$ such that $u$ is supported on $A$
and $\overline{t^u}$ is supported on $X-A$:
\[
      u(S) = u(S\cap A) \qquad \overline{t^u}(S) = \overline{t^u}(S - A)
\]
for all measurable sets $S$.  But then absolute continuity of $t^u$
with respect to $u$ implies $t^u(X-A) = 0$, and therefore $t^u(S)=
t^u(S\cap A)$.  That is, $t^u$ is supported on $A$, so $t^u \perp
\overline{t^u}$.
\end{proof}
\begin{prop} \label{mutual-decomposition}
Consider the Lebesgue decompositions $t = t^u + \overline{t^u}$ and $u
= u^t + \overline{u^t}$.  Then $\overline{t^u} \perp \overline{u^t}$
and $t^u \sim u^t$.
\end{prop}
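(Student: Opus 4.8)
The plan is to extract explicit supporting sets from the two singularity hypotheses built into the Lebesgue decompositions, and then read off both conclusions from how these sets overlap. First I would unpack the mutual-singularity conditions. Since $\overline{t^u} \perp u$, there is a measurable set $A$ with $u(X - A) = 0$ and $\overline{t^u}(A) = 0$, so $u$ is supported on $A$ and $\overline{t^u}$ on $X - A$. Since $\overline{u^t} \perp t$, there is a measurable set $B$ with $t(X - B) = 0$ and $\overline{u^t}(B) = 0$, so $t$ is supported on $B$ and $\overline{u^t}$ on $X - B$.

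Next I would locate the supports of all four pieces. Because $t^u$ and $\overline{t^u}$ are dominated by $t$, and $t$ is supported on $B$, both are supported on $B$; symmetrically $u^t$ and $\overline{u^t}$ are supported on $A$. Combining with the previous paragraph, $\overline{t^u}$ is supported on $B - A$ and $\overline{u^t}$ on $A - B$. Moreover, since $t^u \ll u$ (hence supported on $A$) while $t^u$ is dominated by $t$ (hence supported on $B$), the part $t^u$ is supported on $A \cap B$; by the symmetric argument so is $u^t$.

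The first claim $\overline{t^u} \perp \overline{u^t}$ is then immediate, since the sets $B - A$ and $A - B$ are disjoint: taking $C = A - B$ yields $\overline{t^u}(C) = 0$ and $\overline{u^t}(X - C) = 0$. For $t^u \sim u^t$ I would work on $A \cap B$, where $\overline{t^u}$ and $\overline{u^t}$ both vanish, so that $t$ agrees with $t^u$ and $u$ with $u^t$ on this set. Given a measurable $S \subseteq A \cap B$ with $t^u(S) = 0$, we get $t(S) = 0$, and since $u^t \ll t$ this forces $u^t(S) = 0$; the reverse implication uses $t^u \ll u$ together with $u(S) = u^t(S)$ on $A \cap B$. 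Thus $t^u$ and $u^t$ share their null sets inside $A \cap B$, and since both are supported on $A \cap B$ this upgrades to $t^u \sim u^t$ on all of $X$.

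The only thing that requires care is the bookkeeping of supports; the genuine observation is that the two singular parts are forced onto the disjoint corners $B - A$ and $A - B$, while the two absolutely continuous parts both collapse onto the overlap $A \cap B$. Once these placements are established there is no hard analytic step, and both statements follow by elementary set manipulation together with the defining absolute-continuity relations.
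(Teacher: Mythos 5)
Your proof is correct and takes essentially the same approach as the paper's: both extract supporting sets from the singularity clauses of the Lebesgue decompositions and then transfer null sets across using the absolute-continuity relations $t^u \ll u$ and $u^t \ll t$. The only difference is cosmetic --- the paper works with the single set $A$ furnished by $\overline{t^u} \perp u$ (observing that $\overline{u^t}$, being dominated by $u$, is also supported on $A$, and proving $u^t \ll t^u$ by a direct null-set chase), whereas you additionally invoke the set $B$ from $\overline{u^t} \perp t$ and localize all four pieces on the resulting partition, which makes both conclusions visible at once at the cost of a little extra bookkeeping.
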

\begin{proof}
Given that $\overline{t^u} \perp u$, there is a measurable set $A$
such that $u$ is supported on $A$ and $\overline{t^u}$ is supported
on $X-A$.  Note first that $\overline{u^t}$ is supported on $A$, as
$u$ is. This shows that $\overline{t^u} \perp \overline{u^t}$.

Next, fix a $t^u$-null set $S$; we thus have that $t(S) =
\overline{t^u}(S)$. Since $\overline{t^u}$ is supported on $X-A$, it
follows that $t(S\cap A) = 0$. Using the fact that $u^t \ll t$, we
obtain $u^t(S \cap A) = 0$. But $u^t$ is supported on $A$, as $u$ is;
therefore $u^t(S)= u^t(S \cap A) = 0$. Thus, we have shown that $u^t
\ll t^u$. We show similarly $t^u \ll u^t$, and conclude that $t^u \sim
u^t$.
\end{proof}

The Lebesgue decomposition theorem is refined by the Radon--Nikodym
theorem, which provides a classification of absolutely continuous
measures:
\begin{theo}[Radon-Nikodym]
Let $t$ and $u$ be two $\sigma$-finite measures on $X$. Then $t\ll u$
if and only if $t$ can be written as $u$ times a function
$\rnd{t}{u}$, the {\bf Radon--Nikodym derivative}: that is,
\[
        t(A) = \int_A \extd u \, \rnd{t}{u}
\]
\end{theo}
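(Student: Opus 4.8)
The reverse implication is immediate: if $t(A) = \int_A \extd u\, \rnd{t}{u}$ for every measurable $A$, then any $u$-null set $A$ satisfies $t(A) = \int_A \extd u\,\rnd{t}{u} = 0$, so $t \ll u$. The content is the forward implication, and my plan is to produce the density by von Neumann's Hilbert-space argument. First I would reduce to the case of finite measures. Since $t$ and $u$ are both $\sigma$-finite, I can write $X$ as a countable disjoint union $X = \coprod_n X_n$ of measurable sets with $t(X_n) < \infty$ and $u(X_n) < \infty$; it then suffices to construct a density $h_n$ for the restrictions to each $X_n$ and patch, the countability guaranteeing that the patched function is measurable and that $t(A) = \sum_n t(A \cap X_n) = \sum_n \int_{A\cap X_n}\extd u\, h_n$ assembles into a single integral.

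On a fixed piece, where now both measures are finite, I would set $\phi = t + u$. Because $t(A) \le \phi(A)$, the map $f \mapsto \int_X f\,\extd t$ is a bounded linear functional on the Hilbert space $L^2(X,\phi)$: Cauchy--Schwarz together with $\phi(X) < \infty$ gives $|\int f\,\extd t| \le \phi(X)^{1/2}\,\|f\|_{L^2(\phi)}$. The Riesz representation theorem then yields $g \in L^2(X,\phi)$ with
\[
  \int_X f\,\extd t = \int_X f\, g\, \extd\phi \qquad \text{for all } f \in L^2(X,\phi),
\]
which upon substituting $\phi = t+u$ rearranges to $\int_X f\,(1-g)\,\extd t = \int_X f\, g\,\extd u$.

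Next I would pin down the range of $g$. Testing this identity against the indicators of $\{g < 0\}$ and $\{g \ge 1\}$ shows each is $\phi$-null, so $0 \le g < 1$ holds $\phi$-\alme; the hypothesis $t \ll u$ enters precisely in ruling out $\{g \ge 1\}$, where the computation alone gives only $u(\{g\ge 1\})=0$ and absolute continuity then forces $t(\{g\ge1\})=0$ as well. I note that without this hypothesis the same argument yields the Lebesgue decomposition theorem stated above, the set $\{g\ge 1\}$ carrying the part of $t$ singular with respect to $u$; the Radon--Nikodym statement is exactly the special case in which that singular part is absent. With $0 \le g < 1$ in hand I would set $h = g/(1-g)$, a nonnegative measurable function that is finite $\phi$-\alme, and promote the $L^2$ identity to the measure identity $t(A) = \int_A h\,\extd u$ for every measurable $A$.

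The main obstacle is this last promotion: the Riesz identity holds only for $L^2(\phi)$ test functions, whereas $1/(1-g)$ need not lie in $L^2$, so I cannot simply substitute $f = \chi_A/(1-g)$. The remedy is a monotone-convergence argument: apply the identity to the bounded functions $f_n = \chi_A\,(1 + g + \cdots + g^n)$, which lie in $L^2(\phi)$ since $0\le f_n \le (n+1)\chi_A$, and observe that $f_n(1-g) = \chi_A(1 - g^{n+1}) \uparrow \chi_A$ while $f_n g \uparrow \chi_A\, h$ pointwise $\phi$-\alme\ on $\{g<1\}$. Passing to the limit by the monotone convergence theorem on each side gives $t(A) = \int_A h\,\extd u$, identifying $h$ with $\rnd{t}{u}$. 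Finally, uniqueness $u$-\alme\ is routine: two nonnegative functions with equal integrals over every measurable set agree $u$-\alme, so $\rnd{t}{u}$ is well defined.
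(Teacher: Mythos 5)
Your argument is correct and complete. One point of orientation: the paper offers no proof of this theorem at all---it is stated in Appendix A.1 as classical background, right after the Lebesgue decomposition theorem, with the standard references (e.g.\ Rudin) in the bibliography---so what you have supplied is a self-contained proof where the paper has none. The route you chose is von Neumann's $L^2$ argument, which is in fact the proof in Rudin's book. Your handling of the delicate points is right: the reduction to finite measures by intersecting $\sigma$-finite decompositions of $t$ and $u$ and patching along a countable partition; the bound $|\int f\,\extd t|\le \phi(X)^{1/2}\|f\|_{L^2(\phi)}$ with $\phi=t+u$, which justifies invoking Riesz representation; and especially the monotone-convergence promotion with $f_n=\chi_A(1+g+\cdots+g^n)$, which is exactly the device needed to circumvent the fact that $\chi_A/(1-g)$ need not lie in $L^2(\phi)$. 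You also locate the use of the hypothesis precisely: testing $\int f(1-g)\,\extd t=\int fg\,\extd u$ on $\chi_{\{g\ge1\}}$ yields $u(\{g\ge1\})=0$ directly, while the vanishing of $\int_{\{g\ge1\}}(g-1)\,\extd t$ by itself only kills $t$ on $\{g>1\}$; it is genuinely $t\ll u$ that annihilates $t$ on all of $\{g\ge1\}$. Your side remark that dropping this hypothesis turns the same computation into a proof of the Lebesgue decomposition, with the singular part carried by $\{g\ge1\}$, is accurate and ties together the two theorems stated in this appendix, which the paper otherwise quotes as independent facts. The closing uniqueness claim is also fine, using $\sigma$-finiteness of $u$ in the standard way.
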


\subsection{Geometric mean measure}
\label{apx:gmean}

Suppose $X$ is a measurable space on which are defined two
measures, $u$ and $t$.  If each measure is absolutely
continuous with respect to the other, then we have the equality
\[
      \sqrt{\frac{\extd t}{\extd u}} \extd u =
      \sqrt{\frac{\extd  u}{\extd t }} \extd t
\]
so we can define the `geometric mean' $\sqrt{\extd u \extd t}$ of the
two measures to be given by either side of this equality.  In the more
general case, where $u$ and $t$ are not necessarily mutually
absolutely continuous, we may still define $\sqrt{\extd u \extd t}$,
as we shall see.

Using the notation of the first section we have the
following key fact.  Recall once more that all our measures are assumed
$\sigma$-finite.
\begin{prop}
If $u$ and $t$ are measures on the same measurable
space $X$ then
\[
      \sqrt{\frac{\extd t^u}{\extd u}} \extd u
        = \sqrt{\frac{\extd u^t}{\extd t }} \extd t
\]
\end{prop}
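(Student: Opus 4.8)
The plan is to dominate both measures by a single reference measure and thereby reduce the asserted identity to a manifestly symmetric statement about densities. Set $\mu = t + u$, which is again $\sigma$-finite, so that $t \ll \mu$ and $u \ll \mu$. By the Radon--Nikodym theorem I obtain nonnegative measurable densities $f = \rnd{t}{\mu}$ and $g = \rnd{u}{\mu}$, finite $\mu$-almost everywhere. Everything in sight will be expressed as a density against $\mu$, and the two sides of the claimed equation will both turn out to be $\sqrt{fg}\,\extd\mu$.

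The first real step is to identify the Lebesgue decomposition of $t$ with respect to $u$ in terms of these densities, namely $\extd t^u = f\,\mathbf{1}_{\{g>0\}}\,\extd\mu$ and $\extd \overline{t^u} = f\,\mathbf{1}_{\{g=0\}}\,\extd\mu$. To verify this I check the two defining properties: the first piece is absolutely continuous with respect to $u$, since any $u$-null set $A$ has $g=0$ $\mu$-almost everywhere on $A$, whence $A \cap \{g>0\}$ is $\mu$-null; and the second piece is supported on the $u$-null set $\{g=0\}$, hence singular to $u$. Uniqueness in the Lebesgue decomposition theorem then pins down both pieces. It follows that $\rnd{t^u}{u} = f/g$ holds $u$-almost everywhere, where necessarily $g > 0$.

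With this in hand the computation is short and symmetric. The left-hand side is the measure $A \mapsto \int_A \sqrnd{t^u}{u}\,\extd u = \int_A \sqrt{f/g}\;g\,\extd\mu = \int_A \sqrt{fg}\,\extd\mu$, where off $\{g>0\}$ the factor $g$ annihilates the integrand, so the values of $\rnd{t^u}{u}$ on $\{g=0\}$ (where it is not even defined) never enter. Running the identical argument with the roles of $t$ and $u$ interchanged gives $\overline{u^t}$, $u^t$, and $\rnd{u^t}{t} = g/f$, whence the right-hand side is $A \mapsto \int_A \sqrnd{u^t}{t}\,\extd t = \int_A \sqrt{g/f}\;f\,\extd\mu = \int_A \sqrt{gf}\,\extd\mu$. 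Since $\sqrt{fg} = \sqrt{gf}$, the two measures coincide.

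I expect the only genuine subtlety to be the bookkeeping of almost-everywhere qualifiers: the derivative $\rnd{t^u}{u}$ is defined only $u$-almost everywhere, that is $\mu$-almost everywhere on $\{g>0\}$, so I must be sure that integrating against $g\,\extd\mu$ never probes $\{g=0\}$ where the density is meaningless — which is exactly what the vanishing of $g$ guarantees. An alternative route would invoke Prop.~\ref{mutual-decomposition}, which gives $t^u \sim u^t$, restrict attention to their common support, and there apply the chain rule \eqref{chainrule} for equivalent measures; but the dominating-measure argument above is cleaner and symmetric under $t \leftrightarrow u$, so I would present that one.
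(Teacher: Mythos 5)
Your proof is correct, and it takes a genuinely different route from the paper's. The paper argues structurally: using Props.~\ref{fact1} and \ref{mutual-decomposition} it chooses sets $A$ and $B$ with $t$, $u^t$ supported on $A$ and $u$, $t^u$ supported on $B$, splits $X$ into the four pieces $A\cap B$, $A-B$, $B-A$, $X-(A\cup B)$, notes that on $A\cap B$ the restrictions of $t$ and $u$ are $t^u$ and $u^t$, hence mutually absolutely continuous, so that there the claim reduces to the symmetric identity $\sqrnd{t}{u}\,\extd u = \sqrnd{u}{t}\,\extd t$ for equivalent measures (asserted without proof at the start of Appendix~\ref{apx:gmean}), while on the remaining three pieces $t$ or $u$ vanishes and both sides are zero. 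You instead dominate by $\mu = t+u$, identify the Lebesgue decomposition explicitly as $\extd t^u = f\,\mathbf{1}_{\{g>0\}}\,\extd\mu$ and $\extd\overline{t^u} = f\,\mathbf{1}_{\{g=0\}}\,\extd\mu$ (your verification of the two defining properties plus the appeal to uniqueness is exactly right), obtain $\rnd{t^u}{u} = f/g$ $u$-\alme, and compute both sides to be $\sqrt{fg}\,\extd\mu$; your attention to $\{g=0\}$, where the derivative is undefined but the density $g$ of $u$ annihilates the integrand, handles the one real subtlety. As for what each approach buys: the paper's proof is shorter given the two singularity propositions, but it leans on the equivalent-measures case as a black box; yours is self-contained modulo Radon--Nikodym and uniqueness of the Lebesgue decomposition, is manifestly symmetric under $t \leftrightarrow u$, and yields the explicit formula $\gmean{t}{u} = \sqrt{fg}\,\extd\mu$, from which, for instance, Prop.~\ref{null} on $\sqrt{tu}$-null sets is immediate, since $\sqrt{fg}$ vanishes precisely where $f=0$ or $g=0$. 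One caveat about the alternative you sketch at the end: within this paper's logical order, invoking (\ref{chainrule}) here would be circular, because the appendix proves (\ref{chainrule}) via Lemma~\ref{chain}, whose statement already uses the geometric mean measure that this very proposition makes well defined; what the reduction on the common support actually needs is only the elementary chain rule for \emph{equivalent} measures, which is what the paper's proof implicitly invokes on $A\cap B$.
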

\begin{proof}
Our notation for the Lebesgue decomposition means
\[
     u = u^t + \overline{u^t}  \qquad u^t \ll t \quad \overline{u^t} \perp t
\]
and likewise,
\[
         t  = t^u + \overline{t^u} \qquad t^u \ll u \quad \overline{t^u} \perp u.
\]
Prop.\ \ref{fact1} shows that $u^t$ and $\overline{u^t}$ are mutually
singular.  So there is a measurable set $A$ with $t$ and $u^t$ are
supported on $A$ and, and $\overline{u^t}$ supported on $X-A$.
Similarly, there is a measurable set $B$ with $u$ and $t^u$ supported
on $B$, and $\overline{t^u}$ supported on $X-B$.  These sets divide
$X$ into four subsets: $A\cap B$, $A-B$, $B-A$, and $X-(B\cup A)$.
The uniqueness of the Lebesgue decomposition implies the decomposition
of the restriction of a measure is given by the restriction of the
decomposition.  On $A\cap B$, $u$ and $t$ restrict to $u^t$ and $t^u$,
which are mutually absolutely continuous.  Hence, on this subset, we
have
\[
      \sqrt{\frac{\extd t^u}{\extd u}} \extd u = \sqrt{\frac{\extd u^t}{\extd t }} \extd t
\]
On the other three subsets of $X$, we have, respectively $u=0$, $t
=0$, and $u=t =0$. In each case, both sides of the previous equation
are zero.
\end{proof}

Given this proposition, we define the {\bf geometric mean} of the
measures $u$ and $t$ to be:
\[
   \sqrt{\extd t \extd u}:=
      \sqrt{\frac{\extd t ^u}{\extd u}} \extd u = \sqrt{\frac{\extd u^t}{\extd t }} \extd t
\]
Outside of this appendix, to reduce notational clutter, we generally
drop the superscripts in Radon--Nikodym derivatives and simply write,
for example:
\[
    \rnd{t}{u} := \rnd{t^u}{u}.
\]

\begin{prop}
\label{null}
Let $t$, $u$ be measures on $X$.  Then a set is
$\sqrt{tu}$-null if and only if it is the union of a $t$-null set and
a $u$-null set.  Equivalently, expressed in terms of almost-everywhere
equivalence, the relation `$\sqrt{tu}$-\alme' is the transitive
closure of the union of the relations `$t$-\alme' and `$u$-\alme'.
\end{prop}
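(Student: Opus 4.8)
The plan is to prove the set-level statement first and then read off the reformulation in terms of almost-everywhere equivalence; I would treat the two implications separately, since only one of them carries real content. The reverse implication is immediate from the defining formulas for the geometric mean. Writing $\sqrt{tu} = \sqrnd{t^u}{u}\,\extd u = \sqrnd{u^t}{t}\,\extd t$, one sees at once that $\sqrt{tu}\ll u$ and $\sqrt{tu}\ll t$, since each expression exhibits $\sqrt{tu}$ as a density times $u$ (respectively $t$). Thus if $S = S_t\cup S_u$ with $t(S_t)=0$ and $u(S_u)=0$, then $\sqrt{tu}(S_t)=\sqrt{tu}(S_u)=0$, and subadditivity gives $\sqrt{tu}(S)=0$.

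For the forward implication I would reuse the partition of $X$ constructed in the proof of the preceding proposition. There one chooses measurable sets $A$ and $B$ such that $t$ and $u^t$ are supported on $A$ with $\overline{t^u}$ on $X-B$, and $u$ and $t^u$ are supported on $B$ with $\overline{u^t}$ on $X-A$; this cuts $X$ into $A\cap B$, $A-B$, $B-A$, and $X-(A\cup B)$. On $A-B$ we have $u=0$, on $B-A$ we have $t=0$, and on $X-(A\cup B)$ both vanish, so on each of these three pieces the corresponding part of $S$ is automatically null for one (or both) of $t,u$. The only piece requiring an argument is $A\cap B$, where $t=t^u$ and $u=u^t$ are mutually absolutely continuous; there the Radon--Nikodym derivative $\rnd{t^u}{u}$ is strictly positive $u$-\alme, so $\sqrt{tu}(E)=\int_E \sqrnd{t^u}{u}\,\extd u = 0$ for $E\subseteq A\cap B$ forces $u(E)=0$, and then $t(E)=0$ by mutual absolute continuity. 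Granting $\sqrt{tu}(S)=0$, I would therefore set $S_u = (S\cap A\cap B)\cup(S\cap(A-B))\cup(S\cap(X-(A\cup B)))$ and $S_t = S\cap(B-A)$; by the above $S_u$ is $u$-null, $S_t$ is $t$-null, and $S_t\cup S_u = S$.

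For the reformulation, recall that two objects are `$\sqrt{tu}$-\alme' equal exactly when the set where they differ is $\sqrt{tu}$-null, and similarly for $t$ and $u$. Given $f=g$ holding $\sqrt{tu}$-\alme, the set-level result writes $\{f\neq g\} = S_t\cup S_u$; defining $h$ to equal $g$ on $S_u$ and $f$ elsewhere exhibits a two-step chain in which the first link holds $u$-\alme\ and the second $t$-\alme. Conversely, any finite chain of $t$- and $u$-\alme\ equalities has difference sets each null for $t$ or for $u$, hence $\sqrt{tu}$-null since $\sqrt{tu}\ll t$ and $\sqrt{tu}\ll u$; their finite union is again $\sqrt{tu}$-null, so the endpoints agree $\sqrt{tu}$-\alme. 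This shows the $\sqrt{tu}$-\alme\ relation is precisely the transitive closure of the union of the $t$-\alme\ and $u$-\alme\ relations.

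The main obstacle is the $A\cap B$ step: everything hinges on knowing that the Radon--Nikodym density of the geometric mean is strictly positive where $t$ and $u$ are mutually absolutely continuous, so that $\sqrt{tu}$-nullity there genuinely forces $u$-nullity rather than mere containment in a smaller set. This strict positivity follows because the zero set $Z$ of $\rnd{t^u}{u}$ satisfies $t^u(Z)=0$, whence $u(Z\cap A\cap B)=0$ by mutual absolute continuity; the $\sigma$-finiteness standing assumption and the structural facts of Props.~\ref{fact1} and \ref{mutual-decomposition} (already used to build the partition) are what make this clean.
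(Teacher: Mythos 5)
Your proof is correct and takes essentially the same route as the paper's: both directions rest on the Lebesgue decompositions and the mutual singularity of the singular parts, and your interpolating function $h$ for the almost-everywhere reformulation is exactly the paper's construction. The only cosmetic difference is that the paper splits the $\sqrt{tu}$-null set $D$ along a single set $P$ supporting $\overline{u^t}$ with $\overline{t^u}(P)=0$ (using $\overline{t^u}\perp\overline{u^t}$ from Prop.~\ref{mutual-decomposition}, after noting $\sqrt{tu}\sim t^u\sim u^t$ so that $t(D)=\overline{t^u}(D)$ and $u(D)=\overline{u^t}(D)$), whereas you re-run the four-cell partition $A\cap B$, $A-B$, $B-A$, $X-(A\cup B)$ from the preceding proposition and verify by hand the strict positivity of $\rnd{t^u}{u}$ on $A\cap B$ --- which is in effect a proof of the equivalence $\sqrt{tu}\sim t^u$ that the paper merely asserts.
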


\begin{proof}
First, $\sqrt{tu}\ll t$ and  $\sqrt{tu}\ll u$; indeed $\sqrt{tu}$ is equivalent to both $t^u$ and $u^t$.   So clearly the union of a $t$-null set and a $u$-null set is also $\sqrt{tu}$-null.

Conversely, suppose $D\subseteq X$ has $\sqrt{tu}(D) = 0$.  Then $u(D)
= \bar{u^t}(D)$, and $t(D) = \bar{t^u}(D)$.  But $ \bar{u^t}\perp
\bar{t^u}$, so we can pick a set $P\subseteq X$ on which $\bar{u^t}$
is supported and $\bar{t^u}$ vanishes.  Then $t(D\cap P) = 0$ and $u(D
- P)=0$, so $D$ is the union of a $t$-null set and a $u$-null set.

Expressing this in terms of equivalence relations, suppose $f_1(x) =
f_2(x)$ $\sqrt{tu}$-\alme\ in the variable $x$; we will construct
$g(x)$ such that $g(x) = f_1(x)$ $t$-\alme\ and $g(x) = f_2(x)$
$u$-\alme.  Let $D$ be the set on which $f_1$ and $f_2$ differ, and
let $P$ be the set defined in the previous paragraph.  Set $g(x) :=
f_1(x) = f_2(x)$ on $X-D$, $g(x) := f_1(x)$ on $D - P$, and $g(x) :=
f_2(x)$ on $D \cap P$.  This defines $g$ on all of $x$.  Now $f_1$ and
$g$ differ only on $D\cap P$, which has $t$-measure 0; $f_2$ and $g$
differ only on $D - P$, which has $u$-measure 0.
\end{proof}

Now suppose we have three measures $t$, $u$, and $v$ on the same
space.  How are the geometric means $\gmean{t}{u}$ and $\gmean{t}{v}$
related?  An answer to this question is given by the following lemma,
which is useful for rewriting an integral with respect to one of these
geometric means as an integral with respect the other.

\begin{lemma}
\label{chain}
Let $t$, $u$, and $v$ be measures on $X$.  Then we
have an equality of measures
\[
    \gmean{t}{u} \sqrnd{v^u}{u} =
     \gmean{t}{v} \sqrnd{v^t}{t} \sqrnd{u^v}{v} \sqrnd{t^u}{u}
\]
\end{lemma}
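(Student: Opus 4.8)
The plan is to prove this identity by reducing both sides to explicit densities against a single dominating measure, where every Radon--Nikodym factor becomes an ordinary function. First I would introduce the reference measure $w = t + u + v$. Since $t,u,v$ are $\sigma$-finite and dominated by $w$, the Radon--Nikodym theorem provides nonnegative measurable densities
\[
   f = \rnd{t}{w}, \qquad g = \rnd{u}{w}, \qquad h = \rnd{v}{w}.
\]
The key computational input is two formulas, each established by the same support-splitting argument already used for the geometric mean in this appendix. For any two of our measures $a,b$ with $w$-densities $f_a, f_b$, decomposing $X$ along the support $\{f_b > 0\}$ of $b$ gives $\extd a^b = f_a\,\chi_{\{f_b>0\}}\,\extd w$, hence
\[
   \rnd{a^b}{b} = \frac{f_a}{f_b} \quad\text{$b$-\alme}, \qquad
   \gmean{a}{b} = \sqrt{f_a f_b}\;\extd w .
\]
The second formula is manifestly symmetric, matching the Proposition preceding this lemma.

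With these in hand I would simply compute the $w$-density of each side. On the left, $\gmean{t}{u} = \sqrt{fg}\,\extd w$ is supported on $\{g>0\}$, and multiplying by $\sqrnd{v^u}{u} = \sqrt{h/g}$ (legitimate since $\gmean{t}{u}\ll u$) yields the density $\sqrt{fg}\cdot\sqrt{h/g} = \sqrt{fh}$ on $\{g>0\}$ and $0$ on $\{g=0\}$; that is, the left side is $\sqrt{fh}\,\chi_{\{g>0\}}\,\extd w$. On the right I would peel off the factors one at a time, checking at each stage that the running measure stays absolutely continuous with respect to the measure appearing in the next derivative. Explicitly, $\gmean{t}{v}$ has density $\sqrt{fh}$ and is $\ll t, v$; then $\cdot\,\sqrnd{v^t}{t} = \sqrt{h/f}$ gives density $h$ on $\{f>0\}$; then $\cdot\,\sqrnd{u^v}{v} = \sqrt{g/h}$ gives $\sqrt{gh}$ on $\{f>0,h>0\}$. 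This running measure is $\ll u$, since its density vanishes wherever $g=0$, so the final factor $\sqrnd{t^u}{u} = \sqrt{f/g}$ may be applied, producing $\sqrt{gh}\cdot\sqrt{f/g} = \sqrt{fh}$ on $\{f>0,g>0,h>0\}$ and $0$ elsewhere.

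Comparing the two results, both sides have $w$-density equal to $\sqrt{fh}$ on $\{f>0,g>0,h>0\}$ and $0$ off this set: on the left, $\sqrt{fh}\,\chi_{\{g>0\}}$ already forces $f,h>0$ wherever it is nonzero, while on the right the support is exactly $\{f>0,g>0,h>0\}$ by construction. Since equality of densities against the common dominating measure $w$ is equivalent to equality of measures, this finishes the argument.

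The main obstacle I expect is not the algebra of the densities, which collapses cleanly, but the careful bookkeeping of null sets and supports needed to justify each expression as an honest measure. In particular, the right-hand side is a measure multiplied successively by three functions that are Radon--Nikodym derivatives with respect to three \emph{different} measures ($t$, $v$, and $u$), and each such multiplication is unambiguous only once one verifies that the running measure is absolutely continuous with respect to the denominator of the next derivative. Checking this chain of absolute continuities, and confirming that both sides vanish on precisely the same exceptional set $\{fgh = 0\}$, is where the real care is required.
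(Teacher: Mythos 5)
Your argument is correct, and it takes a genuinely different route from the paper's. The paper works intrinsically: it defines $\mu$ to be the left-hand side, inserts the Lebesgue decomposition $t = t^v + \overline{t^v}$, and spends most of its effort showing that the singular contribution $\extd\overline{t^v}\,\sqrnd{u^t}{t}\,\sqrnd{v^u}{u}$ vanishes, via a nested construction of supporting sets (a set $Y$ carrying $\overline{t^v}$ on which $v$ and $t^v$ vanish, a set $A$ carrying $t$ and $u^t$, and finally a set $C$ on which the remaining derivative is forced to vanish $\overline{t^v}$-\alme); the absolutely continuous part is then rewritten directly into the right-hand side. You instead dominate all three measures by $w=t+u+v$ (which is again $\sigma$-finite, so Radon--Nikodym applies) and reduce the identity to pointwise algebra of the densities $f,g,h$, via the two preliminary facts $\rnd{a^b}{b}=f_a/f_b$ \ $b$-\alme\ and $\gmean{a}{b}=\sqrt{f_a f_b}\,\extd w$, both of which are correct and proved exactly as you indicate (split $X$ along $\{f_b>0\}$ and invoke uniqueness of the Lebesgue decomposition); both sides of the lemma then collapse to $\sqrt{fh}\,\chi_{\{f>0,\,g>0,\,h>0\}}\,\extd w$. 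Your intermediate checks---that each running measure is absolutely continuous with respect to the denominator measure of the next derivative, because its $w$-density visibly vanishes off that measure's support---are precisely the bookkeeping needed to make each successive multiplication well defined, and they are complete as written. What your route buys is auditability: the singular parts are encoded once and for all in indicator functions, the symmetry of the geometric mean is manifest, and the almost-everywhere cancellations that the paper manages through successive mutually-singular set constructions become transparent pointwise identities on $\{fgh>0\}$. What the paper's route buys is the avoidance of any auxiliary dominating measure, keeping every object canonically attached to $t,u,v$; but since the conclusion is independent of the choice of $w$, this is an aesthetic rather than a mathematical advantage, and your version is arguably easier to verify line by line.
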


\begin{proof}
Let us first define a measure $\mu$ by the left side of the desired
equality:
\[
  \extd \mu =   \gmean{t}{u} \sqrnd{v^u}{u}
\]
We then have, using the definition of geometric mean measure,
\begin{align*}
\extd \mu &=  \extd t \sqrnd{u^t}{t} \sqrnd{t^u}{u} \\
        &= (\extd t^v + \extd \overline{t^v}) \sqrnd{u^t}{t} \sqrnd{t^u}{u}
\end{align*}
where the latter expression gives the Lebesgue decomposition of $\mu$
with respect to $v$.  However, as we show momentarily, the singular
part of this decomposition is identically zero.  Assuming this result
for the moment, we then have
\begin{align*}
\extd \mu &=  \extd {t^v} \sqrnd{u^t}{t} \sqrnd{t^u}{u} \\
              &= \extd v \rnd{t^v}{v} \sqrnd{u^t}{t} \sqrnd{t^u}{u} \\
              &=\gmean{t}{v} \sqrnd{v^t}{t} \sqrnd{u^v}{v} \sqrnd{t^u}{u}
\end{align*}
as we wished to show.  To complete the proof, we thus need only see
that the $\overline{t^v}$ part of $\mu$ vanishes:
\[
    \extd \overline{t^v} \sqrnd{u^t}{t} \sqrnd{v^u}{u} = 0
\]
That is, we must show that
\[
  \overline{\mu^v}(X)=  \int_X  \extd \overline{t^v} \sqrnd{u^t}{t} \sqrnd{v^u}{u} = 0.
\]
Let $Y\subseteq X$ be a measurable set such that $\overline{t^v}$ is
supported on $Y$, while $v$ and $t^v$ are supported on its complement:
\[
        v =
v|_{X-Y} \qquad t^v = t^v|_{X-Y} \qquad \overline{t^v}= \overline{t^v}|_{Y}
\]
Similarly, let $A\subseteq X$ be such that
\[
  t=t|_A   \qquad u^t= ut|_A \qquad \overline{u^t} = \overline{u^t}|_{X-A}
\]
Note that
\[
      \rnd{u^t}{t}
\]
vanishes $t$--\alme, and hence $\overline{t^v}$--\alme\, on $X-A$.
Thus the measure
\[
  \extd \overline{t^v} \sqrnd{u^t}{t}
\]
is zero on $X-A$.  Since we also have $\overline{t^v}$ vanishing on
$X-Y$, we have
\[
\overline{\mu^v}(X)  =
   \int_{Y\cap A} \extd \overline{t^v} \sqrnd{u^t}{t} \sqrnd{v^u}{u}.
\]
Now by construction of $Y$, we have $v(Y\cap A) = 0$, and hence $v^u(Y
\cap A)$ = 0.  So
\[
   \rnd{v^u}{u}
\]
vanishes $u$--\alme, and hence $u^t$--\alme, on $Y \cap A$.  If
$C\subseteq Y\cap A$ is the set of points where the latter
Radon--Nikodym derivative does not vanish, then $u^t(C)=0$ implies
that
\[
    \sqrnd{u^t}{t}
\]
vanishes $t$--\alme, hence $\overline{t^v}$--\alme\ on C.  Thus
\[
\overline{\mu^v}(X) =
      \int_C  \extd \overline{t^v} \sqrnd{u^t}{t} \sqrnd{v^u}{u} = 0,
\]
so $\mu$ is absolutely continuous with respect to $v$.
\end{proof}

\begin{prop} Let $t, u$ be measures on $X$, and consider
the Lebesgue decompositions
$t = t^u + \overline{t^u}$ and $u = u^t + \overline{u^t}$. Then:
$$\rnd{u^t}{t}\rnd{t^u}{u} = 1 \qquad \sqrt{tu}-\alme$$
\end{prop}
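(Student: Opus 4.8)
The plan is to deduce the identity as an immediate special case of Lemma~\ref{chain}, which is the natural tool here. Recall that, by definition of the geometric mean and the remarks in the proof of Prop.~\ref{null}, the measure $\gmean{t}{u}=\sqrt{tu}$ is $\sigma$-finite and equivalent to both $t^u$ and $u^t$; in particular any equality of measures of the form $\gmean{t}{u}\,h_1=\gmean{t}{u}\,h_2$ forces $h_1=h_2$ to hold $\sqrt{tu}$-\alme. So it suffices to produce such an equality with $h_1=1$ and $h_2=\sqrnd{u^t}{t}\,\sqrnd{t^u}{u}$, and then read off the pointwise identity and square it.

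First I would specialize Lemma~\ref{chain} by taking the third measure $v$ to equal $u$. Since $u\ll u$, the Lebesgue decomposition of $u$ with respect to itself is trivial, $u^u=u$, so $\sqrnd{u^u}{u}=1$. Substituting $v=u$ into
\[
\gmean{t}{u}\,\sqrnd{v^u}{u}=\gmean{t}{v}\,\sqrnd{v^t}{t}\,\sqrnd{u^v}{v}\,\sqrnd{t^u}{u}
\]
collapses the left-hand side to $\gmean{t}{u}$ (the factor $\sqrnd{u^u}{u}=1$) and the right-hand side to $\gmean{t}{u}\,\sqrnd{u^t}{t}\,\sqrnd{t^u}{u}$ (using $\gmean{t}{u}=\gmean{t}{v}$ and $\sqrnd{u^v}{v}=\sqrnd{u^u}{u}=1$). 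This yields the equality of measures
\[
\gmean{t}{u}=\gmean{t}{u}\,\sqrnd{u^t}{t}\,\sqrnd{t^u}{u}.
\]
Reading off densities with respect to $\sqrt{tu}$ gives $\sqrnd{u^t}{t}\,\sqrnd{t^u}{u}=1$ $\sqrt{tu}$-\alme, and squaring produces $\rnd{u^t}{t}\,\rnd{t^u}{u}=1$ $\sqrt{tu}$-\alme, exactly the claim (which is what equation~(\ref{chainrule}) referred to in the main text).

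Since Lemma~\ref{chain} does all the real work, the only point requiring care is the legitimacy of the degenerate substitution $v=u$ and the passage from an equality of measures to an \alme\ equality of densities; both are routine given $\sigma$-finiteness. If one instead wanted a self-contained argument not invoking Lemma~\ref{chain}, I would localize to the common support: using Prop.~\ref{fact1} and Prop.~\ref{mutual-decomposition} one finds a set $S=A\cap B$ on which $t$ restricts to $t^u$, $u$ restricts to $u^t$, these restrictions are mutually absolutely continuous, and $\sqrt{tu}$ is concentrated; there the global derivatives $\rnd{t^u}{u}$ and $\rnd{u^t}{t}$ coincide with $\rnd{t|_S}{u|_S}$ and $\rnd{u|_S}{t|_S}$, and the classical chain rule for mutually absolutely continuous measures finishes the proof. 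In that route the main obstacle is precisely the bookkeeping identifying the global Radon--Nikodym derivatives with the restricted ones off a $\sqrt{tu}$-null set; the Lemma~\ref{chain} approach packages this obstacle away entirely, which is why I would present it as the primary argument.
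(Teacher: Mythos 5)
Your proposal is correct and takes essentially the same route as the paper: the paper's proof is exactly the specialization of Lemma~\ref{chain} to $v=u$, which collapses the factor $\sqrnd{u^u}{u}$ to $1$ and yields the measure equality $\gmean{t}{u} = \gmean{t}{u}\,\sqrnd{u^t}{t}\,\sqrnd{t^u}{u}$, from which the identity follows $\sqrt{tu}$-\alme. The only difference is that you spell out the (routine) passage from equality of measures to \alme\ equality of densities via $\sigma$-finiteness, which the paper leaves implicit.
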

\begin{proof}
Applying Lemma \ref{chain} with $v=u$ we get
\[
    \gmean{t}{u} =
     \gmean{t}{u} \sqrnd{u^t}{t}  \sqrnd{t^u}{u}
\]
Thus the function $\rnd{u^t}{t} \rnd{t^u}{u}$ differs from 1 at most
on a set of $\sqrt{tu}$-measure zero.
\end{proof}

\subsection{Measurable groups}
\label{apx:measurable_groups}

Given a measurable group $H$, it is natural to ask whether $H^*$
is again a measurable group.  The main goal of this section is to
present necessary and sufficient conditions for this to be so.
These conditions are due to Yves de Cornulier and Todd Trimble.
We also show that when $H$ and $H^*$ are measurable, a continuous
action of a measurable group $G$ on $H$ gives a continuous action of
$G$ on $H^*$.

Recall that for us, a {\bf measurable group} is a locally compact Hausdorff
second countable topological group.  Any measurable group
becomes a measurable space with its $\sigma$-algebra of Borel subsets.
The multiplication and inverse maps for the group are then measurable.
However, not every measurable space that is a group with measurable
multiplication and inverse maps can be promoted to a measurable group
in our sense!  There may be no second countable locally compact Hausdorff
topology making these maps continuous.  Luckily, all the counterexamples
are fairly exotic \cite[Sec.\ 1.6]{BeckerKechris}.

\begin{lemma} \label{lem:automatic_continuity}
A measurable homomorphism between measurable groups is continuous.
\end{lemma}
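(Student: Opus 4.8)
The plan is to reduce the statement to continuity at the identity and then exploit the fact that a measurable homomorphism pulls back a countable open cover of the target to a countable \emph{Borel} cover of the source, one piece of which must be ``large'' in the sense of Haar measure. Let $\phi \maps G \to G'$ be a measurable homomorphism of measurable (lcsc) groups, with identities $e \in G$ and $e' \in G'$. Since $\phi(g) = \phi(g_0)\,\phi(g_0^{-1}g)$ and left and right translations are homeomorphisms, it suffices to prove $\phi$ is continuous at $e$; equivalently, given any neighborhood $V$ of $e'$, I must produce a neighborhood $U$ of $e$ with $\phi(U) \subseteq V$.

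First I would fix a symmetric open neighborhood $W$ of $e'$ with $W^{-1}W \subseteq V$. Because $G'$ is second countable, it is covered by countably many left translates $h_n W$, $n \in \N$, so $G = \bigcup_n \phi^{-1}(h_n W)$, and each $\phi^{-1}(h_n W)$ is Borel since $\phi$ is measurable and $h_n W$ is open. Because $G$ is locally compact it carries a left Haar measure $\mu$, which is $\sigma$-finite; as a countable union of null sets is null, at least one set $A := \phi^{-1}(h_n W)$ has positive measure, and intersecting $A$ with a set of finite measure arranges $0 < \mu(A) < \infty$. The crucial input is then the Steinhaus--Weil theorem: for a measurable set $A$ of positive finite Haar measure in a locally compact group, $A^{-1}A$ contains an open neighborhood $U$ of $e$. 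Granting this, the proof finishes at once: any $g \in U \subseteq A^{-1}A$ is $g = a_1^{-1}a_2$ with $a_1, a_2 \in A$, so $\phi(a_i) = h_n w_i$ for some $w_i \in W$, whence $\phi(g) = w_1^{-1} w_2 \in W^{-1}W \subseteq V$. Thus $\phi(U) \subseteq V$, giving continuity at $e$ and hence everywhere.

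I expect the Steinhaus--Weil step to be the one genuinely nontrivial analytic ingredient, and hence the main obstacle: it is the assertion that a positive-measure set is ``thick'' near the identity in its difference set, proved classically via continuity of a suitable convolution of indicator functions, and it may simply be cited from the standard literature on locally compact groups. It is worth noting a parallel category-theoretic route that avoids Haar measure entirely: since $G$ is Polish it is non-meager in itself by the Baire category theorem, so some $\phi^{-1}(h_n W)$ is non-meager, and Pettis's theorem then furnishes the neighborhood $U \subseteq A^{-1}A$ directly. Either route works, since Borel sets are simultaneously Haar measurable and possess the Baire property, and the concluding computation is identical in both.
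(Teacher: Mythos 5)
Your proof is correct, and it differs from the paper in an instructive way: the paper offers no argument at all for Lemma~\ref{lem:automatic_continuity}, simply citing Kleppner \cite{Kleppner}, whereas you supply the classical self-contained proof. All your steps check out in the lcsc setting of the paper: the reduction to continuity at the identity via $\phi(g)=\phi(g_0)\phi(g_0^{-1}g)$ is valid; the countable cover $G'=\bigcup_n h_nW$ exists because second countability gives a countable dense set $\{h_n\}$ and the symmetry of $W$ makes the translates cover; Haar measure on an lcsc group is nonzero and $\sigma$-finite, so some Borel piece $\phi^{-1}(h_nW)$ has positive measure and can be cut down to finite positive measure; and the Steinhaus--Weil theorem (which holds for both $A^{-1}A$ and $AA^{-1}$, e.g.\ Hewitt--Ross, Cor.\ 20.17) plus the computation $\phi(a_1^{-1}a_2)=w_1^{-1}w_2\in W^{-1}W\subseteq V$ finishes it. Two remarks on what each route buys. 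Your use of second countability to get the countable cover is precisely the step that fails for general locally compact groups, and Kleppner's cited theorem is the strengthening that removes all countability hypotheses (this is presumably why the authors cite it rather than reprove the easy case); in the paper's standing framework of lcsc groups, however, your argument fully suffices, and your Baire-category alternative via Pettis's theorem is arguably even better adapted to the paper's standard-Borel conventions, since it avoids Haar measure entirely and works in any Polish group, using only that Borel sets have the Baire property and that a locally compact (or completely metrizable) space is non-meager in itself.
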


\begin{proof}
Various proofs can be found in the literature.  For
example, Kleppner showed that a measurable homomorphism between
locally compact groups is automatically continuous \cite{Kleppner}.
\end{proof}

Given a measurable group $H$, we let $H^*$ be the set of measurable
--- or equivalently, by Lemma \ref{lem:automatic_continuity}, continuous ---
homomorphisms from $H$ to $\C^\times$.  We make $H^*$ into a
topological space with the compact-open topology.  $H^*$ then
becomes a topological group under pointwise multiplication.

The first step in analyzing $H^*$ is noting that every continuous
homomorphism $\chi \maps H \to \C^\times$ is trivial on the commutator
subgroup $[H,H]$ and thus also on its closure $\overline{[H,H]}$.
This lets us reduce the problem from $H$ to
\[      \Ab(H) = H/\overline{[H,H]},  \]
which becomes a topological group with the quotient topology.
Let $\pi \maps H \to \Ab(H)$ be the quotient map.  Then we have:

\begin{lemma}  Suppose $H$ is a measurable group.  Then $\Ab(H)$
is a measurable group.   $\Ab(H)^*$ is a measurable group
if and only if $H^*$ is, and in this case the map
\[       \begin{array}{rccl}
\pi^* \maps & \Ab(H)^* & \to & H^*  \\
          & \chi    & \mapsto & \chi \pi
\end{array}
\]
is an isomorphism of measurable groups.
\end{lemma}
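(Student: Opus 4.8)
The plan is to treat the three assertions in turn, putting almost all of the work into showing that $\pi^*$ is a homeomorphism for the compact-open topologies; the biconditional and the final clause then follow formally. First I would dispose of the claim that $\Ab(H) = H/\overline{[H,H]}$ is a measurable group. Since $[H,H]$ is normal, its closure $\overline{[H,H]}$ is a closed normal subgroup of $H$, so $\Ab(H)$ is a topological group and the projection $\pi\maps H \to \Ab(H)$ is a continuous open surjection. A quotient of a locally compact Hausdorff group by a closed normal subgroup is again locally compact and Hausdorff, and because $\pi$ is an open continuous surjection, second countability of $H$ passes to $\Ab(H)$. Hence $\Ab(H)$ is lcsc, i.e.\ a measurable group.

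Next I would establish the purely algebraic content: that $\pi^*$ is a bijective group homomorphism. It is well defined and a homomorphism because precomposition with $\pi$ sends continuous homomorphisms $\Ab(H)\to\C^\times$ to continuous homomorphisms $H\to\C^\times$ and respects pointwise products. Injectivity is immediate from surjectivity of $\pi$. For surjectivity, any continuous $\psi\maps H \to \C^\times$ kills every commutator (as $\C^\times$ is abelian), hence kills $[H,H]$ and, by continuity, its closure $\overline{[H,H]}$; thus $\psi$ factors uniquely as $\chi\pi$ with $\chi \in \Ab(H)^*$.

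The heart of the argument is that $\pi^*$ is a homeomorphism for the compact-open topologies, whose subbasic open sets are $W(K,U) = \{f : f(K) \subseteq U\}$ with $K$ compact in the domain and $U$ open in $\C^\times$. Continuity of $\pi^*$ is easy: $(\pi^*)^{-1}(W(K,U)) = W(\pi(K),U)$, and $\pi(K)$ is compact since $\pi$ is continuous. For openness I would invoke the standard fact that, for the quotient map of a locally compact group by a closed subgroup, every compact $L \subseteq \Ab(H)$ equals $\pi(K)$ for some compact $K \subseteq H$; this follows from openness of $\pi$ and the usual finite-subcover argument, taking $K = C \cap \pi^{-1}(L)$ for a compact $C$ with $\pi(C) \supseteq L$. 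Granting this, for $\psi = \chi\pi$ one has $\chi(L) = \chi(\pi(K)) = \psi(K)$, so $\pi^*(W(L,U)) = W(K,U)$; since $\pi^*$ is a bijection it carries finite intersections and arbitrary unions of subbasic opens to the corresponding sets, hence is an open map. Therefore $\pi^*$ is a continuous open bijective homomorphism, i.e.\ an isomorphism of topological groups.

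Finally, the remaining assertions follow at once. Being lcsc is a property of topological groups preserved under isomorphism of topological groups, so $\Ab(H)^*$ is a measurable group if and only if $H^*$ is; and whenever they are, $\pi^*$ is, by the previous paragraph, already an isomorphism of topological groups between lcsc groups, hence an isomorphism of measurable groups. I expect the lifting of compact sets through $\pi$ to be the only genuinely nontrivial input; everything else is bookkeeping with the compact-open topology and with elementary properties of quotients of locally compact groups.
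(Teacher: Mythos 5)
Your proof is correct, and it takes a genuinely different route from the paper's at the decisive step. The paper establishes only that $\pi^*$ is a \emph{continuous} bijection (continuity via nets converging uniformly on compacta, using that $\pi$ carries compacta to compacta), and then, rather than proving openness, invokes descriptive set theory: a continuous bijection between second countable locally compact Hausdorff spaces is a measurable bijection of standard Borel spaces, and such a bijection automatically has a measurable inverse (the Kuratowski-type result quoted from Parthasarathy). So the paper obtains a Borel isomorphism without ever showing $\pi^*$ is a homeomorphism. You instead prove the stronger statement that $\pi^*$ is an isomorphism of \emph{topological} groups, the key input being the compact-lifting property of the open quotient map $\pi$ ($K = C \cap \pi^{-1}(L)$ with $C$ compact and $\pi(C) \supseteq L$); that lemma and your computation $\pi^*\bigl(W(L,U)\bigr) = W(K,U)$ are correct, and bijectivity does let you pass from subbasic opens to all opens. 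What each approach buys: yours makes the biconditional completely transparent, since being lcsc is manifestly invariant under topological isomorphism --- whereas with only a continuous bijection, as in the paper, the transfer of local compactness between $\Ab(H)^*$ and $H^*$ is not automatic, and the paper's handling of the ``if and only if'' is correspondingly terse; conversely, the paper's route is shorter given the standard Borel machinery it already has in hand, and avoids the compact-lifting argument entirely. For the first claim, the paper cites its quotient-space lemma (due to Mackey) to see $\Ab(H)$ is lcsc, while you verify it directly from openness of $\pi$; both are fine. One small gloss worth adding to your factorization step: the induced $\chi$ with $\psi = \chi\pi$ is continuous because $\pi$, being open and surjective, is a quotient map.
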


\begin{proof} Suppose $H$ is a measurable group: that is,
a second countable locally compact Hausdorff group.
By Lemma \ref{lem:quotient_space}, the quotient
$\Ab(H)$ is a second countable locally compact Hausdorff
space because the subgroup $\overline{[H,H]}$ is closed.
So, $\Ab(H)$ is a measurable group.

The map $\pi^*$ is a bijection because every continuous homomorphism
$\phi \maps H \to \C^\times$ equals the identity on $\overline{[H,H]}$
and thus can be written as $\chi \pi$ for a unique continuous
homomorphism $\phi \maps \Ab(H) \to \C^\times$.  We can also see that
$\pi^*$ is continuous.  Suppose a net $\chi_\alpha \in \Ab(H)^*$
converges uniformly to $\chi \in \Ab(H)^*$ on compact subsets of
$\Ab(H)$.  Then if $K \subseteq H$ is compact, $\chi_\alpha \pi$
converges uniformly to $\chi \pi$ on $K$ because $\chi_a$ converges
uniformly to $\chi$ on the compact set $\pi (K)$.

It follows that $\pi^* \maps \Ab(H)^* \to H^*$ is a continuous bijection
between second countable locally compact Hausdorff spaces.  This
induces a measurable bijection between measurable spaces.  Such a map
always has a measurable inverse \cite[Chap.\ I, Cor.\ 3.3]{Parthasarathy}.
(This reference describes measurable spaces in terms of separable metric
spaces, but we have seen in Lemma \ref{lem:standard_Borel} that this
characterization is equivalent to the one we are using here.)
So, $\pi^*$ is an isomorphism of measurable spaces.  Since it is a
group homomorphism, it is also an isomorphism of measurable groups.
\end{proof}

Thanks to the above result, we henceforth assume $H$ is an abelian
measurable group.  Since
\[         \C^\times \cong \U(1) \times \R \]
as topological groups, we have
\[      H^* \cong \hom(H,\U(1)) \times \hom(H,\R)  \]
as topological groups, where $\hom$ denotes the space of
continuous homomorphisms equipped with its compact-open topology
and made into a topological group using pointwise multiplication.
The topological group
\[        \hat H = \hom(H,\U(1)) \]
is the subject of Pontrjagin duality so this part of $H^*$ is
well-understood \cite{Armacost,Morris,Pontrjagin}.  In particular:

\begin{lemma} If $H$ is an abelian measurable group, so is its
Pontrjagin dual $\hat{H}$.
\end{lemma}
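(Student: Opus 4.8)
The plan is to verify the three defining properties of a measurable group---abelian, locally compact Hausdorff, and second countable---for $\hat{H} = \hom(H,\U(1))$ in turn, leaning on classical Pontrjagin duality for the substantive parts. The abelian property is immediate: since $\U(1)$ is abelian and the group operation on $\hat{H}$ is pointwise multiplication, $\hat{H}$ is abelian; and it is a Hausdorff topological group in the compact-open topology because two distinct characters are already separated by evaluation at a single point of $H$.

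The heart of the argument is local compactness. Here I would simply invoke the Pontrjagin duality theorem (see \cite{Morris,Pontrjagin}), which asserts precisely that the dual $\hat{H}$ of a locally compact Hausdorff abelian group is again locally compact Hausdorff abelian. This settles local compactness outright, and it is the step I expect to be the main obstacle were one to attempt it from first principles; I would treat it as a citation rather than reprove it.

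It then remains to establish second countability, which does \emph{not} follow formally from local compactness and must be extracted from finer duality correspondences. I would use the standard fact that a locally compact Hausdorff group is second countable if and only if it is simultaneously metrizable and $\sigma$-compact. Since $H$ is a measurable group it is second countable, hence both metrizable and $\sigma$-compact. The duality theory for locally compact abelian groups supplies two \emph{crossed} equivalences: $H$ is metrizable if and only if $\hat{H}$ is $\sigma$-compact, and $H$ is $\sigma$-compact if and only if $\hat{H}$ is metrizable (see \cite{Morris}, or the treatment of Hewitt and Ross). Applying both, $\hat{H}$ is at once $\sigma$-compact and metrizable, and is therefore second countable.

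Combining the three parts, $\hat{H}$ is an abelian, locally compact Hausdorff, second countable topological group, i.e.\ an abelian measurable group, as claimed. The one delicate point is the second-countability step: one must be careful to route through the crossed correspondence between metrizability and $\sigma$-compactness, rather than trying to transport second countability from $H$ to $\hat{H}$ directly.
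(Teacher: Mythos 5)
Your proposal is correct and follows essentially the same route as the paper: local compactness of $\hat{H}$ is cited from classical Pontrjagin duality (Morris), and second countability is obtained by converting it to metrizability plus $\sigma$-compactness of $H$ and transporting these across the crossed correspondence (a locally compact Hausdorff abelian group is metrizable if and only if its dual is $\sigma$-compact), exactly as in the paper's appeal to Morris's Theorem 29 applied in both directions. The only cosmetic difference is that the paper derives metrizability of $H$ from the standard Borel space lemma and spells out the final step ($\sigma$-compact metric spaces are second countable) explicitly, while you package these as a single equivalence.
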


\begin{proof} It is well-known that whenever $H$ is an abelian locally
compact Hausdorff group, so is $\hat{H}$ \cite[Thm.\ 10]{Morris}.  So, let
us assume in addition that $H$ is second countable, and show the same for
$\hat{H}$.

For this, first note by Lemma \ref{lem:standard_Borel} that $H$ is
metrizable.  A locally compact second-countable space is clearly
$\sigma$-compact, so $H$ is also $\sigma$-compact.  Second, note that
a locally compact Hausdorff abelian group $H$ is metrizable if and
only $\hat{H}$ is $\sigma$-compact \cite[Thm.\ 29]{Morris}.

It follows that $\hat{H}$ is also $\sigma$-compact and metrizable.
Since a compact metric space is second countable (for each
$n$ it admits a finite covering by balls of radius $1/n$), so is a
$\sigma$-compact metric space.  It follows that $\hat{H}$ is second
countable.
\end{proof}

The issue thus boils down to: if $H$ is an abelian measurable
group, is $\hom(H,\R)$ also measurable?  Sadly, the answer is ``no".
Suppose $H$ is the free abelian group on countably many generators.
Then $\hom(H,\R)$ is a countable product of copies of $\R$, with its
product topology.  This space is not locally compact.

Luckily, there is a sense in which this counterexample is the
only problem:

\begin{lemma}  Suppose that $H$ is an abelian measurable group.
Then $\hom(H,\R)$ is measurable if and only if the free abelian
group on countably many generators is not a discrete subgroup of
$H$.
\end{lemma}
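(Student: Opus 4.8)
The plan is to reduce the statement to a finite-dimensionality criterion for the real vector space $\hom(H,\R)$, using the structure theory of locally compact abelian groups. First I would note that $\hom(H,\R)$, with the compact–open topology, is a Hausdorff topological vector space over $\R$: it is a closed linear subspace of $C(H,\R)$ (the homomorphism condition is preserved under pointwise, hence compact–open, limits, and sums and scalar multiples of homomorphisms into the abelian group $\R$ are again homomorphisms). By a theorem of Riesz, such a space is locally compact if and only if it is finite-dimensional; in the finite-dimensional case it is isomorphic to some $\R^m$ and hence lcsc (measurable), while in the infinite-dimensional case it is not locally compact and so cannot be a measurable group. Thus it suffices to show that $\hom(H,\R)$ is finite-dimensional precisely when $\Z^{(\infty)} := \bigoplus_{n\in\N}\Z$ fails to embed in $H$ as a discrete subgroup.

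Next I would invoke the structure theorem to choose an open subgroup $H_0\cong \R^n\times K$ with $K$ compact. Since $H$ is second countable it is covered by countably many cosets of the open subgroup $H_0$, so the discrete quotient $D = H/H_0$ is countable. Because $\R$ has no nontrivial compact subgroups, $\hom(K,\R)=0$, and combining precomposition with the projection $\pi\colon H\to D$ with restriction along $H_0\hookrightarrow H$ yields an exact sequence
\[
0\to \hom(D,\R)\to \hom(H,\R)\to \hom(H_0,\R)=\R^n .
\]
Hence $\hom(H,\R)$ is finite-dimensional if and only if $\hom(D,\R)$ is. For the countable discrete group $D$, tensoring with $\Q$ identifies $\hom(D,\R)$ with $\hom_\Q(D\otimes\Q,\R)$, which is finite-dimensional exactly when the torsion-free rank of $D$ is finite (an infinite rank produces a product isomorphic to $\R^{\N}$). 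So the whole question reduces to the equivalence: $D$ has infinite torsion-free rank $\iff$ $\Z^{(\infty)}$ is a discrete subgroup of $H$.

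The heart of the argument is this last equivalence, which I would prove using that $H_0$ is open. For the direction $(\Leftarrow)$, a discrete subgroup $\Gamma\cong\Z^{(\infty)}$ meets $H_0$ in a discrete subgroup of $\R^n\times K$; projecting to $\R^n$ has finite kernel $\Gamma\cap K$, and a compactness argument (a discrete subgroup meets each compact set in a finite set) shows the image in $\R^n$ is again discrete, hence free of finite rank, so $\Gamma\cap H_0$ is finitely generated. Then $\Gamma/(\Gamma\cap H_0)$ embeds in $D$ and, being a quotient of $\Z^{(\infty)}$ by a finitely generated subgroup, still has infinite torsion-free rank, so $D$ does as well. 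For $(\Rightarrow)$, if $D$ has infinite torsion-free rank it contains a free subgroup of infinite rank; lifting its generators through $\pi$ and extending freely gives a homomorphism $s\colon\Z^{(\infty)}\to H$ with $\pi\circ s$ injective. Then $s$ is injective and $s(\Z^{(\infty)})\cap H_0=\{0\}$; since $H_0$ is open, $\{0\}$ is open in $s(\Z^{(\infty)})$, forcing that subgroup to be discrete.

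I expect the main obstacle to be the $(\Leftarrow)$ direction, specifically the claim that a discrete subgroup of $\R^n\times K$ is finitely generated: this needs the compactness step establishing that the projection to $\R^n$ stays discrete, together with the bookkeeping that passing to the quotient $D$ preserves infinite torsion-free rank. The reduction $\hom(K,\R)=0$ and the local-compactness dichotomy for topological vector spaces are routine once the structure theorem is in hand.
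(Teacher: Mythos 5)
Your proof is correct, but it takes a genuinely different route from the paper's. The paper picks a compact subgroup $K$ with $H/K$ a Lie group, kills $K$ using $\hom(K,\R)=0$, then uses divisibility of $\R^n$ to split $H/K \cong \R^n \times A$ with $A$ discrete, reduces to $A$ torsion-free, and finishes by a two-case analysis done by hand: in the finite-rank case it embeds $A$ in $\Q^k$ and checks that restriction to a copy of $\Z^k$ is a topological isomorphism onto $\R^k$; in the infinite-rank case it disproves local compactness directly, by producing inside any candidate compact neighborhood a sequence of homomorphisms $\chi_n$ with $\chi_n(K)=0$ and $\chi_n(a)=n$ (extended via divisibility of $\R$) that has no cluster point. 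You replace this entire case analysis with Riesz's theorem for Hausdorff topological vector spaces, which converts ``lcsc'' into ``finite-dimensional'' in one stroke, and you replace the global splitting $H\cong \R^n\times A$ with the open-subgroup form of the structure theorem ($H_0\cong\R^n\times K$ open) plus an inflation--restriction exact sequence $0\to\hom(D,\R)\to\hom(H,\R)\to\R^n$ with $D=H/H_0$ countable; this sidesteps the (mildly delicate) point in the paper that the abstract splitting is also a topological product. The trade-off: the paper's argument is self-contained point-set topology and needs no TVS machinery, while yours is shorter and more conceptual once Riesz is granted. Notably, your careful two-directional argument that $D$ has infinite torsion-free rank if and only if $\Z^{(\infty)}$ embeds discretely in $H$ supplies a detail the paper asserts in a single sentence (``This happens precisely when\ldots'') without proof; in particular your finite-generation lemma for discrete subgroups of $\R^n\times K$ (discreteness of the projected subgroup via the compactness of $K$, plus additivity of rank to see that $\Gamma/(\Gamma\cap H_0)$ retains infinite rank) is exactly the implication the paper leaves to the reader, and your lifting construction handles the converse just as the paper's product decomposition would.
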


\begin{proof}
First suppose $H$ is an abelian locally compact Hausdorff group.
Then $H$ has a compact subgroup $K$ such that $H/K$ is a Lie group,
perhaps with infinitely many connected components \cite[Cor.\ 7.54]{HM}.
Since any connected abelian Lie group is the product of $\R^n$ and a
torus, we can enlarge $K$ while keeping it compact to ensure that the
identity component of $H/K$ is $\R^n$.

Any continuous homomorphism from a compact group to $\R$ must have
compact range, and thus be trivial.  It follows that $K$
lies in the kernel of any $\chi \in \hom(H,\R)$, so
\[   \hom(H,\R) \cong \hom(H/K,\R).  \]
So, without loss of generality we can replace $H$ by $H/K$.  In
other words, we may assume that $H$ is an abelian Lie group with
$\R^n$ as its identity component.  The only subtlety is that $H$
may have infinitely many components.

Since $\R^n$ is a divisible abelian group, the inclusion
$j \maps \R^n \to H$ comes with a homomorphism $p \maps H \to \R^n$
with $p j = 1$, so we actually have $H \cong \R^n \times A$
as abstract groups, where $A$ is the range of $p$.   Since
$A \cap \R^n$ is trivial, $A$ is actually a discrete subgroup of
$H$.  So, as a topological group $H$ must be the product of $\R^n$
and a discrete abelian group $A$.  It follows that
\[      \hom(H,\R) \cong \R^n \times \hom(A,\R)  ,\]
so without loss of generality we may replace $H$ by the discrete
abelian group $A$, and ask if $\hom(A,\R)$ is measurable.

Since homomorphisms $\chi \maps A \to \R$ vanish on the torsion
of $A$, we may assume $A$ is torsion-free.  There are two alternatives
now:

\begin{enumerate}
\item $A$ has finite rank: i.e., it is a subgroup of the discrete
group $\Q^k$ for some finite $k$.  If we choose the smallest such $k$, then
$A$ contains a subgroup isomorphic to $\Z^k$ such that the natural
restriction map
\[    \hom(A,\R) \to \hom(\Z^k,\R) \]
is an isomorphism (actually of topological groups).
Since $\hom(\Z^k,\R)$ is locally compact, Hausdorff, and second countable,
so is $\hom(A,\R)$.  So, in this case our original topological
group $\hom(H,\R)$ is measurable.

\item $A$ has infinite rank.  This happens precisely when our original
group $H$ contains the free abelian group on a countable infinite set
of generators as a discrete subgroup.  In this case we can show that
$\hom(A,\R)$ and thus our original topological group $\hom(H,\R)$
is not locally compact.

To see this, let $U$ be any neighborhood of $0$ in $\hom(A,\R)$.
By the definition of the compact-open topology, there is a compact
(and thus finite) subset $K \subseteq A$ and a number $r > 0$ such that
$U$ contains the set $V$ consisting of $\chi \in \hom(A,\R)$ with
$|\chi(a)| \le r$ for all $a \in K$.   It suffices to show that $V$
is not relatively compact.

To do this, we shall find a sequence $\chi_n \in V$ with no cluster
point.  Since $A$ has infinite rank, we can find $a \in A$ such that
the subgroup generated by $a$ has trivial intersection with the finite
set $K$.  For each $n \in \N$, there is a unique homomorphism $\phi_n$
from the subgroup generated by $a$ and $K$ to $\R$ with $\phi_n(a) = n$
and $\phi_n(K) = 0$.  Since $\R$ is a divisible abelian group, we can
extend $\phi_n$ to a homomorphism $\chi_n \maps A \to \R$.  Since
$\chi_n$ vanishes on $K$, it lies in $V$.  But since $\chi_n(a) = n$,
there can be no cluster point in the sequence $\chi_n$.
\end{enumerate}
\end{proof}

Combining all these lemmas, we easily conclude:

\begin{theo}
Suppose $H$ is a measurable group.  Then $H^*$ is a measurable group
if and only if the free abelian group on countably many generators is
not a discrete subgroup of $\Ab(H)$.  This is true, for example, if
$H$ has finitely many connected components.
\end{theo}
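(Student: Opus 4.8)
The plan is to chain together the four lemmas already established in this subsection; the theorem is essentially their combination. First I would reduce to the abelian case. By the lemma on $\Ab(H)$, the group $\Ab(H)$ is always a measurable group, and the map $\pi^*\maps\Ab(H)^*\to H^*$ is an isomorphism of measurable groups, so $H^*$ is measurable if and only if $\Ab(H)^*$ is. Hence it suffices to decide measurability of $\Ab(H)^*$, and I may as well assume $H$ is abelian, replacing it by $\Ab(H)$ throughout and translating the final condition back at the end.

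Next I would split off the Pontrjagin-dual part. Using $\C^\times\cong\U(1)\times\R$ as topological groups gives an isomorphism of topological groups $H^*\cong\hat H\times\hom(H,\R)$, where $\hat H=\hom(H,\U(1))$. The Pontrjagin-duality lemma shows $\hat H$ is always a measurable group. The point is then that a finite product of topological groups is locally compact, Hausdorff and second countable if and only if each factor is: Hausdorffness and second countability pass to and from finite products at once, while local compactness of $A\times B$ forces local compactness of $B$, since the continuous open projection carries a compact neighborhood of $(a,b)$ onto a compact neighborhood of $b$. Because $\hat H$ is always measurable, it follows that $H^*$ is measurable if and only if $\hom(H,\R)$ is. Applying the last lemma, this holds exactly when the free abelian group on countably many generators is not a discrete subgroup of $H$; unwinding the reduction, this is precisely the stated condition on $\Ab(H)$.

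Finally, for the ``for example'' clause, I would show that finitely many connected components excludes the bad case. The quotient map $H\to\Ab(H)$ is a continuous surjective homomorphism, so it induces a surjection $\pi_0(H)\to\pi_0(\Ab(H))$, and thus $\Ab(H)$ again has finitely many components. Running the structure theory from the proof of the last lemma, $\Ab(H)$ has a compact subgroup $K$ with $\Ab(H)/K\cong\R^n\times A$ topologically, $A$ a discrete abelian group, and the free abelian group of countably infinite rank embeds as a discrete subgroup precisely when $A$ has infinite rank. But $\pi_0(\Ab(H)/K)\cong A$, and the induced surjection $\pi_0(\Ab(H))\to\pi_0(\Ab(H)/K)$ shows $A$ is finite; a finite group has rank zero, so the bad case cannot occur and $H^*$ is measurable. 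Almost all the content is carried by the earlier lemmas, so the only genuinely new points to check — and hence the main, and modest, obstacle — are the product-stability of local compactness used to pass from $\hom(H,\R)$ to $H^*$, and the $\pi_0$ bookkeeping needed to deduce finiteness of $A$ from finiteness of the components of $H$.
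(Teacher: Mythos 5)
Your proposal is correct and takes essentially the same approach as the paper, whose proof consists precisely of combining the four preceding lemmas in the order you describe: reduce to $\Ab(H)$, split $H^*\cong \hat H\times \hom(H,\R)$, invoke the measurability of $\hat H$, and apply the final lemma on $\hom(H,\R)$. The two points you spell out explicitly---the stability of local compactness, Hausdorffness and second countability under finite products (and passage to factors), and the $\pi_0$ bookkeeping that settles the `finitely many connected components' clause via finiteness of the discrete part $A$---are exactly the details the paper leaves implicit, and both are argued correctly.
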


We also have:

\begin{lemma}
\label{lem:continuous_action}
Let $G$ and $H$ be measurable groups with a left action $\rhd$ of $G$
as automorphisms of $H$ such that the map
\[       \rhd \maps G \times H \to H  \]
is continuous.  Then the right action of $G$ on $H^*$ given by
\[\chi_g[h] = \chi[g\rhd h] \]
is also continuous.
\end{lemma}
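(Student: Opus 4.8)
The plan is to exhibit the action map $a \maps H^* \times G \to H^*$, $(\chi,g)\mapsto \chi_g$, as the curried form of a manifestly continuous map, and then to invoke the ``easy'' direction of the exponential law for the compact-open topology. Throughout I use that a measurable group is locally compact Hausdorff and second countable; in particular $H$ is locally compact Hausdorff, which is precisely the hypothesis that makes evaluation and currying behave well.

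First I would record the two general facts about the compact-open topology on $C(H,\C^\times)$ that do all the work. \emph{(1)} Since $H$ is locally compact Hausdorff, the evaluation map $e \maps C(H,\C^\times)\times H \to \C^\times$, $(\chi,h)\mapsto \chi[h]$, is continuous, and it stays continuous when restricted to the subspace $H^*\subseteq C(H,\C^\times)$. \emph{(2)} For an arbitrary space $Z$, a map $b\maps Z \to C(H,\C^\times)$ into the compact-open topology is continuous whenever its uncurried partner $\tilde b\maps Z\times H \to \C^\times$, $\tilde b(z,h)=b(z)[h]$, is continuous; this direction uses only the tube lemma for the compact sets appearing in the subbasis, and needs no hypothesis on $Z$. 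Because $H^*$ carries the subspace topology, continuity of a map into $H^*$ is the same as continuity of the corresponding map into $C(H,\C^\times)$, so it suffices to show that the partner of $a$ is continuous.

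Taking $Z = H^*\times G$, the uncurried partner of $a$ is
\[
\tilde a \maps (H^*\times G)\times H \to \C^\times, \qquad
\tilde a((\chi,g),h) = \chi_g[h] = \chi[g\rhd h].
\]
I would then observe that $\tilde a$ factors as the composite
\begin{align*}
(H^*\times G)\times H &\;\xrightarrow{\ \cong\ }\; H^*\times (G\times H)
\;\xrightarrow{\ \unit_{H^*}\times\rhd\ }\; H^*\times H \\
&\;\xrightarrow{\ e\ }\; \C^\times ,
\end{align*}
where the first arrow is the reassociation homeomorphism, the second is continuous because $\rhd$ is continuous by hypothesis, and the third is the evaluation map of fact \emph{(1)}. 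Hence $\tilde a$ is continuous, and by fact \emph{(2)} the curried map $a$ is continuous, which is the assertion.

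The one genuinely substantive step — and the one I would treat most carefully — is the continuity of evaluation in fact \emph{(1)}: this is exactly where local compactness of $H$ (guaranteed by $H$ being a measurable group) is indispensable, and it is the only hypothesis used beyond the continuity of $\rhd$. For a reader preferring a self-contained verification, I would also sketch the equivalent net argument: given $(\chi_\alpha,g_\alpha)\to(\chi,g)$ and a compact $K\subseteq H$, choose a compact neighborhood $C$ of $g$ in $G$ so that $g_\alpha\in C$ eventually; then $g_\alpha\rhd K$ and $g\rhd K$ all lie in the compact image $K'=\rhd(C\times K)\subseteq H$, and one estimates $\sup_{h\in K}|\chi_\alpha[g_\alpha\rhd h]-\chi[g\rhd h]|$ by splitting it into $\sup_{h'\in K'}|\chi_\alpha[h']-\chi[h']|$, which tends to $0$ by uniform convergence on the compact set $K'$, plus $\sup_{h\in K}|\chi[g_\alpha\rhd h]-\chi[g\rhd h]|$, which is controlled by the uniform continuity of $\chi$ on $K'$ together with the uniform convergence $g_\alpha\rhd h\to g\rhd h$ on $K$ coming from continuity of $\rhd$ on the compact set $C\times K$.
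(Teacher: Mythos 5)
Your proof is correct, but it takes the route adjoint to the paper's. The paper proves continuity of the \emph{curried} map directly: it curries the action $\rhd$ into a continuous map $\alpha \maps G \to C(H,H)$ and then factors $(f,g)\mapsto f_g$ through the composition map $C(H,\C^\times)\times C(H,H) \to C(H,\C^\times)$, whose continuity in the compact-open topology is the key lemma there (and requires the middle space $H$ to be locally compact Hausdorff). You instead \emph{uncurry} the action map and factor the result through the evaluation map $e \maps C(H,\C^\times)\times H \to \C^\times$, then recover continuity of the curried map via the hypothesis-free direction of the exponential law. The two arguments draw on the same circle of cartesian-closedness facts, but your decomposition is slightly leaner: it isolates the unique point where local compactness of $H$ is genuinely needed (continuity of evaluation), whereas the paper invokes it both for the composition map and—unnecessarily—for the currying of $\rhd$, since a continuous map $X\times Y \to X$ always curries to a continuous map $Y \to C(X,X)$ with no hypotheses, exactly as your fact (2) records. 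Your supplementary net argument is also sound and matches the paper's own description of the topology on $H^*$ as uniform convergence on compact subsets of $H$; the appeals to uniform continuity on the compact sets $K'$ and $C\times K$ are justified by the group uniformity on $H$. One cosmetic remark: like the paper, you take for granted that $\chi_g = \chi\circ\alpha(g)$ again lies in $H^*$, which is immediate since $\alpha(g)$ is a continuous automorphism, but is worth a half-sentence if you want the argument fully self-contained.
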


\begin{proof}
Recall that $H^*$ has the induced topology coming from
the fact that it is a subset of the space of continuous maps
$C(H,\C^\times)$ with its compact-open topology.  So, it suffices
to show that the following map is is continuous:
\[
\begin{array}{ccc}
        C(H, \C^\times) \times G &\to& C(H, \C^\times)  \\
                  (f, g)         &\mapsto& f_g
\end{array}
  \]
where
\[          f_g[h] = f[g \rhd h]  .\]
This map is the composite of two maps:
\[
\begin{array}{ccccc}
C(H, \C^\times) \times G
& \stackrel{1 \times \alpha}{\longrightarrow}
& C(H, \C^\times) \times C(H, H)
&\stackrel{\circ}{\longrightarrow}
& C(H, \C^\times) \\
(f, g)
& \mapsto
& (f, \alpha(g))
& \mapsto
& f \circ \alpha(g) = f_g .
\end{array}
\]
where
\[        \alpha(g)h = g \rhd h  .\]
The first map in this composite
is continuous because $\alpha$ is: in fact,
any continuous map
\[        \rhd \maps X \times Y \to X  \]
determines a continuous map
\[        \alpha \maps Y \to C(X,X)   \]
by the above formula, as long as $X$ and $Y$ are locally compact Hausdorff
spaces.   The second map
\[  C(H, \C^\times) \times C(H, H) \stackrel{\circ}{\to} C(H, \C^\times) \]
is also continuous, since composition
\[  C(Y,Z) \times C(X,Y) \stackrel{\circ}{\to} C(X, Z) \]
is continuous in the compact-open topology whenever $X,Y$ and
$Z$ are locally compact Hausdorff spaces.
\end{proof}

\subsection{Measurable $G$-spaces}
\label{apx:G-spaces}

Suppose $G$ is a measurable group.  A (right) action of $G$ on a
measurable space $X$ is a {\bf measurable} if the map $(g, x) \mapsto
xg$ of $G \times X$ into $X$ is measurable. A measurable space $X$ on
which $G$ acts measurably is called a {\bf measurable $G$-space}.

In fact, we can always equip a measurable $G$-space with a topology
for which the action of $G$ is continuous:

\begin{lemma}
\label{lem:good_topology}
{\bf \cite[Thm.\ 5.2.1]{BeckerKechris}}
Suppose $G$ is a measurable group and $X$ is a measurable
space with $\sigma$-algebra ${\cal B}$.  Then there is a way to equip
$X$ with a topology such that:
\begin{itemize}
\item
$X$ is a Polish space---i.e., homeomorphic to separable complete metric
space,
\item
${\cal B}$ consists precisely of the Borel sets for this topology, and
\item
the action of $G$ on $X$ is continuous.
\end{itemize}
\end{lemma}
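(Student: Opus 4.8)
The plan is to obtain this as an instance of the general \emph{topological realization} of Borel actions of Polish groups, whose standard proof proceeds through the Vaught transform. Since $G$ is by assumption lcsc, it is in particular a Polish group, and the measurability hypothesis on $(g,x)\mapsto xg$ means precisely that this map is Borel with respect to the product Borel structure on $G\times X$. So the statement to prove reduces to: a Borel action of a Polish group $G$ on a standard Borel space $X$ admits a compatible Polish topology, generating the same $\sigma$-algebra $\mathcal B$, for which the action is continuous.

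First I would fix some compatible Polish topology $\tau_0$ on $X$—one exists by the very definition of standard Borel space (Lemma \ref{lem:standard_Borel})—together with a countable base $\{U_n\}$ of $\tau_0$ and a countable base $\{V_m\}$ of identity-neighbourhoods in $G$, using second countability of $G$. For a Borel set $A \subseteq X$ and open $V \subseteq G$ I would introduce the Vaught transform
$$A^{*V} = \{\, x\in X : xg \in A \text{ for comeager-many } g \in V \,\}.$$
The first technical input is that each $A^{*V}$ is again Borel: this rests on the fact that $G$, being Polish, is a Baire space with a countable base, so that the category quantifier ``for comeager-many $g$'' applied to a Borel condition yields a Borel set, by the standard results on Borel-measurability of the category quantifier over a Polish group together with Kuratowski--Ulam.

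Next I would build the new topology. Let $\mathcal{S}$ be the smallest countable family of Borel subsets of $X$ containing every $U_n$ and closed under finite intersections, complements, and the operations $A \mapsto A^{*V_m}$ for all $m$. Let $\tau$ be the topology generated by $\mathcal{S}$ as a subbase. Then $\tau$ is second countable; every $\tau$-open set is $\tau_0$-Borel, so $\tau$ does not enlarge the Borel structure, while $\{U_n\}\subseteq \mathcal{S}$ forces $\tau \supseteq \tau_0$, so $\tau$ generates exactly $\mathcal B$. The crucial point is then that the action $a\maps G\times X \to (X,\tau)$ is continuous, and this is exactly what closure of $\mathcal S$ under the Vaught transforms buys: for a subbasic $A\in\mathcal S$ and a point $(g_0,x_0)$ with $x_0 g_0 \in A$, the transform calculus produces an index $m$ with $x_0\in A^{*V_m}\in\mathcal S$ and a neighbourhood of $g_0$ in $G$ whose product is carried into $A$ by the action. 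I expect this continuity verification to be the main obstacle, since it requires the bookkeeping of the $\forall^{*}$-calculus for the transforms (the identities relating $A^{*V}$ to translates of $A$) to be arranged just right.

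Finally I would upgrade $\tau$ to a genuinely Polish topology. Since $\tau$ is second countable, finer than the Polish topology $\tau_0$, and generates the same Borel sets, the standard change-of-topology theorems for Polish spaces allow one to refine it further to a Polish topology $\tau'$ still generating $\mathcal B$. The delicate last step—and the reason one must follow the Becker--Kechris argument rather than invoke a black-box refinement—is to carry out this Polishization so that continuity of the action survives, which is arranged by refining only through transform-stable countable families so that the continuity established for $\tau$ is preserved in $\tau'$. The outcome is a Polish topology realizing $\mathcal B$ with continuous $G$-action, as claimed; the complete details are exactly those of \cite[Thm.\ 5.2.1]{BeckerKechris}.
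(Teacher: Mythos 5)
The paper gives no proof of this lemma at all---it is quoted directly from \cite[Thm.\ 5.2.1]{BeckerKechris}---so the benchmark is the Becker--Kechris argument you are reconstructing, and your overall plan (Vaught transforms over countable bases, Borelness of the category quantifier, a transform calculus for continuity) is indeed the shape of that proof. But the construction you actually write down fails at its central step. You generate $\tau$ from the \emph{whole} family $\mathcal{S}$, which you have closed under complements and finite intersections and which contains the raw basic sets $U_n$. Declaring all of $\mathcal{S}$ open destroys continuity of the action, and no transform bookkeeping can repair this, because the offending subbasic sets are arbitrary members of a countable Boolean algebra rather than Vaught transforms. Concretely, let $G=\R$ act on $X=\R$ by translation and take the rational intervals as the $U_n$. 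Then
\[
\bigl(\R\setminus(-1,0)\bigr)\cap\bigl(\R\setminus(0,1)\bigr)\cap\bigl(-\tfrac12,\tfrac12\bigr)=\{0\},
\]
so $\{0\}\in\mathcal{S}$ is $\tau$-open; since the topology of $G$ is \emph{not} being changed, every product neighborhood $W\times N$ of a point $(g_0,-g_0)$ in the action preimage of $\{0\}$ has $W$ a nondegenerate interval, hence cannot lie inside that preimage, and the action $G\times(X,\tau)\to(X,\tau)$ is discontinuous. In the actual Becker--Kechris construction the open sets are exactly unions of transforms $A^{*U}$ (equivalently $A^{\Delta U}$), with $A$ ranging over a countable Boolean algebra closed under the transforms; closure of the algebra under complementation enters only through the identity $X\setminus A^{\Delta U}=(X\setminus A)^{*U}$, which makes the transform sets clopen and yields regularity. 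The sets $A$ themselves---in particular the $U_n$ and their complements---are never declared open.

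Your final step is also mis-structured. You propose to Polishize $\tau$ via ``standard change-of-topology theorems,'' refining ``only through transform-stable countable families.'' No such black box exists: the classical refinement theorems produce finer Polish topologies with the same Borel sets but are blind to the action, and refining the topology on $X$ makes continuity of $G\times X\to X$ strictly \emph{harder} to preserve, not easier. In the genuine proof, Polishness is not obtained by refinement at all: one verifies directly that the transform-generated topology is second countable, regular (via the clopenness just described), and strong Choquet, and then invokes Choquet's characterization of Polish spaces. Note also that once the raw $U_n$ are no longer open, your assertion $\tau\supseteq\tau_0$---which is what you used to conclude that $\tau$ generates ${\cal B}$---disappears, so the equality of Borel structures becomes a further point needing the transform calculus rather than an immediate observation. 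In short: right reference and right tool, but the topology you define fails exactly the continuity requirement the lemma asserts, and the Polishization is deferred to machinery that cannot deliver it.
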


Moreover:

\begin{lemma}
\label{measurable_orbits}
{\bf \cite[Cor.\ 5.8]{Varadarajan}}
Let $G$ be a measurable group and let $X$ be a measurable $G$-space.
Then for every $x \in X$, the orbit $x G = \{x g \,: \,g\in G\}$
is a measurable subset of $X$; moreover the stabilizer $S_x = \{g
\in G\, | \, x g = x \}$ is a closed subgroup of $G$.
\end{lemma}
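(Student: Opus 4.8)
The plan is to reduce the whole statement to topology by invoking Lemma \ref{lem:good_topology}, which lets us equip $X$ with a Polish topology, compatible with its given Borel structure, for which the right action of $G$ is continuous. Once such a topology is fixed, for each $x \in X$ the \emph{orbit map}
\[
    \theta_x \maps G \to X, \qquad g \mapsto xg
\]
is continuous. The stabilizer claim then falls out immediately: since $X$ is Hausdorff the singleton $\{x\}$ is closed, so $S_x = \theta_x^{-1}(\{x\})$ is a closed subset of $G$; and it is a subgroup because $xg = x$ and $xg' = x$ force $x(gg') = (xg)g' = x$ and $xg^{-1} = x$. This disposes of the second assertion with no further work.

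For the first assertion the difficulty is that the image $\theta_x(G) = xG$ of a continuous map between Polish spaces is in general only \emph{analytic}, not Borel, so continuity alone is not enough. The idea is to recover Borel-ness by passing to an \emph{injective} version of the orbit map. First I would observe that $\theta_x$ is constant on the cosets of $S_x$, and therefore descends, by the universal property of the quotient topology, to a continuous injection $\bar\theta_x \maps G/S_x \to X$ whose image is exactly $xG$. Since $S_x$ is a closed subgroup of the lcsc group $G$, the coset space $G/S_x$ is again second-countable, locally compact, and Hausdorff --- this is the content of Lemma \ref{lem:quotient_space} --- and hence a standard Borel space by Lemma \ref{lem:standard_Borel}.

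The last step, and the genuine crux of the argument, is to control the image of this injection. The key tool is the Lusin--Souslin theorem: a Borel-measurable injection between standard Borel spaces sends Borel sets to Borel sets, so in particular its total image is Borel. Applying it to $\bar\theta_x \maps G/S_x \to X$ shows that $xG = \bar\theta_x(G/S_x)$ is a Borel, hence measurable, subset of $X$, completing the proof. I expect this image-control step to be the main obstacle, since it is exactly here that arbitrary continuous maps fail; what makes it go through is the reduction to an injective map via the stabilizer quotient, together with the standardness of both $G/S_x$ and $X$ supplied by Lemma \ref{lem:standard_Borel}.
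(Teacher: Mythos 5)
Your proposal is correct, but note that the paper offers no proof of this lemma at all --- it is quoted verbatim from Varadarajan \cite[Cor.\ 5.8]{Varadarajan} --- so what you have written is a genuine self-contained argument rather than a rederivation of an in-paper proof. Your route is essentially the classical one underlying Varadarajan's corollary (and Miller's paper \cite{Miller} in the bibliography): the orbit map alone only yields an analytic image, so one injectivizes it by factoring through the stabilizer quotient and then invokes a theorem guaranteeing that injective Borel maps between standard Borel spaces have Borel images. The two ingredients you use are sound: Lemma \ref{lem:good_topology} supplies a compatible Polish topology on $X$ with continuous $G$-action, which immediately gives closedness of $S_x = \theta_x^{-1}(\{x\})$ in the given lcsc topology of $G$ (the Becker--Kechris realization keeps $G$'s topology fixed, so there is no mismatch), and the Lusin--Souslin theorem finishes the image-control step, since $\bar\theta_x \maps G/S_x \to X$ is a continuous injection from a standard Borel space with image exactly $xG$ (with the paper's convention that $G/S_x$ means the space of right cosets $S_x g$, on which $\theta_x$ is indeed constant). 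One trivial slip: Lemma \ref{lem:quotient_space} as stated asserts only that $G/S_x$ is Polish, not that it is lcsc --- but Polishness is all your argument needs, since it already makes $G/S_x$ standard Borel via Lemma \ref{lem:standard_Borel}. Where Varadarajan's own treatment works directly with Borel maps and his one-one Borel image theorem, your use of the topological realization buys actual continuity of the orbit map, which makes the stabilizer claim a one-line preimage argument; the price is that the descriptive-set-theoretic weight is shifted onto Lusin--Souslin, which is itself a nontrivial external result, so your proof is self-contained only modulo that citation --- a perfectly reasonable trade given that the paper defers the entire lemma to the literature anyway.
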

This result is important for the following reason. Given a point
$x_o \in X$, the measurable map $$g \mapsto x_og$$ from $G$ into
$X$ allows us to measurably identify the orbit $x_oG$ with the
homogeneous space $G/S_{x_o}$ of right cosets $S_{x_o}g$, on which
$G$ acts in the obvious way.  Now, such spaces enjoy some nice
properties, some of which are listed below.

Fix a measurable group $G$ and a closed subgroup $S$ of $G$.

\begin{lemma} {\bf \cite[Thm.\ 7.2]{Mackey1957}}
\label{lem:quotient_space}
The homogeneous space $X = G/S$, equipped
with the quotient topology, is a Polish space.  Since the action of
$G$ on $X$ is continuous, it follows that $X$ becomes a measurable
$G$-space when endowed with its $\sigma$-algebra of Borel sets.
\end{lemma}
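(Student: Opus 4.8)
The statement is classical and is cited here to Mackey, so the plan is to reprove it by assembling standard facts about topological groups. The claim that $X = G/S$ is Polish reduces to verifying three topological properties---second countability, local compactness, and Hausdorffness---and then invoking the implication $2)\Rightarrow 1)$ of Lemma~\ref{lem:standard_Borel}, whose proof constructs a complete separable metric inducing the topology of any second countable locally compact Hausdorff space. The single most useful tool throughout is that the canonical projection $\pi\maps G \to G/S$ is an \emph{open} continuous surjection: for any open $U \subseteq G$ one has $\pi^{-1}(\pi(U)) = US = \bigcup_{s\in S} Us$, a union of translates of $U$ and hence open, so $\pi(U)$ is open by definition of the quotient topology. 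Once openness of $\pi$ is in hand, most of the work becomes formal.

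First I would check that $G/S$ is Hausdorff, which holds precisely because $S$ is \emph{closed}. The equivalence relation $R = \{(g,g')\in G\times G : gS = g'S\} = \{(g,g') : g^{-1}g' \in S\}$ is the preimage of the closed set $S$ under the continuous map $(g,g')\mapsto g^{-1}g'$, hence $R$ is closed in $G\times G$; since $\pi$ is open, so is $\pi\times\pi$, and for a quotient by an open surjection the target is Hausdorff if and only if this relation is closed, giving the separation of distinct cosets. Next, second countability passes to the quotient immediately from openness of $\pi$: if $\{U_n\}$ is a countable base for $G$, then $\{\pi(U_n)\}$ is a countable base for $G/S$. Finally, $G/S$ is locally compact: given a coset $gS$, choose an open $U \ni g$ in $G$ with compact closure $\overline{U}$, possible since $G$ is locally compact; then $\pi(U)$ is an open neighborhood of $gS$ contained in the compact set $\pi(\overline{U})$, so $gS$ has a compact neighborhood.

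With these three properties established, Lemma~\ref{lem:standard_Borel} shows directly that $X = G/S$ is a Polish space. The final clause is then immediate: endowing $X$ with its $\sigma$-algebra of Borel sets makes it a standard Borel space, and since by hypothesis the action $G \times X \to X$ is continuous, it is in particular Borel measurable, so $X$ is a measurable $G$-space in the sense of the preceding subsection. I expect the main delicate point to be the Hausdorffness step, since it is the only place where the closedness of $S$ is genuinely used and where one must correctly invoke the characterization of Hausdorff quotients by open maps in terms of closedness of the orbit relation; local compactness and second countability, by contrast, are essentially bookkeeping given that $\pi$ is open.
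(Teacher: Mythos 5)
Your proof is correct, but note that the paper does not prove this lemma at all: it simply cites Mackey \cite[Thm.\ 7.2]{Mackey1957}, so your argument is a genuinely self-contained alternative to an appeal to the literature. The assembly is the standard one and each step checks out: openness of $\pi \maps G \to G/S$ from $\pi^{-1}(\pi(U)) = US$; Hausdorffness via the closed-relation criterion for open quotient maps, which is indeed the only place closedness of $S$ enters (the complement of the diagonal in $(G/S)\times(G/S)$ is the image of the open set $(G\times G)-R$ under the open map $\pi\times\pi$, exactly as you set it up); second countability and local compactness by pushing a countable base and a relatively compact neighborhood through the open map; and then Polishness via the paper's own Lemma \ref{lem:standard_Borel}, whose implication $2)\Rightarrow 1)$ is proved there by one-point compactification and Urysohn metrization, so you are not smuggling in an unproved ingredient. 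What your route buys is that the lemma becomes internal to the paper's toolkit rather than resting on Mackey's machinery, which proves considerably more (Borel structures on duals, etc.) than is needed here. The one point you gloss that deserves a sentence is the final clause: ``continuous hence Borel measurable'' for the action $G\times X \to X$ implicitly uses the fact that, for second countable spaces, the Borel $\sigma$-algebra of the product topology coincides with the product of the Borel $\sigma$-algebras, so that continuity really does yield measurability with respect to the measurable structure on $G\times X$ that the paper intends; since $G$ and $G/S$ are both second countable this is standard, but it is exactly the kind of identification that fails without second countability and so is worth making explicit.
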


Let $\pi \maps G \to G/S$ denote the canonical projection. A
{\bf measurable section} for $G/S$ is a measurable map $s \maps
G/S \to G$ such that $\pi s$ is the identity on $G/S$ and
$s(\pi(1)) = 1$, where $1$ is the identity in $G$.

\begin{lemma} {\bf \cite[Lemma 1.1]{Mackey1952}}
\label{meas.sections}
There exist measurable sections for $G/S$.
\end{lemma}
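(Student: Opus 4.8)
The plan is to obtain the section from a measurable selection theorem applied to the fibers of the projection $\pi \maps G \to G/S$. By Lemma~\ref{lem:quotient_space}, $G/S$ equipped with its quotient topology is a Polish space, hence in particular a standard Borel space, and $\pi$ is a continuous \emph{open} surjection onto it. Since $G$ is itself Polish (being lcsc, by Lemma~\ref{lem:standard_Borel}) and $S$ is closed, each fiber $\pi^{-1}(x)$ is a left coset, and thus a nonempty closed subset of $G$. These two facts—Polishness of the range $G$ and closedness of the fibers—are exactly the structural inputs a selection theorem needs.

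Concretely, I would consider the set-valued map $F$ sending each $x \in G/S$ to its fiber $F(x) = \pi^{-1}(x)$, whose values are nonempty and closed, and apply the Kuratowski--Ryll-Nardzewski selection theorem to produce a Borel map $s_0 \maps G/S \to G$ with $s_0(x) \in F(x)$ for every $x$. This containment is precisely the condition $\pi s_0 = \mathrm{id}$. The hypothesis to check is \emph{weak measurability} of $F$: for every open $U \subseteq G$, the set $\{x \in G/S : F(x) \cap U \neq \emptyset\}$ must be Borel. But this set is exactly the image $\pi(U)$, and since $\pi$ is an open map, $\pi(U)$ is open, hence Borel. So the selection theorem applies and yields a Borel section $s_0$.

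Finally I would enforce the normalization $s(\pi(1)) = 1$. The singleton $\{\pi(1)\}$ is closed, hence Borel, in the Polish space $G/S$, and the point $1$ lies in the fiber $\pi^{-1}(\pi(1)) = S$. Redefining $s_0$ to equal $1$ at the single point $\pi(1)$ therefore alters it only on a Borel set and leaves it Borel, producing the desired section $s$ with $\pi s = \mathrm{id}$ and $s(\pi(1)) = 1$.

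I expect the only genuine work to lie in the selection step: one must invoke the correct form of the theorem (measurable domain, Polish codomain, closed nonempty values, weak measurability) and confirm that openness of $\pi$—which is standard for quotients of a topological group by a closed subgroup—delivers weak measurability. An essentially equivalent route, closer to Mackey's original argument, is to construct a Borel transversal directly: using a countable basis $\{U_n\}$ of $G$, one inductively carves out a Borel set meeting each coset in exactly one point, again relying on the openness of $\pi$ to keep each stage Borel, and then lets $s$ send each coset to its unique representative. Either way, the openness of $\pi$ together with second countability of $G$ is the crucial ingredient.
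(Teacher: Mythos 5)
Your proof is correct. Note, though, that the paper itself offers no argument at all here: the lemma is stated with a bare citation to Mackey's Lemma 1.1, whose original proof is the direct transversal construction you sketch in your closing paragraph---using a compact symmetric neighborhood, second countability, and an inductive carving of a Borel set meeting each coset exactly once. Your main route via the Kuratowski--Ryll-Nardzewski selection theorem is a genuinely different (and nowadays standard) alternative, and all the hypotheses check out: $G$ is Polish since it is lcsc, $G/S$ is Polish by the quotient lemma the paper records, the fibers $\pi^{-1}(x)$ are cosets of the closed subgroup $S$ and hence nonempty closed sets, and weak measurability of the fiber map reduces, exactly as you say, to the identity $\{x : \pi^{-1}(x)\cap U \neq \emptyset\} = \pi(U)$ together with openness of $\pi$ (which holds because $\pi^{-1}(\pi(U)) = SU$ is open). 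The normalization step is harmless since altering a Borel map at the single point $\pi(1)$ preserves Borel measurability. What each approach buys: KRN packages the combinatorial work into one general-purpose theorem and makes the verification nearly mechanical, while Mackey's construction is elementary and self-contained (no selection machinery, just second countability and local compactness) and directly produces a Borel transversal rather than merely a section---though the two are essentially interchangeable here, since an injective Borel section has Borel image by Lusin--Souslin. Either argument fully establishes the lemma as used in the paper.
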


Next we present a classic result concerning
quasi-invariant measures on homogeneous spaces. Let $X$ be a
measurable $G$-space, and $\mu$ a measure on $X$. For each $g\in
G$, define a new measure $\mu^g$ by setting $\mu^g(A) = \mu(A
g^{-1})$. We say the measure is {\bf invariant} if $\mu^g = \mu$
for each $g \in G$; we say it is {\bf quasi-invariant} if $\mu^g
\sim \mu$ for each $g \in G$.

\begin{lemma} {\bf \cite[Thm.\ 1.1]{Mackey1952}}
Let $G$ be a measurable group and $S$ a closed subgroup of $G$.
Then there exist non-trivial quasi-invariant measures on the
homogeneous space $G/S$. Moreover, such measures are all
equivalent.
\end{lemma}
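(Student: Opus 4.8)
The plan is to prove existence and uniqueness-up-to-equivalence by the classical Weil--Bruhat construction, working entirely within the category of lcsc groups so that every space stays standard Borel. Since $G$ is a measurable group it is locally compact Hausdorff and second countable, hence carries a left Haar measure $\extd g$ with modular function $\Delta_G$; the closed subgroup $S$ is itself lcsc, with its own left Haar measure $\extd s$ and modular function $\Delta_S$. By Lemma~\ref{lem:quotient_space} the quotient $X = G/S$ is Polish, so the measures we build are measures in the sense of Def.~\ref{defn:measure}. First I would construct a \emph{Bruhat function}: a continuous, strictly positive $\varrho\maps G \to (0,\infty)$ with the cocycle relation $\varrho(gs) = (\Delta_S(s)/\Delta_G(s))\,\varrho(g)$ for all $g\in G$, $s\in S$. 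To build it, choose $\psi \in C_c(G)$ with $\psi \ge 0$ and $\int_S \psi(gs)\,\extd s > 0$ for every $g$ (possible because $G$ is $\sigma$-compact and the projection $\pi\maps G\to X$ is open), and set $\varrho(g) = \int_S \psi(gs)\,(\Delta_G(s)/\Delta_S(s))\,\extd s$; a change of variable in the Haar integral over $S$ verifies the cocycle property, and positivity is immediate.

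Next I would use Weil's formula to define the measure and establish quasi-invariance in one stroke. The averaging map $T\maps C_c(G) \to C_c(X)$, $(Tf)(gS) = \int_S f(gs)\,\extd s$, is well defined and surjective, and I would set $I(Tf) = \int_G f(g)\,\varrho(g)\,\extd g$. The cocycle relation for $\varrho$ is exactly what is needed to check that $I$ annihilates $\ker T$, so $I$ depends only on $Tf$; positivity then yields, via Riesz representation, a Radon measure $\mu$ on $X$, nonzero because $\varrho>0$ and $\extd g$ is nontrivial. A direct computation with the same formula shows that, at the coset $x = gS$, the Radon--Nikodym derivative $\extd\mu^{g_0}/\extd\mu$ equals $\varrho(g_0^{-1}g)/\varrho(g)$; this ratio is well defined on $X$ because the factors $\Delta_S(s)/\Delta_G(s)$ cancel, and it is strictly positive, so $\mu^{g_0} \sim \mu$ for every $g_0$. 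Thus $\mu$ is a nontrivial quasi-invariant measure, proving the existence half.

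The hard part is uniqueness: that \emph{any} quasi-invariant measure $\nu$ on $X$ is equivalent to $\mu$. The strategy is to characterize null sets intrinsically, independently of the measure chosen: a measurable $E \subseteq X$ is $\nu$-null, for \emph{any} quasi-invariant $\nu$, if and only if $\pi^{-1}(E)$ is locally Haar-null in $G$. Granting this, the null sets of every quasi-invariant measure coincide, so all such measures are mutually absolutely continuous, i.e.\ equivalent. To prove the characterization I would disintegrate the Haar measure $\extd g$ over $X$ using a measurable section $\sigma\maps X\to G$ supplied by Lemma~\ref{meas.sections}, which gives a measurable identification $G \cong X \times S$, and then run a Fubini-type argument: quasi-invariance allows one to average the indicator of $\pi^{-1}(E)$ over $G$-translates and thereby compare $\nu$-nullity of $E$ with Haar-nullity of $\pi^{-1}(E)$ in both directions. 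The main obstacle is the careful handling of this disintegration together with \emph{local} (as opposed to global) null sets, ensuring the interchange of integrals is legitimate in the $\sigma$-finite setting; here second countability of $G$ and the measurable section are essential, and this is the step most prone to technical slippage. In the interest of brevity one may instead invoke Mackey~\cite{Mackey1952} directly, since the result is standard, but the argument above makes it self-contained within the framework of the paper.
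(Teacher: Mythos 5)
The first thing to note is that the paper offers no proof of this lemma at all: it is stated purely as a citation to Mackey's Theorem 1.1, so any self-contained argument is by definition a different route, and your plan --- existence via a Bruhat function and Weil's formula, uniqueness via an intrinsic characterization of null sets --- is indeed the classical proof. But the existence half as written contains a genuine error: no $\psi \in C_c(G)$ can satisfy $\int_S \psi(gs)\,\extd s > 0$ for \emph{every} $g$ unless $G/S$ is compact. The function $g \mapsto \int_S \psi(gs)\,\extd s$ descends to a continuous function on $X = G/S$ supported in the compact set $\pi(\mathrm{supp}\,\psi)$, so it must vanish somewhere whenever $X$ is noncompact; already $G = \R$, $S = \{0\}$ defeats your construction. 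The standard repair is exactly the Bruhat-function device: using $\sigma$-compactness of $G$ and openness of $\pi$, choose a locally finite family $\psi_n \in C_c(G)$ whose $S$-averages jointly cover $X$, and set $\psi = \sum_n \psi_n$; this $\psi$ is continuous but not compactly supported, local finiteness keeps $\varrho(g) = \int_S \psi(gs)\,(\Delta_G(s)/\Delta_S(s))\,\extd s$ finite and continuous, and the rest of your computation (the cocycle identity, well-definedness of $I$ on $\ker T$, and the descent of the ratio $\varrho(g_0^{-1}g)/\varrho(g)$ to $X$) then goes through verbatim.

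The uniqueness half is the harder statement, and as written it is a plan rather than a proof: the claimed equivalence between $\nu$-nullity of $E$ (for every quasi-invariant $\nu$) and local Haar-nullity of $\pi^{-1}(E)$ is precisely the content of Mackey's argument, and the two-directional Fubini comparison that you defer is the entire difficulty. One technical point you should build in if you carry it out: Haar measure on $G$ is typically infinite, so before averaging over translates you should replace it by an equivalent probability measure (multiply by a strictly positive $L^1$ density, available since $G$ is second countable and $\sigma$-compact), after which the interchange of integrals is legitimate in the $\sigma$-finite framework of the paper; the measurable section of Lemma~\ref{meas.sections} then gives the identification $G \cong X \times S$ needed for the disintegration. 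Since you close by invoking Mackey anyway, your proposal in its current state agrees with the paper's treatment exactly where it matters --- both ultimately rest on the citation --- but the existence half, once the Bruhat function is repaired as above, is correct self-contained content that the paper does not supply.
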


\end{document}